\definecolor{darkgreen}{rgb}{0,0.45,0} 
\theoremstyle{plain}
\newtheorem{theorem}{Theorem}[section]
\newtheorem{lemma}[theorem]{Lemma}
\newtheorem{proposition}[theorem]{Proposition}
\newtheorem{corollary}[theorem]{Corollary}
\theoremstyle{remark}
\newtheorem{remark}[theorem]{Remark}
\theoremstyle{remark}
\newtheorem{remarks}[theorem]{Remarks}
\theoremstyle{definition}
\newtheorem{example}[theorem]{Example}
\newtheorem{examples}[theorem]{Examples}
\newtheorem{definition}[theorem]{Definition}
\numberwithin{equation}{section}
\DeclareMathOperator{\id}{id}
\DeclareMathOperator{\End}{End}
\DeclareMathOperator{\Aut}{Aut}
\DeclareMathOperator{\supp}{supp}
\DeclareMathOperator{\cosupp}{cosupp}
\DeclareMathOperator{\Hom}{Hom}
\DeclareMathOperator{\colim}{colim}
\newcommand{\LIO}{\mathsf{LIO}}
\newcommand{\Cc}{\mathcal{C}}
\newcommand{\Xx}{\mathcal{X}}
\newbox\skewpullbackbox
\newbox\skwepullbackbox
\newbox\ksewpullbackbox
\newbox\pullbackbox
\newcommand{\pullback}{\copy\pullbackbox}
\newbox\pullbackabox
\newbox\pullbackbbox
\newbox\pullbackcbox
\newcommand{\pullbackc}{\copy\pullbackcbox}
\newbox\pullbackdbox
\newcommand{\pullbackd}{\copy\pullbackdbox}
\newbox\pushoutbox
\def\pushout{\copy\pushoutbox}
\newbox\pushoutabox
\DeclareRobustCommand{\No}{\ifmmode{\nfss@text{\textnumero}}\else\textnumero\fi} 
\newcounter{dummy}
\newcommand\myitem[1][]{\item[(#1)]\refstepcounter{dummy}\def\@currentlabel{#1}}
\renewcommand{\square}{\text{\normalfont\scalebox{.6}{\SmallSquare}}}
\begin{document}

\title[Lifting of locally initial objects]{Lifting of locally initial objects and universal (co)acting Hopf algebras}

\author{A.L. Agore}
\address{Simion Stoilow Institute of Mathematics of the Romanian Academy, P.O. Box 1-764, 014700 Bucharest, Romania}
\email{ana.agore@gmail.com, ana.agore@imar.ro}

\author{A.S. Gordienko}
\address{Department of Higher Algebra,
	Faculty of Mechanics and  Mathematics,
	M.\,V.~Lomonosov Moscow State University,
	Leninskiye Gory, d.1,  Main Building, GSP-1, 119991 Moskva, Russia }
\email{alexey.gordienko@math.msu.ru}

\author{J. Vercruysse}
\address{D\'epartement de Math\'ematiques, Facult\'e des sciences, Universit\'e Libre de Bruxelles, Boulevard du Triomphe, B-1050 Bruxelles, Belgium}
\email{joost.vercruysse@ulb.be}

\keywords{Locally initial object, support, (co)monoid, (co)algebra, Hopf algebra, $H$-module (co)algebra, $H$-comodule (co)algebra, universal (co)acting Hopf algebra}

\begin{abstract}	 The universal (co)acting bi/Hopf algebras introduced by Yu.\,I.~Manin, M.~Sweedler and D.~Tambara, the universal Hopf algebra of a given (co)module structure, as well as the universal group of a grading, introduced by J.~Patera and H.~Zassenhaus, find their applications in the classification of quantum symmetries. Typically, universal (co)acting objects are defined as initial or terminal in the corresponding categories and, as such, they do not always exist. In order to ensure their existence, we introduce the support of a given object, which generalizes the support of a grading and is used to restrict the class of objects under consideration. The existence problems for universal objects are formulated and studied in a purely categorical manner by seeing them as particular cases of the lifting problem for a locally initial object. We prove the existence of a lifting and, consequently, of the universal (co)acting objects under some assumptions on the base (braided or symmetric monoidal) category. In contrast to existing constructions, our approach is self-dual in the sense that we can use the same proof to obtain the existence of universal actions and coactions. In particular, when the base category is the category of vector spaces over a field, the category of sets or their duals, we recover known existence results for the aforementioned universal objects. The proposed approach allows us to apply our results not only to the classical categories of sets and vectors spaces and their duals but also to (co)modules over bi/Hopf algebras, differential graded vector spaces, $G$-sets and graded sets. 
\end{abstract}

\subjclass[2020]{Primary 18A30; Secondary 16T05, 16T15, 16T25, 16W22, 16W25, 16W50, 18A40, 18B35, 18C40, 18M05, 18M15.}

\thanks{The study carried out by the second author was conducted under the state assignment of Lomonosov Moscow State University. The third author thanks the FNRS for support via the research project (PDR) T.0318.25 “Redisclosure”.}

\maketitle

\tableofcontents

\section{Introduction}

When an affine algebraic group $G$ is acting morphically on an affine algebraic variety $X$, this action $G\times X \to X$ corresponds to a homomorphism of algebras $\mathcal O(X) \to \mathcal O(X) \otimes \mathcal O(G)$ where $\mathcal O(X)$ and $\mathcal O(G)$ are the coordinate algebras of regular functions (functions that can be represented by polynomials in coordinates) on $X$ and $G$, respectively. Recall that the group structure on $G$ turns $\mathcal O(G)$ into a Hopf algebra. Moreover $\mathcal O(X)$ is an $\mathcal O(G)$-comodule algebra and a $U(\mathfrak g)$-module algebra where 
$U(\mathfrak g)$ is the universal enveloping algebra of the Lie algebra $\mathfrak g$ of $G$. In this classical situation,
$\mathcal O(G)$ and $\mathcal O(X)$ are commutative and $U(\mathfrak g)$ is cocommutative. 
More generally, if $H$ is a (neither necessarily commutative
nor necessarily cocommutative) Hopf algebra (co)acting on a (not necessarily commutative algebra) $A$, this can be interpreted in view of the above
as an action of a ``quantum group'' by ``quantum symmetries'' on some (not necessarily commutative) algebraic variety $X$ where the algebra $A$ plays the role of the coordinate algebra on $X$. As the structure of a general Hopf algebra can be more involved than the one arising from a usual (algebraic) group, the associated (co)actions do not only allow us to describe classical automorphisms and derivations, but also gradings and much wilder types of ``measurings''. Therefore, the problem of classifying (co)module structures on a given algebra $A$, can be seen geometrically as the problem of classifying quantum symmetries of $A$ and is as such a natural continuation and unification of the classifications of automorphisms, derivations and gradings of $A$.

Usually, gradings are classified either up to an isomorphism (when the grading group is fixed) or up to equivalence (when it is not important by elements of which group the graded components are marked), see e.g.  \cite{ElduqueKochetov}. The classification up to equivalence may seem coarser, however the notion of the universal group of the grading makes it possible to recover all groups that realize a concrete grading. 
Inspired by this, the notions of equivalence and universal Hopf algebras of (co)module structures on algebras were introduced in~\cite{AGV1} as a natural generalization of the aforementioned universal group of a grading. 
This construction was unified in \cite{AGV2} with the 
universal (co)acting bi/Hopf algebras of Sweedler~--- Manin~--- Tambara, introducing the $V$-universal (co)acting bi/Hopf algebras for a given algebra $A$,
where $V$ is a unital subalgebra of $\End_{\mathbbm k}(A)$ and $\mathbbm k$ is the base field. 
The advantage of such a unified theory is that it simplifies the classification of (co)module structures (or even in certain cases makes it possible at all) by providing duality theorems. Moreover, it is known that for a general infinite dimensional algebra $A$, the universal coacting bi/Hopf algebra
of Manin~--- Tambara do not always exist~(see \cite[Section 4.5]{AGV2}) and the use of $V$ provides the necessary restriction on the class of comodule structures under consideration to ensure the existence of the universal Hopf algebra for this class.
Furthermore, following Sweedler \cite{Sweedler}, rather than considering just (universal) (co)actions of a bi/Hopf algebra on an algebra, one can also consider (universal) measurings of coalgebras between different algebras, similarly to considering homomorphisms between algebras rather than just endomorphisms of a single algebra.

Particular interest has been shown recently for studying properties of Manin's universal coacting Hopf algebra (see e.g. \cite{HNUVVW, HNUVVW2, HNUVVW3, RV}). Furthermore, various other instances of universal (co)acting bi/Hopf algebras and universal (co)measurings surfaced in recent years motivated by different applications in e.g., subfactor theory \cite{BhaChiGos}, superpotential algebras \cite{ChiWalWan} or for studying comodules over bialgebroids via chain complexes of modules in \cite{ArdKaoMen}. Moreover, universal coacting weak bi/Hopf algebras are introduced in \cite{HWWW} as a generalization of the Manin-Tambara construction while measurings of Hopf algebroids  were recently considered in \cite{BK} for their applications in cyclic (co)homology theories whereas measurings by Hopf algebroids  as introduced in \cite{BB}, were used in \cite{BKS} for proving structure theorems for crossed products. All this motivates once more the need for a unified theory for such universal objects.

In this paper, we introduce the categorical foundations for the notion of universal bi/Hopf algebra. Instead of working with algebras and coalgebras over a field, we are considering $\Omega$-magmas (see Section~\ref{SubsectionOmegaMagmas}), i.e. objects
endowed with a collection of morphisms $\omega_A \colon A^{{}\otimes s(\omega)} \to A^{{}\otimes t(\omega)}$, indexed by $\omega \in \Omega$, in a braided monoidal category $\mathcal C$ throughout referred to as the \textit{base category}. We provide sufficient conditions for the existence of universal bi/Hopf monoids in terms of the category $\mathcal C$. Such a categorical approach allows us to treat algebras and coalgebras over fields, as well as monoids in $\mathbf{Sets}$ by a single theory. Furthermore, we may now consider the cases when all (co)algebras  under consideration as well as bi/Hopf algebras (co)acting on them are endowed with an additional structure: an action of a group or a Lie algebra, group grading, (co)action of a fixed Hopf algebra, structure of a Yetter-Drinfel'd module algebra, etc., and (co)actions of bi/Hopf algebras are compatible with this structure (see Section~\ref{ExamplesBaseCaategory} and \cite[Section 5]{AGV3}). 

Usually the conditions imposed on a the category $\Cc$ to obtain the existence of universal objects are rather strong. A common condition for example (see e.g. \cite{HylLopVas, GV, NorthPeroux, Porst1, Vasilakopoulou}) is to require that $\Cc$ is locally presentable. However, since a category and its dual cannot be at the same time locally presentable without being ``almost trivial'' (isomorphic to a poset category to be precise), such an approach
cannot be applied for universal coactions. In contrast to this, we propose a self-dual approach that allows us to provide a single proof for both the existence of universal actions and coactions, which can be applied to many categories of interest, such as Sets, Vector spaces, Yetter-Drinfel'd modules etc, as well as to their duals. The price to pay for this self-dual approach is that we have to introduce restrictions on (co)actions in terms of their (co)supports. In addition, we need to suppose the existence of (co)free (co)monoids, rather than deducing it from the conditions on $\Cc$. Indeed, the construction of a cofree coalgebra is not just a formal dual of the construction of a free monoid.

The main difference between existing literature and the approach we advocate here, is that all known general categorical results prove the existence of \emph{globally} universal (i.e. universal among \emph{all} measurings) (co)acting objects, and as already pointed out above, it is known \cite[Section 4.5]{AGV2}, that even in the case $\mathcal C = (\mathbf{Vect}_{\mathbbm k},\otimes)$, the category of vector spaces over a field $\mathbbm k$, such global objects do not exist. Therefore, we propose 
to restrict the class of (co)measurings and (co)actions under consideration which allows for many new categories to be considered, including the ones previously mentioned and not covered by the existing literature.
More precisely, a universal (co)action of a Hopf monoid in our sense can be defined as an initial object in some suitable full subcategory of the category of all (co)actions on a fixed $\Omega$-magma. Consequently, for the whole category of (co)actions this universal (co)action is then a {\em locally} initial object, by which we mean an object which admits {\em at most} one arrow into any other object. In contrast to (classical) initial objects, a category (in our case, the category of all coactions on a fixed $\Omega$-magma) can contain several non-isomorphic locally initial objects (in our case, each of these will be a coaction that is universal with respect to a different `support', see below). The aforementioned full subcategory consists then exactly of those objects whose images under some forgetful functor admit an arrow from the considered locally initial object. This suggests that the problem of finding universal Hopf monoids can be described as a lifting problem for locally initial objects and we provide several useful methods of lifting locally initial objects for various classes of functors (e.g., functors which admit an adjoint, functors which are full and faithful on some hom-sets) in Section~\ref{SectionLiftingProblem}.
To properly describe the full subcategory in which a locally initial object becomes initial, we introduce the notion of an absolute value of an object $x$ as the ''minimum'' among the locally initial objects which admit an arrow into $x$. This leads naturally to what we call the (co)support of a morphism which can be used to restrict the class of  objects under consideration to obtain the desired full subcategory.  In a subsequent paper \cite{AGV3}, we will show that in case of closed monoidal categories the universal (co)actions and (co)measurings obtained in this way, correspond with the $V$-universal (co)measurings as mentioned above and studied in \cite{AGV2}.

Concretely, the lifting of locally initial objects is carried out in several steps as we will outline now. We denote by
\begin{itemize}
\item $A$ and $B$ some $\Omega$-magmas;
\item $\mathbf{MorTens}(A,B)$ and $\mathbf{TensMor}(A,B)$ the categories of, respectively, morphisms $A\to B \otimes Q$
and morphisms $P\otimes A \to B$ for some objects $P$ and $Q$; 
\item $\mathbf{Comeas}(A,B)$ and $\mathbf{Meas}(A,B)$ the categories of, respectively, comeasurings $A\to B \otimes Q$
and measurings $P\otimes A \to B$ where $P$ is a comonoid and $Q$ a monoid; 
\item $\mathbf{ComodStr}(A)$ and $\mathbf{ModStr}(A)$ the categories of, respectively, comodule structures $A\to A \otimes P$ and module structures $Q\otimes A \to A$ where $P$ is a comonoid and $Q$ a monoid;  
\item $\mathbf{Coact}(A)$ and $\mathbf{Act}(A)$  the categories of bimonoid (co)actions on $A$;
\item $\mathbf{HCoact}(A)$ and $\mathbf{HAct}(A)$  the categories of  Hopf monoid (co)actions on $A$;
\item  $G$, $G_1$--$G_4$, $G'$, $G_1'$--$G_4'$ the corresponding forgetful/embedding functors, as in the diagram below.
$$\xymatrix{
	& \mathbf{HCoact}(A)  \ar[d]^{G_4} & \,\, & \mathbf{HAct}(A)^{\mathrm{op}} \ar[d]^{G'_4} \\
\mathbf{Comeas}(A,A) \ar[d]^{G_1}	& \mathbf{Coact}(A) \ar[l]_-{G_3} \ar[d]^{G_2} &  \,\, & \mathbf{Act}(A)^{\mathrm{op}} \ar[d]^{G'_2} \ar[r]^-{G'_3} & \mathbf{Meas}(A,A)^{\mathrm{op}} \ar[d]^{G'_1}   \\
\mathbf{MorTens}(A,A)	& \mathbf{ComodStr}(A) \ar[l]_-{G}  & \,\, & \mathbf{ModStr}(A)^{\mathrm{op}}  
\ar[r]^-{G'} & \mathbf{TensMor}(A,A)^{\mathrm{op}}
}$$	
\end{itemize}
In Theorems~\ref{TheoremMonUnivComeasExistence} and~\ref{TheoremDUALComonUnivMeasExistence}
we lift locally initial objects along the forgetful functors $G_1 \colon \mathbf{Comeas}(A,B) \to \mathbf{MorTens}(A,B)$ and $G_1' \colon \mathbf{Meas}(A,B)^\mathrm{op} \to \mathbf{TensMor}(A,B)^\mathrm{op}$, respectively.
As opposed to the existing results from~\cite{HylLopVas, GV, NorthPeroux, Porst1, Vasilakopoulou}, we do not require our base category to be locally presentable nor monoidal closed. The precise conditions that we impose are detailed in Sections~\ref{SubsectionSupportCoactingConditions} and~\ref{SubsectionDUALCosupportActingConditions}.
When $A=B$, one can consider bi/Hopf monoid (co)actions as well.
Corollaries~\ref{CorollaryBimonUnivCoactingExistence} and~\ref{CorollaryDUALBimonUnivActingExistence}
provide a method of lifting objects in $\mathbf{ComodStr}(A)$ and $\mathbf{ModStr}(A)^{\mathrm{op}}$, whose images under $G$
and $G'$  are locally initial, along the functors $G_2$ and $G_2'$, respectively. The actual lifting is made along $G_3$ and $G_3'$.

Along the way, in order to construct universal (co)acting Hopf monoids, we prove in Theorem~\ref{TheoremBimonHopfLeftAdjoint}
and Theorem~\ref{TheoremDUALBimonHopfRightAdjoint}, respectively, that under certain conditions on the base braided monoidal category $\mathcal C$ the category $\mathsf{Hopf}(\mathcal C)$ of Hopf monoids in $\mathcal C$ is a reflective and coreflective
subcategory of the category $\mathsf{Bimon}(\mathcal C)$ of bimonoids in $\mathcal C$. The constructions follow the general outline proposed in \cite{Takeuchi, CH} (for Hopf algebras over fields) and \cite{Porst1}, see also \cite{GV}. However, we do not assume $\mathcal C$ to be symmetric nor locally finitely presentable as in \cite{Porst1} and \cite{GV}. Instead, the conditions we impose on the base category $\Cc$ make the construction in Theorem~\ref{TheoremBimonHopfLeftAdjoint} dualizable.
Finally, in Theorems~\ref{TheoremHopfMonUnivCoactingExistence} and~\ref{TheoremDUALHopfMonUnivActingExistence}
we lift the locally initial objects obtained in $\mathbf{Coact}(A)$ and $\mathbf{Act}(A)^{\mathrm{op}}$
to $\mathbf{HCoact}(A)$ and $\mathbf{HAct}(A)^{\mathrm{op}}$, respectively.

As suggested by the above diagrams, universal (co)acting objects give rise to a very rich duality theory which is thoroughly investigated in \cite{AGV3}.

For all undefined notions and further details on (braided monoidal) categories see \cite{MacLaneCatWork} (resp. \cite{JoyStr}) and for unexplained concepts in Hopf algebra theory we refer the reader to \cite{DNR}.

\section{Lifting of locally initial objects}\label{SectionLiftingProblem}

\subsection{Locally initial objects}\label{SubsectionLIO}
An object $x_0$ in a category $X$ is \textit{locally initial} if for every object $x$ in $X$ there exists
at most one morphism $x_0 \to x$.  If for every object $x$ in $X$ there exists
exactly one morphism $x_0 \to x$, then $x_0$ is just \textit{initial}. It is well known that the initial object is unique up to an isomorphism.

\begin{example} Let $a$ be an object in a category $C$.
	Then locally initial objects in the comma category $(a \downarrow C)$ are just epimorphisms $a\to b$ for arbitrary objects $b$ in $C$.
\end{example}	

Locally initial objects form a preorder $\LIO(X)$ where $x_1 \succcurlyeq x_2$ if there exists a morphism $x_1 \to x_2$.
For a given locally initial object $x_0$ denote by $X(x_0)$ the full subcategory of $X$ consisting
of all objects $x$ such that there exists a morphism $x_0 \to x$. Then $x_0$ is the initial object in $X(x_0)$.

Fix a functor $G \colon Y \to X$ where $Y$ is a category. For  $x_0 \in \LIO(X)$ denote by  $Y_G(x_0)$ the full subcategory of $Y$ consisting of all objects $y$
such that $Gy$ is an object in $X(x_0)$:
$$ \xymatrix{ 
	\ Y_G(x_0) \ \ar@{^{(}->}[r] \ar[d]  \ar@{}[rd]|<{\pullback} & \ Y \  \ar[d]^G \\
	\ X(x_0) \  \ar@{^{(}->}[r] &  \ X \   \\       
}
$$

\noindent\textbf{Lifting Problem.} \textit{Given $x_0 \in \LIO(X)$, find an initial object $y_0$ in $Y_G(x_0)$.}

Note that the terminology is used loosely since our definition does not require that $Gy_0 = x_0$. However, a slightly different equality may hold, see Proposition~\ref{PropositionLIOAbsValueLiftCriterion} below.
	A criterion for $Gy_0 = x_0$ to hold is given in Proposition~\ref{PropositionLIOLiftingArrowsFromLIO}.

\begin{proposition} Let $G \colon Y \to X$ be a functor and let $y_0$ be an initial object in $Y_G(x_0)$
	for some $x_0 \in \LIO(X)$.
	Then $y_0 \in \LIO(Y)$.	
\end{proposition}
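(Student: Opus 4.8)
The plan is to show that $y_0$ admits at most one morphism into every object of $Y$, using that $y_0$ is initial in the subcategory $Y(x_0)$ and that $G$ sends everything into $X$, where $x_0$ is locally initial. Let $y$ be an arbitrary object of $Y$, and suppose $f, g \colon y_0 \to y$ are two morphisms in $Y$. I need to conclude $f = g$.

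**The key observation** is that although $y$ itself need not lie in $Y(x_0)$, the morphisms $f$ and $g$ do give us maps in $X$ after applying $G$, and we want to factor the situation through $Y(x_0)$. First I would apply $G$ to obtain $Gf, Gg \colon Gy_0 \to Gy$ in $X$. Since $y_0 \in Y(x_0)$, by definition $Gy_0$ lies in $X(x_0)$, so there is a (unique) morphism $h \colon x_0 \to Gy_0$. Composing, $Gf \circ h$ and $Gg \circ h$ are both morphisms $x_0 \to Gy$ in $X$; since $x_0$ is locally initial in $X$, they coincide. This alone does not force $Gf = Gg$ unless $h$ is epic, so the naive approach of working in $X$ is not quite enough — and this is where I expect the main subtlety.

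**The cleaner route** is to stay inside $Y$: I claim $y_0$ itself, together with either of $f$ or $g$, can be used to see that $y$ "effectively" belongs to $Y(x_0)$ after replacing $y$ by the image, but more directly — consider that $f \colon y_0 \to y$ exhibits a morphism, hence $Gf \colon Gy_0 \to Gy$ shows $Gy \in X(x_0)$ (precompose the arrow $x_0 \to Gy_0$ with $Gf$ to get $x_0 \to Gy$; local initiality in $X$ is not even needed for existence). Therefore $y \in Y(x_0)$. But then $f$ and $g$ are both morphisms in $Y(x_0)$ out of the \emph{initial} object $y_0$ of $Y(x_0)$, so $f = g$. Since $y$ was arbitrary, $y_0$ admits at most one morphism into any object of $Y$, i.e. $y_0 \in \LIO(Y)$.

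**The main obstacle**, and the step to state carefully, is the claim that the mere existence of \emph{some} morphism $y_0 \to y$ in $Y$ already places $y$ in $Y(x_0)$: this is exactly the assertion that $X(x_0)$ is closed under codomains of morphisms out of its objects, which is immediate from the definition of $X(x_0)$ (if $x_0 \to Gy_0$ exists and $Gy_0 \to Gy$ exists, compose them). Once this closure property is in hand, the rest is a one-line appeal to initiality of $y_0$ in $Y(x_0)$. No calculation is required; the only care needed is to not conflate "locally initial in $Y$" (at most one arrow to every object of $Y$) with "initial in $Y(x_0)$" (exactly one arrow to every object of the \emph{sub}category), and to observe that the former follows from the latter precisely because every object receiving an arrow from $y_0$ lands in that subcategory.
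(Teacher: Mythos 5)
Your argument is correct and coincides with the paper's own proof: the existence of any arrow $y_0 \to y$ yields $x_0 \to Gy_0 \to Gy$, so $y$ lies in $Y(x_0)$, and initiality of $y_0$ there forces uniqueness. The initial detour through local initiality of $x_0$ in $X$ is unnecessary (as you yourself note), but the final argument is exactly the intended one.
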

\begin{proof}
	Let $y_0 \to y$ be an arrow in $Y$. Then there is an arrow $x_0 \to Gy_0 \to Gy$ in $X$. Thus $y$ is an object in $Y_G(x_0)$.
	Therefore, the arrow $y_0 \to y$ is unique since the object $y$ is initial in $Y_G(x_0)$.
\end{proof}	

In most cases below we will lift initial objects using \textit{ad hoc} methods. However, in the case when $G$ admits a left adjoint $F$, the lifting can be done by simply applying the functor $F$.

\begin{theorem}\label{TheoremLIOLiftAdjointExistence}
	Let $\tilde G \colon \tilde Y \to Y$ and $G \colon Y \to X$ be functors for some categories $X,Y,\tilde Y$.
	Let $y_0$ be the initial object in $Y_G(x_0)$ for some $x_0 \in \LIO(X)$.
	Suppose $\tilde G$ admits a left adjoint $\tilde F \colon Y \to \tilde Y$:	
	$$\xymatrix{ \tilde Y \ar@<-6pt>[r]^{\perp}_{\tilde G} & Y \ar@<-6pt>[l]_{\tilde F} \ar[r]^G & X}$$
 Then $\tilde F y_0$ is the initial object in $\tilde Y_{G \tilde G}(x_0)$.
 Moreover,  $\tilde Fz \in  \LIO(\tilde Y)$ for every $z \in \LIO(Y)$.
\end{theorem}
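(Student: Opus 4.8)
The plan is to unwind the definitions and reduce everything to the universal property of the adjunction $\tilde F \dashv \tilde G$. First I would observe what it means for an object $\tilde y \in \tilde Y$ to lie in the subcategory $\tilde Y(x_0)$ with respect to the composite $G\tilde G \colon \tilde Y \to X$: by definition this says that $G\tilde G\tilde y$ is an object of $X(x_0)$, i.e.\ there is a (necessarily unique, since $x_0$ is locally initial) arrow $x_0 \to G\tilde G\tilde y$ in $X$. But this is exactly the condition that $\tilde G\tilde y$ be an object of $Y(x_0)$ with respect to $G$. Hence the pullback square defining $\tilde Y(x_0)$ factors through $Y(x_0)$, and $\tilde G$ restricts to a functor $\tilde Y(x_0) \to Y(x_0)$; likewise $\tilde F$ sends $Y(x_0)$ into $\tilde Y(x_0)$, since for $y \in Y(x_0)$ the unit $y \to \tilde G\tilde F y$ composed with an arrow $x_0 \to Gy$ yields an arrow $x_0 \to G\tilde G\tilde F y$.

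Next I would check that $\tilde F y_0$ is initial in $\tilde Y(x_0)$. Let $\tilde y$ be any object of $\tilde Y(x_0)$. Then $\tilde G\tilde y$ is an object of $Y(x_0)$ by the previous paragraph, so by initiality of $y_0$ in $Y(x_0)$ there is a unique arrow $y_0 \to \tilde G\tilde y$ in $Y$; since $Y(x_0)$ is a full subcategory, this is the same as a unique arrow in $Y(x_0)$, and in particular the only arrow $y_0 \to \tilde G\tilde y$ in all of $Y$. By the adjunction bijection $\Hom_{\tilde Y}(\tilde F y_0, \tilde y) \cong \Hom_{Y}(y_0, \tilde G\tilde y)$, there is then exactly one arrow $\tilde F y_0 \to \tilde y$ in $\tilde Y$, hence exactly one in the full subcategory $\tilde Y(x_0)$. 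This gives initiality.

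Finally, for the last sentence: if $y_0 \in \LIO(Y)$, then by the Proposition just above (applied to $\tilde G \colon \tilde Y \to Y$ in place of $G$, with $y_0$ playing the role of the locally initial object and $\tilde F y_0$ the initial object of $\tilde Y(y_0)$)\,---\,or simply by the same adjunction argument\,---\,one gets $\tilde F y_0 \in \LIO(\tilde Y)$: for any arrow $\tilde F y_0 \to \tilde y$ in $\tilde Y$, transpose to an arrow $y_0 \to \tilde G\tilde y$, which is unique since $y_0$ is locally initial, so its transpose is unique as well.

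I do not expect any serious obstacle here; the entire argument is a formal manipulation with the adjunction and the definition of the subcategories $Y(x_0)$. The one point requiring a little care is the compatibility of the pullback squares\,---\,verifying that ``$G\tilde G\tilde y \in X(x_0)$'' is literally the same condition as ``$\tilde G\tilde y \in Y(x_0)$''\,---\,so that $\tilde F$ and $\tilde G$ do restrict to the relevant subcategories; once that is in place the adjunction bijection does all the work. (Note the statement uses $F$ in the last line where $\tilde F$ is presumably meant; I would write $\tilde F y_0 \in \LIO(\tilde Y)$.)
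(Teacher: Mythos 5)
Your proposal is correct and follows essentially the same route as the paper: restrict the adjunction to the full subcategories $Y(x_0)$ and $\tilde Y(x_0)$ via the unit, then use the adjunction to transfer initiality and local initiality (the paper phrases the middle step as ``left adjoints preserve initial objects'' rather than invoking the hom-set bijection directly, but that is the same argument). Your remark that the statement's final $F$ should read $\tilde F$ is also right; the paper's own proof contains the same typo.
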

\begin{proof}
By the definition of $\tilde Y_{G \tilde G}(x_0)$, the functor $\tilde G$ maps $\tilde Y_{G \tilde G}(x_0)$ to $Y_G(x_0)$. Moreover, if $y$ is an object in $Y_G(x_0)$,
then we have an arrow $x_0 \to Gy \to G \tilde G \tilde F y$, where $y \to \tilde G \tilde F y$ is the unit of the adjunction $\tilde F \dashv \tilde G$,
so $\tilde F y$ is an object in $\tilde Y_{G \tilde G}(y_0)$. In other words, the adjunction $\tilde F \dashv \tilde G$ restricts to the subcategories $Y_G(x_0)$
and $\tilde Y_{G \tilde G}(x_0)$. As $\tilde F$ is a left adjoint, it preserves all colimits and, in particular, the initial object, which is the colimit of the functor from the empty category. Hence $\tilde Fy_0$ is indeed the initial object in $\tilde Y_{G \tilde G}(x_0)$.
If $z \in \LIO(Y)$, then the set $Y(z, \tilde G\tilde y)$ of morphisms $z \to G\tilde y$ consists
of no more than a single element for every $\tilde y$ in $\tilde Y$. But the adjunction
implies that the sets $Y(z, \tilde G\tilde y)$ and $\tilde Y(\tilde Fz, \tilde y)$ have the same cardinality.
Hence $\tilde Fz \in  \LIO(\tilde Y)$ too.
\end{proof}	
Applying Theorem~\ref{TheoremLIOLiftAdjointExistence} for the case when $X=Y$ and $G=\id_X$, we get 
\begin{corollary}\label{CorollaryLIOLiftAdjointExistence} Let  $x_0 \in \LIO(X)$ for some category $X$.
	Suppose a functor $G \colon Y \to X$, where $Y$ is a arbitrary category, admits a left adjoint $F \colon X \to Y$. Then $Fx_0$ is the initial object in $Y_G(x_0)$. Moreover, $Fx_0 \in \LIO(Y)$.
\end{corollary}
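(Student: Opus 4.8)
The plan is to derive Corollary~\ref{CorollaryLIOLiftAdjointExistence} as the special case of Theorem~\ref{TheoremLIOLiftAdjointExistence} obtained by taking $\tilde Y = Y$, $\tilde G = G$, $\tilde F = F$, and then, in the role of the ``outer'' triple $(\tilde Y', Y', X')$ from the theorem, the trivial situation $X = Y$ (meaning: the category called $Y$ in the theorem is our $X$, and the functor called $G$ in the theorem is $\id_X$). More precisely: in the statement of Theorem~\ref{TheoremLIOLiftAdjointExistence} rename, apply it with ``$X$'' $:= X$, ``$Y$'' $:= X$, ``$G$'' $:= \id_X$, ``$\tilde Y$'' $:= Y$, ``$\tilde G$'' $:= G$, ``$\tilde F$'' $:= F$. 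The hypothesis of the theorem requires a locally initial object $x_0 \in \LIO(X)$ together with the initial object of ``$Y(x_0)$ with respect to $G$'' — but since here ``$G$'' is $\id_X$, the category $X(x_0)$ in the sense of Subsection~\ref{SubsectionLIO} coincides with ``$Y(x_0)$'', and $x_0$ itself is the initial object there (this is exactly the observation recorded just before the Lifting Problem: $x_0$ is initial in $X(x_0)$). So the role of ``$y_0$'' in the theorem is played by $x_0$ itself.

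Granting this identification, Theorem~\ref{TheoremLIOLiftAdjointExistence} immediately yields that $\tilde F y_0 = F x_0$ is the initial object in $\tilde Y(x_0)$ with respect to $G\tilde G = \id_X \circ G = G$; but $\tilde Y(x_0)$ with respect to $G$ is precisely $Y(x_0)$ in the sense relevant to the corollary. Hence $Fx_0$ is initial in $Y(x_0)$, which is the first assertion. For the second assertion, note that $x_0 \in \LIO(X)$ — here $X$ plays the role of ``$Y$'' in the theorem, and $x_0 = y_0$ — so the final clause of the theorem applies: $y_0 \in \LIO(Y)$ implies $F y_0 \in \LIO(\tilde Y)$, i.e. $F x_0 \in \LIO(Y)$ in our notation. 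One small bookkeeping point worth spelling out is that $x_0$ really is in $\LIO(X)$ by hypothesis and really is initial (not just locally initial) in $X(x_0)$, so both forms of the input needed by the theorem are available simultaneously.

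I do not expect any genuine obstacle here; the only thing to be careful about is the collision of names, since the letters $X$, $Y$, $G$, $F$, $y_0$ are reused with different meanings in the theorem and in the corollary. The cleanest way to present the proof is to state explicitly the substitution dictionary (``apply Theorem~\ref{TheoremLIOLiftAdjointExistence} with $\tilde Y, \tilde G, \tilde F$ equal to the present $Y, G, F$ and with the base functor taken to be $\id_X \colon X \to X$''), remark that $x_0$ is initial in $X(x_0)$ by the discussion in Subsection~\ref{SubsectionLIO}, and then read off both conclusions. No diagram chase, colimit computation, or adjunction manipulation beyond what is already done inside the proof of Theorem~\ref{TheoremLIOLiftAdjointExistence} is required.
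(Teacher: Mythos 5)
Your proposal is correct and is exactly the paper's own argument: the corollary is obtained by applying Theorem~\ref{TheoremLIOLiftAdjointExistence} with the base functor taken to be $\id_X$ (so that the theorem's $Y$ is $X$, its $y_0$ is $x_0$, which is initial in $X(x_0)$, and its $\tilde Y,\tilde G,\tilde F$ are the corollary's $Y,G,F$). The substitution dictionary you spell out is the only content of the proof, and you have it right.
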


\begin{remark}
	Corollary~\ref{CorollaryLIOLiftAdjointExistence} hints at another approach to the Lifting Problem.
	It is well known (see e.g. \cite[Chapter IV, Section 1, Theorem 2, (ii)]{MacLaneCatWork})
	that a functor $G \colon Y \to X$ admits a left adjoint if and only if for every object $x$ in $X$
	the comma category $(x \downarrow G)$ admits an initial object. Hence the Lifting Problem above is the restriction
	of the problem of constructing of a left adjoint functor to objects $x\in \LIO(X)$.
\end{remark}

The most obvious candidate to check, whether it is an initial object in  $Y_G(x_0)$ for a given $x_0 \in \LIO(X)$,
is some preimage $y_0$ of $x_0$ in $Y$ if that preimage indeed exists. Proposition~\ref{PropositionLIOLiftingArrowsFromLIO}
shows that such $y_0$ is a solution of the Lifting Problem if $G$ is full and faithful on certain hom-sets.

\begin{proposition}\label{PropositionLIOLiftingArrowsFromLIO}
	Let $G \colon Y \to X$ be a functor and let $y_0$ be an object in $Y$ such that $Gy_0 \in \LIO(X)$.
	Then $y_0$ is an initial object in $Y_G(Gy_0)$ if and only if for every object $y$ in $Y_G(Gy_0)$ the map $Y(y_0, y) \to X(Gy_0, Gy)$, defined by $G$,
	 is a bijection.
 \end{proposition}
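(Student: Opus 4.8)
The plan is to unwind the definitions and exploit the fact that the hypothesis $Gy_0\in\LIO(X)$ forces the relevant hom-sets to be as small as possible, so that the ``bijection'' condition degenerates into a statement about one-element sets. Write $x_0:=Gy_0$. First I would note that $y_0$ really is an object of $Y(x_0)$: the identity $\id_{x_0}$ exhibits $x_0\in X(x_0)$, hence $Gy_0\in X(x_0)$, hence $y_0$ lies in $Y(x_0)$; this is what makes the phrase ``$y_0$ is initial in $Y(x_0)$'' meaningful in the first place.

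The key observation is that for every object $y$ of $Y(x_0)$ the hom-set $X(x_0,Gy)$ has exactly one element. Indeed, $y\in Y(x_0)$ means precisely $Gy\in X(x_0)$, i.e.\ there is at least one morphism $x_0\to Gy$; and since $x_0=Gy_0\in\LIO(X)$ there is at most one. Thus $X(x_0,Gy)$ is a singleton for each such $y$, and the map $Y(y_0,y)\to X(x_0,Gy)$ induced by $G$ has a one-element codomain.

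With this in hand both implications are immediate. For the forward direction: if $y_0$ is initial in $Y(x_0)$ then $Y(y_0,y)$ is a singleton for every $y\in Y(x_0)$, and a map between two one-element sets is automatically a bijection, so $G$ is bijective on these hom-sets. For the converse: if $G\colon Y(y_0,y)\to X(x_0,Gy)$ is a bijection then, its codomain being a singleton, its domain $Y(y_0,y)$ is a singleton as well; hence there is exactly one morphism $y_0\to y$ for every object $y$ of $Y(x_0)$, which says exactly that $y_0$ is initial in $Y(x_0)$.

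There is no genuine obstacle here beyond the bookkeeping of which hom-sets are forced to be singletons; the point worth stressing is that it is the hypothesis $Gy_0\in\LIO(X)$ — rather than the weaker requirement that $Gy_0$ merely be an object of $X$ — that makes the codomain a singleton and so collapses bijectivity to a triviality. If one prefers a contrapositive reading: when $y_0$ fails to be initial in $Y(x_0)$, some $Y(y_0,y)$ with $y\in Y(x_0)$ is either empty or has at least two elements while $X(x_0,Gy)$ has exactly one, so $G$ is not a bijection on that hom-set; but the direct argument above is cleaner.
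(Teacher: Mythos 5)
Your proof is correct and follows essentially the same route as the paper: observe that $X(Gy_0,Gy)$ is a singleton for every $y$ in $Y(Gy_0)$ (nonempty by definition of $Y(Gy_0)$, at most one element since $Gy_0\in\LIO(X)$), so the induced map on hom-sets is a bijection precisely when $Y(y_0,y)$ is itself a singleton, which is the definition of initiality. The only addition is your preliminary check that $y_0$ lies in $Y(Gy_0)$, which the paper leaves implicit.
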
	
\begin{proof} For every
	 object $y$ in $Y_G(Gy_0)$ the set $X(Gy_0, Gy)$ consists of a single arrow $Gy_0 \to Gy$ since $Gy_0 \in \LIO(X)$.
	 Then $Y(y_0, y) \to X(Gy_0, Gy)$ is a bijection if and only if the set $Y(y_0, y)$ consists of a single element.
	  However, the latter holds for every $y$ in  $Y_G(Gy_0)$ if and only if $y_0$ is an initial object in $Y_G(Gy_0)$.
\end{proof}	
	
 \subsection{Absolute values}\label{SubsectionLIOAbsValue} Let $X$ be again a category.
 For a given object $x$ denote by $|x|$ an object in $\LIO(X)$
 such that $x$ is an object in $X(|x|)$ and for any other object $x_1$ in $\LIO(X)$
 such that $x$ is an object in $X(x_1)$ we have $|x| \preccurlyeq x_1$.
 In other words, $|x|$ is the absolute minimum of such $x_1 \in \LIO(X)$
for which there exists a morphism $x_1 \to x$:
 $$\xymatrix{ x &  \ar[l] \ar@{-->}[d] x_1 \\
 	& |x| \ar[lu]
 }$$
 
 The object $|x|$ (if it exists) is unique up to an isomorphism. We will call $|x|$ the \textit{absolute value} of $x$.
 
 The following example motivates the terminology used:

 \begin{example} Consider the category $\mathbb C$ where the objects are all complex numbers
	and $$\mathbb C(a,b)=\left\lbrace
	\begin{array}{cl}
	\left\lbrace 1_a, e_a \right\rbrace & \text{ if } a=b \text{ and } a\notin \mathbb{R}_+, \\
	\left\lbrace f_{a,b} \right\rbrace & \text{ if } a\in \mathbb{R}_+ \text{ and } |b|\leqslant a, \\
	\varnothing & \text{ otherwise.}
	\end{array}
	\right.$$ Define $$f_{b,c} f_{a,b} = f_{a,c},\
	1_c^2=1_c,\ e_c^2 = 1_c e_c = e_c 1_c = e_c,\ 
	e_c f_{b,c}= 1_c f_{b,c} = f_{b,c}$$	 
	for all $c\in \mathbb C$, $a,b \in \mathbb{R}_+$,
	$|c|\leqslant b \leqslant a$.
	Then $\LIO(\mathbb C)=\mathbb{R}_+$ and for every $c\in \mathbb C$
	the object $|c|\in \mathbb{R}_+$ coincides with the absolute value of $c$ in the usual sense.
\end{example}	
 
 \begin{remarks}
 \hspace{0.1cm}
 \begin{enumerate}
 \item The arrow $|x| \to x$ is just the terminal object in the comma category~$(\LIO(X)\downarrow x)$.
\item For every $x \in \LIO(X)$ we have $x = |x|$. 
\end{enumerate}
\end{remarks}

\begin{example} Let $f \colon a \to b$ be a morphism in an abelian category $A$ 
	and let $X := (a \downarrow A)$. Then $|f| = \mathop\mathrm{coim} f := \mathop\mathrm{coker}(\mathop\mathrm{ker} f)$.
\end{example}

 Now return to the situation when we have a functor $G \colon Y \to X$ for some categories $X$ and $Y$.
 
 Given an object $y$ in $Y$, denote $|y|_G := |Gy|$.
 
 For objects $y_1, y_2$ in $Y$ we write $y_1 \preccurlyeq y_2$
 and say that $y_1$ is \textit{coarser} than $y_2$ and $y_2$ is \textit{finer} than $y_1$ if $|y_1|_G \preccurlyeq |y_2|_G$.
 If $y_1 \preccurlyeq y_2$ and $y_2 \preccurlyeq y_1$, then we say that $y_1$ and $y_2$ are \textit{(support) equivalent}.
 Since the absolute value is defined up to an isomorphism, $y_1$ and $y_2$ are equivalent if and only if $|y_1|_G = |y_2|_G$.

Now we prove some simple criteria that can be used to check whether $y_1 \succcurlyeq y_2$ for given
objects $y_1$ and $y_2$.

\begin{proposition}\label{PropositionLIOCoarserFinerEquivalent} 
	Let $G \colon Y \to X$ be a functor for some categories $X$ and $Y$
	and let $y_1$ and $y_2$ be some objects in $Y$ such that there exist $|y_1|_G$ and $|y_2|_G$.
	Then $y_1 \succcurlyeq y_2$ holds if and only if there exists an arrow $|y_1|_G\to Gy_2$ in $X$.
\end{proposition}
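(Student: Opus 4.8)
The plan is simply to unwind the definitions. By definition of the preorder on objects of $Y$, the relation $y_1 \succcurlyeq y_2$ means $|y_1| \succcurlyeq |y_2|$ in $\LIO(X)$, i.e. that there exists a morphism $|y_1| \to |y_2|$ in $X$. Recalling that $|y_2| = |Gy_2|$, the assertion to be proved is thus the equivalence between the existence of a morphism $|y_1| \to |Gy_2|$ and the existence of a morphism $|y_1| \to Gy_2$ in $X$.

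For the ``only if'' direction I would assume a morphism $|y_1| \to |Gy_2|$ is given. Since $|Gy_2|$ is an absolute value of $Gy_2$, in particular $Gy_2$ is an object of $X(|Gy_2|)$, so there is a (necessarily unique, as $|Gy_2| \in \LIO(X)$) morphism $|Gy_2| \to Gy_2$. Composing yields $|y_1| \to |Gy_2| \to Gy_2$, as required.

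For the ``if'' direction I would assume a morphism $|y_1| \to Gy_2$ is given. Then $Gy_2$ is an object of $X(|y_1|)$, and $|y_1| \in \LIO(X)$ (it is the absolute value of an object, hence locally initial). By the defining minimality property of $|Gy_2|$ — it is $\preccurlyeq$ any object of $\LIO(X)$ from which there is an arrow to $Gy_2$ — we obtain $|Gy_2| \preccurlyeq |y_1|$, i.e. $|y_2| \preccurlyeq |y_1|$, i.e. $y_1 \succcurlyeq y_2$.

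I do not expect a genuine obstacle here: the argument is a short chase through the definitions of absolute value and of the preorder. The only point that needs a little care is to invoke the \emph{universal} (minimality) property of the absolute value $|Gy_2|$ in the converse direction, rather than merely its existence, and to note explicitly that $|y_1|$ is itself a locally initial object so that it is an admissible competitor in that minimality.
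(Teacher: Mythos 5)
Your proposal is correct and follows essentially the same route as the paper: the forward direction composes the arrow $|y_1|\to|y_2|$ with the structural arrow $|y_2|\to Gy_2$, and the converse invokes exactly the minimality property of $|Gy_2|$ among objects of $\LIO(X)$ admitting an arrow to $Gy_2$ (using that $|y_1|\in\LIO(X)$), which is what the paper means by the arrow ``factoring through $|y_2|$''. No gaps.
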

\begin{proof}  If $y_1 \succcurlyeq y_2$, then there exists an arrow $|y_1|_G\to |y_2|_G$.
	Composing this arrow with $|y_2|_G\to Gy_2$, we get the desired arrow $|y_1|_G\to Gy_2$.
	
	Conversely, suppose there exists an arrow $|y_1|_G\to Gy_2$ in $X$. By the definition of $|y_1|_G$, this arrow must factor through
	$|y_2|_G$, which yields an arrow $|y_1|_G\to |y_2|_G$. Hence $y_1 \succcurlyeq y_2$.
\end{proof}		
\begin{corollary}\label{CorollaryLIOCoarserFinerSufficientConditions}
Let $G \colon Y \to X$ be a functor for some categories $X$ and $Y$.
Suppose there is an arrow $y_1 \to y_2$ in $Y$ such that there exist $|y_1|_G$ and $|y_2|_G$.
Then $y_1 \succcurlyeq y_2$.
\end{corollary}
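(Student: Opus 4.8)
The plan is to deduce Corollary~\ref{CorollaryLIOCoarserFinerSufficientConditions} directly from Proposition~\ref{PropositionLIOCoarserFinerEquivalent}, which is the immediately preceding result and already does the substantive work. By Proposition~\ref{PropositionLIOCoarserFinerEquivalent}, once we know that $|y_1|$ and $|y_2|$ exist, the relation $y_1 \succcurlyeq y_2$ is equivalent to the existence of an arrow $|y_1| \to Gy_2$ in $X$. So it suffices to produce such an arrow from the hypothesis that there is an arrow $y_1 \to y_2$ in $Y$.

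The arrow $y_1 \to y_2$ in $Y$ yields, upon applying $G$, an arrow $Gy_1 \to Gy_2$ in $X$. On the other hand, by the very definition of the absolute value $|y_1| = |y_1|_X = |Gy_1|$, there is a canonical arrow $|y_1| \to Gy_1$ in $X$ (recall $Gy_1$ is an object of $X(|y_1|)$). Composing these two arrows gives the desired morphism $|y_1| \to Gy_1 \to Gy_2$ in $X$. Invoking Proposition~\ref{PropositionLIOCoarserFinerEquivalent} in the ``if'' direction, we conclude $y_1 \succcurlyeq y_2$.

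I do not anticipate any genuine obstacle here: the statement is a one-line corollary whose only content is the functoriality of $G$ together with the definition of $|{-}|$. The only point meriting a word of care is that $|y_1|$ is defined as an object of $X$ (via $Gy_1$), so the composite must be formed in $X$, not in $Y$ — but this is exactly the hypothesis format under which Proposition~\ref{PropositionLIOCoarserFinerEquivalent} is stated, so everything matches up. One could alternatively phrase the proof without citing the proposition, by directly factoring $|y_1| \to Gy_2$ through $|y_2|$ using the minimality property of $|y_2|$, but citing the proposition is cleaner and is presumably the intended route.
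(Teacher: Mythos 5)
Your proof is correct and coincides with the paper's own argument: compose the canonical arrow $|y_1|\to Gy_1$ with $G$ applied to $y_1\to y_2$, then invoke Proposition~\ref{PropositionLIOCoarserFinerEquivalent}. Nothing to add.
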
	
\begin{proof} We consider the composition of $|y_1|_G\to Gy_1$ and $Gy_1 \to Gy_2$  and apply Proposition~\ref{PropositionLIOCoarserFinerEquivalent}.
\end{proof}	

\begin{remark} 
 Using the notion of an absolute value, we see that if absolute values of all objects in $X$ exist, then, given $x_0 \in \LIO(X)$, the category $Y_G(x_0)$ consists of all objects $y$ in $Y$ such that $|y|_G \preccurlyeq x_0$.
\end{remark} 
 
 \begin{proposition}\label{PropositionLIOAbsValueLiftCriterion} 
 	Let $G \colon Y \to X$ be a functor for some categories $X$ and $Y$. Suppose that in $X$ there exist absolute values of all objects. Let $y_0$ be the initial object in $Y_G(x_0)$ for some $x_0 \in \LIO(X)$.
 	Then $x_0 = |y_0|_G$  if and only if $x_0 = |y|_G$ for some object $y$ in $Y$.
 \end{proposition}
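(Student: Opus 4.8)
The plan is to prove the two implications separately; the forward direction is immediate and all the content sits in the converse.

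\textbf{Forward direction.} If $x_0 = |y_0|$, then the object $y := y_0$ of $Y$ already witnesses $x_0 = |y|$, so there is nothing to do.

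\textbf{Converse direction.} Assume $x_0 = |y| = |Gy|$ for some object $y$ in $Y$. Since absolute values in $X$ are assumed to exist, $|y_0| = |Gy_0|$ exists, and since it is locally initial it suffices to exhibit morphisms $x_0 \to |y_0|$ and $|y_0| \to x_0$ in $X$: then both composites $x_0 \to |y_0| \to x_0$ and $|y_0| \to x_0 \to |y_0|$ are forced to be the identities (a locally initial object has at most one endomorphism), so $x_0 \cong |y_0|$, and uniqueness of the absolute value up to isomorphism gives $x_0 = |y_0|$.

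For the morphism $x_0 \to |y_0|$: because $y_0$ lies in $Y(x_0)$, its image $Gy_0$ lies in $X(x_0)$, so there is a morphism $x_0 \to Gy_0$; as $x_0 \in \LIO(X)$, the defining property of $|Gy_0| = |y_0|$ forces this morphism to factor through the canonical $|y_0| \to Gy_0$, yielding $x_0 \to |y_0|$ (equivalently $|y_0| \preccurlyeq x_0$). This step does not use the hypothesis. For the morphism $|y_0| \to x_0$: here the hypothesis enters. The canonical morphism $x_0 = |Gy| \to Gy$ shows $Gy \in X(x_0)$, i.e. $y \in Y(x_0)$; since $y_0$ is the initial object of $Y(x_0)$, there is a morphism $y_0 \to y$ in $Y$, and then Corollary~\ref{CorollaryLIOCoarserFinerSufficientConditions} gives $y_0 \succcurlyeq y$, that is $|y_0| \succcurlyeq |y| = x_0$, which is a morphism $|y_0| \to x_0$.

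The only point requiring attention is recognizing that the hypothesis $x_0 = |y|$ is precisely what places $y$ inside the subcategory $Y(x_0)$, so that initiality of $y_0$ can be invoked to produce the arrow $y_0 \to y$; the rest is an unwinding of the definitions of $\LIO(X)$, of $X(x_0)$, and of the absolute value, together with Corollary~\ref{CorollaryLIOCoarserFinerSufficientConditions}. I do not anticipate any serious obstacle.
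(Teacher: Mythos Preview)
Your proof is correct and follows essentially the same approach as the paper's: both obtain $|y_0| \preccurlyeq x_0$ by factoring $x_0 \to Gy_0$ through $|y_0|$, and both obtain $|y_0| \succcurlyeq x_0$ from the arrow $y_0 \to y$ in $Y$ (the paper unwinds this step directly via the minimality of $|y|=x_0$, while you invoke Corollary~\ref{CorollaryLIOCoarserFinerSufficientConditions}, which amounts to the same thing).
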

 \begin{proof}
 	Suppose $x_0 = |y|_G$ for some object $y$ in $Y$. Then there exists an arrow $x_0 \to Gy$ in $X$
 	and	the object $y$ belongs to $Y_G(x_0)$. Hence there is an arrow $y_0 \to y$ in $Y$.
 	Note that by the definition of the absolute value, the arrow $x_0 \to Gy_0$
 	factors through $|y_0|_G$. 
 	We get the following diagram:
 	$$\xymatrix{
x_0 \ar[d] \ar@{-->}[r] \ar[rd]   & |y_0|_G \ar[d] \\
Gy & \ar[l]   Gy_0	
}$$
 	In particular, $|y_0|_G \preccurlyeq x_0$. On the other hand, as the arrow $x_0 \to Gy$
 	factors through $|y_0|_G$ and $|y|_G = x_0$, we have $|y_0|_G \succcurlyeq x_0$.
 	Therefore, $|y_0|_G$ and $x_0$ are isomorphic, since both belong to $\LIO(X)$. As a consequence,
 	$x_0$ is the absolute value of $y_0$ too. (Recall that the absolute value is defined up to an isomorphism.)
 	
 	The converse is trivial.
 \end{proof}

  If for an object~$y$ in $Y$
 there exists an initial object $y_\mathrm{univ}$ in $Y_G(|y|_G)$, then we call $y_\mathrm{univ}$ the \textit{universal object of $y$}. Note that, by Proposition~\ref{PropositionLIOAbsValueLiftCriterion}, we have $|y_\mathrm{univ}|_G = |y|_G$, so~$y_\mathrm{univ}$ is indeed universal among all objects $y_{1}$ in $Y$ (support) equivalent to $y$, i.e. for every $y_{1}$
 with $|y_{1}|_G = |y|_G$ there exists a unique arrow $y_\mathrm{univ} \to y_{1}$ in $Y$.
 
 In Proposition~\ref{PropositionLIOLiftingArrowsFromLIO} we considered the particular case when it was possible to lift certain arrows from $X$ to $Y$ in a unique way. If, in addition, one is able to find preimages under $G$ of absolute values of objects in $Y$,
 then the corresponding universal objects exist and this particular case of the Lifting Problem has a solution:
 
 \begin{proposition}\label{PropositionLIOLiftingAbsValue}
 	Let $G \colon Y \to X$ be a functor and let $y_0,y_1$ be objects in $Y$ such that $Gy_0 = |y_1|_G$
 	and for every object $y$ in $Y_G(|y_1|_G)$ the map $Y(y_0, y) \to X(Gy_0, Gy)$, defined by $G$,
 	is a bijection.
 	Then $y_0$ is an initial object in $Y_G(|y_1|_G)$.
 \end{proposition}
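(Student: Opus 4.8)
The plan is to deduce this directly from Proposition~\ref{PropositionLIOLiftingArrowsFromLIO}; there is essentially nothing else to do once the hypotheses are matched up. First I would observe that the assumption $Gy_0 = |y_1|$ automatically places $Gy_0$ into $\LIO(X)$, since by definition $|y_1| = |Gy_1|$ is an object of $\LIO(X)$. Consequently the full subcategory $Y(Gy_0)$ appearing in Proposition~\ref{PropositionLIOLiftingArrowsFromLIO} is literally the same as the subcategory $Y(|y_1|)$ in the present statement, as both are determined by the single object $Gy_0 = |y_1|$ of $X$.

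Next I would invoke Proposition~\ref{PropositionLIOLiftingArrowsFromLIO} with the object $y_0$: it asserts that $y_0$ is initial in $Y(Gy_0)$ if and only if the map $Y(y_0, y) \to X(Gy_0, Gy)$ induced by $G$ is a bijection for every $y$ in $Y(Gy_0)$. But this bijectivity condition is exactly what we are assuming, after the identification $Y(|y_1|) = Y(Gy_0)$. Hence the proposition applies and yields that $y_0$ is initial in $Y(Gy_0) = Y(|y_1|)$, which is the claim. For completeness I would also remark that $Y(|y_1|)$ is nonempty: it contains $y_0$, via $\id_{Gy_0} \colon Gy_0 \to Gy_0 = |y_1|$, and it contains $y_1$, via the structural arrow $|y_1| \to Gy_1$ coming from the definition of the absolute value; so the statement is not vacuous.

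I do not expect any genuine obstacle here: the proposition is precisely the special case of Proposition~\ref{PropositionLIOLiftingArrowsFromLIO} in which the chosen preimage $y_0$ of the locally initial object is required to realize the absolute value of some object $y_1$. The only point demanding care is the bookkeeping — making explicit that $Gy_0 = |y_1| \in \LIO(X)$ and that the two descriptions of the relevant subcategory of $Y$ coincide — which is exactly what licenses the verbatim application of Proposition~\ref{PropositionLIOLiftingArrowsFromLIO}.
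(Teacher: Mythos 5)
Your proof is correct and matches the paper's own argument exactly: the paper's proof is simply ``Apply Proposition~\ref{PropositionLIOLiftingArrowsFromLIO} for $x_0 = |y_1|$,'' and your careful bookkeeping (that $Gy_0 = |y_1| \in \LIO(X)$ and hence $Y(Gy_0) = Y(|y_1|)$) is precisely what makes that application legitimate.
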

\begin{proof} Note that $G y_0 = |y_1|_G \in \LIO(X)$ and apply Proposition~\ref{PropositionLIOLiftingArrowsFromLIO}.
\end{proof}		
 
 \subsection{Group gradings}\label{SubsectionGroupGradings}
 
 While the main examples will follow in the next sections, we provide here an example to justify the terminology introduced.
 
  Let $A$ be a (neither necessarily unital, nor necessarily associative) algebra over a field $\mathbbm k$.
 	
 	Let $Y$ be the category where \begin{itemize} \item the objects are all gradings $$A=\bigoplus\limits_{h\in H} A^{(h)} \text{ (direct sum of subspaces)}$$
 	on $A$ by arbitrary groups $H$ where $A^{(g)}A^{(h)}\subseteq A^{(gh)}$ for all $g,h \in H$;
 	\item
 	the morphisms between group gradings $$A=\bigoplus\limits_{h\in H_1} A^{(h)}_1 \text{
 	and }A=\bigoplus\limits_{h\in H_2} A^{(h)}_2$$
 	are all group homomorphisms  $\varphi\colon H_1 \to H_2$ such that $A_1^{(h)}\subseteq A_2^{\bigl(\varphi(h)\bigr)}$
 	for all $h\in H_1$.\end{itemize}
 
 	Let $X$ be the category  where \begin{itemize} \item the objects are all gradings 
 	on $A$ by arbitrary sets $T$ that can be realized as group gradings, i.e. such decompositions $\Gamma \colon A=\bigoplus\limits_{t\in T} A^{(t)}$
 	that there exists a group grading $A=\bigoplus\limits_{h\in H} A_0^{(h)}$
 	and an embedding $$\tau \colon \supp \Gamma := \lbrace t\in T \mid A^{(t)}\ne 0  \rbrace \hookrightarrow H$$
 	such that $A^{(t)}=A_0^{\bigl(\tau(t)\bigr)}$ for all $t\in\supp \Gamma$;
 	\item the morphisms between set gradings $$\Gamma_1 \colon A=\bigoplus\limits_{t\in T_1} A^{(t)}_1\text{
 		and }\Gamma_2 \colon A =\bigoplus\limits_{t\in T_2} A^{(t)}_2$$
 	are all maps $\varphi\colon T_1 \to T_2$ such that $A_1^{(t)}\subseteq A_2^{\bigl(\varphi(t)\bigr)}$
 	for all $t\in T_1$.
 \end{itemize}

Note that if $\Gamma \colon A=\bigoplus\limits_{t\in T} A^{(t)}$, then $\Gamma \in \LIO(X)$ if and only if $T = \supp \Gamma$.

Now let $G \colon Y \to X$ be the functor that forgets the group structure on then grading group. Then
for $\Gamma \colon A=\bigoplus\limits_{h\in H} A^{(h)}$ we have
$|\Gamma|_G = \Gamma_0$ where $\Gamma_0 \colon A=\bigoplus\limits_{g\in \supp \Gamma} A^{(g)}$.
In other words, in this example $|\Gamma|_G$ turns out to be
the restriction of the grading $\Gamma$ to its support $ \supp \Gamma$.
The finer/coarser relation and the equivalence relation introduced in Section~\ref{SubsectionLIOAbsValue} coincide here with the usual ones for gradings. 

Finally, the left adjoint functor $F \colon X \to Y$ is constructed as follows. Let $\Gamma \colon A=\bigoplus\limits_{t\in T} A^{(t)}$. Denote $\mathsf{G}_\Gamma := \mathcal F(T)/N$ where $\mathcal F(T)$ is the free group with the set $T$ of free generators
and $N$ is the normal closure in $\mathcal F(T)$  of the words $hst^{-1}$ where $h,s,t \in T$
are all such elements that $0 \ne A^{(h)}A^{(s)} \subseteq A^{(t)}$.
Now let $$F\Gamma \colon A=\bigoplus\limits_{g\in \mathsf{G}_\Gamma} A_1^{(g)}$$
where $A_1^{(g)} := A^{(t)}$ if $g\in \mathsf{G}_\Gamma$ is the image of some $t\in T$ in $\mathsf{G}_\Gamma$
and $A_1^{(g)} := 0$ otherwise.

Given a group grading $\Gamma_1$, the object $F(|\Gamma_1|_G)$
is initial in $Y_G(|\Gamma_1|_G)$ by Corollary~\ref{CorollaryLIOLiftAdjointExistence}. The corresponding group $\mathsf{G}_{|\Gamma_1|_G}$ is called the \textit{universal group}
of the grading $\Gamma_1$ and it was first introduced by J.~Patera and H.~Zassenhaus~\cite{PZ89} in 1989.

\section{(Co)modules, $\Omega$-magmas, (co)measurings and (co)actions}\label{SectionDefinitionsComodulesOmegaMeasurings}

In order to proceed to the main cases of categories and functors where we solve the Lifting Problem,
we recall definitions that generalize well known notions for the categorical setting.

\subsection{Modules over monoids}

Let $(A, \mu, u)$ be a monoid in a monoidal category $\mathcal C$
and let $\psi \colon A \otimes M \to M$ be a morphism for some object $M$ in $\mathcal C$.
Recall that the pair $(M,\psi)$ is a \textit{(left) $A$-module} if the diagrams below are commutative:

$$\xymatrix{ M \ar@{=}[d] \ar[r]^(0.4){\sim} & \mathbbm{1}\otimes M \ar[d]^{u\,\otimes\, \id_M} \\
	M  & A \otimes M \ar[l]^(0.6)\psi
} \qquad
\xymatrix{ (A \otimes A) \otimes M \ar[rr]^{\sim} \ar[d]^{\mu\, \otimes\, \id_M} & & A \otimes (A \otimes M) 
	\ar[d]^{\id_A \otimes\, \psi} \\
	A \otimes M 	\ar[rd]^{\psi}  & & A \otimes M \ar[ld]_{\psi} \\
	& M \\
}
$$

\begin{examples} 
\hspace{0.1cm}
\begin{enumerate}
\item Let $\mathcal C = \mathbf{Vect}_\mathbbm{k}$ for a field $\mathbbm{k}$ where the monoidal product $\otimes$ is the usual tensor product $\otimes$ and the monoidal unit $\mathbbm{1}$ is just the base field $\mathbbm{k}$.
Then we recover the usual definition of a unital module over a unital associative algebra $A$;
\item Let $\mathcal C = \mathbf{Sets}$ where the monoidal product $\otimes$ is just the usual Cartesian product $\times$ and the monoidal unit $\mathbbm{1}$ is a one element set $\lbrace * \rbrace$.
	Then we get the usual definition of a set $M$ with an action of a monoid $A$.
\end{enumerate}	
\end{examples}

\subsection{Comodules over comonoids}

Dually, let $(C, \Delta, \varepsilon)$ be a comonoid in a monoidal category $\mathcal C$
and let $\rho \colon M \to M \otimes C$ be a morphism for some object $M$ in $\mathcal C$.
Recall that the pair $(M,\rho)$ is a \textit{(right) $C$-comodule} if the diagrams below are commutative:

$$\xymatrix{ M \ar[r]^(0.4)\rho \ar@{=}[d] &  M \otimes C  \ar[d]^{\id_M \otimes\, \varepsilon} \\
	M   &  M \otimes \mathbbm{1}\ar[l]_(0.6){\sim}  \\
} \qquad
\xymatrix{ 	& M \ar[ld]_{\rho} \ar[rd]^{\rho} \\
	M \otimes C \ar[d]^{ \id_M \otimes\, \Delta} & & M \otimes C \ar[d]^{\rho \, \otimes\, \id_C} \\
	M  \otimes (C \otimes C)   & & (M \otimes C) \otimes C  \ar[ll]_{\sim}
	\\
}
$$

\begin{examples} 
\hspace{0.1cm}
\begin{enumerate}
\item If $\mathcal C = \mathbf{Vect}_\mathbbm{k}$ for a field $\mathbbm{k}$ we obtain the usual definition of a (counital) comodule over a (counital coassociative) coalgebra $C$;
\item Recall that in $\mathbf{Sets}$ all comonoids $C$ are just sets endowed with the diagonal map $\Delta \colon C \to C\times C$, $\Delta(c)  = (c,c)$. Therefore, if $\mathcal C = \mathbf{Sets}$ all $C$-comodules are just sets~$M$ endowed with maps $\rho_1 \colon M \to C$. Namely, $\rho(m)=(m,\rho_1(m))$ for all $m\in M$.
\end{enumerate}	
\end{examples}

\subsection{$\Omega$-magmas}\label{SubsectionOmegaMagmas}

Let $\Omega$ be a set together with maps $s,t \colon \Omega \to \mathbb Z_+$. 

\begin{definition} An \textit{$\Omega$-magma} in a monoidal category $\mathcal C$ is 
	an object $A$ endowed with morphisms $\omega_A \colon A^{\otimes s(\omega)} \to
	A^{\otimes t(\omega)}$ for every $\omega \in \Omega$. (We will usually drop the subscript $A$ and denote
	the map just by $\omega$.) Here we use the convention that $A^{\otimes 0} := \mathbbm{1}$, the monoidal unit in $\mathcal C$.
\end{definition}

\begin{remark}
	
	Note that $\omega_A$ is not required to satisfy any identities.
	
\end{remark}

\begin{examples}\label{comod} 
\hspace{0.1cm}
\begin{enumerate}
\item\label{ExampleAlgebraOmega} Every (neither necessarily associative, nor necessarily unital) algebra over a field $\mathbbm{k}$
	is just an $\Omega$-magma in $\mathbf{Vect}_\mathbbm{k}$	for $\Omega=\lbrace\mu \rbrace$, $s(\mu)=2$, $t(\mu)=1$; 
\item\label{ExampleUnitalAlgebraOmega} Every unital algebra $A$ over a field $\mathbbm{k}$ is an example of an $\Omega$-magma in $\mathbf{Vect}_\mathbbm{k}$	for $\Omega=\lbrace\mu, u \rbrace$, $s(\mu)=2$, $t(\mu)=1$, $s(u)=0$, $t(u)=1$, where $u_A \colon \mathbbm{k} \to A$ is defined by $u_A(\alpha)=\alpha 1_A$
	for $\alpha \in \mathbbm{k}$. An ordinary monoid is an example of an $\Omega$-magma in $\mathbf{Sets}$ for the same $\Omega$;
\item\label{ExampleCoalgebraOmega} Every coalgebra $C$ over a field $\mathbbm{k}$ is an example of an $\Omega$-magma in $\mathbf{Vect}_\mathbbm{k}$	for $\Omega=\lbrace\Delta, \varepsilon \rbrace$, $s(\Delta)=1$, $t(\Delta)=2$, $s(\varepsilon)=1$, $t(\varepsilon)=0$.
In general, $\Omega$-magmas in $\mathbf{Vect}_\mathbbm{k}$ are called \textit{$\Omega$-algebras} over $\mathbbm{k}$~\cite{AGV2}.
\item\label{ExampleBraidingOmega} An object $A$ endowed with a braiding $c_A \colon A \otimes A \to A \otimes A$
	is an example of an $\Omega$-magma for $\Omega=\lbrace c \rbrace$, $s(c)=2$, $t(c)=2$.
\end{enumerate}
\end{examples}	

\subsection{Measurings}\label{SubsectionMeasurings}

Fix a braided monoidal category $\mathcal C$ with a braiding $c$ and a monoidal unit~$\mathbbm{1}$. Let $P$ be a comonoid in $\mathcal C$ with a comultiplication
$\Delta \colon P \to P \otimes P$ and a counit $\varepsilon \colon P \to \mathbbm{1}$. Consider the monoidal category $\mathsf{TensMor}(P)$ where the objects are morphisms $P\otimes A \to B$ and morphisms between objects $\psi_1 \colon P\otimes A_1 \to B_1$ and $\psi_2 \colon P\otimes A_2 \to B_2$
are pairs of morphisms $\alpha \colon A_1 \to A_2$ and $\beta \colon B_1 \to B_2$ making the diagram below
commutative:

$$\xymatrix{  P\otimes A_1 \ar[d]_{\id_P\, \otimes \, \alpha }\ar[r]^(0.6){\psi_1} & B_1 \ar[d]_{\beta } \\
	P\otimes A_2 \ar[r]^(0.6){\psi_2} & B_2
}$$

The monoidal product $\psi_1 \mathbin{\widetilde\otimes} \psi_2 \colon P \otimes (A_1 \otimes A_2) \to B_1 \otimes B_2$
of objects $\psi_1 \colon P\otimes A_1 \to B_1$ and $\psi_2 \colon P\otimes A_2 \to B_2$
in $\mathsf{TensMor}(P)$ is defined as the composition of the morphisms below:

$$\xymatrix{ P \otimes (A_1 \otimes A_2) \ar[rr]^(0.45){\Delta \, \otimes \,  \id_{A_1 \otimes A_2}} &  & (P \otimes P) \otimes (A_1 \otimes A_2) \ar[rr]^{\id_{P} \, \otimes \,  c_{P, A_1} \, \otimes \,  \id_{A_2}} & \quad & (P \otimes A_1) \otimes (P \otimes A_2)
	\ar[d]^{\psi_1 \otimes \psi_2} \\ & & & & B_1 \otimes B_2 }$$

The monoidal unit of $\mathsf{TensMor}(P)$ is the composition $P \otimes \mathbbm{1} \mathop{\widetilde{\to}} P \xrightarrow{\varepsilon} \mathbbm{1}$.
The axioms of a monoidal category for $\mathsf{TensMor}(P)$ are consequences of those for $\mathcal C$ and the fact that $P$ is a comonoid.

A \textit{measuring} of $\Omega$-magmas is an $\Omega$-magma $\psi \colon P\otimes A \to B$ in the category $\mathsf{TensMor}(P)$. Note that the structure of an $\Omega$-magma on $\psi$ endows the objects $A$ and $B$ with structures
of $\Omega$-magmas in $\mathcal C$ and the morphism $\psi$ relates these structures in a special way.

\begin{examples}\label{meas_exmp}
\hspace{0.1cm}
\begin{enumerate}
\item If $\mathcal C = \mathbf{Vect}_\mathbbm{k}$ for a field $\mathbbm{k}$ and $\Omega$
	is as in Examples~\ref{comod}, (\ref{ExampleAlgebraOmega}) and~(\ref{ExampleUnitalAlgebraOmega}), we recover the traditional definition of a measuring of (now not necessarily associative)
	algebras \cite[Chapter VII]{Sweedler};
\item If $\mathcal C = \mathbf{Sets}$ with $\otimes = \times$, then
	a measuring $\psi \colon P \otimes A \to B$ is just a map $\psi \colon P \times A \to B$
	such  that for every fixed $p\in P$ the map $\psi(p, -)$ is an ordinary $\Omega$-magma homomorphism, i.e. a map compatible with all operations from the set $\Omega$.
\end{enumerate}
\end{examples}

For a monoidal category~$\mathcal C$ denote by $\mathsf{Mon}(\mathcal C)$ and $\mathsf{Comon}(\mathcal C)$
the categories of monoids and comonoids in~$\mathcal C$, respectively.

Recall that if the category $\mathcal C$ is braided, then $\mathsf{Mon}(\mathcal C)$ is a monoidal category too.
Objects of the category $\mathsf{Comon}(\mathsf{Mon}(\mathcal C))$ (which is isomorphic to $\mathsf{Mon}(\mathsf{Comon}(\mathcal C))$) are called \textit{bimonoids} in $\mathcal C$.

If $P$ is a bimonoid, then the category ${}_P\mathsf{Mod}$ of left $P$-modules
is a subcategory of $\mathsf{TensMor}(P)$ that inherits from $\mathsf{TensMor}(P)$ the monoidal structure.
An $\Omega$-magma in ${}_P\mathsf{Mod}$ is called a \textit{$P$-module $\Omega$-magma}
and the corresponding morphism $\psi \colon P\otimes A \to A$ is called a \textit{$P$-action} on $A$.

\begin{examples}\label{mod_alg_exmp}
\hspace{0.1cm}
\begin{enumerate}
\item In the case $\mathcal C = \mathbf{Vect}_\mathbbm{k}$ for a field $\mathbbm{k}$ and $\Omega$
is as in Examples~\ref{comod}, (\ref{ExampleAlgebraOmega}) and~(\ref{ExampleUnitalAlgebraOmega}), we recover the traditional definition of a  (now not necessarily associative) module algebra over a bialgebra;
\item Recall that, since comonoids in $\mathbf{Sets}$ are  trivial, all bimonoids in $\mathbf{Sets}$
	are just monoids. Hence if $\mathcal C = \mathbf{Sets}$, then
	an action $\psi \colon P \times A \to A$ is just such an action of $P$ on $A$ by $\Omega$-magma homomorphisms.
\end{enumerate}
\end{examples}

\subsection{Comeasurings}\label{SubsectionComeasurings}

Dually,  let $Q$ be a monoid in $\mathcal C$ with a multiplication
$\mu \colon Q \otimes Q \to Q$ and a unit $u \colon \mathbbm{1} \to Q$. Consider the monoidal category $\mathsf{MorTens}(Q)$ where the objects are morphisms $ A \to B \otimes Q$ and morphisms between objects $\rho_1 \colon 
A_1 \to B_1 \otimes Q$ and $\rho_2 \colon 
A_2 \to B_2 \otimes Q$
are pairs of morphisms $\alpha \colon A_1 \to A_2$ and $\beta \colon B_1 \to B_2$ making the diagram below
commutative:

$$\xymatrix{  A_1 \ar[d]^{ \alpha }\ar[r]^(0.4){\rho_1} & B_1 \otimes Q \ar[d]^{\beta \, \otimes \, \id_Q} \\
	A_2 \ar[r]^(0.4){\rho_2} & B_2 \otimes Q
}$$

The monoidal product $\rho_1 \mathbin{\widetilde\otimes} \rho_2 \colon A_1 \otimes A_2 \to (B_1 \otimes B_2) \otimes Q$
of objects $\rho_1 \colon A_1 \to B_1  \otimes Q$ and $\rho_2 \colon A_2 \to B_2  \otimes Q$
in $\mathsf{MorTens}(Q)$ is defined as the composition of the morphisms below:

$$\xymatrix{ A_1 \otimes A_2 \ar[r]^(0.35){\rho_1 \otimes \rho_2} &
	(B_1 \otimes Q) \otimes (B_2 \otimes Q) \ar[rr]^{\id_{B_1} \, \otimes \,  c_{Q, B_2} \, \otimes \,  \id_{Q}}
	&\qquad & (B_1 \otimes B_2) \otimes (Q \otimes Q)   	\ar[d]^{\id_{B_1 \otimes B_2} \, \otimes \, \mu}
	\\ & & & (B_1 \otimes B_2) \otimes Q }$$

The monoidal unit of $\mathsf{MorTens}(Q)$ is the composition $ \mathbbm{1}  \xrightarrow{u}
Q \mathop{\widetilde{\to}} \mathbbm{1} \otimes Q$.
The axioms of a monoidal category for $\mathsf{MorTens}(Q)$ are consequences of those for $\mathcal C$ and the fact that $Q$ is a monoid.

A \textit{comeasuring} of $\Omega$-magmas is an $\Omega$-magma $\rho \colon A \to B \otimes Q$ in the category $\mathsf{MorTens}(Q)$. Note that the structure of an $\Omega$-magma on $\rho$ endows the objects $A$ and $B$ with structures
of $\Omega$-magmas in $\mathcal C$ and the morphism $\rho$ relates these structures in a special way.

\begin{example}\label{ExampleComeasSets}
	If $\mathcal C = \mathbf{Sets}$, then
	a comeasuring $\rho \colon A \to B \times Q$ is just a pair of maps $\rho_0 \colon A \to B$
	and $\rho_1 \colon A \to Q$ such that $\rho(a)=(\rho_0(a), \rho_1(a))$ for all $a\in A$,
	the map $\rho_0$ is an $\Omega$-magma homomorphism and $\rho_1$ defines on $A$
	a $Q$-grading: $A = \bigsqcup\limits_{q\in Q} A^{(q)}$ where $\rho_0(a)=q$
	for $a\in A^{(q)}$ and $q\in Q$ and all the operations from $\Omega$
	are compatible with this grading.
	If $\Omega$ is the one from Examples~\ref{comod} (\ref{ExampleUnitalAlgebraOmega}), and $A$ and $B$ are ordinary monoids, then
a comeasuring $\rho \colon A \to B \times Q$ is just a monoid homomorphism.
\end{example}

If $Q$ is a bimonoid, then the category $\mathsf{Comod}^Q$ of right $Q$-comodules
is a subcategory of $\mathsf{MorTens}(Q)$ that inherits from $\mathsf{MorTens}(Q)$ the monoidal structure.
An $\Omega$-magma in $\mathsf{Comod}^Q$ is called a \textit{$Q$-comodule $\Omega$-magma}
and the corresponding morphism $\rho \colon  A \to A \otimes Q$ is called a \textit{$Q$-coaction} on $A$.

\begin{examples}\label{comodule} 
\hspace{0.1cm}
\begin{enumerate}
\item In the case $\mathcal C = \mathbf{Vect}_\mathbbm{k}$ for a field $\mathbbm{k}$ and $\Omega$ is as in Examples~\ref{comod}, (\ref{ExampleAlgebraOmega}) and~(\ref{ExampleUnitalAlgebraOmega}), we recover the traditional definition of a  (now not necessarily associative) comodule algebra over a bialgebra;
\item\label{ExampleCoactionSets} If $\mathcal C = \mathbf{Sets}$,
	then a coaction $\rho \colon  A \to A \times Q$ is just a map $a\mapsto (a, \rho_1(a))$
	where $\rho_1$ defines a $Q$-grading on $A$.
	In particular, when $\Omega$
	is as in Examples~\ref{comod} (\ref{ExampleUnitalAlgebraOmega}) and $A$ is an ordinary monoid, then
	 $\rho_1 \colon A \to Q$ is just a monoid homomorphism.
\end{enumerate} 
\end{examples}

\subsection{Hopf monoids}\label{SubsectionHopfMonoids}

Recall that if $(A,\mu_A, u_A)$ is a monoid and $(C,\Delta_C,\varepsilon_C)$ is a comonoid in a monoidal category $\mathcal C$,
then the set $\mathcal C (C,A)$ of all morphisms $C\to A$ in $\mathcal C$ admits a structure of an ordinary monoid:
the multiplication is defined by
$$\varphi * \psi := \mu_A (\varphi \otimes \psi)\Delta_C \text{ for all } \varphi,\psi \in \mathcal C (C,A)$$
and $u_A\varepsilon_C$ is the identity element.
The monoid $\mathcal C (C,A)$ is called the \textit{convolution monoid}.

A bimonoid $H$ in a braided monoidal category $\mathcal C$ is called a \textit{Hopf monoid}
if $\id_H \in \mathcal C (H,H)$ admits an inverse $S \colon H \to H$, which is called the \textit{antipode}. We denote the category of Hopf monoids in $\mathcal C$ by $\mathsf{Hopf}(\mathcal C)$.

\begin{examples}
\hspace{0.1cm}
\begin{enumerate}
\item	Hopf monoids in $\mathbf{Vect}_\mathbbm{k}$, where $\mathbbm{k}$ is a field, are exactly Hopf algebras over~$\mathbbm{k}$;
\item Hopf monoids in $\mathbf{Sets}$ are exactly groups.
\end{enumerate}
\end{examples}

Denote by $c_{X,Y} \colon X \otimes Y \mathrel{\widetilde\to} Y \otimes X$ the braiding in $\mathcal C$.

For a monoid $(A,\mu_A, u_A)$ denote by $A^{\left(\mathrm{op}^n\right)}$, where $n \in \mathbb Z$,
the monoid  $(A,\mu_A(c_{A,A})^n, u_A)$. Analogously, for a comonoid $(C,\Delta_C,\varepsilon_C)$ denote by $C^{\left(\mathrm{cop}^n\right)}$, where $n \in \mathbb Z$, the comonoid  $(C,(c_{C,C})^n\Delta_C,\varepsilon_C)$.

If $B$ is a bimonoid in $\mathcal C$, then $B^{\mathrm{op}^n, \mathrm{cop}^{-n}}$ is a bimonoid too.
(By the induction argument, it is sufficient to check this only for $n=\pm 1$.)

Standard convolution techniques (see e.g.~\cite[Section 4.2]{DNR} or~\cite[Lemma 35, Proposition 36]{Porst1}) combined with a diagram chasing shows that $S \colon H \to H^\mathrm{op}$ is a monoid homomorphism and $S \colon H^\mathrm{cop} \to H$ is a comonoid homomorphism.
Moreover, every bimonoid homomorphism of Hopf monoids commutes with (or preserves) the antipode.
Finally, $H^{\mathrm{op}^n, \mathrm{cop}^{-n}}$ is a Hopf monoid with the same antipode $S$ for every $n\in\mathbb Z$.  

\section{Existence theorems for supports and universal coacting bi- and Hopf monoids}\label{SectionSupportCoactingExistence}

The aim of the next sections is to identify the sufficient conditions on the base category~$\mathcal C$ to ensure the existence
of supports and the universal coacting bi- and Hopf monoids.

\subsection{Monomorphisms and epimorphisms} In order to formulate the conditions, we first need to recall some definitions and results related to monomorphisms and epimorphisms. The details can be found e.g. in~\cite{AHS}.

A monomorphism $i \colon A \rightarrowtail B$  is called
\begin{itemize}
 \item \textit{regular} if $i$ is an equalizer of some morphisms $f_1, f_2 \colon B \to C$; 
\item \textit{strong} if for every commutative square
\begin{equation}\label{EqEpiMonoDiagonal}
\xymatrix{ P \ar@{->>}[r]^\pi \ar[d]_f & Q \ar[d]^g \ar@{-->}[ld]_t \\
	A\ \ar@{>->}[r]^i & B
}
\end{equation}
 where $\pi$ is an epimorphism, there exists a diagonal fill-in, i.e. a morphism $t \colon Q \to A$ such that $i t = g$
and $t \pi = f$ (obviously, such $t$ is unique);
\item \textit{extremal} if for every factorization $i=f\pi$, where $\pi$ is an epimorphism, $\pi$ is in fact an isomorphism.
\end{itemize}

Regular, strong and extremal epimorphisms are introduced in the dual way.

Every regular monomorphism is strong, every strong monomorphism is extremal. The identity isomorphism is regular, a composition of two strong monomorphisms is again strong, a limit (=intersection) of strong subobjects is again a strong subobject (see also Remark~\ref{RemarkIntersections} below). If in a pullback $$\xymatrix{ P\strut \ar[r]^{t} \ar@{>->}[d]_{h} \ar@{}[rd]|<{\pullback} & A\strut \ar@{>->}[d]^f \\
	C \ar[r]^g & B}
$$  $f$ is a strong monomorphism, then $h$ is a strong monomorphism too.

A category is called (Epi, ExtrMono)-\textit{structured} if for every morphism $f$ there exists an epimorphism $\pi$
and an extremal monomorphism $i$ such that $f=i\pi$ and every commutative square~\eqref{EqEpiMonoDiagonal}, where $i$ is  an extremal monomorphism and $\pi$ is an epimorphism, has a diagonal fill-in. In (Epi, ExtrMono)-structured categories all extremal monomorphisms are strong. (ExtrEpi, Mono)-structured categories are introduced in the dual way.

A category $\mathcal C$ is \textit{wellpowered} if for every object $M$ in $\mathcal C$ the set of equivalence classes of monomorphisms to $M$ is a small\footnote{Here we may either assume that we have a fixed universe (see e.g.~\cite[Chapter 1, Section 6]{MacLaneCatWork}) of sets that we call \textit{small} or
	that we work in naive set theory. In the latter case \textit{sets} are arbitrary classes and \textit{small sets} are classes that belong to other classes. If we fix a universe, then in the definition of a (co)wellpowered category we just require that for any object $M$ there exists a bijection between the set of equivalence classes of monomorphisms to (resp., epimorphisms from) $M$ and some member of the universe.} set. Dually, a category $\mathcal C$ is \textit{cowellpowered} if
for every object $M$ in $\mathcal C$ the set of equivalence classes of epimorphisms from $M$ is a small set.

\subsection{Conditions on the base category}\label{SubsectionSupportCoactingConditions}

In this section we list conditions on the base category that make the constructions of Section~\ref{SectionSupportCoactingExistence} possible. Namely, $\mathcal C$ will be a monoidal category satisfying some of the following properties:
\begin{enumerate}
	\item\label{PropertySmallLimits} there exist all small limits in $\mathcal C$;
	\item\label{PropertyFiniteAndCountableColimits} there exist finite and countable colimits in $\mathcal C$;
	\item\label{PropertyEpiExtrMonoFactorizations} $\mathcal C$ is (Epi, ExtrMono)-structured;
	\item\label{PropertySubObjectsSmallSet} $\mathcal C$ is wellpowered;	
	\myitem[4*]\label{PropertyFactorObjectsSmallSet} $\mathcal C$ is cowellpowered;
	\item\label{PropertyMonomorphism} for every monomorphism $f$ and every object $M$
	both
	$f \otimes \id_M$ and $\id_M \otimes f$ are monomorphisms too; 
	\myitem[5a]\label{PropertyExtrMonomorphism} for every extremal monomorphism $f$ the morphism
	$f \otimes f$ is an extremal monomorphism too; 
	\myitem[5*]\label{PropertyEpimorphism} for every epimorphism $f$ and every object $M$
	both
	$f \otimes \id_M$ and
	$\id_M \otimes f$ are epimorphisms too; 
	\item\label{PropertyLimitsOfSubobjectsArePreserved} for every object $M$ the functor $ M \otimes (-)$ preserves limits (= intersections) of extremal subobjects in $ \mathcal C$ (see Remark~\ref{RemarkIntersections} below);
	\item\label{PropertyTensorPullback}
	for every object~$M$ the functor
	$ M \otimes (-)$
	preserves preimages, i.e.
	for every
	pullback $$\xymatrix{ P\strut \ar[r]^{t} \ar@{>->}[d]_{h} \ar@{}[rd]|<{\pullback} & A\strut \ar@{>->}[d]^f \\
		C \ar[r]^g & B}
	$$
	where $f$ is an arbitrary monomorphism and $g$ is an arbitrary morphism having the same codomain $B$
	(recall that in this case $h$ is automatically a monomorphism too) the diagram below is a pullback too:
	$$
	\xymatrix{ M \otimes P
		 \ar[r]^{\id_M \otimes\, t} \ar@{->}[d]_{\id_M \otimes\,  h} \ar@{}[rd]|<{\pullback} & M \otimes A
		 \ar@{->}[d]^{\id_M \otimes\,  f} \\
		M \otimes C \ar[r]^{\id_M \otimes\,  g} & M \otimes B}
	$$
	\item\label{PropertySwitchProdTensorIsAMonomorphism}
	for any nonempty small set $\Lambda$ and any objects $M$ and $A_\alpha$, $\alpha \in \Lambda$, the morphism $$\xymatrix{M \otimes \prod\limits_{\alpha\in\Lambda}A_\alpha \ar[rr]^(0.45){({ \id_M} \otimes {\pi_\alpha})_{\alpha\in\Lambda}}
		& \qquad\quad& \prod\limits_{\alpha\in\Lambda} (M \otimes A_\alpha),}$$ where $\pi_\alpha$ is the projection from $\prod\limits_{\alpha\in\Lambda}A_\alpha$ to $A_\alpha$, $\alpha \in \Lambda$,
	is a monomorphism;	
	\item\label{PropertyEqualizers}
	for every object $M$ the functor $ M \otimes (-)$
	preserves all equalizers;
	\item\label{PropertyFreeMonoid} the forgetful functor $\mathsf{Mon}(\mathcal C) \to \mathcal C$
	has a left adjoint $\mathcal F \colon  \mathcal C \to \mathsf{Mon}(\mathcal C)$.
\end{enumerate}
\begin{remark}\label{RemarkIntersections}
		Properties~\ref{PropertySmallLimits} and~\ref{PropertySubObjectsSmallSet} imply that there exist limits of any families of subobjects,
		i.e. if $\varphi_\alpha \colon A_\alpha \rightarrowtail B$ are monomorphisms
		for some set $\Lambda$ and objects $B$, $A_\alpha$, where $\alpha \in \Lambda$,
		then there exists $\lim T$ where $T \colon \Lambda \cup \lbrace 0 \rbrace \to \mathcal C$, $\Lambda \cup \lbrace 0 \rbrace$
		is the category with the set of objects $\Lambda \cup \lbrace 0 \rbrace$ and
		either only the arrows $\alpha \to 0$ or, in addition, some of arrows $\alpha \to \beta$
		such that $\varphi_\beta = \varphi_{\alpha\beta}\varphi_\alpha$ for some morphism $\varphi_{\alpha\beta} \colon A_\alpha \to A_\beta$  (the resulting $\lim T$ will not depend on whether we include $\alpha \to \beta$ or not)
		  and the functor $T$ is defined as follows: $T\alpha = A_\alpha$, $T0=B$, $T(\alpha \to 0) = \varphi_\alpha$,
		  $T(\alpha \to \beta) =\varphi_{\alpha\beta}$ for all $\alpha,\beta \in \Lambda$.
	
\end{remark}	
\begin{proposition}\label{PropositionPropertyEquilizers}
\begin{enumerate}
\item\label{RemarkPropertyEpiExtrMonoFactorizations} Property~\ref{PropertyEpiExtrMonoFactorizations}
	follows from Properties~\ref{PropertySmallLimits} and~\ref{PropertySubObjectsSmallSet};
\item Property~\ref{PropertyEqualizers} follows from 	Properties~\ref{PropertySmallLimits}, \ref{PropertyTensorPullback}  and~\ref{PropertySwitchProdTensorIsAMonomorphism}.
\end{enumerate}
\end{proposition}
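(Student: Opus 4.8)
The plan treats the two parts independently; neither needs the colimit hypotheses.

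\emph{Part (1).} The goal is to produce an (epi, strong monomorphism)-factorization of every morphism of $\mathcal C$ and to check that every extremal monomorphism is in fact strong; since strong monomorphisms are extremal and the diagonal fill-in in~\eqref{EqEpiMonoDiagonal} against a \emph{strong} monomorphism is automatic by definition, this yields exactly that $\mathcal C$ is (Epi, ExtrMono)-structured, i.e.\ Property~\ref{PropertyEpiExtrMonoFactorizations} holds. The construction is the \emph{strong image}: for $f\colon A\to B$, form the intersection of all strong subobjects $m\colon S\rightarrowtail B$ through which $f$ factors. This family is nonempty ($\id_B$ is regular, hence strong), it is a set by Property~\ref{PropertySubObjectsSmallSet}, and by Property~\ref{PropertySmallLimits} together with Remark~\ref{RemarkIntersections} its intersection $i\colon C\rightarrowtail B$ exists and is again strong; since $f$ factors through each member, it factors through the intersection, say $f=ie$ with $e\colon A\to C$. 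First I would verify that $e$ is an epimorphism: if $ue=ve$ for $u,v\colon C\to W$, take their equalizer $k\colon K\rightarrowtail C$ (regular, hence strong); then $e$ factors through $k$, so $f$ factors through $ik$, which is a composite of strong monomorphisms and thus a member of the family, so minimality of $C$ forces $i$ to factor through $ik$, making $k$ a split epimorphism and hence, being monic, an isomorphism — so $u=v$. Then, applying the same construction to an arbitrary extremal monomorphism $i_0$ in place of $f$ gives $i_0=ie$ with $e$ epic, so by the definition of ``extremal'' $e$ is an isomorphism and $i_0$ represents the strong subobject $i$; hence every extremal monomorphism is strong, the factorization $f=ie$ is (epi, extremal monomorphism), and any square~\eqref{EqEpiMonoDiagonal} with $\pi$ epic and $i$ extremal has the required fill-in because $i$ is strong.

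\emph{Part (2).} Fix an object $M$; the plan is to reduce an equalizer to a pullback of a diagonal. Recall that the equalizer $E$ of $f,g\colon A\to B$ is the pullback of the split monomorphism $\Delta_B=(\id_B,\id_B)\colon B\to B\times B$ along $(f,g)\colon A\to B\times B$, all of these limits existing by Property~\ref{PropertySmallLimits}. By Property~\ref{PropertyTensorPullback} — this is the pullback of a monomorphism along an arbitrary morphism — applying $M\otimes(-)$ again gives a pullback, with corners $M\otimes E$, $M\otimes B$, $M\otimes A$, $M\otimes(B\times B)$ and with left leg $\id_M\otimes(E\rightarrowtail A)$. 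Let $\theta\colon M\otimes(B\times B)\to(M\otimes B)\times(M\otimes B)$ be the canonical comparison morphism; by Property~\ref{PropertySwitchProdTensorIsAMonomorphism} (two-element family) it is a monomorphism, and a computation with the two projections of $B\times B$ gives $\theta\circ(\id_M\otimes\Delta_B)=\Delta_{M\otimes B}$ and $\theta\circ\bigl(\id_M\otimes(f,g)\bigr)=(\id_M\otimes f,\ \id_M\otimes g)$. Because $\theta$ is monic, the square with top edge $\id_{M\otimes B}$, vertical edges $\id_M\otimes\Delta_B$ and $\Delta_{M\otimes B}$, and bottom edge $\theta$ is itself a pullback; pasting it onto the previous pullback along $\id_M\otimes\Delta_B$ and applying the pullback pasting lemma yields that
$$\xymatrix{ M\otimes E \ar[r] \ar[d] & M\otimes B \ar[d]^{\Delta_{M\otimes B}} \\ M\otimes A \ar[r]^-{(\id_M\otimes f,\ \id_M\otimes g)} & (M\otimes B)\times(M\otimes B) }$$
is a pullback, i.e.\ $M\otimes E$ is the equalizer of $\id_M\otimes f$ and $\id_M\otimes g$, with equalizer inclusion $\id_M\otimes(E\rightarrowtail A)$. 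Hence $M\otimes(-)$ preserves this equalizer, which is Property~\ref{PropertyEqualizers}.

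\emph{Main obstacle.} Part~(2) is entirely formal once the equalizer is rewritten as a pullback of a diagonal; the single point not to overlook is that the comparison morphism $\theta$ is a monomorphism, which is exactly what makes the auxiliary square a pullback so that the pasting lemma applies. The genuine subtlety lies in part~(1): the naive ``image'' obtained by intersecting \emph{all} subobjects of $B$ through which $f$ factors only gives an (extremal epi, mono)-factorization, so one is forced to intersect the \emph{strong} subobjects instead — this is what makes the left factor a plain epimorphism and the right factor a strong monomorphism, and, fed through the definition of an extremal monomorphism, it is precisely what upgrades ``extremal'' to ``strong'' and renders the fill-in condition free.
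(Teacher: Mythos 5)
Your proposal is correct; both parts go through. For part~(2) you are doing essentially what the paper does: the paper realizes the equalizer of $f,g\colon A\to B$ as the pullback of the two graph morphisms $(\id_A,f)$ and $(\id_A,g)$ over $A\times B$, applies Property~\ref{PropertyTensorPullback}, and then uses the monomorphism of Property~\ref{PropertySwitchProdTensorIsAMonomorphism} to disentangle the product; you instead realize it as the pullback of the diagonal $\Delta_B$ along $(f,g)$ and disentangle via a pasted pullback square whose bottom edge is the comparison map $\theta$. The two presentations are interchangeable, and your observation that monicity of $\theta$ is exactly what makes the auxiliary square a pullback is the same point the paper makes when it invokes Property~\ref{PropertySwitchProdTensorIsAMonomorphism} to justify the ``if'' direction of its equivalence of cones. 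The real divergence is in part~(1): the paper disposes of it in two lines by citing Ad\'amek--Herrlich--Strecker (strong completeness of a wellpowered complete category, then their Corollary~14.21), whereas you reconstruct the underlying argument from scratch --- intersecting the \emph{strong} subobjects through which $f$ factors, proving the induced map to the intersection is epic via a regular-mono equalizer and minimality, and then feeding an extremal monomorphism through the same machine to upgrade it to a strong one, which renders the diagonal fill-in automatic. This is exactly the content of the cited results, correctly executed (including the points one could trip over: the factorizations through the various strong subobjects form a cone because each leg is unique, and closure of strong monomorphisms under composition and intersection is available from the paper's preliminaries), so your version buys self-containedness at the cost of length, while the paper's buys brevity at the cost of an external reference.
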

\begin{proof}
1) By \cite[Proposition 12.5]{AHS} any wellpowered small complete category is strongly complete. Now \cite[Corollary 14.21]{AHS} implies that such a category is both (Epi, ExtrMono)- and (ExtrEpi, Mono)-structured, as desired.

2) Recall that an equalizer $h$ of morphisms $f,g \colon A \to B$ can be calculated via the pullback
	\begin{equation*}
		\xymatrix{ P\strut\ \ar@{>->}[r]^{h} \ar@{>->}[d]_{h} \ar@{}[rd]|<{\pullback} & A\strut \ar@{>->}[d]^{(\id_A, f)} \\
			A\ \ar@{>->}[r]^(0.4){(\id_A,g)} & A\times B}\end{equation*}
	Property~\ref{PropertyTensorPullback} implies that
	$$	\xymatrix{M\otimes P\ar@{->}[rr]^{ \id_M \otimes\, h} \ar@{->}[d]_{ \id_M \otimes\, h} \ar@{}[rd]|<{\pullback} & & M\otimes A \ar@{->}[d]^{ \id_M \otimes\, (\id_A, f)} \\
		M\otimes A \ar@{->}[rr]^(0.4){ \id_M \otimes\, (\id_A,g)} & & M\otimes (A\times B)}$$
	is a pullback.
	
	Note that
	$$({ \id_M} \otimes  {\pi_A} )({ \id_M} \otimes (\id_A,f) )={ \id_M} \otimes {\id_A},$$
	$$({ \id_M} \otimes {\pi_B})({ \id_M} \otimes (\id_A,f))={ \id_M} \otimes f,$$
	$$({ \id_M} \otimes {\pi_A})({ \id_M} \otimes (\id_A,g))={ \id_M} \otimes {\id_A}$$
	and $$({ \id_M} \otimes {\pi_B})({ \id_M} \otimes (\id_A,g))={ \id_M} \otimes  g.$$
	Hence for every pair of morphisms
	$\alpha, \beta \colon Q \to M\otimes A$
	the equality $$({ \id_M} \otimes (\id_A,f))\alpha =({ \id_M} \otimes (\id_A,g))\beta$$ holds
	if and only if $$({ \id_M} \otimes f)\alpha  = ({ \id_M} \otimes g )\alpha \text{ and }\alpha = \beta.$$
	The ``if'' part is a consequence of the fact that $$({ \id_M} \otimes {\pi_A},
	{ \id_M} \otimes {\pi_B}) \colon M\otimes (A\times B) \rightarrowtail (M\otimes A)\times (M\otimes B)$$ is a monomorphism by Property~\ref{PropertySwitchProdTensorIsAMonomorphism}.
	
	Therefore, $ {\id_M} \otimes h$ is indeed an equalizer of
	${ \id_M} \otimes f$ and ${ \id_M} \otimes g $.
\end{proof}	

\begin{example}\label{ExampleCategoriesThatSatisfyProperties}
 The basic examples of categories $\mathcal C$ satisfying Properties~\ref{PropertySmallLimits}--\ref{PropertyFreeMonoid},
	\ref{PropertyFactorObjectsSmallSet}, \ref{PropertyEpimorphism} and \ref{PropertyExtrMonomorphism} we keep in mind are $\textbf{Sets}$ (with the Cartesian monoidal product), $\textbf{Sets}^\mathrm{op}$  (with the same monoidal product as in $\textbf{Sets}$, which becomes the co-Cartesian monoidal product with respect to $\textbf{Sets}^\mathrm{op}$), $\textbf{Vect}_\mathbbm{k}$ and $\textbf{Vect}_\mathbbm{k}^\mathrm{op}$ for a field $\mathbbm{k}$.
	In all these examples all monomorphisms and epimorphisms are extremal.
	 Property~\ref{PropertyFreeMonoid} is equivalent to the existence of the free monoid of a set
	for $\textbf{Sets}$, the free (or tensor) algebra of a vector space for $\textbf{Vect}_\mathbbm{k}$ and 
	the cofree coalgebra  of a vector space for $\textbf{Vect}_\mathbbm{k}^\mathrm{op}$. Property~\ref{PropertyFreeMonoid} holds trivially in $\textbf{Sets}^\mathrm{op}$, since all comonoids in $\textbf{Sets}$ are just sets endowed with diagonal maps.
	More examples of categories satisfying Properties~\ref{PropertySmallLimits}--\ref{PropertyFreeMonoid},
	\ref{PropertyFactorObjectsSmallSet}, \ref{PropertyEpimorphism} and \ref{PropertyExtrMonomorphism} will be provided in Section~\ref{ExamplesBaseCaategory}.    
\end{example}

The proof of Proposition~\ref{PropositionMonoidEpiMono} below is straightforward since if $\mathcal C$ satisfies Property~\ref{PropertyEpiExtrMonoFactorizations}, then every extremal monomorphism is strong:
\begin{proposition}\label{PropositionMonoidEpiMono}
	Suppose a monoidal category $\mathcal C$ satisfies Properties~\ref{PropertyEpiExtrMonoFactorizations}
	and~\ref{PropertyEpimorphism}.
	Let $f \colon M \to N$ be a monoid homomorphism for monoids $M$ and $N$ in $\mathcal C$
	and let $f = i \pi$ for an epimorphism $\pi \colon M \twoheadrightarrow L$, an extremal monomorphism $i \colon L \rightarrowtail N$	and an object $L$ in $\mathcal C$.
	Then $L$ admits a unique monoid structure making  $\pi$ and $i$ monoid homomorphisms.
\end{proposition}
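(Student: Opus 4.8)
The plan is to build the monoid structure on $L$ by transporting $\mu_N$ through the factorization, using the diagonal fill-in property that extremal monomorphisms enjoy in an (Epi, ExtrMono)-structured category. Recall that since $\mathcal C$ satisfies Property~\ref{PropertyEpiExtrMonoFactorizations}, every extremal monomorphism is strong, so a diagonal fill-in exists in every commutative square with an epimorphism on top and an extremal monomorphism on the bottom; moreover such a fill-in is unique because $i$, being a monomorphism, is left-cancellable. The unit of $L$ will be $\pi u_M \colon \mathbbm 1 \to L$, and the key point is to produce a multiplication $\mu_L \colon L \otimes L \to L$.

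\medskip

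\noindent\textbf{Construction of $\mu_L$.} First I would observe that $\pi \otimes \pi \colon M \otimes M \to L \otimes L$ is an epimorphism: by Property~\ref{PropertyEpimorphism}, $\pi \otimes \id_M$ and $\id_L \otimes \pi$ are epimorphisms, and $\pi \otimes \pi$ is their composite. Likewise $i \otimes i \colon L \otimes L \to N \otimes N$ is a monomorphism by Property~\ref{PropertyEpimorphism}'s companion Property~\ref{PropertyMonomorphism} (two applications and a composite) — though in fact for the fill-in below I only need that $i$ itself is an extremal, hence strong, monomorphism. Now consider the square
$$\xymatrix{
M \otimes M \ar@{->>}[r]^{\pi\,\otimes\,\pi} \ar[d]_{\pi\mu_M} & L \otimes L \ar[d]^{\mu_N(i\,\otimes\,i)} \ar@{-->}[ld] \\
L\ \ar@{>->}[r]^{i} & N
}$$
which commutes because $\mu_N(i\otimes i)(\pi\otimes\pi) = \mu_N(f\otimes f) = f\mu_M = i\pi\mu_M$, using that $f$ is a monoid homomorphism. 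Since $\pi\otimes\pi$ is an epimorphism and $i$ is a strong monomorphism, there is a unique diagonal $\mu_L \colon L\otimes L \to L$ with $\mu_L(\pi\otimes\pi) = \pi\mu_M$ and $i\mu_L = \mu_N(i\otimes i)$. The first identity says $\pi$ is multiplicative; the second says $i$ is multiplicative.

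\medskip

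\noindent\textbf{Verifying the monoid axioms and uniqueness.} The associativity and unit axioms for $(L,\mu_L,\pi u_M)$ follow by a standard cancellation argument: to check associativity, precompose both sides of $\mu_L(\mu_L\otimes\id_L) = \mu_L(\id_L\otimes\mu_L)$ with the epimorphism $\pi\otimes\pi\otimes\pi$ (again an epimorphism by iterating Property~\ref{PropertyEpimorphism}) and use $\mu_L(\pi\otimes\pi)=\pi\mu_M$ together with associativity of $\mu_M$; since $\pi\otimes\pi\otimes\pi$ is epi, the two sides agree. The left and right unit axioms are handled the same way, precomposing with $u_M\otimes\pi$ and $\pi\otimes u_M$ (epimorphisms by Property~\ref{PropertyEpimorphism} since $\pi$ is) and invoking the unit axioms for $M$. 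For uniqueness of the monoid structure: any $\mu_L'$ making $\pi$ a monoid homomorphism must satisfy $\mu_L'(\pi\otimes\pi)=\pi\mu_M=\mu_L(\pi\otimes\pi)$, and cancelling the epimorphism $\pi\otimes\pi$ gives $\mu_L'=\mu_L$; similarly the unit is forced to be $\pi u_M$. (Alternatively, one may cancel the monomorphism $i$ after noting that both $\mu_L$ and $\mu_L'$ must make $i$ multiplicative — but making $\pi$ a homomorphism already pins things down.)

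\medskip

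\noindent\textbf{Expected main obstacle.} None of the steps is deep; the only care needed is bookkeeping. The one genuine point is that the fill-in $\mu_L$ is obtained from the \emph{strong} (not merely extremal) mono property, so the proof must explicitly invoke the remark that in an (Epi, ExtrMono)-structured category extremal monomorphisms are strong — this is exactly the sentence preceding the statement, so it is available. A secondary subtlety is making sure that all the morphisms one wants to cancel on the left ($\pi\otimes\pi$, $\pi\otimes\pi\otimes\pi$, $u_M\otimes\pi$, $\pi\otimes u_M$) are genuinely epimorphisms, which is precisely what Property~\ref{PropertyEpimorphism} delivers; without that hypothesis the argument would break, which is why it appears among the assumptions.
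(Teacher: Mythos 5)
Your proof is correct and follows exactly the route the paper intends: the paper dismisses this proposition as ``straightforward'' precisely because, under Property~\ref{PropertyEpiExtrMonoFactorizations}, the extremal monomorphism $i$ is strong, so the diagonal fill-in in your square (with the epimorphism $\pi\otimes\pi$ on top, which is epi by Property~\ref{PropertyEpimorphism}) produces $\mu_L$, and all axioms follow by cancelling epimorphisms on the left or the monomorphism $i$ on the right. One small slip: $u_M\otimes\pi$ and $\pi\otimes u_M$ are \emph{not} epimorphisms in general ($u_M$ need not be epi, and Property~\ref{PropertyEpimorphism} only tensors an epimorphism with an identity), so for the unit axioms you should instead precompose with $\id_{\mathbbm{1}}\otimes\pi$ and $\pi\otimes\id_{\mathbbm{1}}$, which are epimorphisms, and then compute $\mu_L(u_L\otimes\id_L)(\id_{\mathbbm{1}}\otimes\pi)=\mu_L(\pi\otimes\pi)(u_M\otimes\id_M)=\pi\mu_M(u_M\otimes\id_M)$; this is a trivial repair and does not affect the argument.
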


\subsection{Limits and colimits in $\mathsf{Mon}(\mathcal C)$ and reflectivity of $\mathsf{Hopf}(\mathcal C)$ in $\mathsf{Bimon}(\mathcal C)$}
\label{Subsection(Co)limitsMonHopfMonReflectivity}

Let $T \colon J \to \mathsf{Mon}(\mathcal C)$ be a functor where $J$ is a category and $\mathcal C$ is a monoidal category.
Suppose that $M := \lim UT$ is the limit of $UT$ in $\mathcal C$ where $U\colon \mathsf{Mon}(\mathcal C) \to \mathcal C$
is the forgetful functor. Then the unique morphisms $\mu_M$
and $u_M$ making the diagrams below commutative for every object $j$ of $J$ ($\varphi_j$ is the limiting cone) turn $M$ into
a monoid and, therefore, into the limit of $T$ in $\mathsf{Mon}(\mathcal C)$:

$$\xymatrix{M\otimes M \ar@{-->}[d]^{\mu_M} \ar[rr]^{\varphi_j \otimes \varphi_j} & & Tj \otimes Tj \ar[d]^{\mu_{Tj}} \\
 M \ar[rr]^{\varphi_j} & & Tj } \qquad 
\xymatrix{ 	& \mathbbm{1} \ar@{-->}[ld]_{u_M} \ar[rd]^{u_{Tj}} &  \\
	 M \ar[rr]^{\varphi_j} & & Tj  
}
$$

In other words, the forgetful functor $U$ creates limits.

Now consider colimits in $\mathsf{Mon}(\mathcal C)$:

\begin{theorem}\label{TheoremColimitsMon} Let $\mathcal C$ be a braided monoidal category satisfying Properties~\ref{PropertySmallLimits},
 \ref{PropertyEpiExtrMonoFactorizations}, 	\ref{PropertyFactorObjectsSmallSet}, 
  \ref{PropertyEpimorphism}, \ref{PropertyFreeMonoid} of Section~\ref{SubsectionSupportCoactingConditions}.
	Let $T \colon J \to \mathsf{Mon}(\mathcal C)$ be a functor where $J$ is a category.
	Suppose there exists $N := \colim UT$ and $\varphi_j$ is the corresponding colimiting cocone. (The colimit is taken in $\mathcal C$.)
	Then there exists a monoid homomorphism $\pi \colon {\mathcal F}N \twoheadrightarrow P$, which is, in addition,
	an epimorphism in $\mathcal C$, such that the composition 
	$$\xymatrix{ Tj \ar[r]^{\varphi_j} & N \ar[r]^{\eta_N} & {\mathcal F}N \ar@{->>}[r]^{\pi} & P  }$$
	is a colimiting cocone of $T$ in $\mathsf{Mon}(\mathcal C)$.
	(Here $\eta$ is the unit of the adjunction ${\mathcal F} \dashv U$.)
\end{theorem}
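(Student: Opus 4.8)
The plan is to realize the colimit of monoids as a quotient of the free monoid $\mathcal{F}N$ on the colimit $N$ of the underlying objects, using the (Epi, ExtrMono)-factorization to construct the quotient and the universal property of $\mathcal{F}$ together with cowellpoweredness to verify it works. First I would observe that since $\mathcal{C}$ is braided and satisfies Property~\ref{PropertyFreeMonoid}, the free monoid functor $\mathcal{F}$ exists and is left adjoint to $U$; in particular $\mathcal{F}N$ is a monoid receiving morphisms $Tj \xrightarrow{\varphi_j} N \xrightarrow{\eta_N} \mathcal{F}N$ in $\mathcal{C}$, but these need not be monoid homomorphisms, so $\mathcal{F}N$ is only a ``candidate'' that is too big and not yet compatible with the monoid structures. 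The idea is to pass to the smallest quotient of $\mathcal{F}N$ that repairs this.

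Next I would consider the collection of all monoid congruences (equivalently, all regular/extremal quotient monoids) $q\colon \mathcal{F}N \twoheadrightarrow P_q$ such that each composite $q \circ \eta_N \circ \varphi_j$ \emph{is} a monoid homomorphism $Tj \to P_q$; this collection is nonempty (any colimit cocone of $T$ in $\mathsf{Mon}(\mathcal{C})$, if one exists, factors $\mathcal{F}N$ this way, but to avoid circularity one notes the terminal monoid gives a trivial such quotient) and, by Property~\ref{PropertyFactorObjectsSmallSet} (cowellpoweredness), it is essentially a small set. I would then form the ``intersection'' of these quotients: take the induced map $\mathcal{F}N \to \prod_q P_q$ into the (small) product, which exists by Property~\ref{PropertySmallLimits}, and factor it via Property~\ref{PropertyEpiExtrMonoFactorizations} as $\mathcal{F}N \xrightarrow{\pi} P \rightarrowtail \prod_q P_q$. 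Since $\mathcal{F}N \to \prod_q P_q$ is a monoid homomorphism into a monoid (a product of monoids in $\mathsf{Mon}(\mathcal{C})$, using that $U$ creates limits as noted just before the theorem) and $\pi$ is an epimorphism with extremal-mono image, Proposition~\ref{PropositionMonoidEpiMono} (applicable because $\mathcal{C}$ satisfies Properties~\ref{PropertyEpiExtrMonoFactorizations} and~\ref{PropertyEpimorphism}) endows $P$ with a unique monoid structure making $\pi$ and the inclusion monoid homomorphisms. One checks each $\pi\circ\eta_N\circ\varphi_j$ is a monoid homomorphism: composing with the extremal mono into $\prod_q P_q$ and then with each projection $P_q$, it equals $q\circ\eta_N\circ\varphi_j$, which is a monoid homomorphism by construction; since the projections of an extremal mono into a product are jointly monic this forces the composite itself to be a monoid homomorphism. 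This gives the cocone $\psi_j := \pi\circ\eta_N\circ\varphi_j \colon Tj \to P$ in $\mathsf{Mon}(\mathcal{C})$, and it is a cocone because already $\eta_N\circ\varphi_j$ is a cocone over $N$ in $\mathcal{C}$.

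Finally I would verify the universal property: given any monoid $R$ and a cocone $g_j\colon Tj \to R$ in $\mathsf{Mon}(\mathcal{C})$, the underlying morphisms $U g_j$ form a cocone over $UT$ in $\mathcal{C}$, hence factor uniquely through $N$ by some $h\colon N \to R$ in $\mathcal{C}$; then $\tilde h := \mathcal{F}(h)$ followed by the monoid counit, i.e. the adjoint transpose, yields a monoid homomorphism $\mathcal{F}N \to R$ extending $h$. Factoring this homomorphism through its image gives an extremal quotient monoid $\mathcal{F}N \twoheadrightarrow P'$ through which it factors, and since each $g_j$ is a monoid homomorphism this $P'$ lies in our collection $\{P_q\}$, so the map $\mathcal{F}N\to R$ factors through $P = $ the chosen joint image, giving a monoid homomorphism $P \to R$ with $P\to R \to$, i.e. $\psi_j$ composed with it equals $g_j$. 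Uniqueness follows because $\pi$ is an epimorphism and the $\varphi_j$ are jointly epimorphic onto $N$ which generates $\mathcal{F}N$ as a monoid, so the factoring map out of $P$ is determined on a generating set. The main obstacle I anticipate is the bookkeeping in the second paragraph: ensuring the collection of admissible quotients is genuinely small (this is exactly where cowellpoweredness enters) and that the joint map into the product is still a monoid homomorphism so that Proposition~\ref{PropositionMonoidEpiMono} applies cleanly; the transfer of the monoid-homomorphism property across the extremal mono via joint monicity of product projections is the delicate categorical point, while everything else is formal manipulation of the adjunction $\mathcal{F}\dashv U$.
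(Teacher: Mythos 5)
Your proposal is correct and follows essentially the same route as the paper: you form the (small, by cowellpoweredness) family of epimorphic monoid quotients of $\mathcal{F}N$ through which all legs $\eta_N\varphi_j$ become monoid homomorphisms, take the (Epi, ExtrMono)-factorization of the comparison map into their limit (you use the product, which yields the same image), transfer the monoid structure to the image via Proposition~\ref{PropositionMonoidEpiMono} and joint monicity of the projections, and verify the universal property through the adjoint transpose of a given cocone. The only caveats are cosmetic: the admissible quotients should be all epimorphisms in $\mathcal{C}$ rather than only regular/extremal ones (the epi part of an (Epi, ExtrMono)-factorization need not be extremal, so your $P'$ might otherwise fall outside the class), and nonemptiness of the family is not actually needed for the argument (your terminal-monoid justification is in any case shaky, since a morphism to the terminal object need not be an epimorphism).
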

\begin{proof}Consider the set $(\psi_\alpha)_{\alpha\in\Lambda}$
	of all monoid homomorphisms $\psi_\alpha \colon {\mathcal F}N \to P_\alpha$ that are epimorphisms in $\mathcal C$
	such that the composition $\psi_\alpha \eta_N \varphi_j$ is a monoid homomorphism for every $j$.
	By Property~\ref{PropertyFactorObjectsSmallSet} we may assume that the set $\Lambda$ is small.
	Introduce on $\Lambda$ a partial ordering by $\alpha \preccurlyeq \beta$
	if $\psi_\alpha =\psi_{\beta\alpha}\psi_\beta$ for some morphism $\psi_{\beta\alpha}$.
	Note that since $\psi_\beta$ is an epimorphism, every such morphism $\psi_{\beta\alpha}$
	is unique. Moreover, by Property~\ref{PropertyEpimorphism}, $\psi_\beta \otimes \psi_\beta$
	is an epimorphism too. Now the fact that both $\psi_\alpha$ and  $\psi_\beta$ are monoid homomorphisms
	implies that $\psi_{\beta\alpha}$ is a monoid homomorphism too.
	Denote by $P_0$ the limit of $P_\alpha$ in $\mathcal C$ and by $i$ and $\pi$, respectively, the extremal monomorphism and the epimorphism from the (Epi, ExtrMono)-factorization of the comparison morphism between $\psi_\alpha$ and the limiting cone:
	$$\xymatrix{ Tj \ar[r]^{\varphi_j} & N \ar[r]^{\eta_N} & {\mathcal F}N \ar@{->>}[r]^{\psi_\alpha} \ar@{->>}[d]^{\pi} & P_\alpha  \\
	                                   &                   &    P\               \ar@{>->}[r]^i             &  P_0 \ar[u] }$$
                                   
    By the remarks made before the theorem, $P_0$ is a limit in $\mathsf{Mon}(\mathcal C)$ too. By Proposition~\ref{PropositionMonoidEpiMono}  the object $P$ bears a unique structure of a monoid and $\pi$ and $i$ are monoid homomorphisms.  Hence all the squares and triangles in the diagrams below, except, possibly, the left ones, are commutative:
     $$\xymatrix{Tj \otimes Tj \ar[d] \ar[rr]^{{\eta_N\varphi_j} \otimes {\eta_N\varphi_j}} & & {\mathcal F}N \otimes {\mathcal F}N \ar[d] \ar@{->>}[r]^{\pi\otimes \pi} & P \otimes P\ar[d] \ar[r]^{i\otimes i} & P_0 \otimes P_0 \ar[d] \ar[r] & P_\alpha \otimes P_\alpha \ar[d] \\
    Tj \ar[rr]^{\eta_N\varphi_j} & & {\mathcal F}N \ar@{->>}[r]^{\pi}  & P\  \ar@{>->}[r]^{i}  & P_0  \ar[r]  & P_\alpha } $$

 $$\xymatrix{Tj \ar[rr]^{\eta_N\varphi_j} & & {\mathcal F}N \ar@{->>}[r]^{\pi}  & P\  \ar@{>->}[r]^{i}  & P_0  \ar[r]  & P_\alpha \\
 & & \mathbbm{1} \ar[llu] \ar[u] \ar[ru] \ar[rru] \ar[rrru] & & &} $$
Now the fact that $P_0$ is a limit and $i$ is a monomorphism implies that $\pi \eta_N \varphi_j$ is a monoid homomorphism
for every $j$.

In other words, $P$ belongs to the set $\lbrace P_\alpha \mid \alpha \in \Lambda \rbrace$
    and corresponds to the global maximum of~$\Lambda$. Hence $P$ is the limit of $P_\alpha$ in $\mathcal C$
    and $i$ is in fact an isomorphism.   

    Suppose now $\tau_j \colon Tj \to Q$ is a cocone in $\mathsf{Mon}(\mathcal C)$.
    Then there exists a unique monoid homomorphism $\tau \colon {\mathcal F}N \to Q$ making the left triangle of the diagram below commutative:
    	$$\xymatrix{ Tj \ar[r]^{\varphi_j} \ar[rd]_{\tau_j} & N \ar[r]^{\eta_N} & {\mathcal F}N \ar@{->>}[r]^{\pi}
    		\ar[ld]^{\tau}
    		 \ar@{->>}[d]^{\pi_0} & P \ar[ld]^s  \\
                                     &    Q              & \ar@{>->}[l]^{i_0} \ Q_0            &  }$$
                                 Consider the (Epi, ExtrMono)-factorization
                                 $\tau = i_0\pi_0$.
Again, by Proposition~\ref{PropositionMonoidEpiMono}, the corresponding object $Q_0$ bears a unique structure of a monoid and $\pi_0$ and $i_0$ are monoid homomorphisms. Note that $\tau_j = i_0 \pi_0 \eta_N \varphi_j$
and therefore $\pi_0 \eta_N \varphi_j$ are monoid homomorphims too for every $j$. Hence $Q_0$
belongs to the set $\lbrace P_\alpha \mid \alpha \in \Lambda \rbrace$. Since $P$ corresponds to the global maximum of~$\Lambda$,
there exists the comparison morphism $s \colon P \to Q_0$, which, being one of $\psi_{\beta\alpha}$ above, is a monoid homomorphism too. Hence the composition $i_0 s$ is a monoid homomorphism and $P$ is indeed the colimit of $T$ in $\mathsf{Mon}(\mathcal C)$. The uniqueness of $i_0 s$ follows from the fact that $\pi$ is an epimorphism.
\end{proof}	

Now we prove the following theorem inspired by~\cite{CH, Porst1,Takeuchi}:
\begin{theorem}\label{TheoremBimonHopfLeftAdjoint}
Let $\mathcal C$ be a braided monoidal category satisfying Properties~\ref{PropertySmallLimits}--\ref{PropertyEpiExtrMonoFactorizations}, 	\ref{PropertyFactorObjectsSmallSet},  \ref{PropertyEpimorphism}, \ref{PropertyFreeMonoid} of Section~\ref{SubsectionSupportCoactingConditions}.
		Then the forgetful functor $\mathbf{Hopf}(\mathcal C) \to \mathbf{Bimon}(\mathcal C)$
	admits a left adjoint functor $H_l \colon  \mathbf{Bimon}(\mathcal C) \to \mathbf{Hopf}(\mathcal C)$.
\end{theorem}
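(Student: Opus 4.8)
The plan is to construct the free Hopf monoid $H_l(B)$ on a bimonoid $B$ by a transfinite version of the classical group-completion / Takeuchi--Porst construction, and then to verify the universal property. The key point is that a bimonoid map $B \to H$ into a Hopf monoid $H$ is the same thing as a coalgebra map that is ``compatible with the antipode on one side at a time''; iterating the passage $B \rightsquigarrow B^{\mathrm{op},\mathrm{cop}}$ is what forces the antipode into existence. First I would set $H_0 := B$ and, given $H_n$, form a bimonoid $H_{n+1}$ together with a bimonoid homomorphism $H_n \to H_{n+1}$ which is universal among bimonoid homomorphisms $H_n \to K$ for which $\id_K$ has a ``left $n$-fold convolution inverse'' becoming a genuine inverse at the next stage; concretely one takes the colimit in $\mathsf{Bimon}(\mathcal C)$ of the diagram $\big(\cdots \to H_n \to H_n^{\mathrm{op},\mathrm{cop}} \to H_n \to \cdots\big)$, or more precisely a coequalizer-type construction that identifies an element with the composite of it and its formal inverse. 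Then $H_l(B) := \colim_n H_n$, the colimit being taken in $\mathsf{Bimon}(\mathcal C)$.

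The first genuine ingredient is that $\mathsf{Bimon}(\mathcal C)$ has the colimits needed for this: colimits in $\mathsf{Bimon}(\mathcal C) = \mathsf{Comon}(\mathsf{Mon}(\mathcal C))$ are created by the forgetful functor to $\mathsf{Comon}(\mathcal C)$, and colimits of comonoids are created by the forgetful functor to $\mathcal C$ (the dual of the easy ``$U$ creates limits'' observation made before Theorem~\ref{TheoremColimitsMon} — wait, rather: comonoids are coalgebras, so their colimits are created from $\mathcal C$ just as monoids' limits are). More carefully, what I actually need is that $\mathsf{Bimon}(\mathcal C)$ is cocomplete (at least has countable colimits and coequalizers), and this is where Theorem~\ref{TheoremColimitsMon} together with Properties~\ref{PropertySmallLimits}--\ref{PropertyEpiExtrMonoFactorizations}, \ref{PropertyFactorObjectsSmallSet}, \ref{PropertyEpimorphism}, \ref{PropertyFreeMonoid} enters: colimits of bimonoids in $\mathcal C$ are computed by taking the colimit of the underlying comonoids in $\mathcal C$ (created from $\mathcal C$ since the cofree-comonoid machinery is not needed — colimits, not limits, of comonoids are easy), then recognizing this underlying object also carries the colimit monoid structure via the Theorem~\ref{TheoremColimitsMon} construction applied in $\mathsf{Comon}(\mathcal C)$. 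The compatibility of the two structures is a diagram chase using that the braiding is natural. Granting cocompleteness of $\mathsf{Bimon}(\mathcal C)$, the sequential colimit $H_l(B)$ exists.

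Next I would show $H_l(B)$ is a Hopf monoid. On $H_l(B)$ define $S$ as the colimit of the maps $H_n \xrightarrow{\can} H_n^{\mathrm{op},\mathrm{cop}} \hookrightarrow H_{n+1}$ (note $H_n^{\mathrm{op},\mathrm{cop}}$ has the same underlying comonoid-with-reversed-comultiplication, but as recalled at the end of Section~2 it is again a bimonoid, and a map out of it is an ``anti-homomorphism''); the point of building $H_{n+1}$ from the two-sided sequence is exactly that $\id * S = u\varepsilon = S * \id$ holds in the colimit because at each finite stage the obstruction to this equation is pushed forward to a stage where it is killed. This requires a careful bookkeeping of convolution identities and the recalled facts that $S$ is automatically an anti-(bi)monoid morphism; that bookkeeping is the part I expect to be fiddly but not deep. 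For the universal property: given a bimonoid map $f : B \to K$ with $K$ a Hopf monoid with antipode $S_K$, one builds compatible maps $H_n \to K$ inductively, using at stage $n+1$ the map $H_n^{\mathrm{op},\mathrm{cop}} \to K$ given by $f_n$ composed with the fact (recalled at the end of Section~2) that $S_K : K^{\mathrm{op},\mathrm{cop}} \to K$ is a bimonoid homomorphism; the universal property of the stagewise colimit then produces a unique bimonoid map $H_l(B) \to K$, and it automatically preserves the antipode since every bimonoid map between Hopf monoids does. Taking colimits and using uniqueness at each stage gives uniqueness overall.

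The main obstacle I anticipate is the cocompleteness of $\mathsf{Bimon}(\mathcal C)$ under only the listed hypotheses — in particular checking that the colimit of bimonoids is computed ``comonoid-wise'' and that the resulting object simultaneously satisfies the monoid colimit universal property of Theorem~\ref{TheoremColimitsMon} — and, relatedly, a size/smallness issue: one must ensure the transfinite (here: merely countable) construction of $H_n$ genuinely stabilizes or that the colimit of the $H_n$ is small, which is where cowellpoweredness (Property~\ref{PropertyFactorObjectsSmallSet}) and the colimit bound built into Theorem~\ref{TheoremColimitsMon} do their work. Everything else — the convolution-algebra manipulations showing $S$ is a two-sided inverse, and the inductive construction of the comparison map — is routine diagram chasing of the kind already illustrated in the proof of Theorem~\ref{TheoremColimitsMon}.
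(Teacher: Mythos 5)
Your overall strategy (adjoin formal antipodes via op/cop twists of $B$, then impose the antipode relations by a coequalizer) is the same Takeuchi--Porst route the paper takes, but two of your load-bearing steps do not go through as stated. First, your cocompleteness argument for $\mathsf{Bimon}(\mathcal C)$ is backwards: the forgetful functor $\mathsf{Comon}(\mathcal D)\to\mathcal D$ creates \emph{colimits} for any monoidal $\mathcal D$, so colimits of bimonoids are created by $\mathsf{Bimon}(\mathcal C)=\mathsf{Comon}(\mathsf{Mon}(\mathcal C))\to\mathsf{Mon}(\mathcal C)$, not by the forgetful functor to $\mathsf{Comon}(\mathcal C)$. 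Moreover, colimits in $\mathsf{Mon}(\mathcal C)$ are \emph{not} computed on underlying objects --- by Theorem~\ref{TheoremColimitsMon} the underlying object of a colimit of monoids is an epimorphic quotient of the free monoid on the colimit taken in $\mathcal C$ --- so your claim that the colimit of the underlying comonoids ``also carries the colimit monoid structure'' is false, and rerouting through ``Theorem~\ref{TheoremColimitsMon} applied in $\mathsf{Comon}(\mathcal C)$'' would require $\mathsf{Comon}(\mathcal C)$ to be complete, cowellpowered and to admit free monoids, none of which follows from the hypotheses (limits of comonoids need cofree comonoids, Property~\ref{PropertyDUALCofreeComonoid}, which is not assumed here). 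A second, smaller but real, error: in a braided non-symmetric category the same-sign twist $H^{\mathrm{op},\mathrm{cop}}$ is not a bimonoid; one must use opposite-sign twists $B^{\mathrm{op}^{-n},\mathrm{cop}^{n}}$ as recalled at the end of Section~3, and your diagram $\cdots\to H_n\to H_n^{\mathrm{op},\mathrm{cop}}\to\cdots$ has no canonical bimonoid arrows to take a colimit over.

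The second genuine gap is that you dismiss as ``routine bookkeeping'' precisely the step the hypotheses are there for. After forming the coequalizer of $S*\id$, $\id*S$ and $u\varepsilon$ in $\mathcal C$, one must produce the corresponding coequalizer $\theta\colon\tilde B\to P$ among monoid homomorphisms (via the free monoid on the coequalizer object and the limit of its epimorphic monoid quotients) and then prove that $\theta$ is an epimorphism \emph{in $\mathcal C$}; this uses the (Epi, ExtrMono)-factorization, Proposition~\ref{PropositionMonoidEpiMono}, and Properties~\ref{PropertyEpiExtrMonoFactorizations}, \ref{PropertyFactorObjectsSmallSet} and~\ref{PropertyEpimorphism}. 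Only once $\theta$ and $\theta\otimes\theta$ are known to be epimorphisms can the dual of Porst's Crucial Lemma be invoked to show that $(\theta\otimes\theta)\Delta$ and $\varepsilon$ coequalize the three maps, so that the comultiplication, counit and antipode descend to $P$ and make it a Hopf monoid. Your outline concentrates on the convolution identities for $S$ but never addresses why the comonoid structure survives passage to a quotient that is not computed objectwise; without that, $H_l(B)$ is only a monoid and the universal property cannot even be formulated.
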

\begin{proof}
	For a given bimonoid $B$ consider the coproduct $\tilde B = \coprod\limits_{n=0}^\infty B^{\mathrm{op}^{-n},\mathrm{cop}^n}$
in $\mathsf{Mon}(\mathcal C)$, which exists by  Property~\ref{PropertyFiniteAndCountableColimits} and Theorem~\ref{TheoremColimitsMon}. Note that being a coproduct, the monoid $\tilde B$ bears a natural structure of a comonoid making $\tilde B$ the coproduct of $B^{\mathrm{op}^{-n},\mathrm{cop}^n}$ in $\mathsf{Bimon}(\mathcal C)$.

We regard $(-)^{\mathrm{op},\mathrm{cop}^{-1}}$ as an endofunctor on $\mathsf{Bimon}(\mathcal C)$ that twists both the multiplication and the comultiplication and changes neither morphisms nor objects. Being an automorphism of the category $\mathsf{Bimon}(\mathcal C)$, the functor $(-)^{\mathrm{op},\mathrm{cop}^{-1}}$ preserves all limits and colimits.
Hence for any bimonoids $B_\alpha$ we may identify~$\coprod\limits_\alpha \left( B_\alpha\right)^{\mathrm{op},\mathrm{cop}^{-1}}$ with~$\left(\coprod\limits_\alpha B_\alpha \right)^{\mathrm{op},\mathrm{cop}^{-1}}$.
	Define the morphism $S$ on $\tilde B$ as the unique bimonoid homomorphism making
	the diagram below commutative:
	$$\xymatrix{ \coprod\limits_{n=0}^\infty B^{\mathrm{op}^{-n},\mathrm{cop}^n}  \ar@{-->}[r]^(0.4)S & \left(\coprod\limits_{n=0}^\infty  B^{\mathrm{op}^{-n},\mathrm{cop}^n}\right)^{\mathrm{op},\mathrm{cop}^{-1}} \\
		\ar[u]_(0.45){i_n}   B^{\mathrm{op}^{-n},\mathrm{cop}^n} \ar@{=}[r] & \ar[u]_{{(i_{n+1})}^{\mathrm{op},\mathrm{cop}^{-1}}}  \left(B^{\mathrm{op}^{-n-1},\mathrm{cop}^{n+1}}\right)^{\mathrm{op},\mathrm{cop}^{-1}} \\
	}$$

	Here $i_n$ are the morphisms $B^{\mathrm{op}^{-n},\mathrm{cop}^n} \to \tilde B$ from the universal property of the coproduct. 
		By the definition, $S$ is a bimonoid homomorphism $\tilde B \to \tilde B^{\mathrm{op},\mathrm{cop}^{-1}}$.

	If $H$ is bimonoid endowed with a bimonoid homomorphism $S_H \colon H \to H^{\mathrm{op},\mathrm{cop}^{-1}}$
	and $\varphi_0 \colon B \to H$ is an arbitary bimonoid homomorphism, then the diagram
	below shows that there exists a unique bimonoid homomorphism $\varphi \colon \tilde B \to H$
	such that $\varphi S = S_H \varphi$ and $\varphi_0 = \varphi i_0$:
		$$\xymatrix{ H^{\mathrm{op}^{-n},\mathrm{cop}^n} \ar[r]^{\left(S_H\right)^n} & H \\
					\left(\coprod\limits_{m=0}^\infty B^{\mathrm{op}^{-m},\mathrm{cop}^m}\right)^{\mathrm{op}^{-n},\mathrm{cop}^n}  \ar@{->}[r]^(0.6){S^n}
			\ar[u]_(0.6){\varphi^{\mathrm{op}^{-n},\mathrm{cop}^n}}	 & \coprod\limits_{m=0}^\infty  B^{\mathrm{op}^{-m},\mathrm{cop}^m} \ar@{-->}[u]_(0.6){\varphi} \\
		\ar[u]_(0.4){(i_0)^{\mathrm{op}^{-n},\mathrm{cop}^n}}   B^{\mathrm{op}^{-n},\mathrm{cop}^n} \ar@{=}[r]
		\ar@/^80pt/[uu]^{\left(\varphi_0\right)^{\mathrm{op}^{-n},\mathrm{cop}^n}}
		 & \ar[u]_(0.4){i_n}  B^{\mathrm{op}^{-n},\mathrm{cop}^n} \\
	}$$
	
	Consider the coequalizer $\gamma \colon \tilde B \to E$ in $\mathcal C$
	of morphisms $S * \id_{\tilde B}$, $\id_{\tilde B} * S$ and $u \varepsilon$
	where $u$ and $\varepsilon$ are, respectively, the unit and the counit of $\tilde B$.
	(The coequalizer exists by Property~\ref{PropertyFiniteAndCountableColimits}.)
	Like in the proof of Theorem~\ref{TheoremColimitsMon}, consider the free monoid ${\mathcal F}E$
	on $E$ and the set $(\psi_\alpha)_{\alpha\in\Lambda}$
	of all monoid homomorphisms $\psi_\alpha \colon {\mathcal F}E \to P_\alpha$ that are epimorphisms in $\mathcal C$
	such that the composition $\psi_\alpha \eta_E \gamma$ is a monoid homomorphism for every $j$.
	By Property~\ref{PropertyFactorObjectsSmallSet} we may assume that the set $\Lambda$ is small.
	Introduce on $\Lambda$ a partial ordering by $\alpha \preccurlyeq \beta$
	if $\psi_\alpha =\psi_{\beta\alpha}\psi_\beta$ for some morphism $\psi_{\beta\alpha}$.
	Note that since $\psi_\beta$ is an epimorphism, every such morphism $\psi_{\beta\alpha}$
	is unique. Again, Property~\ref{PropertyEpimorphism} implies that $\psi_{\beta\alpha}$ is a monoid homomorphism too.
	Denote by $P_0$ the limit of $P_\alpha$ in $\mathcal C$ and by $i$ and $\pi$, respectively, the extremal monomorphism and the epimorphism from the (Epi, ExtrMono)-factorization of the comparison morphism between $\psi_\alpha$ and the limiting cone:
			$$\xymatrix{ \tilde B \ar@<-1ex>[r] \ar[r] \ar@<1ex>[r]  & \tilde B \ar[r]^{\gamma} \ar[rrd]^\theta & E \ar[r]^{\eta_E} & {\mathcal F}E \ar@{->>}[r]^{\psi_\alpha} \ar@{->>}[d]^{\pi} & P_\alpha  \\
		&     &              &    P\               \ar@{>->}[r]^i             &  P_0 \ar[u] }$$
	The same argument as in Theorem~\ref{TheoremColimitsMon} shows that
	$\theta := \pi\eta_E\gamma$ is the coequalizer of $S * \id_{\tilde B}$, $\id_{\tilde B} * S$ and $u \varepsilon$
	among monoid homomomorphisms from $\tilde B$. 
	
	Let $\theta = i_1 \pi_1$ be the (Epi, ExtrMono)-factorization of $\theta$.
	By Proposition~\ref{PropositionMonoidEpiMono}, both $i_1$ and $\pi_1$ are monoid homomorphisms.
	Since $i_1$ is a monomorphism, $$\pi_1 (S * \id_{\tilde B}) = \pi_1 (\id_{\tilde B} * S) = \pi_1 u \varepsilon.$$
	Hence $\pi_1 = \tau_1 \theta$ for some monoid homomorphism $\tau_1$. Then $\theta = i_1\tau_1\theta$ and
	$\pi_1 = \tau_1 i_1 \pi_1$. Recall that $\pi_1$ is an epimorphism and $\theta$ is a coequalizer among monoid homomomorphisms.
	Therefore $i_1 = \tau_1^{-1}$ is an isomorphism and $\theta$ is an epimorphism in $\mathcal C$. 
	
	Diagram chasing dual to that in the proof of Porst's Crucial Lemma~\cite[Lemma 38]{Porst1}
	shows that $$(\theta \otimes \theta) \Delta (S * \id_{\tilde B}) 
	= (\theta \otimes \theta) \Delta (\id_{\tilde B} * S)
	=  (\theta \otimes \theta) \Delta u \varepsilon,$$
	$$\varepsilon (S * \id_{\tilde B}) 
	= \varepsilon (\id_{\tilde B} * S)
	=  \varepsilon u \varepsilon$$
	 (here $\Delta$ is the comultiplication in $\tilde B$)
	 and therefore there exist unique monoid homomorphisms $\varepsilon_P \colon P \to \mathbbm{1}$
	 and $\Delta_P \colon P \to P\otimes P$ making the diagrams below commutative
	 and turning $P$ into a bimonoid:
	 	$$\xymatrix{ \tilde B \ar@<-1ex>[r] \ar[r] \ar@<1ex>[r] & \tilde B \ar@{->>}[rr]^{\theta}\ar[d]^{\Delta} & & P \ar@{-->}[d]^{\Delta_P}  \\
	 	                       &    \tilde B  \otimes \tilde B   \ar@{->>}[rr]^{\theta \otimes \theta}  & & P \otimes P}
 	                       \qquad 
 	                       \xymatrix{ \tilde B \ar@<-1ex>[r] \ar[r] \ar@<1ex>[r] & \tilde B \ar@{->>}[r]^{\theta}\ar[rd]_{\varepsilon} & P \ar@{-->}[d]^{\varepsilon_P}  \\
 	                       	&      &  \mathbbm{1}}$$
     Moreover, there exists a unique monoid homomorphism $S_P \colon P \to P^{\mathrm{op},\mathrm{cop}^{-1}}$ (which is a comonoid homomorphism too since $\theta$ is an epimorphism) such that $S_P \theta = \theta S$:               	
                    $$\xymatrix{ \tilde B \ar@<-1ex>[r] \ar[r] \ar@<1ex>[r] \ar[d]^{S} & \tilde B \ar@{->>}[rr]^{\theta}\ar[d]^{S} & & P \ar@{-->}[d]^{S_P}  \\
    B^{\mathrm{op},\mathrm{cop}^{-1}}    \ar@<-1ex>[r] \ar[r] \ar@<1ex>[r]              	&  B^{\mathrm{op},\mathrm{cop}^{-1}} \ar[rr]^{\theta^{\mathrm{op},\mathrm{cop}^{-1}}}    & & P^{\mathrm{op},\mathrm{cop}^{-1}}}$$    	
	 Since $\theta$ is an epimorphism, $P$ is a Hopf monoid. 
	 The universal property of $\tilde B$ implies that for every Hopf monoid $H$ and every bimonoid homomorphism
	 $\varphi_0 \colon B \to H$ there exists a unique bimonoid homomorphism $\varphi \colon \tilde B \to H$
	 such that $\varphi S = S_H \varphi$ and $\varphi_0 = \varphi i_0$. Hence there exists
	 a unique monoid homomorphism $\psi \colon P \to H$ such that $\varphi = \psi \theta$.
	 Note that $\psi$ is a comonoid homomorphism since $\theta$ is an epimorphism. Furthermore, $\psi$ is a Hopf monoid homomorphism since every bimonoid homomorphism between Hopf monoids preserves the antipode,
	 see Section~\ref{SubsectionHopfMonoids}.
	 Therefore, $\psi$ is the unique Hopf monoid homomorphism $P \to H$ such that
	 $\varphi_0 = \psi \theta i_0$, i.e. we can take $H_l B := P$.
\end{proof}		

\begin{remark} In Proposition~\ref{PropositionMonoidEpiMono} and Theorems~\ref{TheoremColimitsMon} and~\ref{TheoremBimonHopfLeftAdjoint} instead of Property~\ref{PropertyEpimorphism} it is sufficient to require just that for every epimorphism $\varphi$ in~$\mathcal C$ the morphisms $\varphi\otimes \varphi$ and $\varphi\otimes \varphi \otimes \varphi$
	are epimorphisms too.
\end{remark}

\begin{example}\label{ExampleHopfLeftAdjointSets} In the case $\mathcal{C} = \mathbf{Sets}$ the left adjoint functor $H_l$ assigns to each monoid $M$ its \textit{Grothendieck group}, i.e. the group with the same generators and relations as in $M$.
\end{example}

\subsection{Supports of morphisms $A \to B \otimes Q$}\label{SubsectionSupportMorTensAB}

Now we are ready to define supports of morphisms and prove their existence.

Let $\mathcal C$ be a monoidal category.
For given objects $A,B$ in $\mathcal C$ denote by $\mathbf{MorTens}(A,B)$
the comma category $(A\downarrow B\otimes(-))$, i.e.
the category where
\begin{itemize}
	\item the objects are all morphisms $\rho \colon A \to B \otimes Q$ for arbitrary objects $Q$;
	\item the morphisms between $\rho_1 \colon A \to B \otimes Q_1$ and $\rho_2 \colon   A \to  B \otimes Q_2$
	 are morphisms $\tau \colon Q_1 \to Q_2$
making the diagram below commutative:
$$ \xymatrix{	 A \ar[r]^(0.4){\rho_1} \ar[rd]_{\rho_2} &  B \otimes Q_1 \ar[d]^{\id_{ B} \otimes\, \tau} \\
	&  B \otimes Q_2}$$
\end{itemize}

\begin{remark}
	The category $\mathbf{MorTens}(A,B)$ defined above should not be confused with the category
	$\mathsf{MorTens}(Q)$ defined in Section~\ref{SubsectionComeasurings} in order to introduce comeasurings. Both contain
	$\rho \colon A \to B \otimes Q$ as objects, but in $\mathbf{MorTens}(A,B)$ the objects $A$ and $B$ are fixed and 
	in $\mathsf{MorTens}(Q)$ we fix $Q$, the objects $A$ and $B$ may be arbitrary.
\end{remark}	

\begin{definition}
	We say that a morphism $\rho \colon A \to B \otimes Q$ is a \textit{tensor epimorphism}
	if $\rho \in \LIO(\mathbf{MorTens}(A,B))$, i.e. if
	for every $f,g \colon Q \to R$, such that
 \begin{equation*}
		({\id_B} \otimes f)\rho = ({\id_B} \otimes g) \rho,\end{equation*}
	we have $f=g$.
\end{definition}

If there exists $|\rho| \colon  A \to B \otimes \tilde Q$ for some $\rho$, then we call the object $\supp \rho := \tilde Q$
the \textit{support} of $\rho$. From the definition of the absolute value it follows that $\supp \rho$ is defined up to an isomorphism compatible with $|\rho|$.

\begin{examples}\label{support}
\hspace{1cm}
\begin{enumerate}
	\item\label{ExampleVectMorTensSupp} Let $\mathcal C = \mathbf{Vect}_\mathbbm{k}$ and let $\rho \colon A \to B \otimes Q$ be a linear map where $A,B,Q$ are vector spaces over a field $\mathbbm{k}$.
	Choose a basis  $(a_\alpha)_\alpha$ in $A$ and a basis $(b_\beta)_\beta$ in $B$.
	Define $q_{\beta\alpha} \in Q$ by $\rho(a_\alpha)=\sum_{\beta} b_\beta \otimes q_{\beta\alpha}$.
	Applying the elements of the dual vector space $B^*$
	to the left component of $B \otimes Q$, we see that
	the $\mathbbm{k}$-linear span $Q_0$ of all $q_{\beta\alpha}$ is the minimal subspace $Q_0 \subseteq Q$
	such that $\rho(A) \subseteq B \otimes Q_0$. Hence $\rho$ is a tensor epimorphism if and only if $Q=Q_0$.
	Denote by $\rho_0 \colon A \to B \otimes Q_0$ the corestriction of $\rho$ to $B \otimes Q_0$.
	Suppose now that $\rho=(\id_B \otimes \tau)\rho'$ for some vector space $Q'$, tensor epimorphism $\rho' \colon A \to B \otimes Q'$ and a linear map $\tau \colon Q' \to Q$. Define $q'_{\beta\alpha} \in Q'$ by $\rho'(a_\alpha)=\sum_{\beta} b_\beta \otimes q'_{\beta\alpha}$. Then $\tau(q'_{\beta\alpha})=q_{\beta\alpha}$ for all $\alpha$ and $\beta$. Thus
	$\rho_0 = (\id_B \otimes \tau)\rho'$ and $\rho_0 \preccurlyeq \rho'$.
	Therefore, $|\rho| = \rho_0$ and $\supp \rho = Q_0$.
	
\item  Let $\Gamma \colon A=\bigoplus\limits_{g\in G} A^{(g)}$ be a grading
	on an algebra $A$ over a field $\mathbbm{k}$ by a group $G$.
	Then $A$ is a $\mathbbm{k}G$-comodule algebra, where
	$\mathbbm{k}G$ is the group Hopf algebra and the comodule structure $\rho \colon A \to A \otimes \mathbbm{k}G$
	is defined by $\rho(a):=a\otimes g$ for all $a\in A^{(g)}$ and $g \in G$. Let again $\mathcal C = \mathbf{Vect}_\mathbbm{k}$.
	Then $\supp \rho = \langle \supp \Gamma \rangle_\mathbbm{k}$,
	which justifies using the name \emph{support} for $\supp \rho$.

\item  In the case $\mathcal C = \mathbf{Sets}$, $\supp \rho$ is the projection of the image of $\rho$ on the second component
	of $B\times Q$ and $|\rho| \colon A \to B \times (\supp \rho)$ is the corestriction of $\rho$ to $B \times (\supp \rho)$.
\end{enumerate}
\end{examples}

Proposition~\ref{PropositionEpimorphismTensorEpimorphism} establishes a link between tensor epimorphisms and ordinary epimorphisms.

\begin{proposition}\label{PropositionEpimorphismTensorEpimorphism} 
	Let $\rho_1 \colon A \to B \otimes Q_1$ be a tensor epimorphism	in a monoidal category~$\mathcal C$.
	Then a morphism $\tau \colon Q_1 \to Q_2$ is an epimorphism if and only if
	$\rho_2 = ({\id_B}\otimes \tau) \rho_1$ is a tensor epimorphism.
\end{proposition}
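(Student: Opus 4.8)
The plan is to deduce both implications directly from the definition of a tensor epimorphism, using nothing about $\mathcal C$ beyond the bifunctoriality of $\otimes$ together with the defining identity $\rho_2 = (\id_B \otimes \tau)\rho_1$. Throughout I would repeatedly use that $(\id_B \otimes h_2)(\id_B \otimes h_1) = \id_B \otimes (h_2 h_1)$ for composable $h_1, h_2$, so that applying $\id_B\otimes(-)$ to a composite of morphisms $Q_1\to Q_2\to R$ decomposes in the obvious way.

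\emph{Step 1: if $\tau$ is an epimorphism, then $\rho_2$ is a tensor epimorphism.} I would take an arbitrary pair $f, g \colon Q_2 \to R$ with $(\id_B \otimes f)\rho_2 = (\id_B \otimes g)\rho_2$ and substitute $\rho_2 = (\id_B \otimes \tau)\rho_1$; this rewrites the hypothesis as $(\id_B \otimes f\tau)\rho_1 = (\id_B \otimes g\tau)\rho_1$. Since $\rho_1$ is a tensor epimorphism this forces $f\tau = g\tau$, and since $\tau$ is an epimorphism, $f = g$. Hence $\rho_2 \in \LIO(\mathbf{MorTens}(A,B))$, i.e. $\rho_2$ is a tensor epimorphism.

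\emph{Step 2: if $\rho_2$ is a tensor epimorphism, then $\tau$ is an epimorphism.} Here I would take $f, g \colon Q_2 \to R$ with $f\tau = g\tau$ and compute $(\id_B \otimes f)\rho_2 = (\id_B \otimes f\tau)\rho_1 = (\id_B \otimes g\tau)\rho_1 = (\id_B \otimes g)\rho_2$; the defining property of the tensor epimorphism $\rho_2$ then yields $f = g$, so $\tau$ is an epimorphism. Note that this direction does not use the hypothesis that $\rho_1$ is a tensor epimorphism.

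I expect no genuine obstacle: the statement is a formal consequence of the definitions, and the only point requiring (minor) care is the bookkeeping for the functor $\id_B \otimes (-)$ on composites of morphisms and keeping track of which of $\rho_1$, $\rho_2$, $\tau$ the relevant cancellation property is being applied to. One could phrase the argument slightly more abstractly by observing that postcomposition with $\id_B\otimes\tau$ is compatible with the passage to locally initial objects in the comma categories $\mathbf{MorTens}(A,B)$, but the direct calculation above is the shortest route and is what I would write out.
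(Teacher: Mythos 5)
Your proposal is correct and coincides with the paper's own proof: both directions are handled by exactly the same two computations with $(\id_B\otimes f\tau)\rho_1 = (\id_B\otimes f)\rho_2$, applying the tensor-epimorphism property of $\rho_1$ resp.\ $\rho_2$ and the (assumed or concluded) epimorphy of $\tau$. Your side remark that the ``$\rho_2$ tensor epi $\Rightarrow$ $\tau$ epi'' direction does not use the hypothesis on $\rho_1$ is also accurate.
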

\begin{proof} Suppose first that $\rho_2$ is a tensor epimorphism.
	 Let $f,g \colon Q_2 \to Q_3$ be morphisms such that $f\tau = g\tau$.
	 Then $$(\id_B \otimes f)\rho_2 = ({\id_B}\otimes f\tau) \rho_1 = ({\id_B}\otimes g\tau) \rho_1 = (\id_B \otimes g)\rho_2$$
	 and $f=g$:
	\begin{equation*}\xymatrix{
		A \ar[r]^(0.35){\rho_1} \ar[rd]_(0.45){\rho_2} & B \otimes Q_1 \ar[d]^(0.45){{\id_B}\otimes \tau} \\
		& B \otimes Q_2 \ar@<-3pt>[rr]_{{\id_B}\otimes g} \ar@<3pt>[rr]^{{\id_B}\otimes f} & & B \otimes Q_3
}\end{equation*}

Suppose now that $\tau$ is an epimorphism.
Let $f,g \colon Q_2 \to Q_3$ be morphisms such that $(\id_B \otimes f)\rho_2 = (\id_B \otimes g)\rho_2$.
Then 
$$({\id_B}\otimes f\tau) \rho_1 = (\id_B \otimes f)\rho_2 = (\id_B \otimes g)\rho_2 = ({\id_B}\otimes g\tau) \rho_1.$$
Since 	$\rho_1$ is a tensor epimorphism, we have $f\tau = g\tau$ and $f=g$.
\end{proof}

\begin{theorem}\label{TheoremAbsValueSupportExistence}
	Let $\mathcal C$ be a monoidal category satisfying Properties~\ref{PropertySmallLimits}, \ref{PropertySubObjectsSmallSet}--\ref{PropertyLimitsOfSubobjectsArePreserved} and \ref{PropertyEqualizers}. Then for every objects $A,B$ in $\mathcal C$ there exist absolute values of
	 all objects in the category $\mathbf{MorTens}(A,B)$.
	 As a consequence, there exist supports for all morphisms 
	$\rho \colon A \to B \otimes Q$ in~$\mathcal C$. 
\end{theorem}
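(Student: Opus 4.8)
The plan is to construct, for an arbitrary morphism $\rho\colon A\to B\otimes Q$ (i.e. an arbitrary object of $\mathbf{MorTens}(A,B)$), its absolute value as the corestriction of $\rho$ to $B\otimes\tilde Q$, where $\iota\colon\tilde Q\rightarrowtail Q$ is the intersection inside $Q$ of all extremal subobjects through which $\rho$ factors. First I would record that by Proposition~\ref{PropositionPropertyEquilizers} Properties~\ref{PropertySmallLimits} and~\ref{PropertySubObjectsSmallSet} imply that $\mathcal C$ is (Epi, ExtrMono)-structured, so that all extremal monomorphisms in $\mathcal C$ are strong. Call an extremal subobject $i\colon Q_0\rightarrowtail Q$ \emph{$\rho$-good} if $\rho$ factors (necessarily uniquely, by Property~\ref{PropertyMonomorphism}) as $(\id_B\otimes i)\rho_0$ for some $\rho_0\colon A\to B\otimes Q_0$. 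These subobjects form a nonempty family ($i=\id_Q$ works) admitting a small set of representatives by Property~\ref{PropertySubObjectsSmallSet}, so their intersection $\iota\colon\tilde Q\rightarrowtail Q$ exists by Property~\ref{PropertySmallLimits} and Remark~\ref{RemarkIntersections}, and being an intersection of strong subobjects it is again strong, hence extremal. Applying Property~\ref{PropertyLimitsOfSubobjectsArePreserved} with $M=B$, the monomorphism $\id_B\otimes\iota$ exhibits $B\otimes\tilde Q$ as the intersection of the subobjects $\id_B\otimes i\colon B\otimes Q_0\rightarrowtail B\otimes Q$; since the witnesses $\rho_0$ assemble into a cone over the corresponding diagram with apex $A$, the map $\rho$ factors uniquely as $(\id_B\otimes\iota)\,|\rho|$ for some $|\rho|\colon A\to B\otimes\tilde Q$, and $\iota$ is then an arrow $|\rho|\to\rho$ in $\mathbf{MorTens}(A,B)$.

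Next I would verify that $|\rho|$ lies in $\LIO(\mathbf{MorTens}(A,B))$, i.e. is a tensor epimorphism. Given $f,g\colon\tilde Q\to R$ with $(\id_B\otimes f)|\rho|=(\id_B\otimes g)|\rho|$, let $e\colon E\rightarrowtail\tilde Q$ be their equalizer (which exists by Property~\ref{PropertySmallLimits}). By Property~\ref{PropertyEqualizers} the morphism $\id_B\otimes e$ is the equalizer of $\id_B\otimes f$ and $\id_B\otimes g$, so $|\rho|$ factors through $\id_B\otimes e$, whence $\iota e$ is again $\rho$-good; it is moreover an extremal monomorphism, since $e$ is regular hence strong, a composite of strong monomorphisms is strong, and strong monomorphisms are extremal. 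By minimality of $\tilde Q$ the monomorphism $\iota$ factors through $\iota e$, which forces $e$ to be a split epimorphism and therefore an isomorphism; hence $f=g$.

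For the minimality of $|\rho|$, let $\rho'\colon A\to B\otimes Q'$ be a tensor epimorphism admitting an arrow $\tau\colon\rho'\to\rho$ in $\mathbf{MorTens}(A,B)$, i.e. $(\id_B\otimes\tau)\rho'=\rho$; I must produce an arrow $\rho'\to|\rho|$. I would factor $\tau=i_\tau\pi_\tau$ with $\pi_\tau$ an epimorphism and $i_\tau\colon Q_1\rightarrowtail Q$ an extremal monomorphism, note by Proposition~\ref{PropositionEpimorphismTensorEpimorphism} that $\rho_1:=(\id_B\otimes\pi_\tau)\rho'$ is again a tensor epimorphism, and observe that $(\id_B\otimes i_\tau)\rho_1=\rho$ makes $Q_1$ a $\rho$-good extremal subobject; hence $\iota$ factors as $\iota=i_\tau j$ for a monomorphism $j\colon\tilde Q\rightarrowtail Q_1$, and cancelling $\id_B\otimes i_\tau$ yields $(\id_B\otimes j)|\rho|=\rho_1$. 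Now $j$ is an epimorphism — if $fj=gj$, then $(\id_B\otimes f)\rho_1=(\id_B\otimes g)\rho_1$, so $f=g$ because $\rho_1$ is a tensor epimorphism — and $j$ is an extremal monomorphism, since any factorization $j=p\,\pi$ with $\pi$ an epimorphism gives $\iota=i_\tau j=(i_\tau p)\pi$, forcing $\pi$ to be an isomorphism. Therefore $j$ is an isomorphism and $\sigma:=j^{-1}\pi_\tau$ is the desired arrow $\rho'\to|\rho|$ in $\mathbf{MorTens}(A,B)$. This establishes that $|\rho|$ is the absolute value of $\rho$; defining $\supp\rho:=\tilde Q$ then gives the support of $\rho$.

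The main obstacle will be the minimality step. The seductive but invalid shortcut is to say that the comparison morphism $j$ is an isomorphism because it is simultaneously a monomorphism and an epimorphism, which fails in a non-balanced category; the resolution is to pass first, via Proposition~\ref{PropositionEpimorphismTensorEpimorphism}, to the case where $\tau$ is an extremal monomorphism, and then to use the two honest facts about $j$ separately — it is an epimorphism because $\rho_1$ is a tensor epimorphism, and it is an \emph{extremal} monomorphism because it sits over the extremal monomorphism $i_\tau$. A subsidiary technical point is the bookkeeping: one must keep every subobject that $B\otimes(-)$ is transported through extremal (intersections, and composites with regular monomorphisms), so that Properties~\ref{PropertyLimitsOfSubobjectsArePreserved} and~\ref{PropertyEqualizers} apply directly.
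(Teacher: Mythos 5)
Your proposal is correct and follows essentially the same route as the paper: you form the intersection of all extremal subobjects of $Q$ through which $\rho$ factors (the paper's $\mathop{\mathrm{supp}_0}\rho$), use Property~\ref{PropertyLimitsOfSubobjectsArePreserved} to corestrict $\rho$, prove local initiality via equalizers and Property~\ref{PropertyEqualizers} (the paper's Proposition~\ref{PropositionSuppIsATensorEpimorphism}), and prove minimality by (Epi, ExtrMono)-factorizing the comparison morphism and combining Proposition~\ref{PropositionEpimorphismTensorEpimorphism} with extremality (the paper's Lemma~\ref{LemmaSupportZeroIsTheActualSupport}). The only cosmetic difference is that your tensor-epimorphism step concludes via ``split epi $+$ mono $=$ iso,'' which is a slightly cleaner phrasing of the paper's mutual-subobject argument.
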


In order to prove Theorem~\ref{TheoremAbsValueSupportExistence}, consider the following construction.

Let $\rho \colon A \to B \otimes Q$ be a morphism for some objects $A,B,Q$ in $\mathcal C$.
Consider the category~$\mathcal Q(\rho)$ where:
\begin{itemize}
	\item
	the objects
	are pairs $(\rho_1, i_1)$
	where $\rho_1 \colon A \to B \otimes Q_1$ is a morphism
	and $i_1 \colon Q_1 \rightarrowtail Q$ is an extremal monomorphism
	making the diagram below commutative:
	\begin{equation}\label{EqSupportDef}\xymatrix{
		A \ar[r]^(0.4){\rho_1} \ar[rd]_{\rho} & B \otimes Q_1 \ar[d]^{\id_{B} \otimes\, i_1} \\
		& B \otimes Q
	}\end{equation}
	
	\item the morphisms between $$\rho_1 \colon A \to B \otimes Q_1,
	\quad i_1 \colon Q_1 \rightarrowtail Q$$ and $$\rho_2 \colon   A \to  B \otimes Q_2,
	\quad i_2 \colon Q_2 \rightarrowtail Q$$ are morphisms $\tau \colon Q_1 \to Q_2$
	making the diagrams below commutative:
	$$ \xymatrix{	 A \ar[r]^(0.4){\rho_1} \ar[rd]_{\rho_2} &  B \otimes Q_1 \ar[d]^{\id_{ B} \otimes\, \tau} \\
		&  B \otimes Q_2} \qquad \xymatrix{	Q_1\ \ar@{>->}[r]^{i_1} \ar[d]_{\tau} &  Q \\
		Q_2 \ar@{>->}[ur]_{i_2}}$$
	
\end{itemize}

Consider the functor $T \colon \mathcal Q(\rho) \to \mathcal C$
that maps a pair $(A \to  B \otimes Q_1,\  Q_1 \rightarrowtail Q)$
to $Q_1$.  Define a candidate $\mathop{\mathrm{supp}_0} \rho$ for the support of $\rho$ by $\mathop{\mathrm{supp}_0} \rho := \mathop\mathrm{lim} T$. The latter exists by Properties~\ref{PropertySmallLimits} and \ref{PropertySubObjectsSmallSet}. 

\begin{remark}\label{RemarkSupp0Embedding}
	Note that $i_0 \colon \mathop{\mathrm{supp}_0} \rho \rightarrowtail Q$ is an extremal monomorphism, since $\mathop{\mathrm{supp}_0} \rho$ is a limit (=intersection) of extremal subobjects of $Q$. (Recall that by Property~\ref{PropertyEpiExtrMonoFactorizations}, which follows from Properties~\ref{PropertySmallLimits} and \ref{PropertySubObjectsSmallSet}, all extremal monomorphisms are strong.)
\end{remark}

By Property~\ref{PropertyLimitsOfSubobjectsArePreserved}, if $\mathop{\mathrm{supp}_0} \rho \to Q_1$
is the limiting cone for $T$, then $B \otimes (\mathop{\mathrm{supp}_0} \rho) \to B \otimes Q_1$ is the limiting cone
for $B \otimes T(-)$. However, $A\to B \otimes Q_1$ is a cone over $B \otimes T(-)$ too. Hence there exists a unique morphism
$|\rho|_0 \colon A \to B \otimes \mathop{\mathrm{supp}_0} \rho$ between the cones. In particular, the diagram below is commutative: 

\begin{equation*}\xymatrix{
	A \ar[rr]^(0.35){|\rho|_0} \ar[rd]^{\rho_1} \ar@/_2pc/[rdd]_\rho & & B \otimes \mathop{\mathrm{supp}_0} \rho \ar[ld] 
	\ar@/^2pc/[ldd]^{\id_B \otimes i_0} \\
	& B \otimes Q_1 \ar[d]^{\id_B \otimes i_1} & \\
             & B \otimes Q
}\end{equation*}

We claim that $\mathop{\mathrm{supp}_0} \rho = \supp \rho$ and $|\rho| = |\rho|_0$.

\begin{remark}\label{RemarkSupp0IsGlobalMinimum}
Note that  $i \colon \mathop{\mathrm{supp}_0} \rho \rightarrowtail Q$ is the global minimum in the preorder of extremal subobjects $i_1 \colon Q_1 \rightarrowtail Q$ corresponding to objects in~$\mathcal Q(\rho)$.
\end{remark}

\begin{proposition}\label{PropositionSuppIsATensorEpimorphism} Suppose a monoidal category~$\mathcal C$
	satisfies Properties~\ref{PropertySmallLimits}, \ref{PropertySubObjectsSmallSet}, \ref{PropertyLimitsOfSubobjectsArePreserved} and \ref{PropertyEqualizers}. Then for every  $\rho \colon A \to B \otimes Q$
	the morphism
	$|\rho|_0$ is a tensor epimorphism.
\end{proposition}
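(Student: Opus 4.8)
The plan is to verify directly that $|\rho|_0$ is a locally initial object of $\mathbf{MorTens}(A,B)$, i.e.\ that for any parallel pair $f,g \colon \mathop{\mathrm{supp}_0} \rho \to R$ satisfying $(\id_B \otimes f)|\rho|_0 = (\id_B \otimes g)|\rho|_0$ one has $f=g$. First I would form the equalizer $e \colon E \rightarrowtail \mathop{\mathrm{supp}_0} \rho$ of $f$ and $g$, which exists by Property~\ref{PropertySmallLimits}. Being a regular monomorphism, $e$ is strong, hence extremal. By Property~\ref{PropertyEqualizers} the functor $B \otimes (-)$ sends $e$ to the equalizer $\id_B \otimes e$ of the pair $\id_B \otimes f$, $\id_B \otimes g$; since $|\rho|_0$ equalizes this pair by hypothesis, it factors uniquely as $|\rho|_0 = (\id_B \otimes e)\rho'$ for some morphism $\rho' \colon A \to B \otimes E$.

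Next I would check that $(\rho', i_0 e)$ is an object of $\mathcal Q(\rho)$, where $i_0 \colon \mathop{\mathrm{supp}_0} \rho \rightarrowtail Q$ is the canonical extremal monomorphism from Remark~\ref{RemarkSupp0Embedding}. The composite $i_0 e$ is a composition of two strong monomorphisms (recall that in an (Epi,\,ExtrMono)-structured category, which we are in by Property~\ref{PropertyEpiExtrMonoFactorizations}, every extremal monomorphism is strong), hence strong, hence an extremal monomorphism. Moreover $(\id_B \otimes (i_0 e))\rho' = (\id_B \otimes i_0)(\id_B \otimes e)\rho' = (\id_B \otimes i_0)|\rho|_0 = \rho$, so the diagram~\eqref{EqSupportDef} commutes with $Q_1 := E$. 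Thus $(\rho', i_0 e)$ indeed lies in $\mathcal Q(\rho)$.

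Finally I would invoke Remark~\ref{RemarkSupp0IsGlobalMinimum}: $i_0$ is the global minimum in the preorder of extremal subobjects attached to objects of $\mathcal Q(\rho)$, so $i_0$ factors through $i_0 e$, say $i_0 = (i_0 e)s$ for some $s \colon \mathop{\mathrm{supp}_0} \rho \to E$. Cancelling the monomorphism $i_0$ yields $es = \id_{\mathop{\mathrm{supp}_0} \rho}$, so $e$ is a split epimorphism; being also a monomorphism, $e$ is an isomorphism. Since $fe = ge$ and $e$ is (epi), we conclude $f = g$, as required. The step I expect to be the most delicate is the bookkeeping in the middle paragraph — namely certifying that $i_0 e$ is an extremal monomorphism and that $(\rho', i_0 e)$ genuinely lands in $\mathcal Q(\rho)$, which is exactly where Properties~\ref{PropertySmallLimits}, \ref{PropertySubObjectsSmallSet}, \ref{PropertyEpiExtrMonoFactorizations} (via closure of strong monomorphisms under composition) and \ref{PropertyEqualizers} all have to be combined; everything else is formal diagram chasing.
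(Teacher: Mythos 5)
Your proposal is correct and follows essentially the same route as the paper: form the equalizer of $f$ and $g$, use Property~\ref{PropertyEqualizers} to see that $|\rho|_0$ factors through it, observe that the resulting extremal subobject gives an object of $\mathcal Q(\rho)$, and invoke the minimality of $\mathop{\mathrm{supp}_0}\rho$ to force the equalizer to be an isomorphism. Your final step (extracting a splitting $es=\id$ from the global-minimum property and concluding that $e$ is a split epi and a mono, hence iso) is a slightly more explicit rendering of the paper's closing argument, but it is the same proof.
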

\begin{proof}
	Let $f,g \colon \mathop{\mathrm{supp}_0} \rho \to R$ be such morphisms
	that \begin{equation}\label{EquationMonoAfterTensor}
	({\id_B} \otimes f)|\rho|_0 = ({\id_B} \otimes g) |\rho|_0.\end{equation}
	We claim that $f=g$.
	
	Let $h \colon P \to \mathop{\mathrm{supp}_0} \rho$ be an equalizer of $f$ and $g$. Then by Property~\ref{PropertyEqualizers}
	the morphism
	${\id_B}\otimes h$ is an equalizer of ${\id_B}\otimes f$ and ${\id_B}\otimes g$. Hence~\eqref{EquationMonoAfterTensor} implies that there exists a morphism
	$q \colon A \to  B\otimes P$ making the diagram below commutative:
	$$\xymatrix{ 
		& & B \otimes Q                                  &   &          \\	
		A \ar[rr]^(0.45){|\rho|_0} \ar@{-->}[rrd]^q \ar[rru]^\rho &  & B \otimes (\mathop{\mathrm{supp}_0} \rho)\strut \ar@<0.2pc>[rr]^{{\id_B}\otimes f}
		\ar@<-0.2pc>[rr]_{{\id_B}\otimes g} \ar@{->}[u]_{\id_B\otimes i_0}   & & B \otimes R \\
		& & B \otimes P\strut  \ar[u]_{{\id_B}\otimes h} & &             }
	$$
	Recall that every equalizer, in particular $h$, is an extremal monomorphism.
	By Remark~\ref{RemarkSupp0Embedding}, $\mathop{\mathrm{supp}_0} \rho$ is an extremal subobject of $Q$, whence $P$ is an extremal subobject of $Q$. Moreover, $\rho$ factors through $q$. Since $\mathop{\mathrm{supp}_0} \rho$ is the corresponding limit,
	 $\mathop{\mathrm{supp}_0} \rho$ is an extremal subobject of $P$.
		  At the same time, $P$ is an extremal subobject of $\mathop{\mathrm{supp}_0} \rho$ via $h$. Hence $h$ is an isomorphism and $f=g$.
\end{proof}

Now Theorem~\ref{TheoremAbsValueSupportExistence} follows from Lemma~\ref{LemmaSupportZeroIsTheActualSupport} below:
\begin{lemma}\label{LemmaSupportZeroIsTheActualSupport} Let $\mathcal C$ be a monoidal category satisfying Properties~\ref{PropertySmallLimits}, \ref{PropertySubObjectsSmallSet}--\ref{PropertyLimitsOfSubobjectsArePreserved} and \ref{PropertyEqualizers}. We have $\mathop{\mathrm{supp}_0} \rho = \supp \rho$ and $|\rho| = |\rho|_0$
	for every morphism  $\rho \colon A \to B \otimes Q$.
\end{lemma}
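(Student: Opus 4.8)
The plan is to verify that the morphism $|\rho|_0 \colon A \to B \otimes \mathop{\mathrm{supp}_0}\rho$ built above satisfies the two defining conditions of the absolute value $|\rho|$ in $X := \mathbf{MorTens}(A,B)$; since absolute values are unique up to isomorphism this gives $|\rho| = |\rho|_0$, and then $\supp\rho = \mathop{\mathrm{supp}_0}\rho$ by the definition of the support. By Proposition~\ref{PropositionSuppIsATensorEpimorphism}, $|\rho|_0$ is a tensor epimorphism, i.e. $|\rho|_0 \in \LIO(X)$. By Remark~\ref{RemarkSupp0Embedding} the comparison morphism $i_0 \colon \mathop{\mathrm{supp}_0}\rho \rightarrowtail Q$ is an extremal monomorphism, and by the defining diagram of $|\rho|_0$ one has $(\id_B \otimes i_0)|\rho|_0 = \rho$; thus $i_0$ is a morphism $|\rho|_0 \to \rho$ in $X$, so $\rho$ is an object of $X(|\rho|_0)$, and moreover $(|\rho|_0, i_0)$ is an object of $\mathcal Q(\rho)$, which by Remark~\ref{RemarkSupp0IsGlobalMinimum} is its global minimum.

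It remains to show that $|\rho|_0 \preccurlyeq \rho'$ for every tensor epimorphism $\rho' \colon A \to B \otimes Q'$ admitting a morphism $\rho' \to \rho$ in $X$, i.e. that there is a morphism $\rho' \to |\rho|_0$. Such a morphism $\rho' \to \rho$ is a morphism $\tau \colon Q' \to Q$ with $(\id_B \otimes \tau)\rho' = \rho$. Since $\mathcal C$ is (Epi, ExtrMono)-structured (this follows from Properties~\ref{PropertySmallLimits} and~\ref{PropertySubObjectsSmallSet}; see Proposition~\ref{PropositionPropertyEquilizers}), I would factor $\tau = i'\pi'$ with $\pi' \colon Q' \twoheadrightarrow \bar Q$ an epimorphism and $i' \colon \bar Q \rightarrowtail Q$ an extremal monomorphism, and put $\bar\rho := (\id_B \otimes \pi')\rho'$. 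Then $(\id_B \otimes i')\bar\rho = (\id_B \otimes \tau)\rho' = \rho$, so $(\bar\rho, i')$ is an object of $\mathcal Q(\rho)$, and $\bar\rho$ is again a tensor epimorphism by Proposition~\ref{PropositionEpimorphismTensorEpimorphism} because $\pi'$ is an epimorphism. As $(|\rho|_0, i_0)$ is the global minimum of $\mathcal Q(\rho)$, there is a morphism $\lambda \colon \mathop{\mathrm{supp}_0}\rho \to \bar Q$ with $i'\lambda = i_0$; and since $\id_B \otimes i'$ is a monomorphism by Property~\ref{PropertyMonomorphism}, cancelling it in $(\id_B \otimes i')(\id_B \otimes \lambda)|\rho|_0 = (\id_B \otimes i_0)|\rho|_0 = \rho = (\id_B \otimes i')\bar\rho$ gives $(\id_B \otimes \lambda)|\rho|_0 = \bar\rho$.

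The crucial point is that this $\lambda$ is an isomorphism. On the one hand $\lambda$ is a monomorphism, because $i_0 = i'\lambda$ is one, and in fact an extremal monomorphism: in any factorization $\lambda = me$ with $e$ an epimorphism, $i_0 = (i'm)e$ exhibits the extremal monomorphism $i_0$ as a composite through the epimorphism $e$, forcing $e$ to be an isomorphism. On the other hand $\lambda$ is an epimorphism: if $f, g \colon \bar Q \to R$ satisfy $f\lambda = g\lambda$, then $(\id_B \otimes f)\bar\rho = (\id_B \otimes f\lambda)|\rho|_0 = (\id_B \otimes g\lambda)|\rho|_0 = (\id_B \otimes g)\bar\rho$, hence $f = g$ since $\bar\rho$ is a tensor epimorphism. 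An extremal monomorphism that is also an epimorphism is an isomorphism, so $\lambda$ is invertible, and $\sigma := \lambda^{-1}\pi' \colon Q' \to \mathop{\mathrm{supp}_0}\rho$ satisfies $(\id_B \otimes \sigma)\rho' = (\id_B \otimes \lambda^{-1})(\id_B \otimes \pi')\rho' = (\id_B \otimes \lambda^{-1})\bar\rho = |\rho|_0$, i.e. $\sigma$ is the desired morphism $\rho' \to |\rho|_0$. This completes the verification that $|\rho|_0 = |\rho|$, whence $\mathop{\mathrm{supp}_0}\rho = \supp\rho$.

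The routine parts are the citations (Propositions~\ref{PropositionSuppIsATensorEpimorphism} and~\ref{PropositionEpimorphismTensorEpimorphism}) and the bookkeeping with (Epi, ExtrMono)-factorizations and with monomorphisms of the form $\id_B \otimes (-)$. The step I expect to require the most care is the last paragraph: producing the comparison morphism $\lambda$ from the global-minimum property of $\mathop{\mathrm{supp}_0}\rho$ in $\mathcal Q(\rho)$ and then showing that it is at once an extremal monomorphism (from the factorization data) and an epimorphism (from $\bar\rho$ being a tensor epimorphism), hence an isomorphism; one also has to be careful throughout about the direction of arrows in the preorder $\LIO(X)$.
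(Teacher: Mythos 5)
Your proof is correct and follows essentially the same route as the paper's: reduce to showing $|\rho|_0 \preccurlyeq \rho'$ for any tensor epimorphism $\rho'$ through which $\rho$ factors, take the (Epi, ExtrMono)-factorization of the comparison morphism, use the global-minimum property of $\mathop{\mathrm{supp}_0}\rho$ in $\mathcal Q(\rho)$ to obtain $\lambda$, and show $\lambda$ is simultaneously an extremal monomorphism and an epimorphism, hence an isomorphism. The only cosmetic differences are that you verify the epimorphism property of $\lambda$ inline (where the paper invokes Proposition~\ref{PropositionEpimorphismTensorEpimorphism} a second time) and derive its extremality by cancellation from $i_0 = i'\lambda$ rather than quoting Remark~\ref{RemarkSupp0IsGlobalMinimum}.
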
	
\begin{proof} By Proposition~\ref{PropositionSuppIsATensorEpimorphism}, $|\rho|_0$ is a tensor epimorphism, i.e. $|\rho|_0 \in \LIO(\mathbf{MorTens}(A,B))$. Hence it is sufficient to show that if  $\rho = (\id_B \otimes \tau)\rho_1$
	for some tensor epimorphism $\rho_1 \colon A \to B \otimes Q_1$ and a morphism $\tau \colon Q_1 \to Q$, then $|\rho|_0 \preccurlyeq \rho_1$.
	
	Let $\tau = i \pi$ be the (Epi, ExtrMono)-factorization of $\tau$
where $\pi \colon Q_1 \twoheadrightarrow Q_2$ is an epimorphism and $i \colon Q_2 \rightarrowtail Q$
is an extremal monomorphism.

Recall that by $i_0 \colon \mathop{\mathrm{supp}_0} \rho \rightarrowtail Q$
we denote the extremal monomorphism such that $\rho = (\id_B \otimes i_0)|\rho|_0$.

Consider the following diagram:
$$\xymatrix{ & B \otimes Q_1 \ar[d]^{\id_B \otimes \pi} \\
	 A \ar[ru]^{\rho_1} \ar[d]_{|\rho|_0}  &  B \otimes Q_2 \strut \ar@{>->}[d]^{\id_B \otimes i}  \\ 
     B \otimes (\mathop{\mathrm{supp}_0} \rho)\, \ar@{>->}[r]^(0.6){\id_B \otimes i_0} \ar@{>-->}[ru]^{\id_B \otimes h}   &  B \otimes Q }$$

By Remark~\ref{RemarkSupp0IsGlobalMinimum} there exists an extremal monomorphism $h \colon \mathop{\mathrm{supp}_0} \rho  \rightarrowtail Q_2 $ such that $i_0 = i h$. By Proposition~\ref{PropositionEpimorphismTensorEpimorphism},
$(\id_B \otimes \pi)\rho_1$ is a tensor epimorphism, whence, by the same proposition, 
$h$ is an epimorphism. Now the extremality of $h$ implies that $h$ is an isomorphism.
Therefore, $|\rho|_0=(\id_B \otimes h^{-1}\pi)\rho_1$ and $|\rho|_0 \preccurlyeq \rho_1$.
\end{proof}		

We conclude Section~\ref{SubsectionSupportMorTensAB} with two propositions that we will use to calculate supports:

\begin{proposition}\label{PropositionRhoMonoComposition}
	Let $\mathcal C$ be a monoidal category satisfying Properties~\ref{PropertySmallLimits}, \ref{PropertySubObjectsSmallSet}--\ref{PropertyLimitsOfSubobjectsArePreserved} and \ref{PropertyEqualizers} and let $\rho \colon A \to B \otimes Q$ be a morphism for some objects $A,B,Q$ in $\mathcal C$. If $i \colon Q \rightarrowtail \tilde Q$ is an extremal monomorphism, then
	$|(\id_B \otimes i) \rho| = |\rho|$ and $\supp \bigl((\id_B \otimes i) \rho \bigr) = \supp \rho$.
\end{proposition}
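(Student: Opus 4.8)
The plan is to reduce the statement to the explicit description of the support furnished by Lemma~\ref{LemmaSupportZeroIsTheActualSupport}. Write $\rho' := (\id_B \otimes i)\rho \colon A \to B \otimes \tilde Q$. By that lemma, $|\rho| = |\rho|_0$ and $|\rho'| = |\rho'|_0$, so in particular both $|\rho|_0$ and $|\rho'|_0$ are locally initial objects of $\mathbf{MorTens}(A,B)$. Hence it is enough to show that $|\rho|_0$ and $|\rho'|_0$ are isomorphic in $\mathbf{MorTens}(A,B)$: this gives $|\rho| = |\rho'|$ up to the canonical isomorphism, and $\supp\rho = \supp\rho'$ then follows immediately, the support being by definition the codomain object of $|\cdot|$. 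To produce the isomorphism I will exhibit a morphism $|\rho|_0 \to \rho'$ and a morphism $|\rho'|_0 \to \rho$ in $\mathbf{MorTens}(A,B)$. Granting these, the defining property of the absolute value yields $|\rho'| \preccurlyeq |\rho|_0$ (since $\rho'$ then belongs to $\mathbf{MorTens}(A,B)(|\rho|_0)$) and, symmetrically, $|\rho| \preccurlyeq |\rho'|_0$; as $|\rho| = |\rho|_0$ and $|\rho'| = |\rho'|_0$, the locally initial objects $|\rho|_0$ and $|\rho'|_0$ are mutually $\preccurlyeq$-related, hence isomorphic (a locally initial object admits at most one endomorphism, so the two comparison morphisms compose to identities).

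The morphism $|\rho|_0 \to \rho'$ is immediate. Writing $i_0 \colon \mathop{\mathrm{supp}_0}\rho \rightarrowtail Q$ for the canonical extremal monomorphism with $\rho = (\id_B \otimes i_0)|\rho|_0$, functoriality of $B \otimes (-)$ gives
$$(\id_B \otimes (i\,i_0))\,|\rho|_0 = (\id_B \otimes i)(\id_B \otimes i_0)|\rho|_0 = (\id_B \otimes i)\rho = \rho',$$
so $i\,i_0 \colon \mathop{\mathrm{supp}_0}\rho \to \tilde Q$ is a morphism $|\rho|_0 \to \rho'$ in $\mathbf{MorTens}(A,B)$.

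The morphism $|\rho'|_0 \to \rho$ is the substantive step. Since $i$ is an extremal monomorphism and $(\id_B \otimes i)\rho = \rho'$, the pair $(\rho,\ i)$ is an object of the category $\mathcal Q(\rho')$ used to construct $\mathop{\mathrm{supp}_0}\rho'$. By Remark~\ref{RemarkSupp0IsGlobalMinimum} applied to $\rho'$, the object $\mathop{\mathrm{supp}_0}\rho'$ is the global minimum in the preorder of extremal subobjects of $\tilde Q$ attached to objects of $\mathcal Q(\rho')$; in particular the limiting cone of the functor $T' \colon \mathcal Q(\rho') \to \mathcal C$, $(\rho_1, i_1) \mapsto Q_1$, has a component $b \colon \mathop{\mathrm{supp}_0}\rho' \to Q$ at the object $(\rho, i)$, and since $|\rho'|_0$ is by construction the unique morphism from the cone $A \xrightarrow{\rho_1} B \otimes T'(-)$ to the limiting cone $B \otimes \mathop{\mathrm{supp}_0}\rho' \to B \otimes T'(-)$, it satisfies $(\id_B \otimes b)|\rho'|_0 = \rho$. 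Thus $b$ is a morphism $|\rho'|_0 \to \rho$ in $\mathbf{MorTens}(A,B)$, and the proof is complete.

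The only delicate point is the construction of the morphism $|\rho'|_0 \to \rho$: it works precisely because $\mathop{\mathrm{supp}_0}$ is built as a limit over the category of extremal-monomorphism factorizations, so that $\rho$ itself --- factoring $\rho'$ through the extremal monomorphism $i$ --- already appears in that diagram. The remainder of the argument is formal, using only Lemma~\ref{LemmaSupportZeroIsTheActualSupport}, the universal property of the absolute value, and the uniqueness of endomorphisms of a locally initial object.
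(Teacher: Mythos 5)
Your proof is correct and follows essentially the same route as the paper: the easy comparison comes from composing with $i\,i_0$, and the substantive one comes from observing that the factorization of $(\id_B\otimes i)\rho$ through the extremal monomorphism $i$ places $(\rho,i)$ in the limit diagram $\mathcal Q\bigl((\id_B\otimes i)\rho\bigr)$ defining $\mathop{\mathrm{supp}_0}$, which is exactly the paper's key step. The only (cosmetic) difference is in the final assembly: the paper exhibits an explicit comparison morphism $\tau\colon\supp\rho\to\supp\tilde\rho$ and inverts it by monomorphism cancellation, whereas you conclude via mutual comparability of locally initial objects in the preorder $\LIO(\mathbf{MorTens}(A,B))$.
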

\begin{proof}  Let $\tilde \rho := (\id_B \otimes i) \rho$.	
	 By Corollary~\ref{CorollaryLIOCoarserFinerSufficientConditions}, we have $|\tilde \rho| \preccurlyeq |\rho|$. 
	 In particular, there exists a morphism $\tau \colon \supp \rho \to \supp \tilde \rho$
	 such that $(\id_B \otimes \tau)|\rho|=|\tilde \rho|$:
	 	 $$\xymatrix{ A \ar[d]_{|\rho|} \ar[rd]^{|\tilde\rho|} \ar@/^3pc/[rrdd]^{\tilde\rho}\ar@/_3.5pc/[dd]_\rho &  \\
	 	B \otimes (\supp \rho)\strut_{\strut} \ar@{>->}[d] \ar@{-->}[r]_{\id_B \otimes \tau} &  B \otimes (\supp \tilde \rho)\strut_{\strut} \ar@{>->}[rd] \\
	 	B \otimes Q\ \ar@{>->}[rr]_{\id_B \otimes i}  & & B \otimes \tilde Q \\
	 }$$
 Recall that, by Remark~\ref{RemarkSupp0Embedding} and Lemma~\ref{LemmaSupportZeroIsTheActualSupport},
 the monomorphisms $\supp \rho \rightarrowtail \tilde Q$
 and $\supp \tilde\rho \rightarrowtail \tilde Q$ are extremal. Moreover,
 $\tau$ is compatible with $\supp \rho \rightarrowtail \tilde Q$
 and $\supp \tilde\rho \rightarrowtail \tilde Q$, since $|\rho|$ is a tensor epimorphism. On the other hand, $\tilde \rho$ factors through $B \otimes (\supp \rho)$. By Remark~\ref{RemarkSupp0IsGlobalMinimum} and Lemma~\ref{LemmaSupportZeroIsTheActualSupport}, $\supp \tilde \rho$ is the global minimum among all such extremal subobjects $Q_1$ of $\tilde Q$ that $\tilde \rho$ factors through $B \otimes Q_1$. Hence $\supp \tilde \rho$ is a subobject of $\supp \rho$. At the same time, $\supp \rho$ is a subobject of $\supp  \tilde \rho$ via $\tau$. Thus $\tau$ is an isomorphism and we may identify
	$|\tilde \rho|$ with $|\rho|$ and $\supp \tilde\rho$ with $\supp \rho$.
\end{proof}	

\begin{proposition}\label{PropositionATensorEpimorphismIsSupp}
	Let $\mathcal C$ be a monoidal category satisfying Properties~\ref{PropertySmallLimits}, \ref{PropertySubObjectsSmallSet}--\ref{PropertyLimitsOfSubobjectsArePreserved} and \ref{PropertyEqualizers}.
	 For every tensor epimorphism $\rho \colon A \to B \otimes Q$
	and every extremal monomorphism $i \colon Q \rightarrowtail \tilde Q$
	we have $|(\id_B \otimes i)\rho|=\rho$ and $\supp \bigl((\id_B \otimes i)\rho\bigr) = Q$.
\end{proposition}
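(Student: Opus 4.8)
The plan is to deduce this immediately from Proposition~\ref{PropositionRhoMonoComposition} together with the elementary observation that a locally initial object is its own absolute value. So first I would recall that for any $x\in\LIO(X)$ one has $x=|x|$ (item~(2) of the Remarks following the definition of the absolute value). Applying this in the category $X=\mathbf{MorTens}(A,B)$ to the tensor epimorphism $\rho$, which by definition lies in $\LIO(\mathbf{MorTens}(A,B))$, gives $|\rho|=\rho$. By the very definition of the support as the codomain object $\tilde Q$ appearing in a representative $|\rho|\colon A\to B\otimes\tilde Q$, and since $\rho\colon A\to B\otimes Q$ is itself such a representative, we may take $\supp\rho=Q$ on the nose.

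Next I would invoke Proposition~\ref{PropositionRhoMonoComposition}, whose hypotheses on $\mathcal C$ (Properties~\ref{PropertySmallLimits}, \ref{PropertySubObjectsSmallSet}--\ref{PropertyLimitsOfSubobjectsArePreserved} and~\ref{PropertyEqualizers}) are precisely the ones assumed here, with the extremal monomorphism $i\colon Q\rightarrowtail\tilde Q$. That proposition states $|(\id_B\otimes i)\rho|=|\rho|$ and $\supp\bigl((\id_B\otimes i)\rho\bigr)=\supp\rho$. Chaining this with the previous paragraph yields
$$|(\id_B\otimes i)\rho|=|\rho|=\rho \qquad\text{and}\qquad \supp\bigl((\id_B\otimes i)\rho\bigr)=\supp\rho=Q,$$
which is exactly the assertion.

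I do not expect any real obstacle here: the substantive content — that the intersection construction $\mathop{\mathrm{supp}_0}$ genuinely computes the absolute value (Lemma~\ref{LemmaSupportZeroIsTheActualSupport}), and that the absolute value is insensitive to postcomposition with an extremal monomorphism (Proposition~\ref{PropositionRhoMonoComposition}) — has already been established. The only mildly delicate point is bookkeeping about the choice of representative: $\supp$ is defined only up to an isomorphism compatible with $|\rho|$, so strictly speaking one should say $\supp\rho$ \emph{may be taken to be} $Q$, using $\rho$ itself as the chosen representative of $|\rho|$; once this convention is fixed the equalities above hold literally.
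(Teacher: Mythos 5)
Your proof is correct and follows essentially the same route as the paper: the paper likewise combines Proposition~\ref{PropositionRhoMonoComposition} (to reduce to $i=\id_Q$) with the observation that a tensor epimorphism lies in $\LIO(\mathbf{MorTens}(A,B))$ and is therefore its own absolute value. Your remark about the choice of representative for $\supp\rho$ is a fair point of bookkeeping but does not affect the argument.
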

\begin{proof} By Proposition~\ref{PropositionRhoMonoComposition} it is sufficient to consider the case $i = \id_Q$, $\tilde Q = Q$.
	However, as $\rho \in \LIO(\mathbf{MorTens}(A,B))$, we have $|\rho|=\rho$.
\end{proof}

\subsection{Supports of comodule structures}\label{SubsectionSupportComodule}

We first show that under certain conditions on the base category the support of a comodule structure
is always a subcomonoid. 

\begin{theorem}\label{TheoremSupportComodule}
	Suppose $\mathcal C$ is a monoidal category
	satisfying Properties~\ref{PropertySmallLimits}, \ref{PropertySubObjectsSmallSet}--\ref{PropertyTensorPullback}, \ref{PropertyEqualizers} and~\ref{PropertyExtrMonomorphism} of Section~\ref{SubsectionSupportCoactingConditions}.
	Let $M$ be a comodule over a comonoid $(C,\Delta,\varepsilon)$
	and let $\rho \colon M \to M \otimes C$ be the corresponding morphism.
	Then in $\mathcal C$ there exist unique morphisms $$\Delta_0 \colon
	\supp \rho \to \supp \rho \otimes \supp \rho \text{\quad and \quad}\varepsilon_0 \colon \supp \rho \to \mathbbm{1}$$ making the diagrams below commutative:
	\begin{equation}\label{EquationSuppDelta}\xymatrix{ \supp \rho \ar@{-->}[d]^{\Delta_0}\ \ar@{>->}[r] &  C \ar[d]^{\Delta} \\
		(\supp \rho) \otimes (\supp \rho) \ar@{>->}[r]\  & C \otimes C \\  }\end{equation}
	\begin{equation}\label{EquationSuppEpsilon}
	\xymatrix{ \supp \rho\phantom{\Bigl|} \ar@{-->}[r]^(0.6){\varepsilon_0}\ar@{>->}[d]  &   \mathbbm{1}  \\
		C \ar[ru]^\varepsilon &       }
	\end{equation}
	
	Moreover, $(\supp \rho,\Delta_0,\varepsilon_0)$ is a comonoid and the diagrams below are commutative too:
	
	\begin{equation}\label{EquationComoduleDelta} \xymatrix{	 M \ar[rr]^(0.4){|\rho|} \ar[d]_{|\rho|} & &  M \otimes \supp \rho \ar[d]_{|\rho| \otimes \id_{\supp \rho}} \\
		M \otimes \supp \rho \ar@{->}[rr]^(0.4){{\id_M} \otimes \Delta_0} & &  M \otimes \supp \rho \otimes \supp \rho  }
	\end{equation}
	\begin{equation}\label{EquationComoduleEpsilon}\xymatrix{ M \ar[r]^(0.32){|\rho|} \ar@{=}[d] & M \otimes \supp \rho \ar@{->}[d]^{{\id_M} \otimes \varepsilon_0}  \\
		M   &  M \otimes \mathbbm{1}\ar[l]_(0.55){\sim}  } \end{equation}
	In other words, $M$ is a  $(\supp \rho)$-comodule and the monomorphism $\supp \rho \rightarrowtail C$ is a comonoid homomorphism.
\end{theorem}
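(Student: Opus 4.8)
The plan is to construct $\varepsilon_0$ and $\Delta_0$ by transporting the counit and comultiplication of $C$ along the extremal monomorphism $i\colon\supp\rho\rightarrowtail C$ furnished by Lemma~\ref{LemmaSupportZeroIsTheActualSupport}, and then to verify the comonoid and comodule identities by testing them against monomorphisms built from $i$ (these stay monomorphisms by Properties~\ref{PropertyMonomorphism} and~\ref{PropertyExtrMonomorphism}). Recall that by construction $\rho=(\id_M\otimes i)|\rho|$ and, by Proposition~\ref{PropositionSuppIsATensorEpimorphism}, $|\rho|$ is a tensor epimorphism. First I would set $\varepsilon_0:=\varepsilon i$: then \eqref{EquationSuppEpsilon} holds by definition and determines $\varepsilon_0$ uniquely, while \eqref{EquationComoduleEpsilon} is merely the counit axiom of the $C$-comodule $M$, since $(\id_M\otimes\varepsilon_0)|\rho|=(\id_M\otimes\varepsilon)(\id_M\otimes i)|\rho|=(\id_M\otimes\varepsilon)\rho$.

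The substantive step is the construction of $\Delta_0$. Expanding coassociativity $(\rho\otimes\id_C)\rho=(\id_M\otimes\Delta)\rho$ via $\rho=(\id_M\otimes i)|\rho|$ yields the identity
\begin{equation}
(\id_M\otimes i\otimes i)(|\rho|\otimes\id_{\supp\rho})|\rho|=(\id_M\otimes\Delta i)|\rho|\tag{$\ast$}
\end{equation}
in $\mathcal C(M,\,M\otimes C\otimes C)$. Next I would form the pullback
$$\xymatrix{
P \ar[r]^{q} \ar@{>->}[d]_{p} \ar@{}[rd]|<{\pullback} & (\supp\rho)\otimes(\supp\rho) \ar@{>->}[d]^{i\otimes i} \\
\supp\rho \ar[r]^{\Delta i} & C\otimes C
}$$
which exists by Property~\ref{PropertySmallLimits}; since $i\otimes i$ is an extremal, hence strong, monomorphism by Property~\ref{PropertyExtrMonomorphism}, its pullback $p$ is again a strong (in particular extremal) monomorphism. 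Tensoring this square on the left by $M$ and invoking Property~\ref{PropertyTensorPullback} gives a new pullback square over which $|\rho|$ and $(|\rho|\otimes\id_{\supp\rho})|\rho|$ form a compatible cone by $(\ast)$; hence there is a morphism $w\colon M\to M\otimes P$ with $(\id_M\otimes p)w=|\rho|$. Consequently $\rho=(\id_M\otimes ip)w$ factors through the extremal subobject $ip\colon P\rightarrowtail C$ (a composite of strong monomorphisms, hence strong), so by the minimality of $\supp\rho$ among such subobjects (Remark~\ref{RemarkSupp0IsGlobalMinimum} together with Lemma~\ref{LemmaSupportZeroIsTheActualSupport}) the monomorphism $p$ has a section and is therefore an isomorphism. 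Setting $\Delta_0:=q\,p^{-1}$ we get $(i\otimes i)\Delta_0=\Delta i$, which is \eqref{EquationSuppDelta}, and $\Delta_0$ is the unique such morphism because $i\otimes i$ is a monomorphism.

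It then remains to check that $(\supp\rho,\Delta_0,\varepsilon_0)$ is a comonoid and that \eqref{EquationComoduleDelta} holds. Since $i$ is a monomorphism, so are $i\otimes i$, $i\otimes i\otimes i$ and $\id_M\otimes i\otimes i$ by Properties~\ref{PropertyMonomorphism} and~\ref{PropertyExtrMonomorphism}; postcomposing the coassociativity and counit equations for $\Delta_0$ with $i\otimes i\otimes i$, resp.\ $i$, reduces them to the corresponding axioms of $(C,\Delta,\varepsilon)$ via \eqref{EquationSuppDelta} and \eqref{EquationSuppEpsilon} --- a routine diagram chase. For \eqref{EquationComoduleDelta} I would postcompose with $\id_M\otimes i\otimes i$: the left-hand side becomes $(\id_M\otimes i\otimes i)(|\rho|\otimes\id_{\supp\rho})|\rho|$, the right-hand side becomes $(\id_M\otimes i\otimes i)(\id_M\otimes\Delta_0)|\rho|=(\id_M\otimes\Delta i)|\rho|$, and these agree by $(\ast)$. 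Finally, the commutativity of \eqref{EquationSuppDelta} and \eqref{EquationSuppEpsilon} is exactly the assertion that $\supp\rho\rightarrowtail C$ is a comonoid homomorphism.

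The hard part is the second paragraph: showing that $\Delta i$ factors through $(\supp\rho)\otimes(\supp\rho)\rightarrowtail C\otimes C$. This is where the nontrivial hypotheses enter --- Property~\ref{PropertyExtrMonomorphism} to keep $i\otimes i$ an extremal subobject, Property~\ref{PropertyTensorPullback} to transport the relevant pullback through $M\otimes(-)$, and the minimality of the support to force the auxiliary subobject $P\rightarrowtail\supp\rho$ to be an isomorphism. Everything else is formal bookkeeping with tensor powers of $i$.
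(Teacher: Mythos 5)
Your proof is correct and follows essentially the same route as the paper's: define $\varepsilon_0$ by restriction, obtain $\Delta_0$ by pulling back the comultiplication against $(\supp\rho)\otimes(\supp\rho)\rightarrowtail C\otimes C$ (using Property~\ref{PropertyExtrMonomorphism} for extremality and Property~\ref{PropertyTensorPullback} to tensor the pullback with $M$), factor $|\rho|$ through the tensored pullback via comodule coassociativity, and invoke the minimality of the support; the remaining identities are checked after postcomposing with tensor powers of the monomorphism $i$. The only (immaterial) difference is that you pull back along $\Delta i$ to get a subobject of $\supp\rho$ and show it is an isomorphism, whereas the paper pulls back $\Delta$ itself to get an extremal subobject $P\rightarrowtail C$ and obtains $\supp\rho\to P$ directly from minimality.
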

\begin{proof} By Property~\ref{PropertyMonomorphism} the morphism $(\supp \rho) \otimes (\supp \rho) \to C \otimes C$
	is a monomorphism, which implies the uniqueness of $\Delta_{0}$.
	In addition, from~\eqref{EquationSuppEpsilon} above it is clear that the only way to define $\varepsilon_{0}$ is to restrict
	$\varepsilon$ on $\supp \rho$. Define $\varepsilon_{0}$ in this way, i.e. as the composition of
	$\supp \rho \to C$ and $\varepsilon$. Now the commutativity of~\eqref{EquationComoduleEpsilon} follows from the definition
	of a comodule applied to the  $C$-comodule $M$.
	
	Consider the pullback
	\begin{equation}\label{EqTheoremSupportComodulePullback}
\xymatrix{P\ \ar[d]_(0.45){\xi} \ar@{>->}[r] \ar@{}[rd]|<{\pullbackc} &  C \ar[d]^(0.45){\Delta} \\
		(\supp \rho) \otimes (\supp \rho) \ar@{>->}[r]\  & C \otimes C \\  }	
	\end{equation}
	By Property~\ref{PropertyExtrMonomorphism} the monomorphism $P \rightarrowtail C$ is extremal.
	(Recall that by Property~\ref{PropertyEpiExtrMonoFactorizations}, which follows from Properties~\ref{PropertySmallLimits} and \ref{PropertySubObjectsSmallSet}, all extremal monomorphisms are strong and therefore pullback stable.)
	
	Tensor the diagram~\eqref{EqTheoremSupportComodulePullback} above by $M$ (recall that by Property~\ref{PropertyTensorPullback} it will still be a pullback) and incorporate into a larger one (the outer square is commutative since $M$ is a $C$-comodule):
	$$\xymatrix{
		M   \ar[rrr]^{|\rho|} \ar[dd]^{|\rho|}\ar@{-->}[rrd]^{\rho_1}      & &           &   M \otimes (\supp \rho)\strut_{\strut}
		\ar@{-->}[ld]_{{\id_M}\otimes \tau}        \ar@{>->}[d] \\	
		& &	M \otimes P\ \,\ar[d]_{{\id_M} \otimes \xi} \ar@{>->}[r] \ar@{}[rd]|<{\pullbackd}    &   M \otimes C                                        \ar[d]^{\Delta}      \\
		M \otimes (\supp \rho)   
		\ar[rr]^(0.4){|\rho| \otimes {\id_{\supp \rho}}}    & & M \otimes ((\supp \rho) \otimes (\supp \rho))\ \,\ar@{>->}[r]  &  M \otimes( C \otimes C)    \\  }$$
	By the universal property of a pullback there exists $\rho_1 \colon M \to M \otimes P$ 
	and by the definition of $\supp \rho$ there exists $\tau \colon \supp \rho \rightarrowtail P$
	making the diagram above commutative.
	Now it is sufficient to define $\Delta_{0} := \xi\tau$.
	The commutativity of~\eqref{EqTheoremSupportComodulePullback} and the definition of $\tau$
	imply the commutativity of~\eqref{EquationSuppDelta}.
	 The commutativity of~\eqref{EquationComoduleDelta}
	follows from the fact that $M \otimes (\supp \rho) \otimes (\supp \rho) \to M \otimes C \otimes C$ is a monomorphism.
	The axioms of a comonoid for $(\supp \rho,\Delta_0,\varepsilon_0)$ are verified analogously:
	$$ \xymatrix{  \supp \rho \ar[rr]^(0.4){\Delta_0}\ar[d]^(0.45){\Delta_0} & & (\supp \rho) \otimes (\supp \rho)\ \ar@{>->}[r] \ar[d]^(0.45){\Delta_0\otimes{\id_{\supp \rho}}} & C \otimes C \ar[dd]^{\Delta\otimes {\id_C}} \\
		(\supp \rho) \otimes (\supp \rho)\strut \ar@{>->}[d] \ar[rr]^(0.4){{\id_{\supp \rho}}\otimes\Delta_0} & &(\supp \rho) \otimes (\supp \rho) \otimes (\supp \rho)
		\ar@{>->}[rd]  & \\
		C \otimes C \ar[rrr]^{ {\id_C}\otimes\Delta} & & &  C \otimes C \otimes C}$$
	
	$$ \xymatrix{ 
		\supp \rho\   \ar@{>->}[r]  \ar[d]^{\Delta_0} \ar@/_65pt/[dd]_{\sim}      &  C  \ar[d]^\Delta \ar@/^65pt/[dd]^{\sim}         \\
		(\supp \rho) \otimes (\supp \rho)\    \ar@{>->}[r]
		\ar[d]^{{\id_{\supp \rho}}\otimes \varepsilon_{0}} &  C \otimes C
		\ar[d]^{{\id_{\supp \rho}}\otimes \varepsilon}  \\
		(\supp \rho) \otimes \mathbbm{1}\   \ar@{>->}[r]    &  C \otimes \mathbbm{1} \\
	}$$
(The diagram for the left counit axiom is constructed similarly.)
\end{proof}	

Now we show that a morphism between comodule structures is always a comonoid homomorphism if its domain is a tensor epimorphism:

\begin{proposition}\label{PropositionFromTensorEpiComonoidMorphism}
	Let $\mathcal C$ be a monoidal category and let $\rho_i \colon M \to M \otimes C_i$ define on an object $M$ structures of $C_i$-comodules
	for comonoids $(C_i, \Delta_i, \varepsilon_i)$ for $i=1,2$.
	Suppose that $\rho_1$ is a tensor epimorphism and the diagram below is commutative for some morphism $\tau \colon C_1 \to C_2$:
	\begin{equation*}\xymatrix{
		M \ar[r]^(0.35){\rho_1} \ar[rd]_(0.45){\rho_{2}} & M \otimes C_1 \ar[d]^{{\id_M}\otimes \tau} \\
		& M \otimes C_2
	}\end{equation*}
	Then $\tau$ is a comonoid homomorphism.
\end{proposition}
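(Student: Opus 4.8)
The plan is to exploit the very definition of a tensor epimorphism: since $\rho_1 \in \LIO(\mathbf{MorTens}(M,M))$, any two parallel morphisms $f,g\colon C_1 \to R$ in $\mathcal C$ agree as soon as $({\id_M}\otimes f)\rho_1 = ({\id_M}\otimes g)\rho_1$. A comonoid homomorphism $\tau\colon (C_1,\Delta_1,\varepsilon_1)\to(C_2,\Delta_2,\varepsilon_2)$ is precisely a morphism satisfying $\Delta_2\tau = (\tau\otimes\tau)\Delta_1$ and $\varepsilon_2\tau = \varepsilon_1$, so it will suffice to verify these two identities after precomposing the relevant sides with $\rho_1$.

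For the comultiplication, I would compute $({\id_M}\otimes\Delta_2\tau)\rho_1 = ({\id_M}\otimes\Delta_2)({\id_M}\otimes\tau)\rho_1 = ({\id_M}\otimes\Delta_2)\rho_2$, using the hypothesis $({\id_M}\otimes\tau)\rho_1 = \rho_2$; by the coassociativity axiom of the $C_2$-comodule $M$ this equals $(\rho_2\otimes{\id_{C_2}})\rho_2$ (up to the associator of $\mathcal C$). On the other side, $({\id_M}\otimes(\tau\otimes\tau)\Delta_1)\rho_1 = ({\id_M}\otimes\tau\otimes\tau)({\id_M}\otimes\Delta_1)\rho_1 = ({\id_M}\otimes\tau\otimes\tau)(\rho_1\otimes{\id_{C_1}})\rho_1$ by the coassociativity axiom of the $C_1$-comodule $M$; since $({\id_M}\otimes\tau\otimes\tau)(\rho_1\otimes{\id_{C_1}}) = (({\id_M}\otimes\tau)\rho_1)\otimes\tau = \rho_2\otimes\tau = (\rho_2\otimes{\id_{C_2}})({\id_M}\otimes\tau)$, this rewrites as $(\rho_2\otimes{\id_{C_2}})({\id_M}\otimes\tau)\rho_1 = (\rho_2\otimes{\id_{C_2}})\rho_2$. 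The two composites with $\rho_1$ therefore coincide, and the tensor-epimorphism property of $\rho_1$ gives $\Delta_2\tau = (\tau\otimes\tau)\Delta_1$.

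For the counit, I would similarly compute $({\id_M}\otimes\varepsilon_2\tau)\rho_1 = ({\id_M}\otimes\varepsilon_2)\rho_2$, which equals ${\id_M}$ modulo the right unit isomorphism $M\cong M\otimes\mathbbm{1}$ by the counit axiom of the $C_2$-comodule $M$, while $({\id_M}\otimes\varepsilon_1)\rho_1 = {\id_M}$ modulo the same isomorphism by the counit axiom of the $C_1$-comodule $M$. Hence the two composites with $\rho_1$ agree and, $\rho_1$ being a tensor epimorphism, $\varepsilon_2\tau = \varepsilon_1$, completing the argument. There is no genuine obstacle here: the only point requiring care is keeping track of the coherence (associativity and unit) isomorphisms of $\mathcal C$ when invoking the comodule axioms, which is harmless by Mac Lane's coherence theorem, so one may argue as if $\mathcal C$ were strict. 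The whole content of the proposition is that the tensor-epimorphism property of $\rho_1$ converts the two defining identities of a comonoid morphism into identities of morphisms out of $M$, each of which is an immediate consequence of the comodule axioms for $\rho_1$ and $\rho_2$.
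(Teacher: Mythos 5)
Your proof is correct and follows essentially the same route as the paper: the paper displays the two diagrams whose left squares and outer square/pentagon commute by the comodule axioms for $\rho_1$ and $\rho_2$, and then invokes the tensor-epimorphism property of $\rho_1$ to cancel it on the left — exactly the computation you carried out in equations. The only point worth noting is that your intermediate identity $({\id_M}\otimes\tau\otimes\tau)(\rho_1\otimes{\id_{C_1}})=(\rho_2\otimes{\id_{C_2}})({\id_M}\otimes\tau)$ is just bifunctoriality of $\otimes$ combined with the hypothesis $({\id_M}\otimes\tau)\rho_1=\rho_2$, which is fine.
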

\begin{proof} Consider the diagrams
	\begin{equation}\label{EquationComoduleDeltaTauComon} \xymatrix{	 M \ar[rr]^(0.4){\rho_1} \ar[d]_{\rho_1} & &  M \otimes C_1 \ar[d]_{{\id_M} \otimes \Delta_1}
		\ar[rr]^{\id_M \otimes \tau}
		& &  M \otimes C_2 \ar[d]_{{\id_M} \otimes \Delta_2}\\
		M \otimes C_1 \ar@{->}[rr]^(0.4){\rho_1 \otimes \id_{C_1}} & &  M \otimes C_1 \otimes C_1 \ar[rr]^{{\id_M} \otimes {\tau\otimes \tau}} & & M \otimes C_2\otimes C_2  }
	\end{equation}
	\begin{equation}\label{EquationComoduleEpsilonTauComon}\xymatrix{ M \ar[r]^(0.35){\rho_1} \ar@{=}[d] & M \otimes C_1 \ar@{->}[d]_{{\id_M} \otimes \varepsilon_1} \ar[r]^{{\id_M} \otimes \tau}  & M \otimes C_2 \ar[ld]^{{\id_M} \otimes \varepsilon_{2}} \\
		M   &  M \otimes \mathbbm{1}\ar[l]_(0.6){\sim}  } \end{equation}
	where the left squares and the outer, respectively, square and pentagon are commutative by the definition of a comodule. Then use the definition of a tensor epimorphism.
\end{proof}	

\begin{remark} Suppose that $\mathcal C$ satisfies Properties~\ref{PropertySmallLimits}, \ref{PropertySubObjectsSmallSet}--\ref{PropertyTensorPullback}, \ref{PropertyEqualizers} and~\ref{PropertyExtrMonomorphism} of Section~\ref{SubsectionSupportCoactingConditions}.
Theorem~\ref{TheoremSupportComodule} and Proposition~\ref{PropositionFromTensorEpiComonoidMorphism} imply that in the construction of $\supp \rho$
via the limit (see $\mathop{\mathrm{supp}_0} \rho$ in Section~\ref{SubsectionSupportMorTensAB})
in the case of a comodule structure $\rho \colon M \to M \otimes C$ over a comonoid $C$ it is sufficient to consider
only comodule structures $\rho_1 \colon M \to M \otimes Q_1$ for comonoids $Q_1$ and comonoid homomorphisms between $Q_1$.
(The limit $\lim T$ is still taken in $\mathcal C$.)
\end{remark}

\begin{example}
	In the case $\mathcal C = \mathbf{Vect}_\mathbbm{k}$, where $\mathbbm{k}$ is a field, $\supp \rho$ corresponds to taking the intersection of all such subcoalgebras that the map can be factored through them. Therefore $\supp \rho$ coincides with the support defined in~\cite{AGV1}. (See also Examples~\ref{support} (\ref{ExampleVectMorTensSupp}).)
\end{example}

Now we are ready to interpret Theorem~\ref{TheoremSupportComodule} and Proposition~\ref{PropositionFromTensorEpiComonoidMorphism} in terms of the Lifting Problem.

Let $\mathcal C$ be a monoidal category.
For a given object $M$ in $\mathcal C$ denote by $\mathbf{ComodStr}(M)$
the category where
\begin{itemize}
	\item the objects are morphisms $\rho \colon M \to M \otimes C$
	defining on $M$ a comodule structure for some comonoid $C$;
	\item the morphisms between $\rho_1 \colon M \to M \otimes C_1$ and $\rho_2 \colon   M \to  M \otimes C_2$
	are comonoid homomorphisms $\tau \colon C_1 \to C_2$
	making the diagram below commutative:
	$$ \xymatrix{	 M \ar[r]^(0.4){\rho_1} \ar[rd]_{\rho_2} &  M \otimes C_1 \ar[d]^{\id_{ B} \otimes\, \tau} \\
		&  M \otimes C_2}$$
\end{itemize}

Denote by $G$ 
the forgetful functor $\mathbf{ComodStr}(M) \to \mathbf{MorTens}(M, M)$.

Suppose that $\mathcal C$ satisfies Properties~\ref{PropertySmallLimits}, \ref{PropertySubObjectsSmallSet}--\ref{PropertyTensorPullback}, \ref{PropertyEqualizers} and~\ref{PropertyExtrMonomorphism} of Section~\ref{SubsectionSupportCoactingConditions}. Fix
a comodule structure $\rho \colon M \to M \otimes C$  over a comonoid $C$.
Theorem~\ref{TheoremSupportComodule} implies that $|\rho|_G=G\rho_0$
for a unique object $\rho_0$ in $\mathbf{ComodStr}(M)$.  By Proposition~\ref{PropositionFromTensorEpiComonoidMorphism},
 $\rho_0$  satisfies the assumptions of Proposition~\ref{PropositionLIOLiftingAbsValue}
and, therefore, $\rho_0$ is an initial object in $\mathbf{ComodStr}(M)_G(|\rho|_G)$.

\subsection{Universal comeasuring monoids}\label{SubsectionUnivComeasExistence}

Fix $\Omega$-magmas $A$ and $B$ in a braided monoidal category $\mathcal C$. Consider the category $\mathbf{Comeas}(A,B)$ where
\begin{itemize}
	\item the objects are all comeasurings $\rho \colon A \to B \otimes Q$ for arbitrary monoids $Q$;
	\item the morphisms from $\rho_1 \colon A \to B \otimes Q_1$
to $\rho_2 \colon A \to B \otimes Q_2$  are monoid homomorphisms $\varphi \colon Q_1 \to Q_2$
making the diagram below commutative:

$$\xymatrix{ A \ar[r]^(0.4){\rho_1} 
	\ar[rd]_{\rho_2}
	& B \otimes Q_1 \ar[d]^{\id_B \otimes \varphi} \\
	& B \otimes Q_2} $$
\end{itemize}

Denote by $G_1$ the forgetful functor $\mathbf{Comeas}(A,B)\to \mathbf{MorTens}(A,B)$. Given a tensor epimorphism $\rho_U \colon A \to B \otimes U$ for some object $U$ in $\mathcal C$, let us call the monoid $\mathcal{A}^\square(\rho_U)$ corresponding to the initial object $\rho_U^\mathbf{Comeas} \colon A \to B \otimes \mathcal{A}^\square(\rho_U)$ in $\mathbf{Comeas}(A,B)_{G_1}(\rho_U)$ (if it exists) the \textit{$U$-universal comeasuring monoid} from $A$ to $B$.

\begin{theorem}\label{TheoremMonUnivComeasExistence}
	  Suppose that a braided monoidal category $\mathcal C$
	satisfies  Properties~\ref{PropertySmallLimits}, \ref{PropertyEpiExtrMonoFactorizations}, \ref{PropertyFactorObjectsSmallSet}, \ref{PropertyMonomorphism},  \ref{PropertyEpimorphism},  \ref{PropertySwitchProdTensorIsAMonomorphism}, \ref{PropertyFreeMonoid} of Section~\ref{SubsectionSupportCoactingConditions}.
	Then there exists an initial object in $\mathbf{Comeas}(A,B)_{G_1}(\rho_U)$ if $\mathbf{Comeas}(A,B)_{G_1}(\rho_U)$ is not empty.
	In other words, the Lifting Problem for the forgetful functor $$G_1 \colon \mathbf{Comeas}(A,B)\to \mathbf{MorTens}(A,B)$$
	has a solution for all such $\rho_U \in \LIO(\mathbf{MorTens}(A,B))$.
\end{theorem}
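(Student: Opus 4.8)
The plan is to lift $\rho_U$ along $G_1$ by a construction modelled on that of Theorem~\ref{TheoremColimitsMon}, with the free monoid $\mathcal{F}U$ playing the role of the free monoids there. Write $\mathcal{F}\dashv U$ for the free--monoid adjunction of Property~\ref{PropertyFreeMonoid}, $\eta$ for its unit, and, for a monoid homomorphism $\pi\colon\mathcal{F}U\to Q$ that is also an epimorphism in $\mathcal C$, put $\rho_\pi:=(\id_B\otimes\pi\eta_U)\rho_U\colon A\to B\otimes Q$. First I would isolate a \emph{sub-comeasuring lemma}: if $i\colon Q'\to Q$ is a monoid homomorphism that is moreover a monomorphism, and $(\id_B\otimes i)\sigma$ is a comeasuring for some morphism $\sigma\colon A\to B\otimes Q'$, then $\sigma$ is itself a comeasuring. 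Indeed, for $\omega\in\Omega$ the two sides of the comeasuring equation for $\sigma$ are morphisms $A^{\otimes s(\omega)}\to B^{\otimes t(\omega)}\otimes Q'$, and (using naturality of the braiding and that $i$ intertwines the multiplications) the corresponding equation for $(\id_B\otimes i)\sigma$ is obtained from it by postcomposition with $\id_{B^{\otimes t(\omega)}}\otimes i$, a monomorphism by Property~\ref{PropertyMonomorphism}. Together with Proposition~\ref{PropositionMonoidEpiMono} (applicable because Property~\ref{PropertyEpiExtrMonoFactorizations} makes extremal monomorphisms strong) and the (Epi, ExtrMono)-factorization, this lemma shows: if $\mathbf{Comeas}(A,B)(\rho_U)$ is nonempty, say it contains $\rho=(\id_B\otimes\tau)\rho_U$ with $\tau\colon U\to Q$ (which is the unique such morphism, since $\rho_U\in\LIO(\mathbf{MorTens}(A,B))$), then factoring the monoid homomorphism $\mathcal{F}U\to Q$ extending $\tau$ through an epimorphism $\pi_L\colon\mathcal{F}U\twoheadrightarrow L$ and an extremal monomorphism $i_L\colon L\rightarrowtail Q$ produces a monoid $L$ with $\rho_{\pi_L}$ a comeasuring.

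Next, let $(\pi_\alpha\colon\mathcal{F}U\twoheadrightarrow Q_\alpha)_{\alpha\in\Lambda}$ enumerate, up to isomorphism, all monoid homomorphisms which are epimorphisms in $\mathcal C$ and for which $\rho_{\pi_\alpha}$ is a comeasuring; $\Lambda$ is a small set by Property~\ref{PropertyFactorObjectsSmallSet} and nonempty by the previous paragraph. Order $\Lambda$ by $\alpha\preccurlyeq\beta$ iff $\pi_\alpha=\pi_{\beta\alpha}\pi_\beta$ for some $\pi_{\beta\alpha}$, which is then unique and, by Property~\ref{PropertyEpimorphism}, a monoid homomorphism, just as in Theorem~\ref{TheoremColimitsMon}. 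Form $Q_0:=\lim_\alpha Q_\alpha$ in $\mathcal C$ (it exists by Property~\ref{PropertySmallLimits}, and since $U$ creates limits it is at the same time the limit in $\mathsf{Mon}(\mathcal C)$, so that the comparison morphism $\mathcal{F}U\to Q_0$ with components $\pi_\alpha$ is a monoid homomorphism). Factor this comparison morphism as an epimorphism $\pi_P\colon\mathcal{F}U\twoheadrightarrow P$ followed by an extremal monomorphism $i_P\colon P\rightarrowtail Q_0$; by Proposition~\ref{PropositionMonoidEpiMono}, $P$ is a monoid and $\pi_P,i_P$ are monoid homomorphisms. I would then set $\mathcal{A}^\square(\rho_U):=P$ and $\rho_U^{\mathbf{Comeas}}:=\rho_{\pi_P}$.

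The heart of the proof — and the step I expect to be the main obstacle — is to show that $\rho_{\pi_P}$ is genuinely a comeasuring; this is a comeasuring-flavoured version of the diagram chase in Theorem~\ref{TheoremColimitsMon} and of Porst's Crucial Lemma. Write $p_\alpha\colon Q_0\to Q_\alpha$ for the limit projections and $q_\alpha:=p_\alpha i_P\colon P\to Q_\alpha$, so that $\rho_{\pi_\alpha}=(\id_B\otimes q_\alpha)\rho_{\pi_P}$. For each $\omega\in\Omega$, the two sides of the comeasuring equation for $\rho_{\pi_P}$ are morphisms $A^{\otimes s(\omega)}\to B^{\otimes t(\omega)}\otimes P$ which, by the naturality used in the sub-comeasuring lemma, become equal after postcomposition with $\id_{B^{\otimes t(\omega)}}\otimes q_\alpha$ for every $\alpha$, because the analogous equation holds for the comeasuring $\rho_{\pi_\alpha}$. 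It therefore suffices that the family $\bigl(\id_{B^{\otimes t(\omega)}}\otimes q_\alpha\bigr)_{\alpha}$ be jointly monomorphic, and this is precisely where Property~\ref{PropertySwitchProdTensorIsAMonomorphism} is needed: $Q_0\rightarrowtail\prod_\alpha Q_\alpha$ is a monomorphism (a limit being an equalizer of two maps between products), hence so is $\id_{B^{\otimes t(\omega)}}\otimes\bigl(Q_0\rightarrowtail\prod_\alpha Q_\alpha\bigr)$ by Property~\ref{PropertyMonomorphism}; composing with the monomorphism $B^{\otimes t(\omega)}\otimes\prod_\alpha Q_\alpha\rightarrowtail\prod_\alpha\bigl(B^{\otimes t(\omega)}\otimes Q_\alpha\bigr)$ of Property~\ref{PropertySwitchProdTensorIsAMonomorphism} shows $\bigl(\id_{B^{\otimes t(\omega)}}\otimes p_\alpha\bigr)_\alpha$ jointly monomorphic, and then so is $\bigl(\id_{B^{\otimes t(\omega)}}\otimes q_\alpha\bigr)_\alpha$ since $\id_{B^{\otimes t(\omega)}}\otimes i_P$ is a monomorphism (Property~\ref{PropertyMonomorphism}). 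Hence $\rho_{\pi_P}$ is a comeasuring, i.e. $(\pi_P,P)$ occurs among the $(\pi_\alpha,Q_\alpha)$; being the limit of all the $Q_\alpha$, it is the global maximum of $\Lambda$ and $i_P$ is an isomorphism.

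Finally, I would verify that $\rho_U^{\mathbf{Comeas}}$ is initial in $\mathbf{Comeas}(A,B)(\rho_U)$. It belongs to that subcategory via $\pi_P\eta_U\colon U\to P$. Given any object $\rho=(\id_B\otimes\tau)\rho_U$ of $\mathbf{Comeas}(A,B)(\rho_U)$, the first paragraph places the epimorphic monoid quotient $\pi_L\colon\mathcal{F}U\twoheadrightarrow L$ associated with $\tau$ among the $(\pi_\alpha,Q_\alpha)$, say $L=Q_{\alpha_0}$; then $i_L q_{\alpha_0}\colon P\to Q$ is a monoid homomorphism, and $q_{\alpha_0}\pi_P=\pi_{\alpha_0}=\pi_L$ gives $(\id_B\otimes i_L q_{\alpha_0})\rho_U^{\mathbf{Comeas}}=(\id_B\otimes i_L\pi_L\eta_U)\rho_U=\rho$. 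It is the unique such morphism: if $\varphi\colon P\to Q$ is a monoid homomorphism with $(\id_B\otimes\varphi)\rho_U^{\mathbf{Comeas}}=\rho$, then $\varphi\pi_P\eta_U=\tau=i_Lq_{\alpha_0}\pi_P\eta_U$ because $\rho_U$ is a tensor epimorphism, hence $\varphi\pi_P=i_Lq_{\alpha_0}\pi_P$ since $\mathcal{F}U$ is free on $U$, and $\varphi=i_Lq_{\alpha_0}$ since $\pi_P$ is an epimorphism. Thus $\mathbf{Comeas}(A,B)(\rho_U)$ has an initial object, the Lifting Problem for $G_1$ is solved, and the $U$-universal comeasuring monoid is $\mathcal{A}^\square(\rho_U)=P$.
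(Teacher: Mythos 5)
Your proposal is correct and follows essentially the same route as the paper: the limit over all epimorphic monoid quotients of $\mathcal F U$ that induce comeasurings, joint monomorphy via Properties~\ref{PropertyMonomorphism} and~\ref{PropertySwitchProdTensorIsAMonomorphism} to verify the comeasuring identities on the limit, the (Epi, ExtrMono)-factorization together with Proposition~\ref{PropositionMonoidEpiMono} to descend comeasurings to epimorphic images, and the final universal-property check using that $\rho_U$ is a tensor epimorphism and $\mathcal F U$ is free. The only (harmless) organizational difference is that you factor the comparison morphism $\mathcal F U \to Q_0$ up front and take its image $P$ as the universal monoid, whereas the paper works with $Q_0$ directly and proves afterwards (Lemma~\ref{LemmaThetaEpi}) that the comparison morphism is an epimorphism.
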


We first describe the construction and then prove in several lemmas that this construction indeed provides
the initial object in $\mathbf{Comeas}(A,B)_{G_1}(\rho_U)$.

Recall that by $\mathcal F \colon  \mathcal C \to \mathsf{Mon}(\mathcal C)$ we denote the left adjoint to the forgetful functor $\mathsf{Mon}(\mathcal C) \to \mathcal C$ and let $\eta_M \colon M \to \mathcal FM$ be the unit of this adjunction.
	Consider the category~$\Lambda$ where objects $\alpha$ correspond to all monoid homomorphisms $\tau_\alpha \colon {\mathcal F}U \twoheadrightarrow  Q_\alpha$ such that
	they are epimorphisms in $\mathcal C$ and the composition
	$({\id_B} \otimes {\tau_\alpha\eta_U})\rho_U$ is a comeasuring. The arrow $\alpha \to \beta$ exists if and only if
	$\tau_\beta = \tau_{\alpha\beta}\tau_\alpha$ for some morphism $\tau_{\alpha\beta} \colon Q_\alpha \to Q_\beta$.
	We say that $\tau_\alpha$ and $\tau_\beta$ are equivalent if $ \tau_{\alpha\beta}$ is an isomorphism.
	Define the functor $T$ as follows: $T\alpha = Q_\alpha$,
	$T(\alpha \to \beta) =\tau_{\alpha\beta}$ for every objects $\alpha,\beta$ in $\Lambda$.
	By Property~\ref{PropertyFactorObjectsSmallSet}, we may assume that the set of equivalence classes is small. Since in $ \lim T$ it is sufficient to take only a single representative for each equivalence class,
	by Property~\ref{PropertySmallLimits}, the limit
	 $Q_0 := \lim T$ exists. Denote by $\varphi_\alpha \colon Q_0 \to Q_\alpha$ the limiting cone.
	 Since epimorphisms  $\tau_\alpha \colon {\mathcal F}U \twoheadrightarrow  Q_\alpha$ form themselves a cone,
	 there exists a unique morphism $\theta$ between the cones:
		$$	\xymatrix{  U \ar[r]^(0.4){\eta_U} &  {\mathcal F}U  \ar@{-->}[r]^{\theta} \ar@{->>}[rd]^{\tau_\alpha} &  Q_0 \ar@{->}[d]^{\varphi_\alpha} \\
		&  &  \ Q_\alpha 
	}$$
As in the proof of Theorem~\ref{TheoremColimitsMon}, Property~\ref{PropertyEpimorphism} implies that morphisms $\tau_{\alpha\beta}$ between different epimorphisms $\tau_\alpha$ are monoid homomorphisms too.
Hence, by the remarks made at the beginning of Section~\ref{Subsection(Co)limitsMonHopfMonReflectivity},
 there exists a unique monoid structure on $Q_0$ making $\theta$ a monoid homomorphism.

Below we show that $({\id_B} \otimes {\theta\eta_U})\rho_U$ is the initial object in $\mathbf{Comeas}(A,B)_{G_1}(\rho_U)$.

\begin{lemma}
$({\id_B} \otimes {\theta\eta_U})\rho_U$ is a comeasuring.
\end{lemma}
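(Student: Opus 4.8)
The plan is to verify, one operation at a time, the defining identities of a comeasuring for $\rho := (\id_B \otimes \theta\eta_U)\rho_U \colon A \to B \otimes Q_0$, by transporting back along the limiting cone the comeasuring identities that already hold for each $\rho_\alpha := (\id_B \otimes \tau_\alpha\eta_U)\rho_U$. Since $A$ and $B$ carry fixed $\Omega$-magma structures, what has to be checked is that for every $\omega \in \Omega$, with $s=s(\omega)$ and $t=t(\omega)$,
$$(\omega_B \otimes \id_{Q_0})\,\rho^{\widetilde\otimes s} \;=\; \rho^{\widetilde\otimes t}\,\omega_A \colon A^{\otimes s} \longrightarrow B^{\otimes t}\otimes Q_0,$$
where $\rho^{\widetilde\otimes n}$ denotes the $n$-fold monoidal power of $\rho$ in $\mathsf{MorTens}(Q_0)$ (built from $\rho$, the braiding of $\mathcal C$, and the unit and multiplication of $Q_0$). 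I would first note that $\Lambda$ is nonempty: since $\mathbf{Comeas}(A,B)(\rho_U)\neq\varnothing$, pick a witnessing $\rho'\colon A\to B\otimes Q'$ together with $\tau\colon U\to Q'$ satisfying $(\id_B\otimes\tau)\rho_U=\rho'$; extend $\tau$ to a monoid homomorphism ${\mathcal F}U\to Q'$, take its $(\mathrm{Epi},\mathrm{ExtrMono})$-factorization ${\mathcal F}U\twoheadrightarrow Q''\xrightarrow{\,i\,}Q'$ (a factorization in $\mathsf{Mon}(\mathcal C)$ by Proposition~\ref{PropositionMonoidEpiMono}), and transport the comeasuring identities of $\rho'$ back across the monomorphism $\id_B\otimes i$ (Property~\ref{PropertyMonomorphism}); this exhibits ${\mathcal F}U\twoheadrightarrow Q''$ as an element of $\Lambda$.

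The proof then rests on two structural observations. First, each monoid homomorphism $\varphi_\alpha\colon Q_0\to Q_\alpha$ from the limiting cone induces a strict monoidal functor $\mathsf{MorTens}(Q_0)\to\mathsf{MorTens}(Q_\alpha)$, $\psi\mapsto(\id_B\otimes\varphi_\alpha)\psi$ — strict because $\varphi_\alpha$ preserves the unit and multiplication and because the braiding is natural — so by induction on $n$ one gets $(\id_{B^{\otimes n}}\otimes\varphi_\alpha)\,\rho^{\widetilde\otimes n}=\rho_\alpha^{\widetilde\otimes n}$, where $(\id_B\otimes\varphi_\alpha)\rho=\rho_\alpha$ because $\varphi_\alpha\theta=\tau_\alpha$. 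Second, by the very definition of $\Lambda$, each $\rho_\alpha$ is a comeasuring, hence $(\omega_B\otimes\id_{Q_\alpha})\,\rho_\alpha^{\widetilde\otimes s}=\rho_\alpha^{\widetilde\otimes t}\,\omega_A$. Composing the two sides of the identity to be proved with $\id_{B^{\otimes t}}\otimes\varphi_\alpha$ and applying, in turn, the interchange law, the first observation, and the second, yields
$$(\id_{B^{\otimes t}}\otimes\varphi_\alpha)\,(\omega_B\otimes\id_{Q_0})\,\rho^{\widetilde\otimes s} \;=\; (\id_{B^{\otimes t}}\otimes\varphi_\alpha)\,\rho^{\widetilde\otimes t}\,\omega_A \qquad\text{for every }\alpha.$$

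It then remains to promote this family of equalities to the single equality over $Q_0$, and this — not any particular computation — is the step I expect to be the main obstacle, since $B^{\otimes t}\otimes(-)$ is not assumed to preserve the limit defining $Q_0$. The resolution is that Properties~\ref{PropertyMonomorphism} and~\ref{PropertySwitchProdTensorIsAMonomorphism} make the family $\{\id_{B^{\otimes t}}\otimes\varphi_\alpha\}_\alpha$ jointly monic nonetheless: the canonical comparison $c\colon Q_0\to\prod_\alpha Q_\alpha$ is a monomorphism (a limiting cone is jointly monic), hence $\id_{B^{\otimes t}}\otimes c$ is a monomorphism by Property~\ref{PropertyMonomorphism}, and composing it with the monomorphism $B^{\otimes t}\otimes\prod_\alpha Q_\alpha\rightarrowtail\prod_\alpha(B^{\otimes t}\otimes Q_\alpha)$ of Property~\ref{PropertySwitchProdTensorIsAMonomorphism} (here $\Lambda\neq\varnothing$ is needed) gives a monomorphism $B^{\otimes t}\otimes Q_0\rightarrowtail\prod_\alpha(B^{\otimes t}\otimes Q_\alpha)$ whose $\alpha$-th component is precisely $\id_{B^{\otimes t}}\otimes\varphi_\alpha$. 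Cancelling this monomorphism in the displayed equalities gives $(\omega_B\otimes\id_{Q_0})\rho^{\widetilde\otimes s}=\rho^{\widetilde\otimes t}\omega_A$ for all $\omega\in\Omega$, which is exactly the assertion that $\rho$ is a comeasuring; everything outside this joint-monicity point is routine bookkeeping with the monoidal structure of $\mathsf{MorTens}(-)$.
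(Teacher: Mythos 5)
Your proof is correct and follows essentially the same route as the paper: reduce the comeasuring identity over $Q_0$ to the known identities over each $Q_\alpha$ via the cone $\varphi_\alpha$ (using $\varphi_\alpha\theta=\tau_\alpha$ and functoriality of the monoidal structure of $\mathsf{MorTens}(-)$ along monoid maps), then cancel the jointly monic family $\{\id_{B^{\otimes t(\omega)}}\otimes\varphi_\alpha\}_\alpha$ obtained from the comparison monomorphism into $\prod_\alpha Q_\alpha$ together with Properties~\ref{PropertyMonomorphism} and~\ref{PropertySwitchProdTensorIsAMonomorphism}. Your explicit check that $\Lambda\neq\varnothing$ (needed for Property~\ref{PropertySwitchProdTensorIsAMonomorphism}) is a point the paper leaves implicit in the hypothesis that $\mathbf{Comeas}(A,B)(\rho_U)$ is nonempty, but it is a welcome addition rather than a divergence.
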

\begin{proof}
By the definition, a morphism is a comeasuring if for every $\omega\in \Omega$ a certain diagram is commutative.
Fix $\omega\in \Omega$ and denote by $\sigma_1$ and $\sigma_2$ the two boundary compositions of the corresponding diagram for $({\id_B} \otimes {\eta_U})\rho_U$. It is possible that $\sigma_1\ne \sigma_2$, but \begin{equation}\label{EquationSigma12TauAlpha}
\left({\id_B^{t(\omega)}}\otimes\tau_\alpha\right) \sigma_1 = \left({\id_B^{t(\omega)}}\otimes\tau_\alpha\right) \sigma_2
\end{equation}
for every $\alpha$ in $\Lambda$. Now it is sufficient to show that 
\begin{equation}\label{EquationSigma12Theta}\left({\id_B^{t(\omega)}}\otimes\theta\right) \sigma_1 = \left({\id_B^{t(\omega)}}\otimes\theta\right) \sigma_2.\end{equation}
Denote by $\Lambda_0$ a small set of equivalence class representatives in $\Lambda$.
Let $\varphi$ be the unique morphism making the diagram below commutative ($\pi_\alpha$ are the corresponding projections, $\alpha\in\Lambda_0$):

$$\xymatrix{{\mathcal F}U \ar@{->>}[d]^{\tau_\alpha} \ar[r]^\theta  &  Q_0 \ar@{-->}[d]^{\varphi}
	                                                             \ar[ld]^{\varphi_\alpha}  \\
            Q_\alpha          & \prod\limits_{\alpha\in\Lambda_0} Q_\alpha \ar@{->}[l]^{\pi_\alpha}  }$$
Note that since $Q_0$ is a limit (as we have mentioned above, it is sufficient to take the limit
just on $\Lambda_0$), $\varphi$ is a monomorphism.

Define the morphisms $\varkappa$ and $\tau$ as the unique morphisms making the diagram below commutative (now $\tilde\pi_\alpha$ are the corresponding projections; in addition, by Properties~\ref{PropertyMonomorphism} and~\ref{PropertyEpimorphism}, ${\id_B^{t(\omega)}}\otimes {\,\varphi}$ is a monomorphism and ${\id_B^{t(\omega)}}\otimes\tau_\alpha$ is an epimorphism for every $\alpha \in \Lambda$):
$$\xymatrix{ A^{s(\omega)} \ar@<0.5ex>[r]^(0.4){\sigma_1} \ar@<-0.5ex>[r]_(0.4){\sigma_2} & 
		B^{t(\omega)} \otimes {\mathcal F}U \ar@{->>}[d]^{{\id_B^{t(\omega)}}\otimes\tau_\alpha} \ar@/_5pc/@{-->}[dd]_\tau \ar[r]^{{\id_B^{t(\omega)}}\otimes{\,\theta}}  &  	B^{t(\omega)} \otimes Q_0\strut \ar@{>->}[d]^{{\id_B^{t(\omega)}}\otimes {\,\varphi}} 
		   \\ 
&	B^{t(\omega)} \otimes Q_\alpha  &  \ar@{->}[l]^{{\id_B^{t(\omega)}}\otimes{\,\pi_\alpha}}
	 B^{t(\omega)} \otimes \prod\limits_{\alpha\in\Lambda_0} Q_\alpha  \ar@/^1pc/@{>-->}[ld]^(0.4)\varkappa	 \\
	 	& \prod\limits_{\alpha\in\Lambda_0} B^{t(\omega)} \otimes Q_\alpha \ar[u]^{\tilde\pi_\alpha}  \\
}$$
By Property~\ref{PropertySwitchProdTensorIsAMonomorphism} the morphism $\varkappa$ is a monomorphism.
Moreover, \eqref{EquationSigma12TauAlpha} implies
\begin{equation*}
	\tau \sigma_1 = \tau \sigma_2.\end{equation*}
Hence 
\begin{equation*}
	\varkappa\left({\id_B^{t(\omega)}}\otimes{\varphi\theta}\right) \sigma_1 = 	\varkappa\left({\id_B^{t(\omega)}}\otimes{\varphi\theta}\right) \sigma_2.\end{equation*}
Since both $\varkappa$ and ${\id_B^{t(\omega)}}\otimes{\varphi}$
are monomorphisms, we get \eqref{EquationSigma12Theta}. As a consequence, $({\id_B} \otimes {\theta\eta_U})\rho_U$ is a comeasuring too.
\end{proof}

\begin{lemma}\label{LemmaEpiAgainAComeasuring} If for some monoid homomorphism $f \colon {\mathcal F}U \to Q$ the morphism
		$$({\id_B} \otimes {f\eta_U})\rho_U$$ is a comeasuring
		and $f = i\pi$ is the (Epi, ExtrMono)-factorization of $f$,
		then  $$({\id_B} \otimes {\pi\eta_U})\rho_U$$ is a comeasuring too.
\end{lemma}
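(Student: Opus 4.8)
The plan is to exploit the fact that $\rho_U$ is a tensor epimorphism together with the monomorphism properties of the tensor product. Let $\omega \in \Omega$ be fixed, and denote by $\sigma_1, \sigma_2 \colon A^{\otimes s(\omega)} \to B^{\otimes t(\omega)} \otimes {\mathcal F}U$ the two boundary compositions of the comeasuring diagram for $({\id_B} \otimes {\eta_U})\rho_U$, exactly as in the previous lemma. Since $({\id_B} \otimes {f\eta_U})\rho_U$ is assumed to be a comeasuring, we have $\bigl({\id_B^{\otimes t(\omega)}} \otimes f\bigr)\sigma_1 = \bigl({\id_B^{\otimes t(\omega)}} \otimes f\bigr)\sigma_2$. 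We want to deduce $\bigl({\id_B^{\otimes t(\omega)}} \otimes \pi\bigr)\sigma_1 = \bigl({\id_B^{\otimes t(\omega)}} \otimes \pi\bigr)\sigma_2$, which is precisely the statement that $({\id_B} \otimes {\pi\eta_U})\rho_U$ respects the operation $\omega$; ranging over all $\omega$ then gives the lemma.

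First I would use $f = i\pi$ to write $\bigl({\id_B^{\otimes t(\omega)}} \otimes f\bigr) = \bigl({\id_B^{\otimes t(\omega)}} \otimes i\bigr)\bigl({\id_B^{\otimes t(\omega)}} \otimes \pi\bigr)$. The hypothesis then reads
$$\bigl({\id_B^{\otimes t(\omega)}} \otimes i\bigr)\bigl({\id_B^{\otimes t(\omega)}} \otimes \pi\bigr)\sigma_1 = \bigl({\id_B^{\otimes t(\omega)}} \otimes i\bigr)\bigl({\id_B^{\otimes t(\omega)}} \otimes \pi\bigr)\sigma_2.$$
Now $i$ is an extremal monomorphism, hence a monomorphism, and by Property~\ref{PropertyMonomorphism} (applied $t(\omega)$ times, i.e. using that $\id_{M} \otimes i$ and $i \otimes \id_{M}$ are monomorphisms for every object $M$, and composing) the morphism ${\id_B^{\otimes t(\omega)}} \otimes i$ is again a monomorphism. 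Cancelling it from the left yields $\bigl({\id_B^{\otimes t(\omega)}} \otimes \pi\bigr)\sigma_1 = \bigl({\id_B^{\otimes t(\omega)}} \otimes \pi\bigr)\sigma_2$, as desired. This shows the $\omega$-diagram for $({\id_B} \otimes {\pi\eta_U})\rho_U$ commutes; since $\omega$ was arbitrary, $({\id_B} \otimes {\pi\eta_U})\rho_U$ is a comeasuring.

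The only subtle point — and the one I expect to be the main obstacle to state cleanly — is the passage from "$i$ is a monomorphism" to "${\id_B^{\otimes t(\omega)}} \otimes i$ is a monomorphism", which relies on Property~\ref{PropertyMonomorphism} and an induction on $t(\omega)$ (writing $B^{\otimes t(\omega)} = B \otimes B^{\otimes (t(\omega)-1)}$ and peeling off tensor factors one at a time, using that $\id \otimes i$ is mono and compositions of monos are mono); the degenerate case $t(\omega)=0$ is trivial since then the tensored morphism is just $\id_{\mathbbm 1} \otimes i \cong i$. Note that here we genuinely need the extremal monomorphism $i$ rather than merely an epimorphism, and we never need to invoke the tensor-epimorphism property of $\rho_U$ for this lemma — $\rho_U$ only enters implicitly through the shape of $\sigma_1,\sigma_2$ carried over from the previous lemma. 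Everything else is formal diagram manipulation already set up there.
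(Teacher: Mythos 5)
Your argument is the same as the paper's: fix $\omega$, take the two boundary compositions $\sigma_1,\sigma_2$ of the diagram for $(\id_B\otimes\eta_U)\rho_U$, factor $\id_B^{\otimes t(\omega)}\otimes f$ through $\id_B^{\otimes t(\omega)}\otimes i$, and cancel the latter as a monomorphism via Property~\ref{PropertyMonomorphism}. The only thing the paper adds that you omit is a citation of Proposition~\ref{PropositionMonoidEpiMono} to equip the codomain of $\pi$ with a monoid structure making $\pi$ and $i$ monoid homomorphisms, which is needed for the conclusion ``$(\id_B\otimes\pi\eta_U)\rho_U$ is a comeasuring'' to make sense and for its $\omega$-diagram to have boundary compositions $(\id_B^{\otimes t(\omega)}\otimes\pi)\sigma_1$ and $(\id_B^{\otimes t(\omega)}\otimes\pi)\sigma_2$.
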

\begin{proof} Recall that by Proposition~\ref{PropositionMonoidEpiMono}
	both $i$ and $\pi$ are monoid homomorphisms.
	
	Again we have to show that for every $\omega\in \Omega$ a certain diagram is commutative.
Fix $\omega\in \Omega$ and denote by $\sigma_1$ and $\sigma_2$ the two boundary compositions of the corresponding diagram for $({\id_B} \otimes {\eta_U})\rho_U$. We know that
$$
\left({\id_B^{t(\omega)}}\otimes f\right) \sigma_1 = \left({\id_B^{t(\omega)}}\otimes f\right) \sigma_2.
$$
 Hence
$$
\left({\id_B^{t(\omega)}}\otimes i\pi\right) \sigma_1 = \left({\id_B^{t(\omega)}}\otimes i\pi\right) \sigma_2
$$
and
$$
\left({\id_B^{t(\omega)}}\otimes \pi\right) \sigma_1 = \left({\id_B^{t(\omega)}}\otimes \pi\right) \sigma_2
$$
since by Property~\ref{PropertyMonomorphism} the morphism ${\id_B^{t(\omega)}}\otimes i$ is a monomorphism.
Therefore, $({\id_B} \otimes {\pi\eta_U})\rho_U$ is a comeasuring too.
\end{proof}	

\begin{lemma}\label{LemmaThetaEpi}
$\theta$ is an epimorphism in $\mathcal C$.
\end{lemma}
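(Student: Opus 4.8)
The plan is to run the same endgame as in the proof of Theorem~\ref{TheoremColimitsMon}. First I would take the (Epi, ExtrMono)-factorization $\theta = i\pi$, which exists by Property~\ref{PropertyEpiExtrMonoFactorizations}, with $\pi \colon \mathcal F U \twoheadrightarrow P$ an epimorphism and $i \colon P \rightarrowtail Q_0$ an extremal monomorphism. Since $\theta$ is a monoid homomorphism and $Q_0$ is a monoid, Proposition~\ref{PropositionMonoidEpiMono} (applicable because $\mathcal C$ satisfies Properties~\ref{PropertyEpiExtrMonoFactorizations} and~\ref{PropertyEpimorphism}) equips $P$ with a unique monoid structure making both $\pi$ and $i$ monoid homomorphisms.

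Next I would identify $\pi$ with an object of $\Lambda$. By the previous lemma, $({\id_B} \otimes \theta\eta_U)\rho_U$ is a comeasuring; since $\theta = i\pi$ is its (Epi, ExtrMono)-factorization, Lemma~\ref{LemmaEpiAgainAComeasuring} yields that $({\id_B} \otimes \pi\eta_U)\rho_U$ is a comeasuring as well. Hence $\pi \colon \mathcal F U \twoheadrightarrow P$ is a monoid homomorphism which is an epimorphism in $\mathcal C$ and for which $({\id_B} \otimes \pi\eta_U)\rho_U$ is a comeasuring, i.e. $\pi = \tau_{\alpha_0}$ for some object $\alpha_0$ of $\Lambda$ with $Q_{\alpha_0} = P$.

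The heart of the argument is then to show that $i$ is an isomorphism with inverse the limiting projection $\varphi_{\alpha_0} \colon Q_0 \to Q_{\alpha_0} = P$. On one side, $\varphi_{\alpha_0}\theta = \tau_{\alpha_0} = \pi$, so $\varphi_{\alpha_0} i \pi = \pi$, and since $\pi$ is an epimorphism we get $\varphi_{\alpha_0} i = \id_P$. On the other side, for every object $\beta$ of $\Lambda$ the morphism $\varphi_\beta i$ satisfies $(\varphi_\beta i)\tau_{\alpha_0} = \varphi_\beta i \pi = \varphi_\beta\theta = \tau_\beta$, so $\tau_\beta$ factors through $\tau_{\alpha_0}$; this exhibits an arrow $\alpha_0 \to \beta$ in $\Lambda$, and $\varphi_\beta i = T(\alpha_0 \to \beta)$ because $\tau_{\alpha_0}$ is an epimorphism. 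Compatibility of the limiting cone then gives $\varphi_\beta(i\varphi_{\alpha_0}) = T(\alpha_0 \to \beta)\varphi_{\alpha_0} = \varphi_\beta$ for all $\beta$, so the uniqueness in the universal property of $Q_0 = \lim T$ forces $i\varphi_{\alpha_0} = \id_{Q_0}$. Thus $i$ is an isomorphism and $\theta = i\pi$ is an epimorphism in $\mathcal C$.

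The step I expect to require the most care is exactly this last one, because of the circularity trap: we are not allowed to use that $\theta$ is an epimorphism anywhere in the argument. The way around it is to lean on two facts that do not presuppose it — that $\pi$ is an epimorphism (giving $\varphi_{\alpha_0} i = \id_P$) and that $Q_0$ is a limit (giving $i\varphi_{\alpha_0} = \id_{Q_0}$) — while everything else is routine diagram bookkeeping with the defining data of $\Lambda$, strictly parallel to the corresponding passage in the proof of Theorem~\ref{TheoremColimitsMon}.
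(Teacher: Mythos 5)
Your proof is correct and follows essentially the same route as the paper: factor $\theta = i\pi$, use Lemma~\ref{LemmaEpiAgainAComeasuring} to place $\pi$ in $\Lambda$, and conclude that the extremal monomorphism $i$ is invertible because $P$ is itself a limit of $T$. The only difference is presentational — you exhibit $\varphi_{\alpha_0}$ explicitly as a two-sided inverse of $i$, where the paper argues more tersely that $Q_\beta$ is also a limit of $T$ and hence the comparison morphism is an isomorphism.
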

\begin{proof}
Consider the (Epi, ExtrMono)-factorization  $\theta = i\pi$, which exists by Property~\ref{PropertyEpiExtrMonoFactorizations}.
By Lemma~\ref{LemmaEpiAgainAComeasuring} the morphism $({\id_B} \otimes {\pi\eta_U})\rho_U$ is a comeasuring too.
Therefore $\pi = \tau_\beta$ for some $\beta$ from $\Lambda$ and
for every $\alpha$ from $\Lambda$ we have $\tau_\alpha = (\varphi_\alpha i)\tau_\beta$:

		$$	\xymatrix{  U \ar[r]^(0.4){\eta_U} &  {\mathcal F}U  \ar@{->>}[r]^{\tau_\beta} \ar@{->>}[rd]^{\tau_\alpha} &  Q_\beta 
			\ar@{->}[r]^i
			& Q_0 \ar@{->}[ld]^{\varphi_\alpha}\\
	&  &  \ Q_\alpha 
}$$
	Hence $Q_\beta$ is a limit of the functor $T$ too, $i$ is an isomorphism and $\theta$ is an epimorphism.
\end{proof}
\begin{proof}[Proof of Theorem~\ref{TheoremMonUnivComeasExistence}]
	Let $\rho \colon A \to B \otimes Q$ be a comeasuring such that $\rho = ({\id_B}\otimes \tau)\rho_U$
	for some morphism $\tau \colon U \to Q$. There exists a unique monoid homomorphism
	$\sigma \colon {\mathcal F}U \to Q$ such that $\tau = \sigma \eta_U$.
	Now consider the (Epi, ExtrMono)-factorization $\sigma = i\pi$. By Lemma~\ref{LemmaEpiAgainAComeasuring} the morphism $({\id_B} \otimes {\pi\eta_U})\rho_U$ is a comeasuring too. Therefore $\pi = \tau_\alpha$ for some $\alpha$ from $\Lambda$
	and $\rho = ({\id_B} \otimes {\,i\varphi_\alpha})({\id_B} \otimes {\,\theta\eta_U})\rho_U$:
	
	$$	\xymatrix{ B\otimes U \ar[rr]^{{\id_B}\otimes {\,\eta_U}} \ar[rrd]^{{\id_B}\otimes \tau} & & B \otimes {\mathcal F}U \ar@{->}[d]^(0.6){{\id_B}\otimes {\,\sigma}} \ar@{->>}[rr]^{{\id_B}\otimes {\,\theta}} \ar@{->>}[rrd]^{\,{\id_B}\otimes\tau_\alpha} & & B \otimes Q_0 \ar@{->>}[d]^{{\id_B}\otimes {\,\varphi_\alpha}} \\
		A \ar[u]^{{\id_B}\otimes {\,\rho_U}} \ar[rr]^(0.4)\rho & & B\otimes Q & & \ B\otimes Q_\alpha \ar@{>->}[ll]^{{\id_B}\otimes {\,i}}
	}$$
	
	Suppose now that there exists another monoid morphism $g \colon Q_0 \to Q$ 
	such that $$\rho = ({\id_B} \otimes {g})({\id_B} \otimes {\theta\eta_U})\rho_U.$$
	Recall that $\rho_U$ is a tensor epimorphism. Hence $$g\,\theta\eta_U = i\varphi_\alpha\, \theta\eta_U.$$
	By the universal property of $\eta_U$ we have
	$$g\,\theta = i\varphi_\alpha\, \theta.$$
    Lemma~\ref{LemmaThetaEpi} implies that $g= i\varphi_\alpha$.
    Hence the morphism is unique and $(\id_B \otimes {\theta\eta_U})\rho_U$
    is indeed an initial object in $\mathbf{Comeas}(A,B)_{G_1}(\rho_U)$.
\end{proof}	

\begin{example}\label{ExampleUnivComeasSets} In the case $\mathcal C = \mathbf{Sets}$
	there exist maps $\varphi_U \colon A \to B$ and $\psi_U \colon A \to U$
	such that $\rho_U(a) = (\varphi_U(a), \psi_U(a))$
	for every $a\in A$. The map $\rho_U$
	is a tensor epimorphism if and only if $\psi_U$ is surjective. By Example~\ref{ExampleComeasSets}, $\mathbf{Comeas}(A,B)_{G_1}(\rho_U)$ is not empty only if $\varphi_U$
	is an $\Omega$-magma homomorphism. (Below in Remark~\ref{RemarkTerminalComeas} we show that the converse is true too.) Consider the case when $\Omega$ is as in Examples~\ref{comod} (\ref{ExampleUnitalAlgebraOmega}) and $A$ and $B$ are ordinary monoids. Suppose that indeed  $\varphi_U$ is a monoid homomorphism. Then 
	$\mathcal{A}^\square(\rho_U)$ is isomorphic to $A$ factored by the congruence generated by the kernel equivalence relation of $\psi_U$. If we take $\psi_U = \id_A$, then $\mathcal{A}^\square(\rho_U) \cong A$ and  $\rho_U^\mathbf{Comeas}=(\varphi_U, \id_A)$
	is universal among all comeasurings $\rho \colon A \to B \times Q$ such that $\rho = (\varphi_U, \psi)$
	for some map $\psi \colon A \to Q$. (Recall that by Example~\ref{ExampleComeasSets} such $\rho$ is a comeasuring if and only if $\psi$ is a monoid homomorphism.)
\end{example}

\begin{remark} \label{RemarkTerminalComeas}
	Suppose that there exists a terminal object $T$ in $\mathcal{C}$ and the object $B \otimes T$ is again terminal for every object $B$. Note that the unique morphisms
	$\mathbbm{1}\to T$ and $T\otimes T \to T$ define on $T$ the structure of a monoid as all the corresponding diagrams are trivially commutative since $T$ is a terminal object. Denote by $\rho_T$  the unique comeasuring $A \to B \otimes T$. Then $\rho_T$ is a terminal object in $\mathbf{Comeas}(A,B)_{G_1}(\rho_U)$. 
	
	Note that the above conditions hold for $\mathcal C = \mathbf{Vect}_\mathbbm{k}$,
	but do not hold	for $\mathcal C = \mathbf{Sets}$, since $\lbrace * \rbrace$ is a terminal object in $\mathbf{Sets}$,
	but $B \times \lbrace * \rbrace$ is isomorphic to  $\lbrace * \rbrace$ if and only if $B$ consists of a single element.
	However, a terminal object in $\mathbf{Comeas}(A,B)_{G_1}(\rho_U)$  still exists if $\mathbf{Comeas}(A,B)_{G_1}(\rho_U)$ is not empty.
	As we have already mentioned in Example~\ref{ExampleUnivComeasSets}, $\mathbf{Comeas}(A,B)_{G_1}(\rho_U)$ is not empty only if $\rho_U = (\varphi_U, \psi_U)$
	for some map $\psi_U \colon A \to U$ and an $\Omega$-magma homomorphism $\varphi_U \colon A \to B$.
	Assume that this is the case. Then $\rho_T \colon A \to B \times \lbrace * \rbrace$, where $\rho_T(a):=(\varphi_U(a),*)$,
	is a comeasuring, which is a terminal object in $\mathbf{Comeas}(A,B)_{G_1}(\rho_U)$ 
	 for $\mathcal C = \mathbf{Sets}$.
\end{remark}	

\subsection{Universal coacting bimonoids}\label{SubsectionUnivBiCoactingExistence}
Fix an $\Omega$-magma $A$ in a braided monoidal category~$\mathcal C$.
Define the category $\mathbf{Coact}(A)$ where \begin{itemize}
	\item the objects are all coactions $\rho \colon A \to A \otimes B$ for arbitrary bimonoids $B$;
	\item  the morphisms from $\rho_1 \colon A \to A \otimes B_1$
	to $\rho_2 \colon A \to A \otimes B_2$  are bimonoid homomorphisms $\varphi \colon B_1 \to B_2$
	making the diagram below commutative:
	$$\xymatrix{ A \ar[r]^(0.4){\rho_1} 
		\ar[rd]_{\rho_2}
		& A \otimes B_1 \ar[d]^{{\id_A} \otimes \varphi} \\
		& A \otimes B_2} $$
\end{itemize}

Consider the following commutative diagram consisting of forgetful functors:
$$\xymatrix{
	\mathbf{Coact}(A) \ar[r]^{G_3} \ar[d]_{G_2} & \mathbf{Comeas}(A,A) \ar[d]^{G_1} \\
	\mathbf{ComodStr}(A) \ar[r]^G    & \mathbf{MorTens}(A,A) 
}$$	

Let $U$ be a comonoid and let a tensor epimorphism $\rho_U \colon A \to A \otimes U$
define on $A$ a structure of a $U$-comodule. 
Let us call the bimonoid  corresponding to the initial object
in $\mathbf{Coact}(A)_{G_2}(\rho_U)$ (if it exists) the \textit{$U$-universal coacting bimonoid} on~$A$.

\begin{remark}\label{RemarkCoactAG2RhoUEquivDef}
	By Proposition~\ref{PropositionFromTensorEpiComonoidMorphism}, 
	the category $\mathbf{Coact}(A)_{G_2}(\rho_U)$  coincides with the category
	$\mathbf{Coact}(A)_{GG_2}(G\rho_U) = \mathbf{Coact}(A)_{G_1G_3}(G \rho_U)$.
\end{remark}

It was noticed by M.~Sweedler~\cite{Sweedler} that the universal measuring coalgebra
for $A=B$ is in fact a universal acting bialgebra. Here we apply this idea to lift initial objects to subcategories
containing certain comeasurings.

We say that a subcategory $\mathcal D$ of $\mathbf{Comeas}(A,A)$
is \textit{closed under coarsenings} if for every object $\rho \colon A\to A \otimes Q_1$
in $\mathcal D$ and every monoid homomorphism $\tau \colon Q_1 \to Q_2$, where $Q_2$ is a monoid in $\mathcal C$,
the comeasuring $(\id_A \otimes \tau)\rho \colon A\to A \otimes Q_2$ is again an object in $\mathcal D$.

\begin{theorem}\label{TheoremBimonUnivCoactingExistenceForSubCat}  Let
	$A$ be an $\Omega$-magma in a braided monoidal category $\mathcal C$.
	Let $\mathcal D$ be a full subcategory of $\mathbf{Comeas}(A,A)$ such that
	there exists an initial object $\rho_0 \colon A \to A \otimes B_0$ in $\mathcal D$.
	Denote by  $\mathbf{Coact}(A)(\mathcal D)$ the full subcategory of $\mathbf{Coact}(A)$
	consisting of all the objects whose images under the forgetful functor $G_3 \colon \mathbf{Coact}(A)\to \mathbf{Comeas}(A,A)$
	belong to $\mathcal D$. 
	Suppose that $\mathcal D$ is closed under coarsenings and $A \mathrel{\widetilde\to} A \otimes \mathbbm{1}$
	and $(\rho_0 \otimes \id_{B_0})\rho_0$ are objects in $\mathcal D$ too.
	Then the monoid $B_0$ admits a unique comonoid structure turning $\rho_0$
	into a coaction, which is the initial object in $\mathbf{Coact}(A)(\mathcal D)$.
\end{theorem}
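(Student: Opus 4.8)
The plan is to follow the classical idea (going back to Sweedler for the universal measuring coalgebra of an algebra with itself) that the universal comeasuring object, in the case $A=B$, already carries a bimonoid structure; but to run the entire argument through the \emph{initiality} of $\rho_0$ in $\mathcal D$ and never through a cancellation property of $\rho_0$, so that it stays self-dual. Write $\rho_0\colon A\to A\otimes B_0$, and recall that the morphisms of $\mathbf{Comeas}(A,A)$ are monoid homomorphisms, so every arrow produced out of $\rho_0$ by initiality is automatically one. Since the object $A\xrightarrow{\sim}A\otimes\mathbbm{1}$ of $\mathbf{Comeas}(A,A)$ lies in $\mathcal D$, initiality of $\rho_0$ produces a unique $\mathcal D$-morphism $\varepsilon\colon B_0\to\mathbbm{1}$ with $(\id_A\otimes\varepsilon)\rho_0$ equal to the coherence isomorphism; since $(\rho_0\otimes\id_{B_0})\rho_0$ lies in $\mathcal D$, initiality produces a unique $\mathcal D$-morphism $\Delta\colon B_0\to B_0\otimes B_0$ with $(\id_A\otimes\Delta)\rho_0=(\rho_0\otimes\id_{B_0})\rho_0$. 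Both $\Delta$ and $\varepsilon$ are monoid homomorphisms.

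The crux is coassociativity and the counit axioms. A direct diagram chase using only bifunctoriality of $\otimes$ and the defining identity $(\id_A\otimes\Delta)\rho_0=(\rho_0\otimes\id_{B_0})\rho_0$ gives
\begin{equation*}
\bigl(\id_A\otimes(\Delta\otimes\id_{B_0})\Delta\bigr)\rho_0=\bigl(\id_A\otimes(\id_{B_0}\otimes\Delta)\Delta\bigr)\rho_0=(\rho_0\otimes\id_{B_0}\otimes\id_{B_0})(\rho_0\otimes\id_{B_0})\rho_0 ,
\end{equation*}
and likewise $\bigl(\id_A\otimes(\id_{B_0}\otimes\varepsilon)\Delta\bigr)\rho_0=\rho_0=\bigl(\id_A\otimes(\varepsilon\otimes\id_{B_0})\Delta\bigr)\rho_0$ up to coherence isomorphisms. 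Each of the monoid homomorphisms $(\Delta\otimes\id_{B_0})\Delta$, $(\id_{B_0}\otimes\Delta)\Delta$, $(\id_{B_0}\otimes\varepsilon)\Delta$, $(\varepsilon\otimes\id_{B_0})\Delta$ is thus a $\mathbf{Comeas}(A,A)$-morphism out of $\rho_0$ whose target is, by these equalities, a coarsening of $\rho_0$ along a monoid homomorphism, hence an object of $\mathcal D$ (as $\mathcal D$ is closed under coarsenings); since $\mathcal D$ is full and $\rho_0$ is initial in it, the $\mathcal D$-morphism from $\rho_0$ to that target is unique. Comparing the two morphisms with the same source and target yields $(\Delta\otimes\id_{B_0})\Delta=(\id_{B_0}\otimes\Delta)\Delta$ and the two counit identities, so $(B_0,\Delta,\varepsilon)$ is a comonoid; since $\Delta,\varepsilon$ are monoid homomorphisms, $B_0$ is a bimonoid.

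The defining identities of $\Delta$ and $\varepsilon$ are exactly the coassociativity and counit axioms of the right $B_0$-comodule $(A,\rho_0)$. Moreover $\rho_0$ is already an $\Omega$-magma in $\mathsf{MorTens}(B_0)$ respecting the fixed $\Omega$-magma structure on $A$ (being a comeasuring); since $\mathsf{Comod}^{B_0}$ inherits the monoidal structure of $\mathsf{MorTens}(B_0)$ once $B_0$ is a bimonoid, each structure morphism $\omega_A$ is then automatically a morphism of $B_0$-comodules, so $\rho_0$ is an $\Omega$-magma in $\mathsf{Comod}^{B_0}$, i.e.\ a $B_0$-coaction, and hence $\rho_0\in\mathbf{Coact}(A)(\mathcal D)$. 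Uniqueness of the comonoid structure is immediate: any comonoid structure on the monoid $B_0$ making $\rho_0$ a coaction must have a comultiplication $\Delta'$ which, by comodule coassociativity and bimonoid compatibility, is a monoid homomorphism satisfying $(\id_A\otimes\Delta')\rho_0=(\rho_0\otimes\id_{B_0})\rho_0$, i.e.\ a $\mathcal D$-morphism $\rho_0\to(\rho_0\otimes\id_{B_0})\rho_0$, hence $\Delta'=\Delta$ by initiality; likewise its counit equals $\varepsilon$.

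Finally, for initiality in $\mathbf{Coact}(A)(\mathcal D)$, let $\rho\colon A\to A\otimes B$ be any of its objects; its underlying comeasuring lies in $\mathcal D$, so there is a unique monoid homomorphism $\varphi\colon B_0\to B$ with $(\id_A\otimes\varphi)\rho_0=\rho$, and it remains only to see $\varphi$ is a comonoid homomorphism (it is then the unique bimonoid homomorphism with the required property, being already the unique monoid one). Using that $\rho$ is a comodule structure, that $(\id_A\otimes\Delta)\rho_0=(\rho_0\otimes\id_{B_0})\rho_0$, and the interchange of $\rho_0$ acting on the $A$-factor with $\varphi$ and $\Delta_B$ acting on the $B_0$-factor, one computes $(\id_A\otimes\Delta_B\varphi)\rho_0=(\rho\otimes\id_B)\rho=(\id_A\otimes(\varphi\otimes\varphi)\Delta_{B_0})\rho_0$, while $(\id_A\otimes\varepsilon_B\varphi)\rho_0$ and $(\id_A\otimes\varepsilon_{B_0})\rho_0$ both coincide with the coherence isomorphism $A\xrightarrow{\sim}A\otimes\mathbbm{1}$. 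The common targets are once more coarsenings of $\rho_0$ in $\mathcal D$, so the uniqueness of $\mathcal D$-morphisms out of $\rho_0$ forces $\Delta_B\varphi=(\varphi\otimes\varphi)\Delta_{B_0}$ and $\varepsilon_B\varphi=\varepsilon_{B_0}$, completing the proof. The one genuinely delicate point is precisely the replacement, in the verification of coassociativity and of $\varphi$ being a comonoid map, of the \emph{unavailable} fact ``$\rho_0$ is a tensor epimorphism'' by the observation that a coarsening of $\rho_0$ along a monoid homomorphism stays in $\mathcal D$, turning the relevant parallel pairs of monoid homomorphisms into parallel $\mathcal D$-arrows out of the initial object; everything else is routine bifunctoriality bookkeeping of the interchange between $\rho_0$ on $A$ and the comultiplications on $B_0$.
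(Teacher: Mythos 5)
Your proposal is correct and follows essentially the same route as the paper's proof: define $\Delta_0$ and $\varepsilon_0$ by initiality of $\rho_0$ in $\mathcal D$, then establish coassociativity, the counit axioms, and the comonoid-homomorphism property of the comparison map $\varphi$ by checking that the relevant parallel pairs of monoid homomorphisms agree after precomposition with $\rho_0$ and land in coarsenings of $\rho_0$ (hence in $\mathcal D$), so that uniqueness of arrows out of the initial object forces them to coincide. Your equational computations are exactly the content of the paper's cube diagrams, and what the paper calls ``the universal property of $B_0$'' is precisely the mechanism you isolate (initiality plus closure under coarsenings standing in for a tensor-epimorphism hypothesis); you additionally make the uniqueness of the comonoid structure explicit, which the paper leaves implicit.
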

\begin{proof}
	By our assumptions, $(\rho_0 \otimes \id_{B_0})\rho_0$ is an object in $\mathcal D$.
	Hence there exists a unique monoid homomorphism 
	$\Delta_0 \colon B_0 \to B_0 \otimes B_0$
	making the diagram below commutative:
	$$\xymatrix{ A \ar[rrr]^{\rho_0} \ar[d]_{\rho_0} &&& A\otimes B_0
		\ar@{-->}[d]^{\id_A\otimes \Delta_0} \\
		A\otimes B_0 \ar[rrr]_(0.4){\rho_0\otimes \id_{B_0}} &&&  A\otimes B_0 \otimes B_0}$$
	Now we define the comultiplication in $B_0$ to be equal to $\Delta_0$.
	
	In order to define the counit, we consider the morphism $$A
	\mathrel{\widetilde\to} A  \otimes \mathbbm{1},$$ which is an object in $\mathcal D$ by the assumptions of the theorem too.
	There exists a unique monoid homomorphism ${\varepsilon_0} \colon   B_0 \to \mathbbm{1}$
	making the diagram below commutative:
	$$\xymatrix{ A \ar[r]^(0.3){\rho_0} \ar[rd]_{\sim} & A \otimes B_0
		\ar[d]^{\id_A \otimes {\varepsilon_0}}\\
		& A \otimes \mathbbm{1}}$$
	We define the counit in $B_0$ to be equal to ${\varepsilon_0}$.
	
	Now we have to prove that the comultiplication $\Delta_0$ is coassociative and that ${\varepsilon_0}$
	and $\Delta_0$ indeed satisfy the counit axioms.
	
	Consider the diagram
	$$\xymatrix{ A \ar[rr]^{\rho_0} \ar[dd]_{\rho_0} \ar[rd]^{\rho_0} && A \otimes B_0 \ar[rd]^{\qquad \rho_0 \otimes \id_{B_0}} \ar'[d][dd]_{\rho_0 \otimes \id_{B_0}} & \\
		& A \otimes B_0 \ar[rr]^(0.3){\id_A \otimes \Delta_0} \ar[dd]_(0.3){\rho_0 \otimes \id_{B_0}} && A \otimes B_0 \otimes B_0\ar[dd]_(0.3){\rho_0 \otimes \id_{B_0 \otimes B_0}} \\
		A \otimes B_0  \ar'[r][rr]_(0.3){\id_A \otimes \Delta_0}
		\ar[rd]^{\id_A \otimes \Delta_0} && A \otimes B_0 \otimes B_0 \ar[rd]_(0.35){\id_A \otimes \Delta_0 \otimes \id_{B_0}\qquad} &\\
		& A \otimes B_0 \otimes B_0 
		\ar[rr]_(0.4){\id_A  \otimes \id_{B_0}\otimes \Delta_0}  && A \otimes B_0 \otimes B_0 \otimes B_0 } $$

	The right face is commutative since
	it is the definition of $\Delta_0$ tensored by $\id_{B_0}$.
	The commutativity of the left, the upper and the rear face follows from
	the definition of $\Delta_0$ too.
	The front face is commutative since both compositions are equal to $\rho_0 \otimes \Delta_0$.
	Therefore the compositions of the lower face being composed with $\rho_0$
	are equal too.
	Now the universal property of $B_0$ implies the commutativity of the diagram
	$$\xymatrix{ B_0 \ar[rr]_{\Delta_0}\ar[d]^{\Delta_0} && B_0 \otimes B_0 \ar[d]^{{\Delta_0}\otimes \id_{B_0}}\\ 
		B_0 \otimes B_0 \ar[rr]_(0.42){ \id_{B_0}\otimes{\Delta_0}} && B_0 \otimes B_0 \otimes B_0 }$$
	and the coassociativity follows.
	
	Consider the diagram
	$$\xymatrix{ A \ar[rr]^{\rho_0} \ar[rd]^{\rho_0} \ar[dd]_{\rho_0} && A \otimes B_0 \ar@{=}[ld]\ar[dd]^{\rho_0 \otimes \id_{B_0}} \\
		& A \otimes B_0 \ar[dd]^(0.3){\sim} & \\
		A \otimes B_0\ar[rd]^{\sim} \ar'[r][rr]^(0.3){\id_A\otimes {\Delta_0}} && A \otimes B_0 \otimes B_0 \ar[ld]^{\qquad\id_A\otimes{\varepsilon_0}\otimes\id_{B_0}} \\
		& A \otimes \mathbbm{1} \otimes B_0 & } $$
	
	The commutativity of the upper face and the left face is obvious. The commutativity of the right face
	follows from the definition of ${\varepsilon_0}$ and the commutativity of the rear face follows from the definition
	of ${\Delta_0}$. Therefore, the monoid homomorphisms forming the lower face become equal after their
	composition with $\rho_0$. 
	Now universal property of $B_0$ implies the commutativity of the diagram
	$$\xymatrix{ B_0 \ar[rr]^{{\Delta_0}} \ar[rd]^{\sim} 
		& & B_0\otimes B_0 \ar[ld]^{\qquad{\varepsilon_0}\otimes \id_{B_0}}\\
		& \mathbbm{1} \otimes B_0  & }$$
	Hence ${\varepsilon_0}$ satisfies the left counit axiom.
	
	Analogously, if we consider the diagram
	$$\xymatrix{ A \ar[rr]^{\rho_0}\ar[dd]_{\rho_0} \ar[rd]^{\sim} && A\otimes B_0
		\ar[ld]^{\qquad\id_A\otimes {\varepsilon_0}} \ar[dd]^{\rho_0 \otimes \id_{B_0}} \\
		& A\otimes \mathbbm{1} \ar[dd]^(0.3){\rho_0 \otimes \id_\mathbbm{1}} & \\
		A \otimes B_0\ar[rd]^{\sim} \ar'[r][rr]_(0.3){\id_A\otimes {\Delta_0}} && A \otimes B_0 \otimes B_0 \ar[ld]^{\qquad\id_A\otimes\id_{B_0}\otimes{\varepsilon_0}} \\
		& A \otimes B_0 \otimes \mathbbm{1} & } $$
	we get the commutativity of the diagram
	$$\xymatrix{ B_0 \ar[rr]^{{\Delta_0}} \ar[rd]^{\sim} 
		& & B_0\otimes B_0 \ar[ld]^{\qquad \id_{B_0}\otimes{\varepsilon_0}}\\
		& B_0 \otimes  \mathbbm{1} & }$$
	Hence ${\varepsilon_0}$ satisfies the right counit axiom and $B_0$ is indeed a bimonoid.
	
	Suppose $B$ is another bimonoid and $\rho \colon  A \to A\otimes B$ is a coaction
	that is an object in $\mathbf{Coact}(A)(\mathcal D)$.
	Denote by $\varphi \colon B_0 \to B$ the unique monoid homomorphism 
	making the diagram
	\begin{equation*}
		\xymatrix{ A \ar[r]^(0.3){\rho_0} \ar[rd]_\rho & A \otimes B_0 \ar@{-->}[d]^{\id_A\otimes
				\varphi} \\
			& A\otimes B} \end{equation*}
	commutative.
	We claim that $\varphi$ is a bimonoid homomorphism.
	
	Consider the diagram
	$$\xymatrix{  A \ar@{=}[rr]\ar[dd]_{\rho_0}\ar[rd]^{\rho_0} && A \ar'[d][dd]_{\rho}  \ar[rd]^{\rho}& \\
		& A \otimes B_0 \ar[rr]^(0.35){\id_A\otimes\varphi}\ar[dd]_(0.3){\rho_0 \otimes \id_{B_0}} && A\otimes B \ar[dd]^{\rho \otimes \id_{B}} \\
		A \otimes B_0 \ar'[r][rr]^(0.3){\id_A\otimes\varphi}\ar[rd]^{\id_A \otimes {\Delta_0}} && A\otimes B \ar[rd]^{\id_A \otimes \Delta}& \\
		& A \otimes B_0\otimes B_0 \ar[rr]^{\id_A\otimes\varphi\otimes\varphi} & & A\otimes B\otimes B } $$
	where $\Delta \colon B \to B\otimes B$ is the comultiplication in $B$.
	The upper face and the rear face are commutative by the definition of $\varphi$. The front face is commutative since it coincides with the upper face (and the rear face) tensored  by $\varphi$ from the right.
	The left and the right faces are commutative since both $\rho$ and $\rho_0$ define on $A$ the structure
	of a right comodule. 
	
	Hence, after the composition with $\rho_0$, the lower face becomes commutative too
	and the universal property of $B_0$ implies the commutativity of the diagram
	$$\xymatrix{B_0\ar[d]_{{\Delta_0}}  \ar[r]^{\varphi} & B \ar[d]^{\Delta}\\
		B_0 \otimes B_0  \ar[r]_(0.52){\varphi\otimes\varphi} & B \otimes B   \\ 
	}$$ Therefore $\varphi$ preserves the comultiplication.
	
	Consider the diagram $$\xymatrix{ & A \ar[lddd]_{\rho} 
		\ar[dd]^(0.7){\rho_0}  \ar[rddd]^{\sim} & \\ & & \\
		&  A  \otimes B_0 
		\ar[rd]_{\id_A\otimes{\varepsilon_0}\ } \ar[ld]^{\ \id_A \otimes \varphi}& \\
		A \otimes B \ar[rr]_{\id_A\otimes\varepsilon} & & A\otimes \mathbbm{1} }$$ where $\varepsilon$
	is the counit in $B$.
	
	The large triangle and the right triangle are commutative by the counitality of the comodule structures on $A$.
	The left triangle is commutative by the definition of $\varphi$. Hence the lower triangle becomes commutative
	after the composition with $\rho_0$. Thus the universal property of $B_0$ implies the commutativity of the diagram
	$$\xymatrix{ B_0\ar[rr]^{\varphi}\ar[rd]_{{\varepsilon_0}} & & B \ar[ld]^{\varepsilon}  \\
		& \mathbbm{1}   &}$$
	Therefore $\varphi$ preserves the counit and is indeed a bimonoid homomorphism.
\end{proof}	

Now we prove sufficient conditions for $\mathbf{Comeas}(A,A)_{G_1}(G\rho_U)$ to satisfy the assumptions of Theorem~\ref{TheoremBimonUnivCoactingExistenceForSubCat}.

\begin{lemma}\label{LemmaCompositionAndMonoidalMorTens}
Let $\rho_{ij} \colon A_i \to A_i \otimes Q_j$
 be morphisms for some objects $A_i$ and monoids $Q_j$ in a braided monoidal category $\mathcal C$
 for $i,j\in\lbrace 1,2 \rbrace$.
 Then $$\bigl((\rho_{11}\otimes \id_{Q_2}) \rho_{12}\bigr) \mathbin{\widetilde\otimes} \bigl((\rho_{21}\otimes \id_{Q_2}) \rho_{22}\bigr) = \bigl((\rho_{11} \mathbin{\widetilde\otimes} \rho_{21} )\otimes \id_{Q_2}\bigr) (\rho_{12} \mathbin{\widetilde\otimes} \rho_{22})$$
 where $\widetilde\otimes$ are the monoidal products in the categories $\mathsf{MorTens}(Q_1\otimes Q_2)$, $\mathsf{MorTens}(Q_1)$
 and $\mathsf{MorTens}(Q_2)$, respectively. (See Section~\ref{SubsectionComeasurings}.)
\end{lemma}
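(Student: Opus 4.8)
The plan is to verify the identity by unfolding every $\mathbin{\widetilde\otimes}$ into its defining composite and recognising the two resulting morphisms as one and the same up to naturality of the braiding and coherence. Both sides are morphisms $A_1\otimes A_2\to(A_1\otimes A_2)\otimes(Q_1\otimes Q_2)$; writing $\phi_i:=(\rho_{i1}\otimes\id_{Q_2})\rho_{i2}\colon A_i\to A_i\otimes Q_1\otimes Q_2$, the left-hand side is $\phi_1\mathbin{\widetilde\otimes}\phi_2$ in $\mathsf{MorTens}(Q_1\otimes Q_2)$, and writing $\sigma_1:=\rho_{11}\mathbin{\widetilde\otimes}\rho_{21}$ in $\mathsf{MorTens}(Q_1)$ and $\sigma_2:=\rho_{12}\mathbin{\widetilde\otimes}\rho_{22}$ in $\mathsf{MorTens}(Q_2)$, the right-hand side is $(\sigma_1\otimes\id_{Q_2})\sigma_2$. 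First I would substitute the definition of $\mathbin{\widetilde\otimes}$ from Section~\ref{SubsectionComeasurings} together with the formula $\mu_{Q_1\otimes Q_2}=(\mu_{Q_1}\otimes\mu_{Q_2})(\id_{Q_1}\otimes c_{Q_2,Q_1}\otimes\id_{Q_2})$ for the monoid structure of $Q_1\otimes Q_2$ in $\mathsf{Mon}(\mathcal C)$, and---invoking Mac Lane coherence \cite{MacLaneCatWork} to treat $\mathcal C$ as strict---rewrite each side as an explicit composite of the four morphisms $\rho_{ij}$, one copy each of $\mu_{Q_1}$ and $\mu_{Q_2}$, and a string of braidings.

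The key structural observation is that in both composites the factor $A_i$ is hit first by $\rho_{i2}$ and then by $\rho_{i1}$, producing in this order the strands $A_i,Q_1,Q_2$; hence the two sides involve exactly the same four ``$Q$-strands'' $Q_j^{(i)}$ (the copy of $Q_j$ split off from $A_i$ by $\rho_{ij}$), and in each case the output $Q_1$ is obtained by multiplying $Q_1^{(1)}$ with $Q_1^{(2)}$ via $\mu_{Q_1}$ and the output $Q_2$ by multiplying $Q_2^{(1)}$ with $Q_2^{(2)}$ via $\mu_{Q_2}$. The only difference is the order of interleaving: on the left one forms $\phi_1,\phi_2$ completely and postpones all the braidings and multiplications to the end (through $c_{Q_1\otimes Q_2,A_2}$ and $\mu_{Q_1\otimes Q_2}$), while on the right one already amalgamates the $Q_2$-strands inside $\sigma_2$ and the $Q_1$-strands inside $\sigma_1$. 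Since each $\rho_{i1}$ acts only on the $A_i$-tensorand, it commutes by the interchange law with everything happening on the $Q$-strands; since $\mu_{Q_1}$ and $\mu_{Q_2}$ act on disjoint, never-interleaved pairs of strands; and since $c_{Q_1\otimes Q_2,A_2}$ splits by the hexagon axiom into the crossings of $Q_1^{(1)}$ and $Q_2^{(1)}$ past $A_2$, I would move the multiplications and braidings past the $\rho_{i1}$'s and bring both composites to the common form ``apply $\rho_{12},\rho_{22},\rho_{11},\rho_{21}$; braid all $Q$-strands to the right of $A_1\otimes A_2$; apply $\mu_{Q_1}$ and $\mu_{Q_2}$''. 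Equivalently, one draws the string diagram of each side in the braided category and checks, by the Joyal--Street coherence theorem \cite{JoyStr}, that they represent the same morphism.

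The main obstacle is purely combinatorial: the two sides apply $\mu_{Q_1},\mu_{Q_2}$ at different stages, so one must check that the extra crossing $c_{Q_2^{(1)},Q_1^{(2)}}$ hidden inside $\mu_{Q_1\otimes Q_2}$ on the left is exactly accounted for by the fact that on the right $Q_2^{(1)}$ is absorbed into $Q_2$ by $\mu_{Q_2}$ \emph{before} $\rho_{21}$ introduces $Q_1^{(2)}$, and that every intermediate identification is an instance of naturality of $c$, a hexagon axiom, or the monoidal interchange law. This is notation-heavy but routine; I would present it as a short chain of commutative squares, each labelled by one of these three principles. No hypothesis on $\mathcal C$ beyond being a braided monoidal category is needed.
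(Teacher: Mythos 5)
Your proposal is correct and takes essentially the same approach as the paper: the paper's entire proof of this lemma is the single sentence ``Use the properties of the braiding,'' and your argument is exactly the intended elaboration of that hint (unfold the three $\mathbin{\widetilde\otimes}$'s and $\mu_{Q_1\otimes Q_2}$, then match the two composites strand by strand using naturality of $c$, the hexagons, and the interchange law). The one point worth making explicit in a write-up is the commutation of $c_{Q_1^{(1)},A_2}$ with $c_{Q_2^{(1)},Q_1^{(2)}}$ as acting on disjoint tensorands, which you have correctly identified as the crux.
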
	
\begin{proof}
	Use the properties of the braiding.
\end{proof}	

\begin{lemma}\label{LemmaCompositionComeasAndTrivial}
	If $\rho_1 \colon A \to A \otimes Q_1$ and $\rho_2 \colon A \to A \otimes Q_2$
	are comeasurings for some $\Omega$-magma $A$ and monoids $Q_1$ and $Q_2$ in a braided monoidal category $\mathcal C$, then 
	$(\rho_1 \otimes {\id_{Q_2}} )\rho_2$ and $A	\mathrel{\widetilde\to} A  \otimes \mathbbm{1}$
	are comeasurings too.
\end{lemma}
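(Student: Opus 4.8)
The plan is to view both claims as saying that comeasurings from $A$ to itself compose (via the assignment $(\rho_1,\rho_2)\mapsto(\rho_1\otimes\id_{Q_2})\rho_2$) and that the canonical isomorphism $\iota\colon A\mathrel{\widetilde\to}A\otimes\mathbbm{1}$ is a unit for this composition; I would prove the (easy) second statement first and then the first. Give $\mathbbm{1}$ its unique monoid structure. A short induction using only the coherence isomorphisms shows that the $n$-fold monoidal power $\iota^{\widetilde\otimes n}$ in $\mathsf{MorTens}(\mathbbm{1})$ is again the canonical isomorphism $A^{\otimes n}\mathrel{\widetilde\to}A^{\otimes n}\otimes\mathbbm{1}$ (for $n=0$ this is the monoidal unit of $\mathsf{MorTens}(\mathbbm{1})$). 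Declaring $\omega_\iota:=(\omega_A,\omega_A)$ for each $\omega\in\Omega$ then makes $\iota$ an $\Omega$-magma in $\mathsf{MorTens}(\mathbbm{1})$: the defining square $(\omega_A\otimes\id_{\mathbbm{1}})\iota^{\widetilde\otimes s(\omega)}=\iota^{\widetilde\otimes t(\omega)}\omega_A$ is exactly naturality of the unitor. (Equivalently, under the identification of $\mathsf{MorTens}(\mathbbm{1})$ with the monoidal arrow category of $\mathcal C$, this merely records that $\id_A$ is an $\Omega$-magma endomorphism of $A$.)

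For the composite, write $\Phi:=(\rho_1\otimes\id_{Q_2})\rho_2\colon A\to A\otimes(Q_1\otimes Q_2)$, where $Q_1\otimes Q_2$ is a monoid via the monoidal product of $\mathsf{Mon}(\mathcal C)$ (available since $\mathcal C$ is braided). The first step is to compute the monoidal powers of $\Phi$ in $\mathsf{MorTens}(Q_1\otimes Q_2)$. Applying Lemma~\ref{LemmaCompositionAndMonoidalMorTens} with $\rho_{11}=\rho_1^{\widetilde\otimes n}$, $\rho_{21}=\rho_1$, $\rho_{12}=\rho_2^{\widetilde\otimes n}$, $\rho_{22}=\rho_2$ to the inductive clause $\Phi^{\widetilde\otimes(n+1)}=\Phi^{\widetilde\otimes n}\mathbin{\widetilde\otimes}\Phi$ yields, by induction on $n\geq1$,
$$\Phi^{\widetilde\otimes n}=(\rho_1^{\widetilde\otimes n}\otimes\id_{Q_2})\,\rho_2^{\widetilde\otimes n},$$
since $\rho_1^{\widetilde\otimes n}\mathbin{\widetilde\otimes}\rho_1=\rho_1^{\widetilde\otimes(n+1)}$ and $\rho_2^{\widetilde\otimes n}\mathbin{\widetilde\otimes}\rho_2=\rho_2^{\widetilde\otimes(n+1)}$; the case $n=0$ follows by unwinding the definitions, using that the unit of $Q_1\otimes Q_2$ is $u_{Q_1}\otimes u_{Q_2}$ pre/post-composed with coherence isomorphisms.

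The second step is to set $\omega_\Phi:=(\omega_A,\omega_A)$ for each $\omega\in\Omega$ and check that this is a morphism $\Phi^{\widetilde\otimes s(\omega)}\to\Phi^{\widetilde\otimes t(\omega)}$ in $\mathsf{MorTens}(Q_1\otimes Q_2)$, i.e. that $(\omega_A\otimes\id_{Q_1\otimes Q_2})\Phi^{\widetilde\otimes s(\omega)}=\Phi^{\widetilde\otimes t(\omega)}\omega_A$. Substituting the formula of the first step, the left-hand side equals $(\omega_A\otimes\id_{Q_1}\otimes\id_{Q_2})(\rho_1^{\widetilde\otimes s(\omega)}\otimes\id_{Q_2})\rho_2^{\widetilde\otimes s(\omega)}$; using the comeasuring identity $(\omega_A\otimes\id_{Q_1})\rho_1^{\widetilde\otimes s(\omega)}=\rho_1^{\widetilde\otimes t(\omega)}\omega_A$ for $\rho_1$ (tensored on the right with $\id_{Q_2}$) and then the analogous identity for $\rho_2$ applied to $(\omega_A\otimes\id_{Q_2})\rho_2^{\widetilde\otimes s(\omega)}$, this rewrites as $(\rho_1^{\widetilde\otimes t(\omega)}\otimes\id_{Q_2})\rho_2^{\widetilde\otimes t(\omega)}\omega_A=\Phi^{\widetilde\otimes t(\omega)}\omega_A$, as required. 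Hence $\Phi$ is an $\Omega$-magma in $\mathsf{MorTens}(Q_1\otimes Q_2)$, i.e. a comeasuring $A\to A\otimes(Q_1\otimes Q_2)$ whose induced $\Omega$-magma structure on $A$ is the original one.

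I do not expect a genuine obstacle: granted Lemma~\ref{LemmaCompositionAndMonoidalMorTens}, the argument is bookkeeping with the monoidal structure of $\mathsf{MorTens}(-)$. The only points deserving care are the nullary operations (those with $s(\omega)=0$ or $t(\omega)=0$, which force one to treat $\Phi^{\widetilde\otimes0}$, i.e. the monoidal unit, explicitly) and the coherence isomorphisms implicit in iterating $\mathbin{\widetilde\otimes}$; both are routine.
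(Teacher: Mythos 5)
Your proposal is correct and follows essentially the same route as the paper: the paper's proof also applies Lemma~\ref{LemmaCompositionAndMonoidalMorTens} inductively to obtain $\bigl((\rho_1\otimes\id_{Q_2})\rho_2\bigr)^{\widetilde\otimes m}=(\rho_1^{\widetilde\otimes m}\otimes\id_{Q_2})\rho_2^{\widetilde\otimes m}$ and then concludes, and handles $A\mathrel{\widetilde\to}A\otimes\mathbbm{1}$ by coherence. You merely spell out the final verification (substituting the power formula and using the comeasuring identities for $\rho_1$ and $\rho_2$) that the paper leaves implicit in its ``Therefore''.
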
	
\begin{proof}
Applying Lemma~\ref{LemmaCompositionAndMonoidalMorTens} inductively, we 
get $$\bigl((\rho_1 \otimes {\id_{Q_2}} )\rho_2\bigr)^{{}\mathbin{\widetilde\otimes} m}=(\rho_1^{{}\mathbin{\widetilde\otimes} m} \otimes {\id_{Q_2}} )\rho_2^{{}\mathbin{\widetilde\otimes} m}$$
for every $m\in\mathbb Z_+$.
Therefore,  $(\rho_1 \otimes {\id_{Q_2}} )\rho_2$ is a comeasuring.

The coherence theorem for braided monoidal categories
implies that $A	\mathrel{\widetilde\to} A  \otimes \mathbbm{1}$ is a comeasuring too.
\end{proof}	

\begin{lemma}\label{LemmaComeasAAU} 
	Let $A$ be an $\Omega$-magma in a braided monoidal category $\mathcal C$
	and let a tensor epimorphism $\rho_U \colon A \to A \otimes U$
	define on $A$ a structure of a $U$-comodule for some comonoid~$U$. 
	Then the morphism $A	\mathrel{\widetilde\to} A  \otimes \mathbbm{1}$ is an object in $\mathbf{Comeas}(A,A)_{G_1}(G\rho_U)$.
	Moreover, if $\rho_1 \colon A \to A \otimes Q_1$ and $\rho_2 \colon A \to A \otimes Q_2$
	are objects in $\mathbf{Comeas}(A,A)_{G_1}(G\rho_U)$, then $(\rho_1 \otimes {\id_{Q_2}} )\rho_2$ is an object in
	$\mathbf{Comeas}(A,A)_{G_1}(G\rho_U)$ too.
\end{lemma}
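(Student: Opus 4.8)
The plan is to unwind the definition of $\mathbf{Comeas}(A,A)(\rho_U)$ and then, for each of the two morphisms in question, to exhibit an explicit morphism out of $\rho_U$ in $\mathbf{MorTens}(A,A)$ that witnesses membership. Recall that $\mathbf{Comeas}(A,A)(\rho_U)$ is the subcategory associated to the forgetful functor $G_1 \colon \mathbf{Comeas}(A,A) \to \mathbf{MorTens}(A,A)$, so a comeasuring $\rho \colon A \to A \otimes Q$ (with $Q$ a monoid) belongs to $\mathbf{Comeas}(A,A)(\rho_U)$ exactly when $G_1\rho$ admits a morphism from $\rho_U$ in $\mathbf{MorTens}(A,A)$; since $\rho_U$ is a tensor epimorphism, i.e. $\rho_U \in \LIO(\mathbf{MorTens}(A,A))$, this amounts to the existence of a (then unique) morphism $\tau \colon U \to Q$ in $\mathcal C$ with $(\id_A \otimes \tau)\rho_U = \rho$. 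Note that $\tau$ is only required to be a $\mathcal C$-morphism, not a monoid homomorphism.

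For the first assertion I would note that $A \mathrel{\widetilde\to} A \otimes \mathbbm{1}$, viewed over the monoid $\mathbbm{1}$, is a comeasuring by Lemma~\ref{LemmaCompositionComeasAndTrivial}, while the counit axiom of the $U$-comodule $(A,\rho_U)$ says precisely that $(\id_A \otimes \varepsilon_U)\rho_U$ equals the canonical isomorphism $A \mathrel{\widetilde\to} A \otimes \mathbbm{1}$; hence the counit $\varepsilon_U \colon U \to \mathbbm{1}$ is the required witness. For the second assertion, $(\rho_1 \otimes \id_{Q_2})\rho_2$ is a comeasuring over the monoid $Q_1 \otimes Q_2$ again by Lemma~\ref{LemmaCompositionComeasAndTrivial}. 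Writing $\rho_i = (\id_A \otimes \tau_i)\rho_U$ with $\tau_i \colon U \to Q_i$ supplied by the hypothesis, repeated use of the functoriality (interchange law) of $\otimes$ rewrites $(\rho_1 \otimes \id_{Q_2})\rho_2$ as $(\id_A \otimes \tau_1 \otimes \tau_2)(\rho_U \otimes \id_U)\rho_U$; the coassociativity axiom of the $U$-comodule $(A,\rho_U)$ then turns $(\rho_U \otimes \id_U)\rho_U$ into $(\id_A \otimes \Delta_U)\rho_U$ (up to the associativity constraint), so that $(\rho_1 \otimes \id_{Q_2})\rho_2 = \bigl(\id_A \otimes (\tau_1 \otimes \tau_2)\Delta_U\bigr)\rho_U$. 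Thus $\tau := (\tau_1 \otimes \tau_2)\Delta_U \colon U \to Q_1 \otimes Q_2$ is a morphism $\rho_U \to (\rho_1 \otimes \id_{Q_2})\rho_2$ in $\mathbf{MorTens}(A,A)$, which places $(\rho_1 \otimes \id_{Q_2})\rho_2$ in $\mathbf{Comeas}(A,A)(\rho_U)$.

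The only mild obstacle is the bookkeeping with the associativity (and unit) coherence isomorphisms hidden in the chain of identities above; this will be handled by coherence for braided monoidal categories, as will the manipulations with the interchange law. It is also worth stressing that, throughout, $\tau$ need not be a monoid homomorphism — membership in $\mathbf{Comeas}(A,A)(\rho_U)$ only constrains the underlying object via $G_1$ — which is exactly why both witnesses, $\varepsilon_U$ and $(\tau_1 \otimes \tau_2)\Delta_U$, are legitimate even though they are not a priori morphisms of monoids.
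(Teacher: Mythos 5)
Your proposal is correct and follows essentially the same route as the paper's proof: both parts invoke Lemma~\ref{LemmaCompositionComeasAndTrivial} to see that the morphisms are comeasurings, then exhibit the witnesses $\varepsilon_U$ (via the counit axiom) and $(\tau_1\otimes\tau_2)\Delta_U$ (via coassociativity of the $U$-comodule structure), exactly as in the paper. Your explicit remark that the witnessing morphism $\tau$ need not be a monoid homomorphism is a correct and worthwhile clarification of the definition of $\mathbf{Comeas}(A,A)(\rho_U)$.
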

\begin{proof} 	 By Lemma~\ref{LemmaCompositionComeasAndTrivial}, $A	\mathrel{\widetilde\to} A  \otimes \mathbbm{1}$ is a comeasuring.
	By the definition of a comodule the diagram below is commutative:
	 $$\xymatrix{  & A \otimes U \ar[d]^{{\id_A}\otimes {\varepsilon_U}}\\
		 A \ar[r]^(0.4){\sim} \ar[ru]^{\rho_U} & A \otimes \mathbbm{1} }$$
	 (Here $\varepsilon_U$ is the counit in $U$.)
	 Hence $A	\mathrel{\widetilde\to} A  \otimes \mathbbm{1}$ is indeed an object in $\mathbf{Comeas}(A,A)_{G_1}(G\rho_U)$.
	
	 By Lemma~\ref{LemmaCompositionComeasAndTrivial}, the morphism $(\rho_1 \otimes {\id_{Q_2}} )\rho_2$
	is a comeasuring.
	Recall that from the definition of $\mathbf{Comeas}(A,A)_{G_1}(G\rho_U)$ it follows that there exist  such morphisms $\tau_i$ that $\rho_i = ({\id_A}\otimes \tau_i)\rho_U$ where $i=1,2$.
	Denote the comultiplication in $U$ by $\Delta_U$.
Then from the commutativity of the diagram 
	$$
	\xymatrix{ & A \otimes U \ar[rrrd]^{{\id_A}\otimes \Delta_U} & & & \\
		A \ar[ru]^{\rho_U} \ar[r]^{\rho_U} \ar[rd]^{\rho_2}  & A \otimes U \ar[rrr]^{\rho_U \otimes{\id_U}}
		\ar[d]^{{\id_A}\otimes {\tau_2}}
		& & & A \otimes U \otimes U  \ar[d]^{{\id_A}\otimes {\tau_1} \otimes {\tau_2}} \\
		& A \otimes {Q_2} \ar[rrr]^(0.4){\rho_1 \otimes {\id_{Q_2}}}  & & & A \otimes {Q_1} \otimes {Q_2} \\
	}	
	$$
it follows that $$(\rho_1 \otimes {\id_{Q_2}} )\rho_2 = ({\id_A} \otimes {({\tau_1} \otimes {\tau_2})\Delta_U})\rho_U$$ and
 $(\rho_1 \otimes {\id_{Q_2}} )\rho_2$  is indeed an object in $\mathbf{Comeas}(A,A)_{G_1}(G\rho_U)$.
	\end{proof}	

\begin{theorem}\label{TheoremBimonUnivCoactingExistence}  Let $\mathcal C$ be a braided monoidal category 
	and let $\rho_U \colon A \to A \otimes U$
	be a tensor epimorphism	defining on an $\Omega$-magma~$A$ a structure of a $U$-comodule for a comonoid $U$
	such that there exists $\mathcal{A}^\square(\rho_U)$.
	Then $ \mathcal{B}^\square(\rho_U):=\mathcal{A}^\square(\rho_U)$ admits a unique comonoid structure turning
	$\rho_U^\mathbf{Coact}:=\rho_U^\mathbf{Comeas}$ into a coaction, which is the initial object in $\mathbf{Coact}(A)_{G_2}(\rho_U)$.
\end{theorem}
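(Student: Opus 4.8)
The strategy is to apply Theorem~\ref{TheoremBimonUnivCoactingExistenceForSubCat} with $\mathcal D := \mathbf{Comeas}(A,A)(\rho_U)$, the full subcategory of $\mathbf{Comeas}(A,A)$ whose objects are the comeasurings $A\to A\otimes Q$ factoring through $\rho_U$ in $\mathbf{MorTens}(A,A)$. By the very definition of $\mathcal{A}^\square(\rho_U)$, the comeasuring $\rho_0:=\rho_U^{\mathbf{Comeas}}\colon A\to A\otimes \mathcal{A}^\square(\rho_U)$ is an initial object of $\mathcal D$, and its existence is exactly the hypothesis of the theorem; so it remains only to check that $\mathcal D$ is closed under coarsenings and that $A\mathrel{\widetilde\to}A\otimes\mathbbm{1}$ and $(\rho_0\otimes\id_{\mathcal{A}^\square(\rho_U)})\rho_0$ lie in $\mathcal D$.

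For closedness under coarsenings, let $\rho\colon A\to A\otimes Q_1$ be an object of $\mathcal D$, say $\rho=(\id_A\otimes\tau)\rho_U$ for a morphism $\tau\colon U\to Q_1$, and let $\sigma\colon Q_1\to Q_2$ be a monoid homomorphism. Since $\sigma$ respects the multiplication and the unit and the braiding is natural, post-composition with $\id_A\otimes\sigma$ defines a strict monoidal functor $\mathsf{MorTens}(Q_1)\to\mathsf{MorTens}(Q_2)$: it is compatible with the monoidal products $\mathbin{\widetilde\otimes}$, which are built from iterated copies of $\mu$ and $u$ (cf. Section~\ref{SubsectionComeasurings}). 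A strict monoidal functor carries $\Omega$-magmas to $\Omega$-magmas, so $(\id_A\otimes\sigma)\rho$ is again a comeasuring; and since $(\id_A\otimes\sigma)\rho=(\id_A\otimes\sigma\tau)\rho_U$, it is an object of $\mathcal D$. The remaining two conditions are supplied by Lemma~\ref{LemmaComeasAAU}: its first assertion gives $A\mathrel{\widetilde\to}A\otimes\mathbbm{1}\in\mathbf{Comeas}(A,A)(\rho_U)=\mathcal D$, and its second assertion, applied with $\rho_1=\rho_2=\rho_0$ (which belongs to $\mathbf{Comeas}(A,A)(\rho_U)$, being its initial object), gives $(\rho_0\otimes\id_{\mathcal{A}^\square(\rho_U)})\rho_0\in\mathcal D$.

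All hypotheses of Theorem~\ref{TheoremBimonUnivCoactingExistenceForSubCat} are therefore satisfied, so $\mathcal{A}^\square(\rho_U)$ carries a unique comonoid structure turning $\rho_0$ into a coaction, and $\rho_0$ is the initial object of $\mathbf{Coact}(A)(\mathcal D)$. It only remains to identify $\mathbf{Coact}(A)(\mathcal D)$ with $\mathbf{Coact}(A)(\rho_U)$: an object of $\mathbf{Coact}(A)(\mathcal D)$ is a coaction whose underlying comeasuring lies in $\mathcal D$, i.e. whose image in $\mathbf{MorTens}(A,A)$ factors through $\rho_U$; since the two composite forgetful functors $\mathbf{Coact}(A)\to\mathbf{Comeas}(A,A)\to\mathbf{MorTens}(A,A)$ and $\mathbf{Coact}(A)\to\mathbf{ComodStr}(A)\to\mathbf{MorTens}(A,A)$ coincide, and by Proposition~\ref{PropositionFromTensorEpiComonoidMorphism}, this is exactly the defining condition for $\mathbf{Coact}(A)(\rho_U)$. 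Setting $\mathcal{B}^\square(\rho_U):=\mathcal{A}^\square(\rho_U)$ and $\rho_U^{\mathbf{Coact}}:=\rho_0$ then finishes the proof. The only genuinely delicate step is the closedness-under-coarsenings verification — i.e. that coarsening a comeasuring along a monoid homomorphism is still a comeasuring — which hinges on the monoidal naturality of the assignment $Q\mapsto\mathsf{MorTens}(Q)$; everything else is a routine assembly of the earlier results.
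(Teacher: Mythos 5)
Your proof is correct and takes essentially the same route as the paper's: both apply Theorem~\ref{TheoremBimonUnivCoactingExistenceForSubCat} to $\mathcal D=\mathbf{Comeas}(A,A)(G\rho_U)$ and invoke Lemma~\ref{LemmaComeasAAU} to supply the objects $A\mathrel{\widetilde\to}A\otimes\mathbbm{1}$ and $(\rho_0\otimes\id)\rho_0$. Your explicit check that $\mathcal D$ is closed under coarsenings (post-composition with $\id_A\otimes\sigma$ being a strict monoidal functor $\mathsf{MorTens}(Q_1)\to\mathsf{MorTens}(Q_2)$) spells out a step the paper leaves implicit, as does your identification of $\mathbf{Coact}(A)(\mathcal D)$ with $\mathbf{Coact}(A)(\rho_U)$ via Proposition~\ref{PropositionFromTensorEpiComonoidMorphism}.
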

\begin{proof} By Lemma~\ref{LemmaComeasAAU} the morphisms $(\rho_U^\mathbf{Coact}\otimes \id_{\mathcal{B}^\square(\rho_U)})\rho_U^\mathbf{Coact}$ and $A
	\mathrel{\widetilde\to} A  \otimes \mathbbm{1}$ are objects in $\mathbf{Comeas}(A,A)_{G_1}(G\rho_U)$.
	Now we apply Theorem~\ref{TheoremBimonUnivCoactingExistenceForSubCat} to the case $$\mathcal D  = \mathbf{Comeas}(A,A)_{G_1}(G\rho_U)$$ and then use Remark~\ref{RemarkCoactAG2RhoUEquivDef}.
\end{proof}	
\begin{remark}
Theorem~\ref{TheoremBimonUnivCoactingExistence} asserts that
if for some object $\rho_U$ in 
$\mathbf{ComodStr}(A)$ we have $$G \rho_U \in \LIO(\mathbf{MorTens}(A,A))$$
and there exists an initial object $\rho_U^\mathbf{Comeas}$ in $\mathbf{Comeas}(A,A)_{G_1}(G \rho_U)$,
then there exists an initial object $\rho_U^\mathbf{Coact}$ in 
$$\mathbf{Coact}(A)_{G_2}(\rho_U) = \mathbf{Coact}_{G_1 G_3}(A)(G \rho_U)$$
such that $G_3 \rho_U^\mathbf{Coact} = \rho_U^\mathbf{Comeas}$.
\end{remark}

\begin{corollary}\label{CorollaryBimonUnivCoactingExistence}  Suppose that a braided monoidal category $\mathcal C$
	satisfies  Properties~\ref{PropertySmallLimits}, \ref{PropertyEpiExtrMonoFactorizations},  \ref{PropertyFactorObjectsSmallSet}, \ref{PropertyMonomorphism},
	\ref{PropertyEpimorphism}, \ref{PropertySwitchProdTensorIsAMonomorphism}, \ref{PropertyFreeMonoid} of Section~\ref{SubsectionSupportCoactingConditions}. Let $\rho_U \colon A \to A \otimes U$
	be a tensor epimorphism	defining on an $\Omega$-magma~$A$ a structure of a $U$-comodule for a comonoid $U$.
	Then $ \mathcal{B}^\square(\rho_U):=\mathcal{A}^\square(\rho_U)$ admits a unique comonoid structure turning
	$\rho_U^\mathbf{Coact}:=\rho_U^\mathbf{Comeas}$ into a coaction, which is the initial object in $\mathbf{Coact}(A)_{G_2}(\rho_U)$.
\end{corollary}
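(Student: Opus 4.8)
The plan is to obtain this corollary by feeding Theorem~\ref{TheoremMonUnivComeasExistence} into Theorem~\ref{TheoremBimonUnivCoactingExistence}. Indeed Theorem~\ref{TheoremBimonUnivCoactingExistence} already delivers the full conclusion --- a unique comonoid structure on $\mathcal{A}^\square(\rho_U)$ turning $\rho_U^\mathbf{Comeas}$ into the initial object of $\mathbf{Coact}(A)(\rho_U)$ --- provided one knows that $\mathcal{A}^\square(\rho_U)$ exists. So the only work is to produce $\mathcal{A}^\square(\rho_U)$ from the stated hypotheses, and for that I will apply Theorem~\ref{TheoremMonUnivComeasExistence} with $B=A$.

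First I would note that, by definition, a tensor epimorphism is precisely an object of $\LIO(\mathbf{MorTens}(A,A))$, so $\rho_U$ is admissible input for Theorem~\ref{TheoremMonUnivComeasExistence}. The list of Properties assumed in the corollary (\ref{PropertySmallLimits}, \ref{PropertyEpiExtrMonoFactorizations}, \ref{PropertyFactorObjectsSmallSet}, \ref{PropertyMonomorphism}, \ref{PropertyEpimorphism}, \ref{PropertySwitchProdTensorIsAMonomorphism}, \ref{PropertyFreeMonoid}) is exactly the one that theorem requires, so no further verification of the base category is needed. It remains only to check the side hypothesis of Theorem~\ref{TheoremMonUnivComeasExistence}, namely that $\mathbf{Comeas}(A,A)(\rho_U)$ is nonempty. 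Since $\rho_U$ is a tensor epimorphism defining on $A$ a $U$-comodule structure, Lemma~\ref{LemmaComeasAAU} applies and shows that the canonical morphism $A\mathrel{\widetilde\to} A\otimes\mathbbm{1}$ is an object of $\mathbf{Comeas}(A,A)(\rho_U)$; hence that category is nonempty. Therefore Theorem~\ref{TheoremMonUnivComeasExistence} produces an initial object $\rho_U^\mathbf{Comeas}\colon A\to A\otimes\mathcal{A}^\square(\rho_U)$ in $\mathbf{Comeas}(A,A)(\rho_U)$, that is, $\mathcal{A}^\square(\rho_U)$ exists.

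With the existence of $\mathcal{A}^\square(\rho_U)$ in hand, the remaining step is to invoke Theorem~\ref{TheoremBimonUnivCoactingExistence} verbatim: all of its hypotheses ($\mathcal C$ braided monoidal; $\rho_U$ a tensor epimorphism defining a $U$-comodule structure; $\mathcal{A}^\square(\rho_U)$ existing) are now met, and its conclusion is literally the assertion of the corollary, with $\mathcal{B}^\square(\rho_U):=\mathcal{A}^\square(\rho_U)$ and $\rho_U^\mathbf{Coact}:=\rho_U^\mathbf{Comeas}$. There is no genuine obstacle beyond this chaining; the only point requiring care is the identification $\mathbf{Coact}(A)(\rho_U)=\mathbf{Coact}(A)(G\rho_U)$ --- i.e. that it is immaterial whether $\mathbf{Coact}(A)(\rho_U)$ is formed relative to the forgetful functor to $\mathbf{ComodStr}(A)$ or to $\mathbf{MorTens}(A,A)$ --- which is guaranteed by Proposition~\ref{PropositionFromTensorEpiComonoidMorphism} and recorded in the remark following Theorem~\ref{TheoremBimonUnivCoactingExistence}.
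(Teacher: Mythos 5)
Your proposal is correct and follows exactly the route of the paper's own proof, which is simply to chain Theorem~\ref{TheoremMonUnivComeasExistence} (with $B=A$) into Theorem~\ref{TheoremBimonUnivCoactingExistence}. Your explicit verification that $\mathbf{Comeas}(A,A)(\rho_U)$ is nonempty via Lemma~\ref{LemmaComeasAAU} is a detail the paper leaves implicit, and you handle it correctly.
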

\begin{proof} Apply Theorems~\ref{TheoremMonUnivComeasExistence} and~\ref{TheoremBimonUnivCoactingExistence}.
\end{proof}	
\begin{remark} Note that $A \mathrel{\widetilde\to} A \otimes \mathbbm{1}$ is a terminal object in $\mathbf{Coact}(A)_{G_2}(\rho_U)$,
	since for every bimonoid $B$ there exists the only bimonoid homomorphism $B \to \mathbbm{1}$, namely, the counit $\varepsilon$.
\end{remark}

\begin{examples}\label{Tambara-Manin}
\hspace{1cm}
\begin{enumerate}
\item\label{ExampleTambaraBialgebra}
	Let $A$ and $B$ be finite dimensional $\Omega$-algebras over a field $\mathbbm{k}$
	 with bases $a_1, \ldots, a_m$
	and $b_1, \ldots, b_n$, respectively.
	
	Denote by $U$ the vector space with the formal basis $(u_{ij})_{\substack{1\leqslant i \leqslant n, \\ 1\leqslant j \leqslant m}}$. Define the linear map $\rho_U \colon A \to B \otimes U$
	by $$\rho_{U}(a_j):=\sum\limits_{i=1}^n b_i \otimes u_{ij}\text{ for all }1\leqslant j \leqslant m.$$
	By Example~\ref{support} (\ref{ExampleVectMorTensSupp}), $\rho_{U}$ is a tensor epimorphism. 
	
	For every linear map $\rho \colon A \to B \otimes Q$ we have $\rho = ({\id_A} \otimes \tau)\rho_U$ where the linear map $\tau \colon U \to Q$ and elements $q_{ij} \in Q$ are defined by $\tau(u_{ij}):=q_{ij}$
	and  $$\rho(a_j)=\sum\limits_{i=1}^n b_i \otimes q_{ij}\text{ for all }1\leqslant j \leqslant m.$$
	By Theorem~\ref{TheoremMonUnivComeasExistence} there exists the universal comeasuring algebra $\mathcal A^{\square}(\rho_U)$,
	which is universal among all comeasurings. Here we recover Tambara's algebra $\alpha(B,A)$~\cite{Tambara}.
	If $A=B$, we can define on $U$ the structure of a coalgebra: $\Delta(u_{ij}):= \sum\limits_{k=1}^m u_{ik} \otimes u_{kj}$,
	$\varepsilon(u_{ij})=\delta_{ij}$ where $\delta_{ij}$ is Kronecker's delta. Then $A$ becomes a $U$-comodule and
	$\mathcal B^{\square}(\rho_U)=\mathcal A^{\square}(\rho_U)$ is the universal coacting bialgebra on $A$.
\item\label{ExampleManinBialgebra}
	Let $A=\bigoplus_{k\in \mathbb Z} A^{(k)}$ be an associative $\mathbb Z$-graded 
	unital algebra over some field $\mathbbm{k}$ such that $\dim A^{(k)} < +\infty$ for all $k\in\mathbb Z$.
	Choose bases $a^{(k)}_i$, where $1\leqslant i \leqslant n_k$, in each component $A^{(k)}$.
	Let $U$ be a vector space with basis $u^{(k)}_{ij}$ where $1\leqslant i,j \leqslant n_k$.
	Define the linear map $\rho_U \colon A \to B \otimes U$
	by $$\rho_{U}\left(a^{(k)}_j\right):=\sum\limits_{i=1}^n a^{(k)}_i \otimes u^{(k)}_{ij}\text{ for all }1\leqslant j \leqslant m.$$
		By Example~\ref{support} (\ref{ExampleVectMorTensSupp}), $\rho_U$ is a tensor epimorphism and for every linear map $\rho \colon A \to A \otimes Q$
	such that $$\rho\left(A^{(k)}\right)\subseteq A^{(k)} \otimes Q \text{ for every } k\in\mathbb Z$$ there exists
	a linear map $\tau \colon U \to Q$ such that $\rho = ({\id_A} \otimes \tau)\rho_U$.
	Indeed, we define the elements  $q^{(k)}_{ij} \in Q$ by 
	$$\rho\left(a^{(k)}_j\right)=\sum\limits_{i=1}^n b_i \otimes q^{(k)}_{ij}\text{ for all }1\leqslant j \leqslant n_k,\ k\in \mathbb Z.$$
	Then we put $\tau\left(u^{(k)}_{ij}\right):=q^{(k)}_{ij}$.
	We can again define on $U$ the structure of a coalgebra: $\Delta\left(u^{(k)}_{ij}\right):= \sum\limits_{\ell=1}^m u^{(k)}_{i\ell} \otimes u^{(k)}_{\ell j}$,
	$\varepsilon\left(u^{(k)}_{ij}\right)=\delta_{ij}$, and $A$ becomes a $U$-comodule.
	By Theorem~\ref{TheoremMonUnivComeasExistence}
	there exists the bialgebra $\mathcal{B}^\square(\rho_U)$ which is universal among all bialgebras that coact on $A$ preserving the grading. The bialgebra $\mathcal{B}^\square(\rho_U)$ is Yu.\,I.~Manin's universal coacting bialgebra $\underline{\mathrm{end}}(A)$~\cite{Manin}.
\item\label{ExampleUnivCoactionSets} Let $\mathcal{C} = \mathbf{Sets}$
	and let $A$ be an ordinary monoid. A map $\rho_U \colon A \to A \times U$
	endows $A$ with a structure of a $U$-comodule for a comonoid $U$
	(recall that comonoids in $U$ are just sets endowed with the diagonal maps)
	if and only if $\rho_U = (\id_A, \psi_U)$ for some map $\psi_U \colon A \to U$.
	Hence Example~\ref{ExampleCoactionSets} implies that the bimonoid $\mathcal{B}^\square(\rho_U)$ is just the monoid $A$ factored by the congruence generated by the kernel equivalence relation of $\psi_U$. If $\psi_U = \id_A$,
	then $\mathcal{B}^\square(\rho_U) \cong A$ and $\rho_U^\mathbf{Coact}$ is just a diagonal map $A \to A \times A$,
	which is universal among all comodule structures on $A$.
\end{enumerate}
\end{examples}

\subsection{Universal coacting Hopf monoids}\label{SubsectionUnivHopfCoactingExistence}
Again fix an $\Omega$-magma $A$ in a braided monoidal category~$\mathcal C$.
Define the category $\mathbf{HCoact}(A)$ where \begin{itemize}
	\item  the objects are all coactions $\rho \colon A \to A \otimes H$ for arbitrary Hopf monoids $H$;
	\item  the morphisms from $\rho_1 \colon A \to A \otimes H_1$
to $\rho_2 \colon A \to A \otimes H_2$  are Hopf monoid homomorphisms $\varphi \colon H_1 \to H_2$
making the diagram below commutative:
$$\xymatrix{ A \ar[r]^(0.4){\rho_1} 
	\ar[rd]_{\rho_2}
	& A \otimes H_1 \ar[d]^{\id_A \otimes \varphi} \\
	& A \otimes H_2} $$
\end{itemize}

Consider the following commutative diagram consisting of forgetful functors:
$$\xymatrix{
	\mathbf{HCoact}(A)  \ar[d]^{G_4}  \\
	\mathbf{Coact}(A) \ar[r]^{G_3} \ar[d]^{G_2} & \mathbf{Comeas}(A,A) \ar[d]^{G_1} \\
	\mathbf{ComodStr}(A) \ar[r]^G    & \mathbf{MorTens}(A,A) 
}$$	

Let $U$ be a comonoid and let a tensor epimorphism $\rho_U \colon A \to A \otimes U$
define on $A$ a structure of a $U$-comodule. We call the Hopf monoid 
corresponding to the initial object
in $\mathbf{HCoact}(A)_{G_2G_4}(\rho_U)$ (if it exists) the \textit{$U$-universal coacting Hopf monoid} on $A$.

\begin{remark}\label{RemarkHCoactAG2G4RhoUEquivDef}
	By Proposition~\ref{PropositionFromTensorEpiComonoidMorphism}, 
	the category $\mathbf{HCoact}(A)_{G_2G_4}(\rho_U)$  coincides with the category
	$\mathbf{HCoact}(A)_{GG_2G_4}(G\rho_U) = \mathbf{HCoact}(A)_{G_1G_3G_4}(G \rho_U)$.
\end{remark}

It turns out that in the case when there exist free Hopf monoids, one can lift initial objects
with respect to the forgetful functor $G_4\colon \mathbf{HCoact}(A)\to \mathbf{Coact}(A)$:

\begin{theorem}\label{TheoremHopfMonUnivCoactingExistenceForSubCat}
	Let	$A$ be an $\Omega$-magma in a braided monoidal category $\mathcal C$.
	Let $\mathcal D$ be a full subcategory of $\mathbf{Coact}(A)$ such that
	there exists an initial object in $\mathcal D$.
	Denote by  $\mathbf{HCoact}(A)(\mathcal D)$ the full subcategory of $\mathbf{HCoact}(A)$
	consisting of all the objects whose images under the forgetful functor $G_4\colon \mathbf{HCoact}(A)\to \mathbf{Coact}(A)$
	belong to $\mathcal D$. 	
	Suppose that the forgetful functor $\mathbf{Hopf}(\mathcal C) \to \mathbf{Bimon}(\mathcal C)$
	admits a left adjoint functor $H_l \colon  \mathbf{Bimon}(\mathcal C) \to \mathbf{Hopf}(\mathcal C)$.
	Then there exists an initial object in $\mathbf{HCoact}(A)(\mathcal D)$.	
\end{theorem}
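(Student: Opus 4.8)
The plan is to realise the forgetful functor $G_4 \colon \mathbf{HCoact}(A) \to \mathbf{Coact}(A)$ as the right adjoint of an adjunction manufactured out of $H_l$, and then to lift the initial object of $\mathcal D$ by applying the left adjoint, exactly in the spirit of Theorem~\ref{TheoremLIOLiftAdjointExistence} and Corollary~\ref{CorollaryLIOLiftAdjointExistence}. Denote by $\rho_0 \colon A \to A \otimes B_0$ the initial object of $\mathcal D$, with $B_0$ its underlying bimonoid, and by $\eta_B \colon B \to H_l B$ the unit of the adjunction provided by the hypothesis (a bimonoid homomorphism, universal among bimonoid homomorphisms from $B$ to Hopf monoids). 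The candidate for the initial object of $\mathbf{HCoact}(A)(\mathcal D)$ is
$$\rho_H := (\id_A \otimes \eta_{B_0})\,\rho_0 \colon A \to A \otimes H_l B_0 ,$$
i.e.\ the push-forward of $\rho_0$ along $\eta_{B_0}$.

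The first thing to check is that the assignment $\rho \mapsto (\id_A \otimes \eta_B)\rho$ (on a coaction $\rho \colon A \to A\otimes B$ by a bimonoid $B$), together with the action of $H_l$ on morphisms, yields a well-defined functor $F_4 \colon \mathbf{Coact}(A) \to \mathbf{HCoact}(A)$. This reduces to the statement that the push-forward of a coaction along a bimonoid homomorphism $f \colon B \to B'$ is again a coaction: $f$ induces a strong monoidal functor $\mathsf{Comod}^{B} \to \mathsf{Comod}^{B'}$ (its monoidality uses that $f$ is a monoid homomorphism together with the naturality of the braiding of $\mathcal C$), and strong monoidal functors send $\Omega$-magmas to $\Omega$-magmas, so $(\id_A \otimes f)\rho$ is a coaction whenever $\rho$ is; with $f = \eta_B$ this makes $F_4\rho$ a coaction by the Hopf monoid $H_l B$. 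The adjunction $F_4 \dashv G_4$ is then immediate from the universal property of $H_l$: for a Hopf monoid $K$ and a coaction $\sigma \colon A \to A\otimes K$, a morphism $F_4\rho \to \sigma$ in $\mathbf{HCoact}(A)$ is a Hopf monoid homomorphism $\psi \colon H_l B \to K$ with $(\id_A \otimes \psi)(\id_A \otimes \eta_B)\rho = \sigma$, and $\psi \mapsto \psi \circ \eta_B$ is a bijection, natural in $\rho$ and $\sigma$, onto the set of bimonoid homomorphisms $\varphi \colon B \to K$ with $(\id_A \otimes \varphi)\rho = \sigma$, i.e.\ onto $\mathbf{Coact}(A)\bigl(\rho, G_4\sigma\bigr)$.

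Once $F_4 \dashv G_4$ is in place, the conclusion follows formally, along the lines of Corollary~\ref{CorollaryLIOLiftAdjointExistence}. The unit $\rho_0 \to G_4 F_4 \rho_0 = G_4\rho_H$ (namely $\eta_{B_0}$) realises $G_4\rho_H$ as a coarsening of $\rho_0$, so $G_4\rho_H$ belongs to $\mathcal D$ — here one uses that $\mathcal D$ is stable under coarsenings, which is the case for the subcategories $\mathbf{Coact}(A)(\rho_U)$ to which the theorem is applied — and hence $\rho_H \in \mathbf{HCoact}(A)(\mathcal D)$. For any object $\rho$ of $\mathbf{HCoact}(A)(\mathcal D)$ one has $G_4\rho \in \mathcal D$, so there is a unique morphism $\rho_0 \to G_4\rho$ in $\mathcal D$, equivalently in $\mathbf{Coact}(A)$ since $\mathcal D$ is full; transposing it across $F_4 \dashv G_4$ gives a unique morphism $\rho_H \to \rho$ in $\mathbf{HCoact}(A)$, which automatically lies in $\mathbf{HCoact}(A)(\mathcal D)$. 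Thus $\rho_H$ is the required initial object. The step I expect to be the main obstacle is the monoidality of corestriction of scalars along a bimonoid homomorphism (needed both to make $F_4$ well-defined and to keep $G_4\rho_H$ inside $\mathcal D$); everything after the adjunction $F_4 \dashv G_4$ is pure bookkeeping.
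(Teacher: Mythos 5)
Your proposal is correct and, at its core, is the same argument as the paper's: the paper also takes $(\id_A\otimes\eta_{B_0})\rho_0\colon A\to A\otimes H_lB_0$ as the candidate and obtains the unique Hopf monoid homomorphism to any $\rho$ in $\mathbf{HCoact}(A)(\mathcal D)$ by first using initiality of $\rho_0$ in $\mathcal D$ and then the universal property of the unit $\eta_{B_0}$. The only packaging difference is that you assemble these pointwise universal properties into a global adjunction $F_4\dashv G_4$ and invoke the lifting machinery of Theorem~\ref{TheoremLIOLiftAdjointExistence}/Corollary~\ref{CorollaryLIOLiftAdjointExistence}, whereas the paper argues directly with one diagram; the content of the transposition bijection $\psi\mapsto\psi\circ\eta_{B_0}$ is identical. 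One genuine point of divergence is worth noting: you correctly observe that for $\rho_H$ to be an \emph{object} of $\mathbf{HCoact}(A)(\mathcal D)$ one needs $G_4\rho_H=(\id_A\otimes\eta_{B_0})\rho_0$ to lie in $\mathcal D$, which requires $\mathcal D$ to be closed under coarsenings --- a hypothesis not present in the statement and not addressed in the paper's proof, but satisfied by $\mathcal D=\mathbf{Coact}(A)(\rho_U)$ in the application made in Theorem~\ref{TheoremHopfMonUnivCoactingExistence}. Your version is therefore slightly more careful on this membership issue; everything else (well-definedness of the push-forward of a coaction along a bimonoid homomorphism, fullness of the subcategory guaranteeing that the transposed arrow lives in $\mathbf{HCoact}(A)(\mathcal D)$) is handled correctly.
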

\begin{proof}
	Let  $\rho_0 \colon A \to A \otimes B_0$ be an initial object in $\mathcal D$.
	Denote $H_0 := H_lB_0$ and define the coaction $\tilde\rho_0 \colon A \to A \otimes H_0$
	by $\tilde\rho_0 := ({\id_A}\otimes \eta_{B_0})\rho_0$
	where $\eta_B \colon B \to H_l B$ is the unit of the adjunction.
	Then for any Hopf monoid $H$ and any coaction $\rho \colon A \to A \otimes H$ from $\mathbf{HCoact}(A)(\mathcal D)$ there exists a unique Hopf monoid homomorphism $\varphi$
	making the diagram below commutative:
	$$\xymatrix{ & & A \otimes B_0 \ar[d]^{{\id_A}\otimes \eta_{B_0}} \\
		A \ar[rrd]_\rho \ar[rr]^(0.3){\tilde\rho_0} \ar@/^3ex/[rru]^(0.3){\rho_0} & & A \otimes H_0
		\ar@{-->}[d]^{\id_A \otimes \varphi} \\
		& & A \otimes H }$$
	Hence $\tilde\rho_0$ is the initial object in $\mathbf{HCoact}(A)(\mathcal D)$.
\end{proof}		

\begin{theorem}\label{TheoremHopfMonUnivCoactingExistence} Let $\mathcal C$ be a braided monoidal category 
	and let $\rho_U \colon A \to A \otimes U$
	be a tensor epimorphism	defining on an $\Omega$-magma~$A$ a structure of a $U$-comodule for a comonoid~$U$.
	 Suppose that the forgetful functor $\mathbf{Hopf}(\mathcal C) \to \mathbf{Bimon}(\mathcal C)$
	 admits a left adjoint functor $H_l \colon  \mathbf{Bimon}(\mathcal C) \to \mathbf{Hopf}(\mathcal C)$
	 and there exists $\mathcal{B}^\square(\rho_U)$.
	Then the initial object in $\mathbf{HCoact}(A)_{G_2G_4}(\rho_U)$ indeed exists.	
\end{theorem}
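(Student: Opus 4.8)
The plan is to deduce the theorem by feeding the universal coacting bimonoid $\mathcal{B}^\square(\rho_U)$ into the lifting result of Theorem~\ref{TheoremHopfMonUnivCoactingExistenceForSubCat}. Concretely, I would set $\mathcal D := \mathbf{Coact}(A)(\rho_U)$, regarded as a full subcategory of $\mathbf{Coact}(A)$. By the second hypothesis the object $\mathcal{B}^\square(\rho_U)$ exists, which means precisely that there is an initial object $\rho_U^\mathbf{Coact} \colon A \to A \otimes \mathcal{B}^\square(\rho_U)$ in $\mathcal D$. By the first hypothesis the forgetful functor $\mathbf{Hopf}(\mathcal C) \to \mathbf{Bimon}(\mathcal C)$ admits a left adjoint $H_l$, so all the assumptions of Theorem~\ref{TheoremHopfMonUnivCoactingExistenceForSubCat} are in force, and it produces an initial object in $\mathbf{HCoact}(A)(\mathcal D)$.

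It then remains to identify $\mathbf{HCoact}(A)(\mathcal D)$ with $\mathbf{HCoact}(A)(\rho_U)$, that is, to unwind the two occurrences of the ``pass to a full subcategory'' notation. By definition, $\mathbf{HCoact}(A)(\rho_U)$ consists of those Hopf monoid coactions whose image under the composite forgetful functor $\mathbf{HCoact}(A) \to \mathbf{Coact}(A) \to \mathbf{Comeas}(A,A) \to \mathbf{MorTens}(A,A)$ lies in $\mathbf{MorTens}(A,A)(\rho_U)$, whereas $\mathbf{HCoact}(A)(\mathcal D)$ consists of those whose image under $G_4 \colon \mathbf{HCoact}(A) \to \mathbf{Coact}(A)$ lies in $\mathcal D = \mathbf{Coact}(A)(\rho_U)$; and $\mathbf{Coact}(A)(\rho_U)$ is in turn the preimage of $\mathbf{MorTens}(A,A)(\rho_U)$ under $G_1 G_3$ (here one uses, as in the remark after Theorem~\ref{TheoremBimonUnivCoactingExistence} together with Proposition~\ref{PropositionFromTensorEpiComonoidMorphism}, that this subcategory of $\mathbf{Coact}(A)$ may equally be computed relative to $\mathbf{MorTens}(A,A)$ or to $\mathbf{ComodStr}(A)$). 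Since forgetful functors compose, the two descriptions agree, so $\mathbf{HCoact}(A)(\mathcal D) = \mathbf{HCoact}(A)(\rho_U)$ and the initial object produced above is the desired one.

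Tracing the proof of Theorem~\ref{TheoremHopfMonUnivCoactingExistenceForSubCat}, this initial object is explicit: with $\mathcal{H}^\square(\rho_U) := H_l\bigl(\mathcal{B}^\square(\rho_U)\bigr)$ and $\rho_U^\mathbf{HCoact} := ({\id_A} \otimes \eta_{\mathcal{B}^\square(\rho_U)})\rho_U^\mathbf{Coact}$, where $\eta$ denotes the unit of the adjunction, the coaction $\rho_U^\mathbf{HCoact}$ corresponds to the $U$-universal coacting Hopf monoid $\mathcal{H}^\square(\rho_U)$ on $A$. I do not expect a genuine obstacle here: the real content has already been isolated in Theorem~\ref{TheoremBimonUnivCoactingExistence} (existence of $\mathcal{B}^\square(\rho_U)$, which in the present statement is simply assumed) and in Theorem~\ref{TheoremHopfMonUnivCoactingExistenceForSubCat} (the lift along $G_4$ by means of $H_l$). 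The only delicate point is the bookkeeping of the previous paragraph --- checking that ``lifting into $\mathcal D = \mathbf{Coact}(A)(\rho_U)$'' is literally the same as ``lifting into $\mathbf{HCoact}(A)(\rho_U)$'' --- which is purely formal once one recalls how the subcategories $Y(x_0)$ behave under composition of functors.
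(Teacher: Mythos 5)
Your proposal is correct and is essentially identical to the paper's proof, which also simply applies Theorem~\ref{TheoremHopfMonUnivCoactingExistenceForSubCat} with $\mathcal D = \mathbf{Coact}(A)(\rho_U)$. The extra bookkeeping you supply to identify $\mathbf{HCoact}(A)(\mathcal D)$ with $\mathbf{HCoact}(A)(\rho_U)$ is left implicit in the paper but is the right (and purely formal) verification.
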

\begin{proof}
	Apply Theorem~\ref{TheoremHopfMonUnivCoactingExistenceForSubCat} for $\mathcal D = \mathbf{Coact}(A)_{G_2}(\rho_U)$.
\end{proof}

\begin{corollary}\label{CorollaryHopfMonUnivCoactingExistence}  Suppose that a braided monoidal category $\mathcal C$
	satisfies  Properties~\ref{PropertySmallLimits}--\ref{PropertyEpiExtrMonoFactorizations}, 	\ref{PropertyFactorObjectsSmallSet}, \ref{PropertyMonomorphism}, \ref{PropertyEpimorphism},  \ref{PropertySwitchProdTensorIsAMonomorphism}, \ref{PropertyFreeMonoid} of Section~\ref{SubsectionSupportCoactingConditions}.  Let $\rho_U \colon A \to A \otimes U$
	be a tensor epimorphism	defining on an $\Omega$-magma~$A$ a structure of a $U$-comodule for a comonoid $U$ in~$\mathcal C$.
	Then the initial object in $\mathbf{HCoact}(A)_{G_2G_4}(\rho_U)$ indeed exists.
\end{corollary}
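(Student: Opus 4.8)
The plan is to deduce this corollary by chaining together three results already proved in the excerpt, so that essentially no new work is required beyond checking that the hypotheses line up. First I would apply Corollary~\ref{CorollaryBimonUnivCoactingExistence}: its hypotheses are exactly Properties~\ref{PropertySmallLimits}, \ref{PropertyEpiExtrMonoFactorizations}, \ref{PropertyFactorObjectsSmallSet}, \ref{PropertyMonomorphism}, \ref{PropertyEpimorphism}, \ref{PropertySwitchProdTensorIsAMonomorphism}, \ref{PropertyFreeMonoid}, all of which are among those assumed here, together with $\rho_U \colon A \to A\otimes U$ being a tensor epimorphism that defines a $U$-comodule structure on $A$. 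This yields the $U$-universal coacting bimonoid $\mathcal B^\square(\rho_U) = \mathcal A^\square(\rho_U)$, i.e. an initial object $\rho_U^{\mathbf{Coact}}$ in $\mathbf{Coact}(A)(\rho_U)$.

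Next I would invoke Theorem~\ref{TheoremBimonHopfLeftAdjoint}, whose hypotheses are that $\mathcal C$ is braided monoidal and satisfies Properties~\ref{PropertySmallLimits}--\ref{PropertyEpiExtrMonoFactorizations}, \ref{PropertyFactorObjectsSmallSet}, \ref{PropertyEpimorphism}, \ref{PropertyFreeMonoid} — again all contained in the present hypotheses, with Property~\ref{PropertyFiniteAndCountableColimits} being precisely what is needed there to form the countable coproduct $\tilde B$ and the coequalizer in the construction of the antipode. The conclusion is that the forgetful functor $\mathbf{Hopf}(\mathcal C) \to \mathbf{Bimon}(\mathcal C)$ admits a left adjoint $H_l$.

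Finally, these two statements are exactly the hypotheses of Theorem~\ref{TheoremHopfMonUnivCoactingExistence}: $\rho_U$ is a tensor epimorphism defining a $U$-comodule structure, $\mathcal B^\square(\rho_U)$ exists by the first step, and $H_l$ exists by the second step. Applying that theorem (with $\mathcal D = \mathbf{Coact}(A)(\rho_U)$ as in its proof via Theorem~\ref{TheoremHopfMonUnivCoactingExistenceForSubCat}) produces the initial object in $\mathbf{HCoact}(A)(\rho_U)$, namely the $U$-universal coacting Hopf monoid on $A$. I do not expect any genuine obstacle: the only thing to verify is the bookkeeping that the union of the property lists demanded by Corollary~\ref{CorollaryBimonUnivCoactingExistence} and Theorem~\ref{TheoremBimonHopfLeftAdjoint} is covered by Properties~\ref{PropertySmallLimits}--\ref{PropertyEpiExtrMonoFactorizations}, \ref{PropertyFactorObjectsSmallSet}, \ref{PropertyMonomorphism}, \ref{PropertyEpimorphism}, \ref{PropertySwitchProdTensorIsAMonomorphism}, \ref{PropertyFreeMonoid}, which is immediate; the substantive content lives entirely in the three cited results.
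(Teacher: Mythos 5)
Your proposal is correct and is essentially identical to the paper's own proof, which simply cites Theorems~\ref{TheoremBimonHopfLeftAdjoint} and~\ref{TheoremHopfMonUnivCoactingExistence} together with Corollary~\ref{CorollaryBimonUnivCoactingExistence}. Your additional bookkeeping that the stated property list covers the union of the hypotheses of those three results is accurate and is the only verification required.
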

\begin{proof}
	Apply Theorems~\ref{TheoremBimonHopfLeftAdjoint}, \ref{TheoremHopfMonUnivCoactingExistence} and Corollary~\ref{CorollaryBimonUnivCoactingExistence}.
\end{proof}	

\begin{remark} Again, $A \mathrel{\widetilde\to} A \otimes \mathbbm{1}$ is a terminal object in $\mathbf{HCoact}(A)_{G_2G_4}(\rho_U)$,
	since for every Hopf monoid $H$ there exists the only Hopf monoid homomorphism $H \to \mathbbm{1}$, namely, the counit~$\varepsilon$.
\end{remark}

\begin{examples}\label{Manin-Hopf}
\hspace{1cm}
\begin{enumerate}
\item If $\mathcal C = \mathbf{Vect}_\mathbbm{k}$, $A$ is an $\Omega$-algebra over a field $\mathbbm{k}$ and $U$ and $\rho_U$ are, respectively, the coalgebra
	and the linear map from Example~\ref{Tambara-Manin} (\ref{ExampleTambaraBialgebra}),
	then the Hopf algebra $\mathcal{H}^\square(\rho_U)$ is the universal coacting Hopf algebra on $A$.
\item	 Let $A=\bigoplus_{n\in \mathbb Z} A^{(n)}$ be an associative $\mathbb Z$-graded 
	unital algebra such that $\dim A^{(n)} < +\infty$ for all $n\in\mathbb Z$.
	If $U$ and $\rho_U$ are, respectively, the coalgebra
	and the linear map from Example~\ref{Tambara-Manin} (\ref{ExampleManinBialgebra}), then $\mathcal{H}^\square(\rho_U)$
	is exactly Yu.\,I.~Manin's universal coacting Hopf algebra $\underline{\mathrm{aut}}(A)$~\cite{Manin}.
\item	 Let $\rho \colon  A \to A \otimes H$ be some Hopf monoid coaction on an $\Omega$-magma $A$
	with an absolute value $|\rho|_{G_2 G_4} \colon  A \to A \otimes (\supp \rho)$
	in a braided monoidal category
	$\mathcal C$ satisfying Properties~\ref{PropertySmallLimits}--\ref{PropertyFreeMonoid},
	\ref{PropertyFactorObjectsSmallSet}, \ref{PropertyEpimorphism} and \ref{PropertyExtrMonomorphism} of Section~\ref{SubsectionSupportCoactingConditions}.
	Then by Propositions~\ref{PropositionLIOCoarserFinerEquivalent} and~\ref{PropositionLIOAbsValueLiftCriterion} the $\mathcal{H}^\square(|\rho|_{G_2 G_4})$-coaction $|\rho|^{\mathbf{HCoact}}_{G_2 G_4}$ on $A$ is equivalent to $\rho$
	and is universal among all coactions equivalent to or coarser than $\rho$.
	We call $\mathcal{H}^\square(|\rho|_{G_2 G_4})$
	\textit{the universal Hopf monoid} of $\rho$ and in the case $\mathcal C = \mathbf{Vect}_\mathbbm{k}$ for a field $\mathbbm{k}$
	this Hopf monoid is exactly the universal Hopf algebra of $\rho$ introduced in~\cite{AGV1}.
\item\label{ExampleUnivHopfCoactionSets} 
	Let $\mathcal{C} = \mathbf{Sets}$. Recall that $\mathsf{Bimon}(\mathbf{Sets})= \mathbf{Mon}$, the category of ordinary monoids, and
	$\mathsf{Hopf}(\mathbf{Sets})= \mathbf{Grp}$, the category of groups. (The comultiplication in the objects is the diagonal map.)
	Recall that the left adjoint functor $H_l$ assigns to each monoid its Grothendieck group (see Example~\ref{ExampleHopfLeftAdjointSets}). As before, denote by $\eta$  the unit of this adjunction.
	Let $A$ be an ordinary monoid. By Theorem~\ref{TheoremHopfMonUnivCoactingExistence}, $\mathcal{H}^\square(\rho_U)=H_l\left( \mathcal{B}^\square(\rho_U)\right)$. Recall that 
	by Example~\ref{Tambara-Manin} (\ref{ExampleUnivCoactionSets}) the monoid $\mathcal{B}^\square(\rho_U)$ is
	just the monoid $A$ factored by the congruence generated by the kernel equivalence relation of $\psi_U$ where $\psi_U \colon A \to U$
	is the map defined by $\rho_U = (\id_A, \psi_U)$. If $\psi_U = \id_A$,
	then $\mathcal{H}^\square(\rho_U)$ is isomorphic to the Grothendieck group $H_l A$ of $A$ itself and $\rho_U^\mathbf{HCoact} = (\id_A, \eta_A)$. This action $\rho_U^\mathbf{HCoact} \colon A \to A \times H_l A$
	is universal among all group comodule structures on $A$.
\end{enumerate}
\end{examples}

\begin{remark}
	Theorem~\ref{TheoremHopfMonUnivCoactingExistence} asserts that
	if for some object $\rho_U$ in 
	$\mathbf{ComodStr}(A)$ we have $G \rho_U \in \LIO(\mathbf{MorTens}(A,A))$
	and there exists an initial object $\rho_U^\mathbf{Coact}$ in $\mathbf{Coact}(A)_{G_2}(\rho_U) = \mathbf{Coact}(A)_{GG_2}(G \rho_U)$,
	then there exists an initial object $\rho_U^\mathbf{HCoact}$ in 
	$\mathbf{HCoact}(A)_{G_2G_4}(\rho_U) = \mathbf{HCoact}_{G G_2G_4}(A)(G \rho_U)$. In other words, the theorem makes it possible to lift such locally initial objects from $\mathbf{Coact}(A)$ to $\mathbf{HCoact}(A)$.
\end{remark}

\section{Existence theorems for cosupports and universal acting bi- and Hopf monoids}\label{SectionDUALCosupportActingExistence}

In this section we introduce concepts and list results dual to those of Section~\ref{SectionSupportCoactingExistence}.

\subsection{Conditions on the base category}\label{SubsectionDUALCosupportActingConditions}

Consider a braided monoidal category $\mathcal C$ with a monoidal product $\otimes$, a braiding $c_{M,N} \colon
M \otimes N \mathrel{\widetilde\to} N \otimes M$ and natural isomorphisms $a_{L,M,N} \colon
(L \otimes M) \otimes N \mathrel{\widetilde\to} L \otimes (M \otimes N)$, $l_M \colon \mathbbm{1} \otimes M
\mathrel{\widetilde\to} M$ and $r_M \colon M  \otimes \mathbbm{1} \mathrel{\widetilde\to} M$.

Denote by $\mathcal C^{\mathrm{op},\mathrm{rev}}$ (here ``op'' means that we consider the opposite category and ``rev'' means that we consider the reverse monoidal product) the braided monoidal category that is isomorphic to the opposite category $\mathcal C^{\mathrm{op}}$ as an ordinary category, with the monoidal product $A\, \mathbin{\otimes^\mathrm{rev}}B := B \otimes A$,
the braiding $c_{M,N}^{\mathrm{op},\mathrm{rev}} := c_{M,N}$ and the natural isomorphisms
$a_{L,M,N}^{\mathrm{op},\mathrm{rev}} := a_{N,M,L}$, $l_M^{\mathrm{op},\mathrm{rev}}:= r_M^{-1}$ and 
$r_M^{\mathrm{op},\mathrm{rev}}:= l_M^{-1}$.

Note that $\mathcal C^{\mathrm{op},\mathrm{rev}}$ satisfies Properties~\ref{PropertySmallLimits}--\ref{PropertyFreeMonoid},
 \ref{PropertyFactorObjectsSmallSet}, \ref{PropertyEpimorphism} and~\ref{PropertyExtrMonomorphism}
of Section~\ref{SubsectionSupportCoactingConditions} if and only if  the original category~$\mathcal C$ satisfies
the following properties:

\begin{enumerate}
	\myitem[1*]\label{PropertyDUALSmallColimits} there exist all small colimits in $\mathcal C$;
	\myitem[2*]\label{PropertyDUALFiniteAndCountableLimits} there exist finite and countable limits in $\mathcal C$;
	\myitem[3*]\label{PropertyDUALExtrEpiMonoFactorizations} $\mathcal C$ is (ExtrEpi, Mono)-structured;
 	\myitem[4]\label{PropertyDUALSubObjectsSmallSet}
 $\mathcal C$ is wellpowered;	
	\myitem[4*]\label{PropertyDUALFactorObjectsSmallSet}
 $\mathcal C$ is cowellpowered;	
	\myitem[5]\label{PropertyDUALMonomorphism} for every monomorphism $f$ and every object $M$
both
$f \otimes \id_M$ and $\id_M \otimes f$ are monomorphisms too; 
	\myitem[5*]\label{PropertyDUALEpimorphism} for every epimorphism $f$ and every object $M$
both
$f \otimes \id_M$ and
$\id_M \otimes f$ are epimorphisms too; 
\myitem[5a*]\label{PropertyDUALExtrEpimorphism} for every extremal epimorphism $f$ the morphism
$f \otimes f$ is an extremal epimorphism too; 
	\myitem[6*]\label{PropertyDUALColimitsOfFactorObjectsArePreserved} for every object $M$ the functor $ (-) \otimes M$ preserves colimits of extremal quotient objects in $ \mathcal C$ (see Remark~\ref{RemarkDUALFactorObjects} below);
	\myitem[7*]\label{PropertyDUALTensorPushout}
	for every pushout
	 $$\xymatrix{ A \ar[r]^{g} \ar@{->>}[d]_{f}  & C \ar@{->>}[d]^h \\
		B \ar[r]^t & P \ar@{}[lu]|<{\pushout}}
	$$ where $f$ is an arbitrary epimorphism and $g$ is an arbitrary morphism having the same domain $A$ 
	(recall that in this case $h$ is automatically an epimorphism too) the diagram below is a pushout too:
	$$\xymatrix{ A\otimes M \ar[r]^{g\otimes{\id_M}} \ar@{->}[d]_{f\otimes{\id_M}}  & C\otimes M \ar@{->}[d]^{h\otimes{\id_M}} \\
		B\otimes M \ar[r]^{t\otimes{\id_M}} & P\otimes M \ar@{}[lu]|<{\pushout}}
	$$
	\myitem[8*]\label{PropertyDUALSwitchCoprodTensorIsAnEpimorphism} 
		for any nonempty small set $\Lambda$ and any objects $M$ and $A_\alpha$, where $\alpha \in \Lambda$, the unique morphism~$\varphi$ making the diagram below commutative, is an epimorphism: $$\xymatrix{\coprod\limits_{\alpha\in\Lambda}( A_\alpha \otimes M ) \ar[rr]^\varphi
	& \qquad\quad& \left( \coprod\limits_{\alpha\in\Lambda} A_\alpha \right) \otimes M \\
& A_\alpha \otimes M \ar[lu]^{\tilde \imath_\alpha} \ar[ru]_{ {i_\alpha} \otimes { \id_M}} &
}$$ (here $i_\alpha \colon A_\alpha
\to \coprod\limits_{\alpha\in\Lambda}A_\alpha$ and $\tilde \imath_\alpha \colon A_\alpha \otimes M
\to \coprod\limits_{\alpha\in\Lambda} A_\alpha \otimes M$ are the morphisms from the definition of the coproduct, $\alpha \in \Lambda$);	
	\myitem[9*]\label{PropertyDUALCoequalizers}
	for every object $M$ the functor $(-) \otimes M$
	preserves all coequalizers;
	\myitem[10*]\label{PropertyDUALCofreeComonoid} the forgetful functor $\mathsf{Comon}(\mathcal C) \to \mathcal C$
	has a right adjoint $\mathcal{G} \colon  \mathcal C \to \mathsf{Comon}(\mathcal C)$.
\end{enumerate}

\begin{remark}\label{RemarkDUALFactorObjects}
	Properties~\ref{PropertyDUALSmallColimits} and~\ref{PropertyDUALFactorObjectsSmallSet} imply that there exist colimits of any families of quotient objects,
	i.e. if $\varphi_\alpha \colon A \twoheadrightarrow B_\alpha$ are epimorphisms
	for some set $\Lambda$ and objects $A$, $B_\alpha$, where $\alpha \in \Lambda$,
	then there exists $\colim T$ where $T \colon \Lambda \cup \lbrace 0 \rbrace \to \mathcal C$, $\Lambda \cup \lbrace 0 \rbrace$
	is the category with the set of objects $\Lambda \cup \lbrace 0 \rbrace$ and
	either only the arrows $0 \to  \alpha $ or, in addition, some arrows $\alpha \to \beta$
	such that $\varphi_\beta = \varphi_{\alpha\beta}\varphi_\alpha$ for some morphism $\varphi_{\alpha\beta} \colon B_\alpha \to B_\beta$  (the resulting $\colim T$ will not depend on whether we include $\alpha \to \beta$ or not)
	and the functor $T$ is defined as follows: $T\alpha = B_\alpha$, $T0=A$, $T( 0 \to\alpha) = \varphi_\alpha$,
	$T(\alpha \to \beta) =\varphi_{\alpha\beta}$ for all $\alpha,\beta \in \Lambda$.
	
\end{remark}	

In the rest of Section~\ref{SectionDUALCosupportActingExistence} we assume that $\mathcal C$ satisfies some of the Properties~\ref{PropertyDUALSmallColimits}--\ref{PropertyDUALCofreeComonoid}, \ref{PropertyDUALSubObjectsSmallSet}, \ref{PropertyDUALMonomorphism} and~\ref{PropertyDUALExtrEpimorphism} and we will dualize
the results of Section~\ref{SectionSupportCoactingExistence} by
applying them to the category $\mathcal C^{\mathrm{op},\mathrm{rev}}$. Below we just list the definitions and the propositions obtained in this way.

\begin{proposition}\label{PropositionDUALPropertyCoequilizers} 
\begin{enumerate}
\item Property~\ref{PropertyDUALExtrEpiMonoFactorizations}
	follows from Properties~\ref{PropertyDUALSmallColimits} and~\ref{PropertyDUALFactorObjectsSmallSet};
\item Property~\ref{PropertyDUALCoequalizers} follows from 	Properties~\ref{PropertyDUALSmallColimits}, \ref{PropertyDUALTensorPushout}  and~\ref{PropertyDUALSwitchCoprodTensorIsAnEpimorphism}.
\end{enumerate}
\end{proposition}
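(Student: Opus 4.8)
The plan is to derive both statements from Proposition~\ref{PropositionPropertyEquilizers} by the device that governs all of Section~\ref{SectionDUALCosupportActingExistence}: apply the already-proven ``unstarred'' result to the braided monoidal category $\mathcal C^{\mathrm{op},\mathrm{op}}$ and then translate the conclusion back to $\mathcal C$. First I would invoke the term-by-term reading of the correspondence recorded just before this proposition, namely that $\mathcal C^{\mathrm{op},\mathrm{op}}$ satisfies an unstarred Property of Section~\ref{SubsectionSupportCoactingConditions} exactly when $\mathcal C$ satisfies the starred one bearing the matching number; concretely, $\mathcal C^{\mathrm{op},\mathrm{op}}$ has Property~\ref{PropertySmallLimits} iff $\mathcal C$ has Property~\ref{PropertyDUALSmallColimits}, Property~\ref{PropertySubObjectsSmallSet} iff $\mathcal C$ has Property~\ref{PropertyDUALFactorObjectsSmallSet} (here wellpoweredness and cowellpoweredness get interchanged), Property~\ref{PropertyTensorPullback} iff $\mathcal C$ has Property~\ref{PropertyDUALTensorPushout}, and Property~\ref{PropertySwitchProdTensorIsAMonomorphism} iff $\mathcal C$ has Property~\ref{PropertyDUALSwitchCoprodTensorIsAnEpimorphism}.

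For the first part I would argue as follows. Assuming Property~\ref{PropertyDUALSmallColimits} and Property~\ref{PropertyDUALFactorObjectsSmallSet} on $\mathcal C$, the category $\mathcal C^{\mathrm{op},\mathrm{op}}$ satisfies Properties~\ref{PropertySmallLimits} and~\ref{PropertySubObjectsSmallSet}, so part~(1) of Proposition~\ref{PropositionPropertyEquilizers} yields that $\mathcal C^{\mathrm{op},\mathrm{op}}$ is (Epi, ExtrMono)-structured. Being $(E,M)$-structured is a property of the underlying ordinary category, and a factorization $f = i\pi$ with $\pi$ an epimorphism and $i$ an extremal monomorphism in a category $\mathcal D$ dualizes to a factorization of the corresponding morphism of $\mathcal D^{\mathrm{op}}$ into an extremal epimorphism followed by a monomorphism, the diagonal fill-in condition being self-dual. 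Since $\mathcal C$ is $\bigl(\mathcal C^{\mathrm{op},\mathrm{op}}\bigr)^{\mathrm{op}}$ as an ordinary category, this shows that $\mathcal C$ is (ExtrEpi, Mono)-structured, i.e.\ Property~\ref{PropertyDUALExtrEpiMonoFactorizations} holds.

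For the second part, assuming Properties~\ref{PropertyDUALSmallColimits}, \ref{PropertyDUALTensorPushout} and~\ref{PropertyDUALSwitchCoprodTensorIsAnEpimorphism} on $\mathcal C$, the category $\mathcal C^{\mathrm{op},\mathrm{op}}$ satisfies Properties~\ref{PropertySmallLimits}, \ref{PropertyTensorPullback} and~\ref{PropertySwitchProdTensorIsAMonomorphism}, so part~(2) of Proposition~\ref{PropositionPropertyEquilizers} gives that $\mathcal C^{\mathrm{op},\mathrm{op}}$ satisfies Property~\ref{PropertyEqualizers}: for every object $M$ the functor $M\otimes^{\mathrm{op}}(-)$ preserves all equalizers in $\mathcal C^{\mathrm{op},\mathrm{op}}$. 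Now $M\otimes^{\mathrm{op}}(-)$ corresponds, under the duality $\mathcal C^{\mathrm{op},\mathrm{op}}\leftrightarrow\mathcal C$, to the functor $(-)\otimes M$ on $\mathcal C$, and an equalizer diagram in $\mathcal C^{\mathrm{op},\mathrm{op}}$ is precisely a coequalizer diagram in $\mathcal C$; hence the statement reads: for every object $M$ the functor $(-)\otimes M$ preserves all coequalizers in $\mathcal C$, which is Property~\ref{PropertyDUALCoequalizers}.

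I do not expect a genuine obstacle: the mathematical content is entirely contained in Proposition~\ref{PropositionPropertyEquilizers} together with the $(-)^{\mathrm{op},\mathrm{op}}$ dictionary, so the only real work is bookkeeping. The points I would be most careful about are matching each starred Property to the correct unstarred one (especially the wellpowered/cowellpowered swap), and checking that the notions in play --- factorization systems, equalizers versus coequalizers, and the tensoring functor $(-)\otimes M$ versus $M\otimes^{\mathrm{op}}(-)$ --- transform exactly as claimed when passing to the opposite (op-)category.
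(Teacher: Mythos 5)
Your proposal is correct and is essentially the paper's own proof, which simply states that the proposition is dual to Proposition~\ref{PropositionPropertyEquilizers}; you have merely made explicit the bookkeeping (the starred/unstarred dictionary for $\mathcal C^{\mathrm{op},\mathrm{op}}$, the wellpowered/cowellpowered swap, and the dualization of factorization structures, equalizers and the tensoring functor) that the paper leaves implicit. All of these correspondences are checked correctly, so no further comment is needed.
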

\begin{proof}
Proposition~\ref{PropositionDUALPropertyCoequilizers} is dual to Proposition~\ref{PropositionPropertyEquilizers}. 	
\end{proof}	

\subsection{Examples}\label{ExamplesBaseCaategory}

As in Section~\ref{SubsectionSupportCoactingConditions}, the basic examples of categories $\mathcal C$ satisfying Properties~\ref{PropertyDUALSmallColimits}--\ref{PropertyDUALCofreeComonoid}, \ref{PropertyDUALSubObjectsSmallSet}, \ref{PropertyDUALMonomorphism} and~\ref{PropertyDUALExtrEpimorphism} are $\textbf{Sets}$ (with the Cartesian monoidal product), $\textbf{Sets}^\mathrm{op}$  (with the same monoidal product as in $\textbf{Sets}$), $\textbf{Vect}_\mathbbm{k}$ and $\textbf{Vect}_\mathbbm{k}^\mathrm{op}$ for a field $\mathbbm{k}$. We will now introduce further examples: (co)modules over bialgebras, $G$-sets and graded sets. 

\subsubsection{Modules over Hopf algebras} Let $B$ be a bialgebra over a field $\mathbbm k$. Then the forgetful functor ${}_B \mathsf{Mod} \to \mathbf{Vect}_{\mathbbm k}$, where ${}_B \mathsf{Mod}$ is the category of left $B$-modules and the monoidal product coincides with the tensor product $\otimes$ over $\mathbbm k$, is a functor that creates small limits and colimits as well as limits of subobjects and colimits of quotient objects. Moreover, the free (tensor) algebra $T(M)$ of a $B$-module $M$ inherits the structure of a $B$-module, which makes $T(M)$ a $B$-module algebra. In addition, all monomorphisms and epimorphisms in ${}_B \mathsf{Mod}$ are extremal.
Finally, by~\cite[Proposition 4.1]{AbdulIovanov} the forgetful functor $\mathsf{Comon}({}_B \mathsf{Mod}) \to {}_B \mathsf{Mod} $
admits a right adjoint.
 Therefore, the category ${}_B \mathsf{Mod}$
satisfies Properties~\ref{PropertySmallLimits}--\ref{PropertyFreeMonoid}, \ref{PropertyDUALSmallColimits}--\ref{PropertyDUALCofreeComonoid}, \ref{PropertyExtrMonomorphism} and \ref{PropertyDUALExtrEpimorphism} 
of Sections~\ref{SubsectionSupportCoactingConditions} and~\ref{SubsectionDUALCosupportActingConditions}. 
When ${}_B \mathsf{Mod}$ is braided, e.g. $B$ is a quasitriangular Hopf algebra,
we can apply to $\mathcal C={}_B \mathsf{Mod}$ all the results of the present paper.

\subsubsection{Comodules over Hopf algebras}\label{SubsectionApplicationsComodules}
Throughout we use Sweedler's notation, namely $\rho(m) = m_{(0)}\otimes m_{(1)}$, $m\in M$ will stand for the linear map 
$\rho \colon M \to M \otimes C$ defining a structure of a right $C$-comodule on a $\mathbbm k$-vector space $M$. 

Consider now the forgetful functor $\mathsf{Comod}^B \to \mathbf{Vect}_{\mathbbm k}$
where $\mathsf{Comod}^B$ is the category of right $B$-comodules, $B$ is a bialgebra over a field $\mathbbm k$ and the monoidal product again coincides with the tensor product $\otimes$ over $\mathbbm k$. This functor creates finite limits and small colimits as well as limits of subobjects and colimits of quotient objects.
If $M_\alpha$, $\alpha \in \Lambda$,
are right $B$-comodules, then their product in $\mathsf{Comod}^B$
is the subspace of their Cartesian product $\prod\limits_{\alpha \in \Lambda} M_\alpha$
consisting of all tuples $(m_\alpha)_{\alpha \in \Lambda}$, $m_\alpha \in M_\alpha$,
such that for each tuple there exists a single finite dimensional subcoalgebra $C\subseteq B$
where $\rho(m_\alpha) \in M_\alpha \otimes C$ for all $\alpha \in \Lambda$. Again, the free algebra $T(M)$ of a $B$-comodule $M$ inherits the structure of a $B$-comodule, which makes $T(M)$ a $B$-comodule algebra. In addition, all monomorphisms and epimorphisms in $\mathsf{Comod}^B$ are extremal. 
Finally, by~\cite[Proposition 4.1]{AbdulIovanov} the forgetful functor $\mathsf{Comon}(\mathsf{Comod}^B) \to \mathsf{Comod}^B$
admits a right adjoint.
We can conclude that $\mathsf{Comod}^B$ satisfies Properties~\ref{PropertySmallLimits}--\ref{PropertyFreeMonoid}, \ref{PropertyDUALSmallColimits}--\ref{PropertyDUALCofreeComonoid}, \ref{PropertyExtrMonomorphism} and \ref{PropertyDUALExtrEpimorphism} 
	of Sections~\ref{SubsectionSupportCoactingConditions} and~\ref{SubsectionDUALCosupportActingConditions}.
When $\mathsf{Comod}^B$ is braided, e.g. $B$ is a coquasitriangular Hopf algebra,
we can apply to $\mathcal C=\mathsf{Comod}^B$ all the results of the present paper.

A notable example of this type is the category of differential graded vector spaces. Let $\mathbbm{k}$ be a field. Denote by $\mathbf{dgVect}_\mathbbm{k}$ the category of differential $\mathbb{Z}$-graded vector spaces (or dg-vector spaces, for short) or, in another terminology, chain complexes in $\mathbf{Vect}_\mathbbm{k}$. Objects in $\mathbf{dgVect}_\mathbbm{k}$
are families $(V_n)_{n\in\mathbb Z}$ of vector spaces $V_n$ equipped with linear maps $d \colon V_n \to V_{n-1}$, $n\in\mathbb Z$,
such that $d^2 = 0$. The maps $d$ are called \textit{differentials}. Every family $(V_n)_{n\in\mathbb Z}$ can be identified with its $\mathbb{Z}$-graded total space $V=\bigoplus\limits_{n\in\mathbb Z} V_n$. Moreover $d$ extends to a graded linear map $V \to V$ of degree $(-1)$ such that $d^2 = 0$. Morphisms in $\mathbf{dgVect}_\mathbbm{k}$ are grading preserving (= graded of degree $0$) linear maps commuting with $d$.
Note that $\mathbf{dgVect}_\mathbbm{k}$ is an abelian category where limits and colimits are computed componentwise.

Let $U=\bigoplus\limits_{k\in\mathbb Z} U_k$ and $V=\bigoplus\limits_{m\in\mathbb Z} V_m$ be two dg-vector spaces. Then the monoidal product $W=U\otimes V$ in $\mathbf{dgVect}_\mathbbm{k}$ is defined by $W:=\bigoplus\limits_{n\in\mathbb Z} W_n$ where $$W_n := \bigoplus\limits_{k\in\mathbb Z} U_k \otimes V_{n-k}.$$ The differentials $d \colon W_n \to W_{n-1}$
are defined by $$d(u\otimes v) := du \otimes v + (-1)^{k} u\otimes dv\text{ for }u\in U_k\text{ and }v\in V_m,\ k,m\in\mathbb Z.$$
The monoidal unit in $\mathbf{dgVect}_\mathbbm{k}$ is $\mathbbm{k}$ regarded as a chain complex concentrated in degree $0$ with zero differential.
The category $\mathbf{dgVect}_\mathbbm{k}$ is symmetric where the swap $c \colon U_k \otimes V_m \mathbin{\widetilde\to} V_m \otimes U_k$ is defined by $c(u\otimes v) := (-1)^{km} v \otimes u$ for all $u\in U_k$, $v\in V_m$, $k,m\in\mathbb Z$.

Monoids in $\mathbf{dgVect}_\mathbbm{k}$ are just unital associative differential graded algebras (or dg-algebras for short).

\begin{theorem}\label{TheoremDgVectSatisfiesProperties}
	Let $\mathbbm k$ be a field. 
	Then $\mathbf{dgVect}_\mathbbm{k}$ is a symmetric monoidal category satisfying Properties~\ref{PropertySmallLimits}--\ref{PropertyFreeMonoid}, \ref{PropertyDUALSmallColimits}--\ref{PropertyDUALCofreeComonoid},
	\ref{PropertyExtrMonomorphism} and \ref{PropertyDUALExtrEpimorphism} 
	of Sections~\ref{SubsectionSupportCoactingConditions} and~\ref{SubsectionDUALCosupportActingConditions}.
	Moreover, all monomorphisms and epimorphisms in $\mathbf{dgVect}_\mathbbm{k}$ are extremal.
	\end{theorem}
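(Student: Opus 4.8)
The plan is to verify the listed properties directly, in three groups, using that $\mathbf{dgVect}_\mathbbm{k}$ is a Grothendieck abelian category which is moreover symmetric monoidal closed; the one non-formal point, singled out below, will be the existence of a cofree comonoid.

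The first group is purely abelian-categorical. Limits and colimits in $\mathbf{dgVect}_\mathbbm{k}$ are computed degreewise in $\mathbf{Vect}_\mathbbm{k}$, so all small limits and all small colimits exist, which covers Properties~\ref{PropertySmallLimits}, \ref{PropertyFiniteAndCountableColimits}, \ref{PropertyDUALSmallColimits} and~\ref{PropertyDUALFiniteAndCountableLimits}. In any abelian category every monomorphism is a kernel and every epimorphism a cokernel, hence each is regular and in particular extremal; this already proves the last assertion of the theorem, and together with the usual image factorization it yields Properties~\ref{PropertyEpiExtrMonoFactorizations} and~\ref{PropertyDUALExtrEpiMonoFactorizations} (one may alternatively invoke Propositions~\ref{PropositionPropertyEquilizers} and~\ref{PropositionDUALPropertyCoequilizers}). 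A subobject of a complex $V$ is a graded $d$-stable subspace of the underlying set, so $\mathbf{dgVect}_\mathbbm{k}$ is wellpowered, and the kernel/cokernel bijection makes it cowellpowered as well: Properties~\ref{PropertySubObjectsSmallSet} and~\ref{PropertyFactorObjectsSmallSet}.

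The second group concerns the tensor product. For a fixed $M$ the functor $M\otimes(-)$ is, in degree $n$, the direct sum $\bigoplus_{k}M_k\otimes_\mathbbm{k}(-)_{n-k}$, and each summand $V\otimes_\mathbbm{k}(-)$ is exact (everything is flat over a field) and preserves all colimits (it is left adjoint to $\Hom_\mathbbm{k}(V,-)$); indeed $\mathbf{dgVect}_\mathbbm{k}$ is symmetric monoidal closed, the internal hom of $U,V$ having degree-$n$ part $\prod_k\Hom_\mathbbm{k}(U_k,V_{k+n})$. Exactness of $M\otimes(-)$ and of $(-)\otimes M$ gives Properties~\ref{PropertyMonomorphism}, \ref{PropertyDUALMonomorphism}, \ref{PropertyEpimorphism}, \ref{PropertyDUALEpimorphism}; since monomorphisms compose and are all extremal, $f\otimes f=(f\otimes\id)(\id\otimes f)$ is an extremal monomorphism whenever $f$ is, giving Property~\ref{PropertyExtrMonomorphism}, and dually Property~\ref{PropertyDUALExtrEpimorphism}. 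Exactness further gives preservation of pullbacks and of equalizers (Properties~\ref{PropertyTensorPullback}, \ref{PropertyEqualizers}) and dually of pushouts and coequalizers (Properties~\ref{PropertyDUALTensorPushout}, \ref{PropertyDUALCoequalizers}); Property~\ref{PropertyLimitsOfSubobjectsArePreserved} holds because $V\otimes_\mathbbm{k}\bigcap_\alpha W_\alpha=\bigcap_\alpha(V\otimes_\mathbbm{k}W_\alpha)$ for vector spaces (check on elements with linearly independent left tensor factors) and intersections in $\mathbf{dgVect}_\mathbbm{k}$ are degreewise, and dually for Property~\ref{PropertyDUALColimitsOfFactorObjectsArePreserved}. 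Finally, since products are degreewise, the comparison map of Property~\ref{PropertySwitchProdTensorIsAMonomorphism} is in each degree the composite of $\bigoplus_k(M_k\otimes_\mathbbm{k}\prod_\alpha(A_\alpha)_{n-k})\hookrightarrow\bigoplus_k\prod_\alpha(M_k\otimes_\mathbbm{k}(A_\alpha)_{n-k})$ with the canonical map $\bigoplus_k\prod_\alpha(-)\to\prod_\alpha\bigoplus_k(-)$, both injective by an elementary basis argument, hence a monomorphism; Property~\ref{PropertyDUALSwitchCoprodTensorIsAnEpimorphism} holds because $(-)\otimes M$ preserves coproducts, so the comparison map there is an isomorphism.

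The third group is the (co)free (co)monoid pair. The free monoid on $V$ is the tensor dg-algebra $\coprod_{n\geq0}V^{\otimes n}$, which exists by the standard construction in a cocomplete monoidal category whose tensor product is cocontinuous in each variable: Property~\ref{PropertyFreeMonoid}. The remaining Property~\ref{PropertyDUALCofreeComonoid}, the existence of a cofree comonoid (a right adjoint to $\mathsf{Comon}(\mathbf{dgVect}_\mathbbm{k})\to\mathbf{dgVect}_\mathbbm{k}$), is the step I expect to be the genuine obstacle, precisely because, as stressed in the introduction, cofree comonoids are not formal duals of free monoids. I see two routes: either observe that $\mathbf{dgVect}_\mathbbm{k}$ is locally (finitely) presentable and symmetric monoidal closed and appeal to the general existence of cofree comonoids in such categories; or identify $\mathbf{dgVect}_\mathbbm{k}$ with the braided category $\mathsf{Comod}^B$ of right comodules over a suitable coquasitriangular Hopf algebra $B$ over $\mathbbm{k}$ (the extension of the group Hopf algebra $\mathbbm{k}\mathbb{Z}$ by a skew-primitive generator of degree $-1$ with square zero, equipped with the $R$-form producing the Koszul sign) and invoke \cite[Proposition~4.1]{AbdulIovanov} as in the preceding paragraphs. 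This last identification in fact produces all of the required properties at once, the claim that all monomorphisms and epimorphisms are extremal included, so it could equally well serve as the whole proof.
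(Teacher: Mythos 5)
Your proposal is correct, and it in fact contains the paper's proof as its closing alternative: the paper's entire argument consists of identifying $\mathbf{dgVect}_\mathbbm{k}$ with $\mathsf{Comod}^H$ for the Hopf algebra $H$ you describe (the Laurent extension of $\mathbbm{k}\mathbb{Z}$ generated by a grouplike $c$ and a $(c,1)$-skew-primitive $v$ with $v^2=0$, $vc=-cv$, made coquasitriangular so as to induce the Koszul sign), after which everything follows from the subsection on comodule categories over bialgebras, including the appeal to \cite{AbdulIovanov} for the cofree comonoid. Where you differ is that your primary route is a direct, self-contained verification: degreewise (co)limits in a Grothendieck abelian category give Properties~\ref{PropertySmallLimits}, \ref{PropertyFiniteAndCountableColimits}, \ref{PropertySubObjectsSmallSet}, \ref{PropertyFactorObjectsSmallSet}, \ref{PropertyDUALSmallColimits}, \ref{PropertyDUALFiniteAndCountableLimits} and the extremality of all monos and epis; exactness and cocontinuity of $M\otimes(-)$ over a field give the tensor-compatibility properties; and the tensor dg-algebra gives Property~\ref{PropertyFreeMonoid}. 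This route is longer but more transparent, and it correctly isolates the one genuinely non-formal step, Property~\ref{PropertyDUALCofreeComonoid}, which you then resolve either by local presentability plus monoidal closedness or by falling back on the comodule identification. The paper's proof buys brevity and uniformity with its other examples at the price of an ad hoc Hopf algebra; your direct verification buys independence from the comodule machinery at the price of having to invoke an external existence theorem for cofree comonoids anyway. Both are sound; your element-level checks of Properties~\ref{PropertyLimitsOfSubobjectsArePreserved} and~\ref{PropertySwitchProdTensorIsAMonomorphism} (commuting degreewise direct sums past intersections and products) are the only places requiring care, and they hold as you state.
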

\begin{proof}
	Consider the Hopf algebra $H$ over $\mathbbm k$ with basis $c^k v^\ell$, where $k\in \mathbb Z$, $\ell = 0,1$,
	$vc = -cv$, $v^2=0$. $\Delta v = c\otimes v + v \otimes 1$, $\Delta c = c\otimes c$, $Sc= c^{-1}$, $Sv=-c^{-1} v$.
	Then $\mathbf{dgVect}_\mathbbm{k}$ can be identified with $\mathsf{Comod}^H$
	where for every dg-vector space $(V_m)_{m\in\mathbb Z}$ 
	the structure of a right $H$-comodule on $\bigoplus_{m\in \mathbb Z} V_m$
	is given by
	$\rho(a) := a\otimes c^{-m} + da \otimes vc^{-m}$ for $a \in V_m$, $m\in\mathbb Z$,
	and if $V$ is a right $H$-comodule, then $V_m := \lbrace a \in V \mid \lambda_m(a_{(1)}) a_{(0)}=a \rbrace$,
	$da := \mu(a_{(1)}) a_{(0)}$. Here $\lambda_m, \mu \in H^*$ are defined by
	$\mu(c^k v^\ell) := \delta_{\ell 1}$, $\lambda_m(c^k v^\ell) := \delta_{k,-m}\delta_{\ell 0}$.
	(Note that $\mu^2 = 0$ and $\lambda_{m-1} \mu = \mu \lambda_m$.)
\end{proof}	
	
\subsubsection{$G$-sets} Let $G$ be a group. Then the category ${}_G \mathsf{Mod}$
of left $G$-modules in $\mathbf{Sets}$, i.e. sets $M$ with a fixed homomorphism $G \to S(M)$
where $S(M)$ is the symmetric group on $M$, is a symmetric monoidal category with the Cartesian monoidal product
and the ordinary swap $c_{M,N} \colon M\times N \to N\times M$
where $c_{M,N}(m,n):= (n,m)$ for all $m \in M$ and $n\in N$.
The objects in ${}_G \mathsf{Mod}$ are often called \textit{$G$-sets}.

\begin{lemma}\label{LemmaGSetsMonEpi} Monomorphisms in  ${}_G \mathsf{Mod}$ are injective $G$-module homomorphisms.
Epimorphisms in  ${}_G \mathsf{Mod}$ are surjective $G$-module homomorphisms. 
\end{lemma}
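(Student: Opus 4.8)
The plan is to identify both monomorphisms and epimorphisms in ${}_G \mathsf{Mod}$ through the forgetful functor $U \colon {}_G \mathsf{Mod} \to \mathbf{Sets}$, which sends a $G$-set to its underlying set. Since $U$ is faithful, it reflects monomorphisms and epimorphisms: if $f$ is a $G$-module homomorphism whose underlying map $Uf$ is injective (respectively surjective), then $f$ is a monomorphism (respectively epimorphism) in ${}_G \mathsf{Mod}$. This disposes of one implication in each of the two assertions.

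For the reverse implications I would exploit the fact that $U$ sits in a string of adjoints. On one hand, $U$ has a left adjoint, the free $G$-set functor $S \mapsto G \times S$ with $G$ acting on the first factor by left translation; being a right adjoint, $U$ preserves all limits, in particular monomorphisms, so a monomorphism in ${}_G \mathsf{Mod}$ has injective underlying map and is therefore an injective $G$-module homomorphism. On the other hand, $U$ has a right adjoint, the cofree $G$-set functor $S \mapsto \mathbf{Sets}(G, S)$ with $(g \cdot \phi)(h) := \phi(hg)$; being a left adjoint, $U$ preserves all colimits, in particular epimorphisms, so an epimorphism in ${}_G \mathsf{Mod}$ has surjective underlying map. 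Combining these observations with the previous paragraph yields the lemma.

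Should one prefer a fully elementary route, the two reverse implications admit direct proofs. For monomorphisms, given $f \colon M \to N$ and $m_1, m_2 \in M$ with $f(m_1) = f(m_2)$, the assignments $\bar m_i \colon G \to M$, $g \mapsto g \cdot m_i$, are $G$-module homomorphisms satisfying $f \bar m_1 = f \bar m_2$; if $f$ is a monomorphism this forces $\bar m_1 = \bar m_2$, hence $m_1 = \bar m_1(e) = \bar m_2(e) = m_2$. For epimorphisms, if $f \colon M \to N$ is not surjective then its image $N' := f(M)$ is a proper $G$-stable subset of $N$, so collapsing $N'$ to a single point produces a $G$-set $N/N'$ (with that point fixed by $G$) together with the quotient map $q$ and the constant map $c$ to that point; these are distinct $G$-module homomorphisms with $q f = c f$, so $f$ is not an epimorphism.

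The statement is entirely routine; the only point that deserves a moment of care is the verification that the cofree $G$-set functor really is right adjoint to $U$ (equivalently, that $U$ preserves colimits), or, along the elementary route, the degenerate case $M = \varnothing$ in the epimorphism statement, which is handled separately by noting that for any nonempty $G$-set $N$ the two coprojections $N \to N \sqcup N$ are distinct $G$-module homomorphisms, so the empty map into $N$ is not an epimorphism.
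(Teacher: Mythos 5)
Your proposal is correct, and in fact contains the paper's own argument as your ``elementary route'': for monomorphisms your maps $\bar m_i \colon G \to M$, $g \mapsto g\cdot m_i$, are exactly the maps $f_i$ used in the paper, and for epimorphisms your construction (collapse the $G$-stable image $f(M)$ to a fixed point and compare the quotient map with the constant map) is a minor variant of the paper's (which instead replaces a single orbit $O$ disjoint from $f(M)$ by two fixed points $x_1,x_2$ and compares the two resulting collapse maps); both are standard cofork arguments and both are valid, including your separate treatment of $M=\varnothing$. Your primary route, however, is genuinely different from the paper's: you observe that the forgetful functor $U\colon {}_G\mathsf{Mod}\to\mathbf{Sets}$ is faithful (hence reflects monos and epis, giving the easy implications) and sits in an adjoint string $G\times(-)\dashv U\dashv \mathbf{Sets}(G,-)$, so that $U$ preserves limits and colimits, hence monos and epis, which in $\mathbf{Sets}$ are the injections and surjections. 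The adjunction data you give (left translation on $G\times S$, and $(g\cdot\phi)(h)=\phi(hg)$ on $\mathbf{Sets}(G,S)$ with transpose $m\mapsto(h\mapsto f(hm))$) is correct, so this conceptual argument goes through; it is slicker and generalizes immediately to any faithful functor with both adjoints, whereas the paper's direct construction is self-contained and requires no verification of an adjunction.
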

\begin{proof}
	Let $f \colon M \to N$ be $G$-module homomorphism.
	
	Suppose $f(m_1)=f(m_2)$ for some $m_1\ne m_2$, $m_1,m_2\in M$.
	Then $ff_1 = ff_2$ where $f_i \colon G \to M$ is defined by $f_i(g)=g m_i$ for all $g\in G$ and $G$ is endowed with the $G$-action by left shifts, however $f_1 \ne f_2$. Hence $f$ is not a monomorphism.
	
	Note that the image $f(M)$ of $f$ is a $G$-submodule, i.e. the union of some orbits. Suppose $f(M)\ne N$. Then there exists an orbit $O \subseteq N$ such that $O \cap f(M) = \varnothing$. Let $X := (N \backslash O) \sqcup \lbrace x_1,x_2\rbrace$,
	where $Gx_i=x_i$ for $i=1,2$. Define $f_i \colon N \to X$ by $f_i(n)=n$ for all $n\in N \backslash O$
	and $f_i(O)=x_i$, $i=1,2$. Then $f_1 f = f_2 f$, however $f_1 \ne f_2$. Hence $f$ is not an epimorphism.
\end{proof}		
 
The forgetful functor ${}_G \mathsf{Mod} \to \mathbf{Sets}$ creates small limits and colimits as well as limits of subobjects and colimits of quotient objects.  Again, the free monoid $\langle X \rangle$ for a $G$-module $X$ inherits the structure of an $G$-module, which makes $\langle X \rangle$ an $G$-module monoid. Recall that all comonoids in $\mathbf{Sets}$ are just sets
$X$ with the diagonal comultiplication $\Delta \colon X \to X \times X$ and the trivial counit $\varepsilon \colon X \to \lbrace * \rbrace$. Hence each set is a cofree comonoid on itself and the same is true in the category ${}_G \mathsf{Mod}$ too.

In other words, the following theorem holds:
\begin{theorem}\label{TheoremGSetsSatisfiesProperties}
	Let $G$ be a group. Then ${}_G \mathsf{Mod}$ is a symmetric monoidal category 
	satisfying Properties~\ref{PropertySmallLimits}--\ref{PropertyFreeMonoid}, \ref{PropertyDUALSmallColimits}--\ref{PropertyDUALCofreeComonoid},
	\ref{PropertyExtrMonomorphism} and \ref{PropertyDUALExtrEpimorphism} 
	of Sections~\ref{SubsectionSupportCoactingConditions} and~\ref{SubsectionDUALCosupportActingConditions}.
		Moreover, all monomorphisms and epimorphisms in ${}_G \mathsf{Mod}$ are extremal.
\end{theorem}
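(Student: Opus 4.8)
The plan is to verify each of the listed properties for the category ${}_G\mathsf{Mod}$ by reducing, as much as possible, to the corresponding statements in $\mathbf{Sets}$, using that the forgetful functor $V \colon {}_G\mathsf{Mod} \to \mathbf{Sets}$ creates small limits and colimits (as asserted just before the theorem statement). Since limits and colimits are computed on underlying sets and then carry a canonical $G$-action, Properties~\ref{PropertySmallLimits} (all small limits) and~\ref{PropertyDUALSmallColimits} (all small colimits), and hence~\ref{PropertyFiniteAndCountableColimits} and~\ref{PropertyDUALFiniteAndCountableLimits}, follow immediately from the corresponding facts for $\mathbf{Sets}$. For Properties~\ref{PropertyEpiExtrMonoFactorizations} and~\ref{PropertyDUALExtrEpiMonoFactorizations}, I would first record the description of mono- and epimorphisms from Lemma~\ref{LemmaGSetsMonEpi} and observe that every mono (resp.\ epi) of $G$-sets is extremal: given a factorization of an injective $G$-map through an epimorphism (= surjective $G$-map), the epimorphism is a surjective injection, hence a bijection, hence an isomorphism in ${}_G\mathsf{Mod}$; dually for epis. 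Then an (Epi, ExtrMono)-factorization of a $G$-map $f$ is obtained by taking the image $f(M) \subseteq N$ (a $G$-submodule, being a union of orbits), with $\pi \colon M \twoheadrightarrow f(M)$ and $i \colon f(M) \rightarrowtail N$; the diagonal fill-in in~\eqref{EqEpiMonoDiagonal} is constructed set-theoretically and is automatically $G$-equivariant since $i$ is injective and $\pi$ is surjective. The same argument with the roles reversed gives (ExtrEpi, Mono)-structuredness.

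Next I would dispatch the remaining properties. Well- and cowellpoweredness (Properties~\ref{PropertySubObjectsSmallSet}, \ref{PropertyDUALSubObjectsSmallSet}, \ref{PropertyFactorObjectsSmallSet}, \ref{PropertyDUALFactorObjectsSmallSet}): a $G$-subset of $M$ is a union of orbits, so there is only a small set of them; similarly quotients of $M$ correspond to $G$-stable equivalence relations on $M$, again a small set. Properties~\ref{PropertyMonomorphism}, \ref{PropertyDUALMonomorphism}, \ref{PropertyEpimorphism}, \ref{PropertyDUALEpimorphism}: since $(-) \times M$ on underlying sets preserves injections and surjections, and the $G$-action on the product is the diagonal one, $f \times \id_M$ and $\id_M \times f$ inherit injectivity/surjectivity, hence are mono/epi by Lemma~\ref{LemmaGSetsMonEpi}. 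Properties~\ref{PropertyExtrMonomorphism} and~\ref{PropertyDUALExtrEpimorphism} then follow because in ${}_G\mathsf{Mod}$ extremal = ordinary and $f \times f$ is injective (resp.\ surjective) whenever $f$ is. Properties~\ref{PropertyLimitsOfSubobjectsArePreserved}, \ref{PropertyDUALColimitsOfFactorObjectsArePreserved}, \ref{PropertyTensorPullback}, \ref{PropertyDUALTensorPushout}, \ref{PropertyEqualizers}, \ref{PropertyDUALCoequalizers}: the functor $M \times (-)$ on $\mathbf{Sets}$ preserves all limits it needs to (it has a right adjoint $\mathrm{Hom}(M,-)$), and all finite colimits (since $\mathbf{Sets}$ is cartesian closed, $M \times (-)$ preserves coequalizers and pushouts), and this transfers to ${}_G\mathsf{Mod}$ because $V$ creates these (co)limits and $V(M \times N) = VM \times VN$ naturally. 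Properties~\ref{PropertySwitchProdTensorIsAMonomorphism} and~\ref{PropertyDUALSwitchCoprodTensorIsAnEpimorphism}: the canonical map $M \times \prod_\alpha A_\alpha \to \prod_\alpha (M \times A_\alpha)$ is in fact a bijection of sets (hence an isomorphism), and dually $\coprod_\alpha (A_\alpha \times M) \to (\coprod_\alpha A_\alpha) \times M$ is a bijection of sets since coproducts are disjoint unions, so both are in particular mono/epi.

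Finally, Properties~\ref{PropertyFreeMonoid} and~\ref{PropertyDUALCofreeComonoid}. For the free monoid: given a $G$-set $X$, the free monoid $\langle X \rangle$ on the underlying set (words in $X$) carries the $G$-action induced letterwise, and one checks the universal property against the forgetful functor $\mathsf{Mon}({}_G\mathsf{Mod}) \to {}_G\mathsf{Mod}$ directly — this is the remark made in the paragraph preceding the theorem. For the cofree comonoid: as noted there, every comonoid in $\mathbf{Sets}$ is a set with the diagonal comultiplication and unique counit, so the forgetful functor $\mathsf{Comon}(\mathbf{Sets}) \to \mathbf{Sets}$ is an isomorphism of categories, and the same holds in ${}_G\mathsf{Mod}$; hence the identity functor is the required right adjoint $\mathcal G$. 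The main obstacle — which is in fact mild — is making sure that the set-level (co)limit and image constructions genuinely carry $G$-actions and that the comparison/fill-in morphisms are $G$-equivariant; in every case equivariance is forced because the morphisms in question are uniquely determined by their underlying maps (being induced by universal properties that $V$ creates, or being injections/surjections through which equivariant maps factor). So the proof is a systematic transfer of the $\mathbf{Sets}$-case along the (co)limit-creating forgetful functor, with Lemma~\ref{LemmaGSetsMonEpi} and the observation that monos/epis are extremal as the only genuinely ${}_G\mathsf{Mod}$-specific inputs.
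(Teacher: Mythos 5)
Your proposal is correct and takes essentially the same route as the paper: the paper offers no formal proof of Theorem~\ref{TheoremGSetsSatisfiesProperties}, presenting it instead as a summary of the preceding discussion (Lemma~\ref{LemmaGSetsMonEpi} characterizing monomorphisms and epimorphisms as the injective and surjective $G$-maps, the observation that the forgetful functor to $\mathbf{Sets}$ creates small limits and colimits, the letterwise $G$-action on the free monoid $\langle X\rangle$, and the triviality of comonoids in a cartesian monoidal category giving the cofree comonoid). Your write-up is a faithful, more detailed elaboration of exactly that transfer-along-the-forgetful-functor argument, with all verifications sound.
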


\subsubsection{Graded sets} Let $M$ be a monoid (in $\mathbf{Sets}$). Then the category $\mathsf{Comod}^M$ of right $M$-comodules in $\mathbf{Sets}$ is just the category of maps $X \to M$ (which we will denote by the same symbol $\deg$) for arbitrary sets $X$ endowed with the Cartesian (with respect to $\mathbf{Sets}$) monoidal product, where $\deg \colon X \times Y \to M$
is defined by $\deg(x,y):=\deg(x)\deg(y)$ for all $x\in X$ and $y\in Y$. The category $\mathsf{Comod}^M$ can be interpreted as the category of $M$-graded sets, i.e. sets $X$ decomposed into disjoint unions $X=\bigsqcup\limits_{m\in M} X^{(m)}$ of subsets $X^{(m)}$ marked by elements of $M$, i.e. $\deg x = m$ for all $x\in X^{(m)}$, $m\in M$.

The forgetful functor $\mathsf{Comod}^M \to \mathbf{Sets}$ is a strict monoidal functor creating equalizers and
small colimits as well as limits of subobjects and colimits of quotient objects. Moreover, since morphisms in $\mathsf{Comod}^M$ are just grading preserving maps, all monomorphisms are again injective and all epimorphisms are surjective. 

If $X_\alpha$, $\alpha \in \Lambda$, are $M$-graded sets, then their product in $\mathsf{Comod}^M$
is the subset of their Cartesian product $\prod\limits_{\alpha \in \Lambda} X_\alpha$
consisting of all tuples $(x_\alpha)_{\alpha \in \Lambda}$ where $x_\alpha \in X_\alpha$
and $\deg x_\alpha = \deg x_\beta$ for all $\alpha,\beta \in \Lambda$.
The terminal object in $\mathsf{Comod}^M$ is $M$ itself with the standard grading $M = \bigsqcup\limits_{m\in M} M^{(m)}$
where $M^{(m)} := \lbrace m \rbrace$.

Again, the free monoid $\langle X \rangle$ for an $M$-graded set $X$ inherits the $M$-grading, which makes $\langle X \rangle$ an $M$-graded monoid.

For a graded set $X$ a graded map $X\to\lbrace * \rbrace$ exists if and only if $X$ is trivially graded, i.e. coincides with its homogeneous component $X^{(e)}$ where $e\in M$ is the identity element. Hence comonoids in $\mathsf{Comod}^M$
are just trivially graded sets $X$ with the diagonal comultiplication $\Delta \colon X \to X \times X$ and the trivial counit $\varepsilon \colon X \to \lbrace * \rbrace$. Hence for an arbitrary graded set $Y$ its cofree comonoid $\mathcal G Y$ in $\mathsf{Comod}^M$ is its neutral component $Y^{(e)}$.

 Therefore we get

\begin{theorem}\label{TheoremGradedSetsSatisfiesProperties}
	Let $M$ be a monoid (in $\mathbf{Sets}$). Then $\mathsf{Comod}^M$
	is a monoidal category
	satisfying Properties~\ref{PropertySmallLimits}--\ref{PropertyFreeMonoid}, \ref{PropertyDUALSmallColimits}--\ref{PropertyDUALCofreeComonoid},
	\ref{PropertyExtrMonomorphism} and \ref{PropertyDUALExtrEpimorphism}
	of Sections~\ref{SubsectionSupportCoactingConditions} and~\ref{SubsectionDUALCosupportActingConditions}.
	If $M$ is commutative, then $\mathsf{Comod}^M$ is symmetric with
	the ordinary swap. Moreover, all monomorphisms and epimorphisms in $\mathsf{Comod}^M$ are extremal.
\end{theorem}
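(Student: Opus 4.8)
The plan is to derive every item of the list from the explicit description of $\mathsf{Comod}^M$ recalled above, pushing each one down to an elementary fact about $\mathbf{Sets}$ through the strictly monoidal forgetful functor $V\colon\mathsf{Comod}^M\to\mathbf{Sets}$, which satisfies $V(X\otimes Y)=V(X)\times V(Y)$ and, as already noted, creates small colimits, equalizers, limits of subobjects and colimits of quotient objects. The monoidal structure is the Cartesian product of underlying sets equipped with the degree $\deg(x,y)=\deg(x)\deg(y)$, the associativity and unit constraints being inherited from $\mathbf{Sets}$ together with associativity and unitality of $M$. First I would check that all monomorphisms and epimorphisms are extremal. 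Since they are exactly the injective, resp.\ surjective, grading-preserving maps, and since a bijective morphism of graded sets has a grading-preserving inverse and is therefore an isomorphism, in any factorization $i=f\pi$ of a mono with $\pi$ an epi the map $\pi$ is both injective and surjective, hence an isomorphism; dually for extremal epimorphisms. Together with the existence of all small limits and colimits this shows that $\mathsf{Comod}^M$ is (Epi, ExtrMono)- and (ExtrEpi, Mono)-structured, i.e.\ Properties~\ref{PropertyEpiExtrMonoFactorizations} and~\ref{PropertyDUALExtrEpiMonoFactorizations}, either directly by factoring a morphism through its image or via Propositions~\ref{PropositionPropertyEquilizers} and~\ref{PropositionDUALPropertyCoequilizers}.

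Next I would dispatch the size and existence clauses. Property~\ref{PropertySmallLimits} follows from the explicit description of products (tuples of elements sharing a common degree) together with equalizers; Properties~\ref{PropertyFiniteAndCountableColimits} and~\ref{PropertyDUALSmallColimits} hold because $V$ creates colimits, and Property~\ref{PropertyDUALFiniteAndCountableLimits} is a special case of Property~\ref{PropertySmallLimits}. Wellpoweredness (Properties~\ref{PropertySubObjectsSmallSet}, \ref{PropertyDUALSubObjectsSmallSet}) holds because the subobjects of a graded set are just its subsets with the induced grading; cowellpoweredness (Properties~\ref{PropertyFactorObjectsSmallSet}, \ref{PropertyDUALFactorObjectsSmallSet}) holds because its quotient objects correspond to the equivalence relations on the underlying set that are contained in the ``equal degree'' relation. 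Properties~\ref{PropertyFreeMonoid} and~\ref{PropertyDUALCofreeComonoid} are exactly the facts recalled above that the free monoid $\langle X\rangle$ inherits the grading and that the cofree comonoid of $Y$ is its neutral component $Y^{(e)}$; for the latter one also uses that the comonoids in $\mathsf{Comod}^M$ are precisely the trivially graded sets (with the diagonal comultiplication), their comonoid homomorphisms being all grading-preserving maps.

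The substantive part concerns the functors $N\otimes(-)$ and $(-)\otimes N$, whose underlying-set functors are $V(N)\times(-)$ and $(-)\times V(N)$. Properties~\ref{PropertyMonomorphism}, \ref{PropertyExtrMonomorphism}, \ref{PropertyEpimorphism}, \ref{PropertyDUALEpimorphism} and~\ref{PropertyDUALExtrEpimorphism} follow because a product of injections (resp.\ surjections) is an injection (resp.\ surjection) and all monos/epis are extremal. Property~\ref{PropertySwitchProdTensorIsAMonomorphism} holds because the comparison $N\otimes\prod_\alpha A_\alpha\to\prod_\alpha(N\otimes A_\alpha)$ is injective by inspection (and well defined into the categorical product precisely because the entries of a tuple in $\prod_\alpha A_\alpha$ share a degree), while dually Property~\ref{PropertyDUALSwitchCoprodTensorIsAnEpimorphism} holds because the corresponding comparison out of $\coprod_\alpha(A_\alpha\otimes N)$ is actually a bijection, coproducts being disjoint unions. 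For the preservation clauses I would use that $(-)\times V(N)$ preserves all colimits in $\mathbf{Sets}$ (it has the right adjoint $(-)^{V(N)}$) while $V(N)\times(-)$ preserves connected limits in $\mathbf{Sets}$; since $V\circ(-\otimes N)=(-\times V(N))\circ V$ and $V$ creates the relevant (co)limits, this gives Properties~\ref{PropertyLimitsOfSubobjectsArePreserved}, \ref{PropertyTensorPullback}, \ref{PropertyEqualizers} and their duals~\ref{PropertyDUALColimitsOfFactorObjectsArePreserved}, \ref{PropertyDUALTensorPushout}, \ref{PropertyDUALCoequalizers} (alternatively Property~\ref{PropertyEqualizers} follows from Properties~\ref{PropertySmallLimits}, \ref{PropertyTensorPullback}, \ref{PropertySwitchProdTensorIsAMonomorphism} by Proposition~\ref{PropositionPropertyEquilizers}, and dually). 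Finally, when $M$ is commutative the swap $(x,y)\mapsto(y,x)$ is grading preserving, since $\deg x\,\deg y=\deg y\,\deg x$, and the symmetry axioms transport from $\mathbf{Sets}$ as $V$ is faithful and strictly monoidal. I expect the only delicate point to be keeping the monoidal product $X\otimes Y$ (underlying set the full Cartesian product) apart from the categorical product (the subset of equal-degree tuples) in Properties~\ref{PropertySwitchProdTensorIsAMonomorphism} and~\ref{PropertyTensorPullback}, and confirming that $V$ genuinely creates the connected limits---equalizers, pullbacks and intersections of subobjects---used for Properties~\ref{PropertyLimitsOfSubobjectsArePreserved}, \ref{PropertyTensorPullback} and~\ref{PropertyEqualizers}, even though it does not create arbitrary products; the grading on such a connected limit is forced and consistent, so no real obstruction arises.
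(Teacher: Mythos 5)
Your proof is correct and follows essentially the same route as the paper, which establishes the theorem by the explicit description of $\mathsf{Comod}^M$ (degree maps, equal-degree tuples as products, the strictly monoidal forgetful functor creating equalizers, small colimits, limits of subobjects and colimits of quotient objects, graded free monoids, and neutral components as cofree comonoids) and then summarizes with ``Therefore we get.'' Your write-up is merely more systematic about checking each property, and correctly identifies the one genuinely delicate point (the monoidal product versus the categorical product, and the restriction to connected limits for $N\otimes(-)$).
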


\subsubsection{Set-theoretic Yetter~--- Drinfel'd modules}
Let $G$ be a group. Denote by ${}^G_G \mathcal{YD}$ the category of \textit{set-theoretic Yetter~--- Drinfel'd modules} (or
 \textit{${}^G_G \mathcal{YD}$-modules} for short), i.e. $G$-graded sets $X=\bigsqcup\limits_{g\in G} X^{(g)}$ that are $G$-modules and 
 $g X^{(t)} = X^{\left(gtg^{-1}\right)}$ for all $g,t\in G$. Consider the Cartesian (with respect to $\mathbf{Sets}$) monoidal product in ${}^G_G \mathcal{YD}$.
 
 The forgetful functor ${}^G_G \mathcal{YD} \to \mathsf{Comod}^G$ is a strict monoidal functor creating small limits and colimits as well as limits of subobjects and colimits of quotient objects. The terminal object in ${}^G_G \mathcal{YD}$ is $G$ with the standard grading and the action on itself by conjugation. 
 
 \begin{lemma}\label{LemmaYDSetsMonEpi} Monomorphisms in  ${}^G_G \mathcal{YD}$ are injective ${}^G_G \mathcal{YD}$-module homomorphisms. Epimorphisms in  ${}^G_G \mathcal{YD}$ are surjective ${}^G_G \mathcal{YD}$-module homomorphisms. 
 \end{lemma}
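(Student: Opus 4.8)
The plan is to mirror the proof of Lemma~\ref{LemmaGSetsMonEpi}, with the extra care that every auxiliary object produced must satisfy the Yetter--Drinfel'd compatibility $gX^{(t)}=X^{\left(gtg^{-1}\right)}$. The starting observation is that the forgetful functor $U\colon{}^G_G\mathcal{YD}\to\mathbf{Sets}$ is faithful, hence reflects both monomorphisms and epimorphisms; since in $\mathbf{Sets}$ monomorphisms are the injections and epimorphisms the surjections, this immediately gives that every injective ${}^G_G\mathcal{YD}$-homomorphism is a monomorphism and every surjective one an epimorphism. It remains to establish the two converse implications.

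For ``monomorphism $\Rightarrow$ injective'' I would argue by contraposition. Let $f\colon M\to N$ be a ${}^G_G\mathcal{YD}$-homomorphism with $f(m_1)=f(m_2)$ for some $m_1\ne m_2$; since $f$ preserves degrees, $h:=\deg m_1=\deg m_2$. Take $P$ to be the set $G$ equipped with the $G$-action by left translations and with degree map $\deg(g):=ghg^{-1}$; a direct computation shows $P$ is an object of ${}^G_G\mathcal{YD}$. The two assignments $g\mapsto gm_1$ and $g\mapsto gm_2$ define ${}^G_G\mathcal{YD}$-homomorphisms $P\to M$ (they are $G$-equivariant, and degree preserving because $\deg(gm_i)=g(\deg m_i)g^{-1}=ghg^{-1}$), they differ at $e\in G$, yet they agree after composition with $f$ since $f(gm_1)=g f(m_1)=g f(m_2)=f(gm_2)$. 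Hence $f$ is not a monomorphism.

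For ``epimorphism $\Rightarrow$ surjective'', again by contraposition, assume $f(M)\subsetneq N$. Both $f(M)$ and its complement $C:=N\setminus f(M)$ are $G$-stable subsets of $N$, hence sub-${}^G_G\mathcal{YD}$-modules (the degree axiom is inherited by any $G$-stable subset). Form $X:=f(M)\sqcup C_1\sqcup C_2$, the coproduct in ${}^G_G\mathcal{YD}$ of $f(M)$ with two copies $C_1,C_2$ of $C$; this exists and is computed in $\mathbf{Sets}$ because the forgetful functors ${}^G_G\mathcal{YD}\to\mathsf{Comod}^G\to\mathbf{Sets}$ create small colimits. The two ${}^G_G\mathcal{YD}$-homomorphisms $N\to X$ which are the inclusion on $f(M)$ and the identification with $C_1$, respectively $C_2$, on $C$ are distinct (they differ on the nonempty set $C$) but become equal after precomposition with $f$, so $f$ is not an epimorphism. (Alternatively one may simply note that the cokernel pair of $f$ is computed in $\mathbf{Sets}$.)

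The one place where the $G$-set argument does not transfer verbatim, and which I would therefore check most carefully, is the handling of degrees: in Lemma~\ref{LemmaGSetsMonEpi} one collapses an orbit onto a $G$-fixed point, but a $G$-fixed element of a Yetter--Drinfel'd module is forced to have central degree, so that collapsing step must be replaced by the degree-preserving constructions of $P$ and $X$ above. Verifying that $P$ satisfies $gP^{(t)}=P^{\left(gtg^{-1}\right)}$ and that the test morphisms are genuinely degree preserving is the only nontrivial point; everything else is routine.
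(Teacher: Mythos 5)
Your proof is correct and follows essentially the same route as the paper's: for monomorphisms you use the identical test object ($G$ with left translations and degree $g\mapsto ghg^{-1}$), and for epimorphisms you duplicate the part of $N$ outside the image, where the paper doubles a single orbit disjoint from $f(M)$ and you double the whole complement — an immaterial variation. Your additional remarks (the easy implications via faithfulness of the forgetful functor, and the observation that the $G$-fixed-point trick from the plain $G$-set case fails here) are accurate but not needed beyond what the paper records.
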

 \begin{proof}
 	Let $f \colon M \to N$ be a ${}^G_G \mathcal{YD}$-module homomorphism.
 	
 	Suppose $f(m_1)=f(m_2)$ for some $m_1\ne m_2$, $m_1,m_2\in M$.
 	Let $$g_0 := \deg m_1 = \deg f(m_1) = \deg f(m_2) = \deg m_2.$$
 	
 	Denote by $G_0$ the ${}^G_G \mathcal{YD}$-module that coincides with $G$ as a set endowed with the $G$-action by left shifts
 	and $\deg \colon G_0 \to G$ is defined by $\deg g := g g_0 g^{-1}$ for all $g\in G_0$.
 	Let $f_i \colon G_0 \to M$ be the ${}^G_G \mathcal{YD}$-module homomorphisms 
 	defined by $f_i(g) := gm_i$ for $i=1,2$. Then $ff_1 = ff_2$, however $f_1(e) \ne f_2(e)$ where $e\in G$ is the identity element. Hence $f$ is not a monomorphism.
 	
 	Again, the image $f(M)$ of $f$ is a ${}^G_G \mathcal{YD}$-submodule, i.e. the union of some orbits. Suppose $f(M)\ne N$. Then there exists an orbit $O \subseteq N$ such that $O \cap f(M) = \varnothing$.
 	Let $O_1$ and $O_2$ be two copies of $O$. Denote by $\tau_i\colon O \mathrel{\widetilde{\to}} O_i$ the 
 	corresponding isomorphisms, $i=1,2$.
 	 Let $X := (N \backslash O) \sqcup  O_1 \sqcup O_2$. Define $f_i \colon N \to X$ by $f_i(n)=n$ for all $n\in N \backslash O$ and $f_i(x)=\tau_i(x)$ for all $x\in O$, $i=1,2$. Then $f_1 f = f_2 f$, however $f_1 \ne f_2$. Hence $f$ is not an         
	 epimorphism.
 \end{proof}	

Again, the free monoid $\langle X \rangle$ of a ${}^G_G \mathcal{YD}$-module $X$ inherits the structure of
a ${}^G_G \mathcal{YD}$-module, which makes $\langle X \rangle$ a ${}^G_G \mathcal{YD}$-module algebra.
For an arbitrary ${}^G_G \mathcal{YD}$-module $X$ its cofree comonoid $\mathcal G X$ in ${}^G_G \mathcal{YD}$ is its neutral component $X^{(e)}$.	

The category ${}^G_G \mathcal{YD}$ is braided where the braiding $c_{X,Y} \colon X \times Y \to Y \times X$
is defined by $c_{X,Y}(x,y) := \bigl((\deg x)y, x\bigr)$.
Moreover, ${}^G_G \mathcal{YD}$ is closed monoidal where for $X\ne \varnothing$ the ${}^G_G \mathcal{YD}$-module $[X,Y]$ consists of all maps $f \colon X \to Y$
such that $\deg f(x) = (\deg f)(\deg x)$ for all $x\in X$ and some element $\deg f \in G$ that does not depend on $x$.
The $G$-module structure on $[X,Y]$ is defined by $(gf)(x):= gf(g^{-1} x)$ for $x\in X$. At the same time,
$[\varnothing, Y]=G$, which is the terminal object in ${}^G_G \mathcal{YD}$.

 Therefore we get
\begin{theorem}\label{TheoremYDSetsSatisfiesProperties}
	Let $G$ be a group. Then ${}^G_G \mathcal{YD}$
	is a braided closed monoidal category
	satisfying Properties~\ref{PropertySmallLimits}--\ref{PropertyFreeMonoid}, \ref{PropertyDUALSmallColimits}--\ref{PropertyDUALCofreeComonoid},
	\ref{PropertyExtrMonomorphism} and \ref{PropertyDUALExtrEpimorphism} 
	of Sections~\ref{SubsectionSupportCoactingConditions} and~\ref{SubsectionDUALCosupportActingConditions}.
		Moreover, all monomorphisms and epimorphisms in ${}^G_G \mathcal{YD}$ are extremal.
\end{theorem}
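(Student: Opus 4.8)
The plan is to reduce everything to the category $\mathbf{Sets}$ via the forgetful functor $U\colon {}^G_G\mathcal{YD}\to\mathbf{Sets}$, which factors as ${}^G_G\mathcal{YD}\to\mathsf{Comod}^G\to\mathbf{Sets}$. As recorded just above, the first functor creates small limits and colimits and the second creates equalizers and small colimits (as well as limits of subobjects and colimits of quotient objects), so the composite creates equalizers and small colimits; moreover $U$ is strict monoidal with $U(M\otimes N)=UM\times UN$ and it reflects isomorphisms. The point to keep in mind throughout is that the \emph{categorical} product $\prod_\alpha X_\alpha$ in ${}^G_G\mathcal{YD}$ is the sub\emph{set} of $\prod_\alpha UX_\alpha$ consisting of the tuples of a common degree, not the full Cartesian product. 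Since ${}^G_G\mathcal{YD}$ has all small limits and colimits (Theorem~\ref{TheoremGradedSetsSatisfiesProperties} applied to the monoid $G$, together with creation), Properties~\ref{PropertySmallLimits}, \ref{PropertyFiniteAndCountableColimits} and their duals \ref{PropertyDUALSmallColimits}, \ref{PropertyDUALFiniteAndCountableLimits} hold at once, and by Lemma~\ref{LemmaYDSetsMonEpi} the monomorphisms (resp. epimorphisms) of ${}^G_G\mathcal{YD}$ are exactly the injective (resp. surjective) ${}^G_G\mathcal{YD}$-homomorphisms.

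Next I would settle the factorization and size conditions. A bijective ${}^G_G\mathcal{YD}$-homomorphism is an isomorphism, since its set-theoretic inverse automatically preserves the grading and the $G$-action; hence if a monomorphism factors as $i=f\pi$ with $\pi$ an epimorphism, then $\pi$ is both injective (as a left factor of an injection) and surjective, so $\pi$ is an isomorphism. Thus every monomorphism is extremal, and dually every epimorphism is extremal. Combined with the image factorization $A\twoheadrightarrow f(A)\hookrightarrow B$ of a morphism $f$ (the image being a sub-${}^G_G\mathcal{YD}$-module) and the set-level diagonal fill-in, which is automatically a ${}^G_G\mathcal{YD}$-homomorphism, this gives Properties~\ref{PropertyEpiExtrMonoFactorizations} and \ref{PropertyDUALExtrEpiMonoFactorizations}. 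Wellpoweredness and cowellpoweredness (Properties~\ref{PropertySubObjectsSmallSet}, \ref{PropertyFactorObjectsSmallSet}) hold because the sub-${}^G_G\mathcal{YD}$-modules of $X$ form a subset of $2^{X}$ and the ${}^G_G\mathcal{YD}$-congruences on $X$ a subset of $2^{X\times X}$.

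The compatibility of the tensor product with (co)limits is handled uniformly by observing that, under $U$, the functor $M\otimes(-)$ is $UM\times(-)$ and $(-)\otimes M$ is $(-)\times UM$, and that every construction involved leaves the $M$-coordinate untouched. So both functors send injections to injections and surjections to surjections (Properties~\ref{PropertyMonomorphism}, \ref{PropertyEpimorphism}, \ref{PropertyDUALMonomorphism}, \ref{PropertyDUALEpimorphism}); on underlying sets $f\otimes f=f\times f$, which with extremality gives Properties~\ref{PropertyExtrMonomorphism} and \ref{PropertyDUALExtrEpimorphism}; intersections of extremal subobjects, and dually the joins of extremal congruences computing colimits of quotient objects, are preserved (Properties~\ref{PropertyLimitsOfSubobjectsArePreserved}, \ref{PropertyDUALColimitsOfFactorObjectsArePreserved}); pullbacks along monomorphisms and pushouts along epimorphisms are created by $U$ and preserved (Properties~\ref{PropertyTensorPullback}, \ref{PropertyDUALTensorPushout}); equalizers and coequalizers are preserved similarly (Properties~\ref{PropertyEqualizers}, \ref{PropertyDUALCoequalizers}, which could also be deduced via Propositions~\ref{PropositionPropertyEquilizers} and \ref{PropositionDUALPropertyCoequilizers}). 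For Property~\ref{PropertySwitchProdTensorIsAMonomorphism}, the canonical map $M\otimes\prod_\alpha A_\alpha\to\prod_\alpha(M\otimes A_\alpha)$ is, on underlying sets, the restriction of the injective canonical map $UM\times\prod_\alpha UA_\alpha\to\prod_\alpha(UM\times UA_\alpha)$ to the sub-products of common-degree tuples, hence injective, hence a monomorphism; dually, for Property~\ref{PropertyDUALSwitchCoprodTensorIsAnEpimorphism}, $(-)\otimes M$ distributes over coproducts (disjoint unions), so the corresponding canonical map is even an isomorphism.

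Finally, Properties~\ref{PropertyFreeMonoid} and \ref{PropertyDUALCofreeComonoid} are exactly the facts noted just before the statement: the free monoid $\langle X\rangle$ on a ${}^G_G\mathcal{YD}$-module $X$ carries a canonical ${}^G_G\mathcal{YD}$-module structure, furnishing the left adjoint of $\mathsf{Mon}({}^G_G\mathcal{YD})\to{}^G_G\mathcal{YD}$; and, since a comonoid in ${}^G_G\mathcal{YD}$ must admit a counit to $\mathbbm{1}$, it is concentrated in degree $e$, so comonoids in ${}^G_G\mathcal{YD}$ are precisely the (trivially graded) $G$-sets with diagonal comultiplication, whence $X\mapsto X^{(e)}$ is right adjoint to $\mathsf{Comon}({}^G_G\mathcal{YD})\to{}^G_G\mathcal{YD}$. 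That ${}^G_G\mathcal{YD}$ is braided closed monoidal with braiding $c_{X,Y}(x,y)=\bigl((\deg x)y,\,x\bigr)$ and internal hom $[X,Y]$ as described is a direct check using the Yetter--Drinfel'd axiom $gX^{(t)}=X^{(gtg^{-1})}$; the only mildly delicate parts there are the naturality and hexagon identities for $c$ and the adjunction isomorphism defining $[X,Y]$. I expect the principal obstacle to be purely organizational: one must consistently remember that the categorical product in ${}^G_G\mathcal{YD}$ is a proper subobject of the Cartesian product of underlying sets --- only the common-degree tuples --- since this is precisely what Properties~\ref{PropertyLimitsOfSubobjectsArePreserved}, \ref{PropertySwitchProdTensorIsAMonomorphism} and their duals hinge on; with that in hand, every remaining step is a routine transfer of a $\mathbf{Sets}$-level fact.
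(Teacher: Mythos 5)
Your proposal is correct and follows essentially the same route as the paper, which justifies the theorem by the accumulated observations preceding it: creation of (co)limits along ${}^G_G\mathcal{YD}\to\mathsf{Comod}^G\to\mathbf{Sets}$, Lemma~\ref{LemmaYDSetsMonEpi} identifying monos/epis with injections/surjections (whence all are extremal, since bijective homomorphisms are isomorphisms), the componentwise behaviour of $M\otimes(-)$ on underlying sets, the free monoid $\langle X\rangle$ for Property~\ref{PropertyFreeMonoid}, and the neutral component $X^{(e)}$ for Property~\ref{PropertyDUALCofreeComonoid}. Your explicit warning that categorical products are the common-degree sub-products of the Cartesian product is exactly the point the paper relies on for Properties~\ref{PropertyLimitsOfSubobjectsArePreserved} and~\ref{PropertySwitchProdTensorIsAMonomorphism}.
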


Hopf monoids in ${}^G_G \mathcal{YD}$ are trivially graded groups $H$ endowed with a homomorphism $\varphi \colon G \to \Aut(H)$.

An $H$-module $\Omega$-magma (recall that the general definition in an arbitrary category was given in Section~\ref{SubsectionMeasurings}) in ${}^G_G \mathcal{YD}$ is just an $\Omega$-magma
$A$ that is an ${}^G_G \mathcal{YD}$-module where the $G$-action is extended to an $H \mathrel{\leftthreetimes_\varphi} G$-action by automorphisms, all operations in $A$ are graded and $h A^{(g)}= A^{(g)}$ for all $g\in G$ and $h\in H$.

An $H$-comodule $\Omega$-magma in ${}^G_G \mathcal{YD}$ is an $G\times H$-graded $\Omega$-magma $A=\bigsqcup\limits_{\substack{g\in G,\\ h\in H}} A^{(g,h)}$ endowed with a $G$-action by automorphisms such that $gA^{(t,h)} = A^{(gtg^{-1}, \varphi(g)h)}$ for all $g,t\in G$ and $h\in H$.

If one considered the inverse braiding on ${}^G_G \mathcal{YD}$, then the operations on $A$ and the $H$-(co)action would be related in a more complicated way.

\subsection{Limits and colimits in $\mathsf{Comon}(\mathcal C)$ and coreflectivity of $\mathsf{Hopf}(\mathcal C)$ in $\mathsf{Bimon}(\mathcal C)$}\label{SubsectionDUAL(Co)limitsComonHopfMonCoreflectivity}

Let $T \colon J \to \mathsf{Comon}(\mathcal C)$ be a functor where $J$ is a category and $\mathcal C$ is a monoidal category.
Suppose that $C := \colim UT$ is the colimit of $UT$ in $\mathcal C$ where $U\colon \mathsf{Comon}(\mathcal C) \to \mathcal C$
is the forgetful functor. Then the unique morphisms $\Delta_C$
and $\varepsilon_C$ making the diagrams below commutative for every object $j$ of $J$ ($\varphi_j$ is the colimiting cocone) turn $C$ into a comonoid and, therefore, into the colimit of $T$ in $\mathsf{Comon}(\mathcal C)$:

$$\xymatrix{
	 Tj \ar[d]^{\Delta_{Tj}} \ar[rr]^{\varphi_j} & & C   \ar@{-->}[d]^{\Delta_C} \\
Tj \otimes Tj  \ar[rr]^{\varphi_j \otimes \varphi_j} & & 	C\otimes C   } \qquad 
\xymatrix{Tj \ar[rd]_{\varepsilon_{Tj}} \ar[rr]^{\varphi_j}  & &  C \ar@{-->}[ld]^{\varepsilon_C} \\
	& \mathbbm{1} & }
$$

In other words, the forgetful functor $U$ creates colimits.

Now consider limits in $\mathsf{Comon}(\mathcal C)$:

\begin{theorem}\label{TheoremDUALLimitsComon} Let $\mathcal C$ be a braided monoidal category satisfying Properties~\ref{PropertyDUALSmallColimits},
	\ref{PropertyDUALExtrEpiMonoFactorizations}, \ref{PropertyDUALSubObjectsSmallSet}, \ref{PropertyDUALMonomorphism} and \ref{PropertyDUALCofreeComonoid} of Section~\ref{SubsectionDUALCosupportActingConditions}.
	Let $T \colon J \to \mathsf{Comon}(\mathcal C)$ be a functor where $J$ is a category.
	Suppose there exists $N := \lim UT$ and $\varphi_j$ is the corresponding limiting cone. (The limit is taken in $\mathcal C$.)
	Then there exists a comonoid homomorphism $i \colon P \rightarrowtail {\mathcal G} N$, which is, in addition,
	a monomorphism in $\mathcal C$, such that the composition 
	$$\xymatrix{ P\ \ar@{>->}[r]^{i} & {\mathcal G}N \ar[r]^{\gamma_N} & N \ar[r]^{\varphi_j} &  Tj }$$
	is a limiting cone of $T$ in $\mathsf{Comon}(\mathcal C)$.
	(Here $\gamma$ is the counit of the adjunction $U \dashv {\mathcal G}$.)
\end{theorem}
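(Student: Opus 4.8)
The plan is to obtain Theorem~\ref{TheoremDUALLimitsComon} as the formal dual of Theorem~\ref{TheoremColimitsMon}, by transporting the latter to the category $\mathcal C^{\mathrm{op},\mathrm{op}}$ introduced at the beginning of Section~\ref{SectionDUALCosupportActingExistence}. First I would set up the dictionary: a comonoid in $\mathcal C$ is the same datum as a monoid in $\mathcal C^{\mathrm{op},\mathrm{op}}$, so that $\mathsf{Comon}(\mathcal C)^{\mathrm{op}}\cong\mathsf{Mon}(\mathcal C^{\mathrm{op},\mathrm{op}})$ compatibly with the forgetful functors down to $\mathcal C$, resp.\ $\mathcal C^{\mathrm{op},\mathrm{op}}$; a limit in $\mathcal C$ is a colimit in $\mathcal C^{\mathrm{op},\mathrm{op}}$; the right adjoint $\mathcal G\colon\mathcal C\to\mathsf{Comon}(\mathcal C)$ granted by Property~\ref{PropertyDUALCofreeComonoid} becomes, under this identification, exactly the left adjoint $\mathcal F$ to the forgetful functor $\mathsf{Mon}(\mathcal C^{\mathrm{op},\mathrm{op}})\to\mathcal C^{\mathrm{op},\mathrm{op}}$ demanded by Property~\ref{PropertyFreeMonoid}, the counit $\gamma$ of $U\dashv\mathcal G$ turning into the unit $\eta$ of $\mathcal F\dashv U$. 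By the equivalences already recorded in Section~\ref{SubsectionDUALCosupportActingConditions}, the hypotheses of the present theorem on $\mathcal C$ (Properties~\ref{PropertyDUALSmallColimits}, \ref{PropertyDUALExtrEpiMonoFactorizations}, \ref{PropertyDUALSubObjectsSmallSet}, \ref{PropertyDUALMonomorphism}, \ref{PropertyDUALCofreeComonoid}) are precisely the hypotheses of Theorem~\ref{TheoremColimitsMon} applied to $\mathcal C^{\mathrm{op},\mathrm{op}}$ (Properties~\ref{PropertySmallLimits}, \ref{PropertyEpiExtrMonoFactorizations}, \ref{PropertyFactorObjectsSmallSet}, \ref{PropertyEpimorphism}, \ref{PropertyFreeMonoid}).

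Next I would apply Theorem~\ref{TheoremColimitsMon} inside $\mathcal C^{\mathrm{op},\mathrm{op}}$ to the functor $J^{\mathrm{op}}\xrightarrow{T^{\mathrm{op}}}\mathsf{Comon}(\mathcal C)^{\mathrm{op}}=\mathsf{Mon}(\mathcal C^{\mathrm{op},\mathrm{op}})$. Its input hypothesis --- existence of $\colim U T^{\mathrm{op}}$ in $\mathcal C^{\mathrm{op},\mathrm{op}}$ with colimiting cocone $\varphi_j$ --- is exactly our hypothesis that $N:=\lim UT$ exists in $\mathcal C$ with limiting cone $\varphi_j$. Translating the output of Theorem~\ref{TheoremColimitsMon} back to $\mathcal C$ yields a comonoid homomorphism $i\colon P\to\mathcal G N$ which is in addition a monomorphism in $\mathcal C$ and is such that the composite $P\xrightarrow{i}\mathcal G N\xrightarrow{\gamma_N}N\xrightarrow{\varphi_j}Tj$ is a limiting cone of $T$ in $\mathsf{Comon}(\mathcal C)$ --- which is the assertion to be proved. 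Internally this uses the dual of Proposition~\ref{PropositionMonoidEpiMono} (an (ExtrEpi, Mono)-factored comonoid homomorphism factors through a comonoid in a unique way with both factors comonoid homomorphisms), which is itself just the image of Proposition~\ref{PropositionMonoidEpiMono} under $(-)^{\mathrm{op},\mathrm{op}}$ and follows from Properties~\ref{PropertyDUALExtrEpiMonoFactorizations} and~\ref{PropertyDUALMonomorphism}.

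The step I would double-check most carefully --- and essentially the only place anything can go wrong --- is the bookkeeping of variances: that the colimiting cocone of $T^{\mathrm{op}}$ in $\mathcal C^{\mathrm{op},\mathrm{op}}$ corresponds to the given limiting cone $\varphi_j$ in $\mathcal C$ and not to its opposite, and that ``epimorphism in $\mathcal C^{\mathrm{op},\mathrm{op}}$'' unwinds to ``monomorphism in $\mathcal C$'', so that the epimorphism $\pi\colon\mathcal F N\twoheadrightarrow P$ produced by Theorem~\ref{TheoremColimitsMon} is turned into the monomorphism $i\colon P\rightarrowtail\mathcal G N$ here. There is no substantive new argument. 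If one preferred a self-contained proof in $\mathcal C$, it would run verbatim as in Theorem~\ref{TheoremColimitsMon} with all arrows reversed: form the collection, small by Property~\ref{PropertyDUALSubObjectsSmallSet}, of comonoid homomorphisms $\psi_\alpha\colon P_\alpha\rightarrowtail\mathcal G N$ that are monomorphisms in $\mathcal C$ and through which each leg $\varphi_j\gamma_N\psi_\alpha$ is a comonoid homomorphism for every $j$; partially order it by $\alpha\preccurlyeq\beta$ when $\psi_\alpha$ factors through $\psi_\beta$ (the factor being unique and, by Property~\ref{PropertyDUALMonomorphism}, a comonoid homomorphism); set $P_0:=\colim_\alpha P_\alpha$ in $\mathcal C$ (which by the remarks preceding the theorem is also the colimit in $\mathsf{Comon}(\mathcal C)$); take the (ExtrEpi, Mono)-factorization $P_0\twoheadrightarrow P\rightarrowtail\mathcal G N$ of the canonical comparison morphism; use Property~\ref{PropertyDUALExtrEpiMonoFactorizations} and the dual of Proposition~\ref{PropositionMonoidEpiMono} to put a comonoid structure on $P$ making everything a comonoid homomorphism, observe that $P$ thus appears among the $P_\alpha$ and is their largest member so that $P_0\twoheadrightarrow P$ is an isomorphism; and finally verify the universal property against an arbitrary cone in $\mathsf{Comon}(\mathcal C)$ by the same (ExtrEpi, Mono)-factorization trick, uniqueness following since $P_0\twoheadrightarrow P$ is an (extremal) epimorphism.
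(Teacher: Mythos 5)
Your proposal is correct and is exactly the paper's approach: the paper proves this theorem in one line by declaring it dual to Theorem~\ref{TheoremColimitsMon}, i.e.\ by applying that theorem to $\mathcal C^{\mathrm{op},\mathrm{op}}$, and your careful bookkeeping of the property correspondences (\ref{PropertyDUALSmallColimits}$\leftrightarrow$\ref{PropertySmallLimits}, \ref{PropertyDUALSubObjectsSmallSet}$\leftrightarrow$\ref{PropertyFactorObjectsSmallSet}, \ref{PropertyDUALMonomorphism}$\leftrightarrow$\ref{PropertyEpimorphism}, \ref{PropertyDUALCofreeComonoid}$\leftrightarrow$\ref{PropertyFreeMonoid}, etc.) and of the variances matches what Section~\ref{SubsectionDUALCosupportActingConditions} sets up. The spelled-out self-contained version you sketch is just the arrow-reversed transcription of the proof of Theorem~\ref{TheoremColimitsMon} and is likewise sound.
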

\begin{proof}
	Theorem~\ref{TheoremDUALLimitsComon} is dual to Theorem~\ref{TheoremColimitsMon}.
\end{proof}

\begin{theorem}\label{TheoremDUALBimonHopfRightAdjoint}
	Let $\mathcal C$ be a braided monoidal category satisfying Properties~\ref{PropertyDUALSmallColimits}--\ref{PropertyDUALExtrEpiMonoFactorizations}, \ref{PropertyDUALSubObjectsSmallSet}, \ref{PropertyDUALMonomorphism} and \ref{PropertyDUALCofreeComonoid} of Section~\ref{SubsectionDUALCosupportActingConditions}.
	Then the forgetful functor $\mathbf{Hopf}(\mathcal C) \to \mathbf{Bimon}(\mathcal C)$
	admits a right adjoint functor $H_r \colon \mathbf{Bimon}(\mathcal C) \to \mathbf{Hopf}(\mathcal C)$.
\end{theorem}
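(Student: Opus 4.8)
The strategy is to obtain Theorem~\ref{TheoremDUALBimonHopfRightAdjoint} by dualizing Theorem~\ref{TheoremBimonHopfLeftAdjoint}, i.e. by applying the latter to the braided monoidal category $\mathcal C^{\mathrm{op},\mathrm{op}}$ of Section~\ref{SubsectionDUALCosupportActingConditions}. First I would record the relevant identifications. Monoids in $\mathcal C^{\mathrm{op},\mathrm{op}}$ are exactly comonoids in $\mathcal C$ and conversely, and since the braiding of $\mathcal C^{\mathrm{op},\mathrm{op}}$ is the braiding of $\mathcal C$, bimonoids in $\mathcal C^{\mathrm{op},\mathrm{op}}$ are bimonoids in $\mathcal C$ with the roles of the multiplication and the comultiplication interchanged; this gives a canonical isomorphism $\mathbf{Bimon}(\mathcal C^{\mathrm{op},\mathrm{op}}) \cong \mathbf{Bimon}(\mathcal C)^{\mathrm{op}}$. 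Moreover the convolution monoid $\mathcal C^{\mathrm{op},\mathrm{op}}(H,H)$ is the opposite of $\mathcal C(H,H)$, and an element of a monoid is invertible exactly when it is invertible in the opposite monoid; hence $\mathbf{Hopf}(\mathcal C^{\mathrm{op},\mathrm{op}}) \cong \mathbf{Hopf}(\mathcal C)^{\mathrm{op}}$, and this isomorphism is compatible with the two forgetful functors. Consequently the forgetful functor $\mathbf{Hopf}(\mathcal C^{\mathrm{op},\mathrm{op}}) \to \mathbf{Bimon}(\mathcal C^{\mathrm{op},\mathrm{op}})$ is, up to these isomorphisms, the opposite of the forgetful functor $\mathbf{Hopf}(\mathcal C) \to \mathbf{Bimon}(\mathcal C)$, so that a left adjoint of the former is precisely the opposite of a right adjoint of the latter.

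Next I would check that the hypotheses match. As recalled in Section~\ref{SubsectionDUALCosupportActingConditions}, $\mathcal C^{\mathrm{op},\mathrm{op}}$ satisfies Properties~\ref{PropertySmallLimits}--\ref{PropertyEpiExtrMonoFactorizations}, \ref{PropertyFactorObjectsSmallSet}, \ref{PropertyEpimorphism} and~\ref{PropertyFreeMonoid} of Section~\ref{SubsectionSupportCoactingConditions} --- that is, exactly the hypotheses of Theorem~\ref{TheoremBimonHopfLeftAdjoint} --- if and only if $\mathcal C$ satisfies Properties~\ref{PropertyDUALSmallColimits}--\ref{PropertyDUALExtrEpiMonoFactorizations}, \ref{PropertyDUALSubObjectsSmallSet}, \ref{PropertyDUALMonomorphism} and~\ref{PropertyDUALCofreeComonoid}, which are exactly our assumptions. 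Hence Theorem~\ref{TheoremBimonHopfLeftAdjoint} applies to $\mathcal C^{\mathrm{op},\mathrm{op}}$ and yields a left adjoint to $\mathbf{Hopf}(\mathcal C^{\mathrm{op},\mathrm{op}}) \to \mathbf{Bimon}(\mathcal C^{\mathrm{op},\mathrm{op}})$; transporting it across the isomorphisms of the previous paragraph and passing to opposite categories produces the desired right adjoint $H_r$.

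For concreteness, unwinding this argument gives a direct recipe in $\mathcal C$. For a bimonoid $B$, form the product $\widehat B := \prod_{n\geqslant 0} B^{\mathrm{op}^{n},\mathrm{cop}^{-n}}$ in $\mathsf{Comon}(\mathcal C)$, which exists by Property~\ref{PropertyDUALFiniteAndCountableLimits} and Theorem~\ref{TheoremDUALLimitsComon} (the dual of Theorem~\ref{TheoremColimitsMon}), and which inherits a monoid structure making it the product of the $B^{\mathrm{op}^{n},\mathrm{cop}^{-n}}$ in $\mathbf{Bimon}(\mathcal C)$. The shifted projections determine a bimonoid homomorphism $S$ from a suitable op/cop-twist of $\widehat B$ to $\widehat B$, playing the role of a candidate antipode. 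Then one takes the equalizer $E \rightarrowtail \widehat B$ in $\mathcal C$ of $S*\id_{\widehat B}$, $\id_{\widehat B}*S$ and $u_{\widehat B}\varepsilon_{\widehat B}$, passes to the cofree comonoid $\mathcal G E$ on $E$, and --- using wellpoweredness (Property~\ref{PropertyDUALSubObjectsSmallSet}), the (ExtrEpi, Mono)-factorizations (Property~\ref{PropertyDUALExtrEpiMonoFactorizations}) and small colimits (Property~\ref{PropertyDUALSmallColimits}) --- cuts down to the ``largest'' sub-comonoid $P \rightarrowtail \widehat B$ through which the equalizer among comonoid homomorphisms factors, exactly as in the proof of Theorem~\ref{TheoremBimonHopfLeftAdjoint} read in $\mathcal C^{\mathrm{op},\mathrm{op}}$; a diagram chase parallel to the one there equips $P$ with a bimonoid structure and an invertible antipode, and $H_r B := P$ together with the composite $P \rightarrowtail \widehat B \to B$ (projection onto the $n=0$ factor) has the required universal property. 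The main obstacle is entirely bookkeeping: one must confirm that the braiding and coherence relabelings defining $\mathcal C^{\mathrm{op},\mathrm{op}}$, the op/cop twisting endofunctors, and every property invoked inside the proofs of Theorem~\ref{TheoremBimonHopfLeftAdjoint}, Theorem~\ref{TheoremColimitsMon} and Proposition~\ref{PropositionMonoidEpiMono} dualize without any hidden asymmetry --- which they do, the conditions of Section~\ref{SubsectionSupportCoactingConditions} having been chosen precisely so that these constructions are dualizable.
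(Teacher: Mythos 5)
Your proposal is correct and takes essentially the same approach as the paper: the paper's entire proof is the single sentence that the theorem is dual to Theorem~\ref{TheoremBimonHopfLeftAdjoint}, i.e.\ precisely the application of that theorem to $\mathcal C^{\mathrm{op},\mathrm{op}}$ together with the identifications $\mathbf{Bimon}(\mathcal C^{\mathrm{op},\mathrm{op}})\cong\mathbf{Bimon}(\mathcal C)^{\mathrm{op}}$ and $\mathbf{Hopf}(\mathcal C^{\mathrm{op},\mathrm{op}})\cong\mathbf{Hopf}(\mathcal C)^{\mathrm{op}}$ and the property dictionary of Section~\ref{SubsectionDUALCosupportActingConditions}, all of which you spell out. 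Your explicit unwinding of the dual construction (product of op/cop-twists, equalizer, cofree comonoid, largest sub-comonoid) is a faithful reading of the original proof in $\mathcal C^{\mathrm{op},\mathrm{op}}$ and goes beyond what the paper records.
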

\begin{proof}
	Theorem~\ref{TheoremDUALBimonHopfRightAdjoint} is dual to Theorem~\ref{TheoremBimonHopfLeftAdjoint}.
\end{proof}	

\begin{remark} Again, in Theorems~\ref{TheoremDUALLimitsComon} and~\ref{TheoremDUALBimonHopfRightAdjoint} instead of Property~\ref{PropertyDUALMonomorphism} it is sufficient to require just that for every monomorphism $\varphi$ in~$\mathcal C$ the morphisms $\varphi\otimes \varphi$ and $\varphi\otimes \varphi \otimes \varphi$
	are monomorphisms too.
\end{remark}

\begin{example}\label{ExampleDUALHopfRightAdjointSets} In the case $\mathcal{C} = \mathbf{Sets}$ the right adjoint functor $H_r$ assigns to each monoid $M$ its group $\mathcal U(M)$ of invertible elements.
\end{example}

\subsection{Cosupports of morphisms $P \otimes A \to B$}

Here we define cosupports of morphisms and prove their existence.

Let $\mathcal C$ be a monoidal category.
For given objects $A,B$ in $\mathcal C$ denote by $\mathbf{TensMor}(A,B)$
the comma category $( (-)\otimes A \downarrow B)$, i.e.
the category where
\begin{itemize}
	\item the objects are all morphisms $\psi \colon  P \otimes A \to B$ for arbitrary objects $P$;
	\item the morphisms between $\psi_1 \colon P_1 \otimes A \to B$ and $\psi_2 \colon P_2 \otimes  A \to B$ are morphisms $\tau \colon P_1 \to P_2$
	making the diagram below commutative:
	$$ \xymatrix{	P_1 \otimes A \ar[d]_{\tau\, \otimes\, \id_{A}} \ar[r]^(0.6){\psi_1}  &  B   \\
		P_2 \otimes A \ar[ru]_{\psi_2} &  }$$
\end{itemize}

For morphisms $\psi \colon P \otimes A \to B$ we are going to use the terminology and the notation from Section~\ref{SectionLiftingProblem} with respect to $X=\mathbf{TensMor}(A,B)^\mathrm{op}$. 

If there exists $|\psi| \colon  \tilde P \otimes A \to B$ for some $\psi$, then we call the object $\cosupp \psi := \tilde P$
the \textit{cosupport} of $\psi$. From the definition of the absolute value it follows that $\cosupp \psi$ is defined up to an isomorphism compatible with $|\rho|$.

\begin{definition}
	We say that a morphism $\psi \colon P \otimes A \to B$ is a \textit{tensor monomorphism}
	if $\rho \in \LIO(\mathbf{TensMor}(A,B)^\mathrm{op})$, i.e. if for every $f,g \colon R \to P$ such that
  \begin{equation*}
		\psi (f \otimes {\id_A}) = \psi (g \otimes {\id_A}) \end{equation*}
	we have $f=g$.
\end{definition}

\begin{example}
	In the case $\mathcal C = \mathbf{Vect}_\mathbbm{k}$, where $\mathbbm{k}$ is a field,
	$\psi$ is a tensor monomorphism if and only if the corresponding linear map $P \to \Hom_\mathbbm{k}(A,B)$
	is injective. In particular,	
	 $$\cosupp \psi = \psi(P\otimes(-))\subseteq \Hom_\mathbbm{k}(A,B).$$ Therefore $\cosupp \psi$ coincides with the cosupport defined in~\cite{AGV2}.
\end{example}

Unfolding the definition from Section~\ref{SubsectionLIOAbsValue} for $X=\mathbf{TensMor}(A,B)^\mathrm{op}$, we see that $\psi_1 \preccurlyeq \psi_2$ for morphisms $\psi_1 \colon P_1 \otimes A \to B$
and $\psi_2 \colon P_2 \otimes A \to B$
 if and only if there exists a morphism $\tau \colon \cosupp \psi_1 \to \cosupp \psi_2$
such that $|\psi_1| = |\psi_2|(\tau\otimes {\id_A}) $:
\begin{equation*}\xymatrix{
		{(\cosupp \psi_1)} \otimes A \ar[r]^(0.7){|\psi_1|}  \ar[d]_{\tau\otimes{\id_A}} & B  \\
		{(\cosupp \psi_2)} \otimes A \ar[ru]_(0.6){|\psi_2|} &
}\end{equation*}

\begin{theorem}\label{TheoremDUALAbsValueCosupportExistence}
	Let $\mathcal C$ be a monoidal category satisfying Properties~\ref{PropertyDUALSmallColimits}, \ref{PropertyDUALFactorObjectsSmallSet}--\ref{PropertyDUALColimitsOfFactorObjectsArePreserved} and \ref{PropertyDUALCoequalizers}. Then for every objects $A,B$ in $\mathcal C$
	there exist absolute values of
	all objects in the category $\mathbf{TensMor}(A,B)^\mathrm{op}$.
	As a consequence, there exist cosupports for all morphisms 
	 $\psi \colon P \otimes A \to B$ in $\mathcal C$. 
\end{theorem}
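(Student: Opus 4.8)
The statement is the dual of Theorem~\ref{TheoremAbsValueSupportExistence}, so the entire proof strategy is to pass to the category $\mathcal C^{\mathrm{op},\mathrm{op}}$ and invoke the already-established result. First I would observe that $\mathbf{TensMor}(A,B)^\mathrm{op}$, as a category, is isomorphic to $\mathbf{MorTens}(A,B)$ computed in $\mathcal C^{\mathrm{op},\mathrm{op}}$: an object $\psi\colon P\otimes A\to B$ in $\mathcal C$ becomes, after reversing arrows and flipping the monoidal product, a morphism $A\to B\,\mathbin{\otimes^\mathrm{op}}P = P\otimes A$ — wait, more precisely, a morphism $B\to A\,\mathbin{\otimes^\mathrm{op}}P$ in $\mathcal C^{\mathrm{op},\mathrm{op}}$, which is exactly the shape $B\to A\otimes P$ required for an object of $\mathbf{MorTens}(B,A)$ in $\mathcal C^{\mathrm{op},\mathrm{op}}$. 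A routine check (which I would only sketch) confirms that the morphisms, the comma-category structure, and the notion of tensor monomorphism / tensor epimorphism all match up under this identification, and that $\cosupp\psi$ in $\mathcal C$ corresponds to $\supp$ of the transported morphism in $\mathcal C^{\mathrm{op},\mathrm{op}}$.

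Next I would verify the hypothesis transfer. By the discussion opening Section~\ref{SectionDUALCosupportActingExistence}, the category $\mathcal C^{\mathrm{op},\mathrm{op}}$ satisfies Properties~\ref{PropertySmallLimits}, \ref{PropertySubObjectsSmallSet}, \ref{PropertyMonomorphism}, \ref{PropertyExtrMonomorphism}, \ref{PropertyLimitsOfSubobjectsArePreserved}, \ref{PropertyTensorPullback}, \ref{PropertySwitchProdTensorIsAMonomorphism} and~\ref{PropertyEqualizers} precisely when $\mathcal C$ satisfies the starred duals \ref{PropertyDUALSmallColimits}, \ref{PropertyDUALFactorObjectsSmallSet}, \ref{PropertyDUALMonomorphism}\footnote{Note the dual of Property~\ref{PropertyMonomorphism} under ${}^{\mathrm{op},\mathrm{op}}$ is again a statement about monomorphisms, so it is Property~\ref{PropertyDUALMonomorphism}; what matters is that all the hypotheses listed in Theorem~\ref{TheoremAbsValueSupportExistence} dualize to exactly the hypotheses listed here.}, \ref{PropertyDUALExtrEpimorphism}, \ref{PropertyDUALColimitsOfFactorObjectsArePreserved}, \ref{PropertyDUALTensorPushout}, \ref{PropertyDUALSwitchCoprodTensorIsAnEpimorphism} and~\ref{PropertyDUALCoequalizers}, which are exactly the assumptions of the present theorem. (Here one uses Proposition~\ref{PropositionDUALPropertyCoequilizers} to get Property~\ref{PropertyDUALCoequalizers}, equivalently Property~\ref{PropertyEqualizers} in the dual, from the remaining ones, just as Proposition~\ref{PropositionPropertyEquilizers} was used in the original proof.) Thus Theorem~\ref{TheoremAbsValueSupportExistence} applies verbatim to $\mathcal C^{\mathrm{op},\mathrm{op}}$ with the roles of $A$ and $B$ as above.

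Finally I would translate the conclusion back: Theorem~\ref{TheoremAbsValueSupportExistence} gives absolute values for all objects of $\mathbf{MorTens}(B,A)$ in $\mathcal C^{\mathrm{op},\mathrm{op}}$, hence, under the isomorphism of categories, absolute values for all objects of $\mathbf{TensMor}(A,B)^\mathrm{op}$, and it gives supports for all morphisms $B\to A\otimes P$ in $\mathcal C^{\mathrm{op},\mathrm{op}}$, i.e. cosupports for all morphisms $\psi\colon P\otimes A\to B$ in $\mathcal C$. That is the assertion.

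The only genuine work — and the step most prone to error — is the bookkeeping in the first paragraph: making sure that the opposite-and-opposite construction sends the comma category $( (-)\otimes A\downarrow B)$ in $\mathcal C$ to the comma category $(B\downarrow A\otimes(-))$ in $\mathcal C^{\mathrm{op},\mathrm{op}}$ on the nose (objects, morphisms, and composition), and that ``tensor monomorphism'' dualizes to ``tensor epimorphism''. Once that dictionary is pinned down the rest is mechanical, and I would present it as: ``Theorem~\ref{TheoremDUALAbsValueCosupportExistence} is dual to Theorem~\ref{TheoremAbsValueSupportExistence}'', exactly in the style already used for Propositions~\ref{PropositionDUALPropertyCoequilizers} and Theorems~\ref{TheoremDUALLimitsComon}, \ref{TheoremDUALBimonHopfRightAdjoint} elsewhere in this section.
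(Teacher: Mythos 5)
Your proposal is correct and is exactly the paper's argument: the paper's entire proof reads ``Theorem~\ref{TheoremDUALAbsValueCosupportExistence} is dual to Theorem~\ref{TheoremAbsValueSupportExistence}'', i.e.\ pass to $\mathcal C^{\mathrm{op},\mathrm{op}}$, identify $\mathbf{TensMor}(A,B)^{\mathrm{op}}$ with the appropriate $\mathbf{MorTens}$ category there, and note that the starred hypotheses are by construction the conditions under which $\mathcal C^{\mathrm{op},\mathrm{op}}$ satisfies the unstarred ones. Your explicit bookkeeping (including landing on $\mathbf{MorTens}(B,A)$ in $\mathcal C^{\mathrm{op},\mathrm{op}}$ after the self-correction) is the part the paper leaves implicit, and it checks out.
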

\begin{proof} Theorem~\ref{TheoremDUALAbsValueCosupportExistence} is dual to Theorem~\ref{TheoremAbsValueSupportExistence}.
\end{proof}	

\begin{remark} We will see in \cite[Remarks 4.3 (1)]{AGV3} that if $\mathcal C$ is monoidal closed,
	then it is sufficient to require just Property~\ref{PropertyDUALExtrEpiMonoFactorizations}
	for cosupports and absolute values to exist.
\end{remark}
	
\subsection{Cosupports of module structures}\label{SubsectionDUALCosupportModule}

Again, it turns out that under certain conditions on the base category the cosupport of a module structure
is always a submonoid.

\begin{theorem}\label{TheoremDUALCosupportModule}
	Suppose $\mathcal C$ is a monoidal category
	satisfying Properties~\ref{PropertyDUALSmallColimits},  \ref{PropertyDUALFactorObjectsSmallSet}--\ref{PropertyDUALTensorPushout}, \ref{PropertyDUALCoequalizers}
	and~\ref{PropertyDUALExtrEpimorphism} of Section~\ref{SubsectionDUALCosupportActingConditions}.
	Let $M$ be a module over a monoid $(A,\mu,u)$ 
	and let $\psi \colon A\otimes M \to M$ be the corresponding morphism.
	Then in $\mathcal C$ there exist unique morphisms $$\mu_0 \colon
	\cosupp \psi \otimes \cosupp \psi \to \cosupp \psi \text{\quad and \quad}u_0 \colon \mathbbm{1} \to \cosupp \psi $$ making the diagrams below commutative:
	\begin{equation*}\xymatrix{
			A \otimes A	\ar[d]^{\mu} \ar@{->>}[r] & 	(\cosupp \psi) \otimes (\cosupp \psi) \ar@{-->}[d]^{\mu_0}   \\  
			A  \ar@{->>}[r] & 	 \cosupp \psi    \\
	}\end{equation*}
	\begin{equation*}
		\xymatrix{
			\mathbbm{1} \ar[d]^u \ar@{-->}[rd]^(0.6){u_0} &       \\
			A \ar@{->>}[r] & \cosupp \psi   \\
		}
	\end{equation*}
	
	Moreover, $(\cosupp \psi,\mu_0,u_0)$ is a monoid and the diagrams below are commutative too:
	\begin{equation*} \xymatrix{
			{(\cosupp \psi)} \otimes {(\cosupp \psi)} \otimes M    \ar[rr]^(0.6){ \mu_0 \otimes {\id_M}}	\ar[d]_{{\id_{\cosupp \psi}} \otimes |\psi|}  	& &	  {(\cosupp \psi)} \otimes M  \ar[d]_{|\psi|} \\	
			{(\cosupp \psi)} \otimes M  \ar[rr]^(0.6){|\psi|}                       & &  M			
		}
	\end{equation*}
	\begin{equation*}
		\xymatrix{ M \ar@{=}[d] \ar[r]_(0.4){\sim} & \mathbbm{1} \otimes M  \ar[d]^{u_0 \otimes {\id_M}} \\
			M & \ar[l]^(0.7){|\psi|}{(\cosupp \psi)} \otimes M} \end{equation*}
	In other words, $M$ is a  $(\cosupp \psi)$-module and the epimorphism $\cosupp \psi \twoheadrightarrow A$ is a monoid homomorphism.
\end{theorem}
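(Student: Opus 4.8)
The proof of Theorem~\ref{TheoremDUALCosupportModule} is dual to that of Theorem~\ref{TheoremSupportComodule}, so the plan is to apply Theorem~\ref{TheoremSupportComodule} to the category $\mathcal C^{\mathrm{op},\mathrm{op}}$, which by the discussion in Section~\ref{SubsectionDUALCosupportActingConditions} satisfies Properties~\ref{PropertySmallLimits}, \ref{PropertySubObjectsSmallSet}--\ref{PropertyTensorPullback}, \ref{PropertyEqualizers} and~\ref{PropertyExtrMonomorphism} precisely because $\mathcal C$ satisfies the dual Properties~\ref{PropertyDUALSmallColimits}, \ref{PropertyDUALFactorObjectsSmallSet}--\ref{PropertyDUALTensorPushout}, \ref{PropertyDUALCoequalizers} and~\ref{PropertyDUALExtrEpimorphism} that we have assumed. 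First I would record the dictionary: a monoid $(A,\mu,u)$ in $\mathcal C$ is a comonoid in $\mathcal C^{\mathrm{op},\mathrm{op}}$, a left $A$-module structure $\psi \colon A\otimes M \to M$ is a right comodule structure $\psi \colon M \to M \mathbin{\otimes^\mathrm{op}} A$ over that comonoid, epimorphisms become monomorphisms, and pushouts become pullbacks. Under this dictionary $\cosupp\psi$ (defined in Section~\ref{SectionLiftingProblem} relative to $X = \mathbf{TensMor}(A,B)^\mathrm{op}$, here with $B=M$) is exactly $\supp\psi$ computed in $\mathcal C^{\mathrm{op},\mathrm{op}}$, since $\mathbf{TensMor}(A,M)^\mathrm{op}$ in $\mathcal C$ is the same as $\mathbf{MorTens}(M,M)$ in $\mathcal C^{\mathrm{op},\mathrm{op}}$.

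Given this, Theorem~\ref{TheoremSupportComodule} applied in $\mathcal C^{\mathrm{op},\mathrm{op}}$ produces unique morphisms $\Delta_0 \colon \supp\psi \to \supp\psi \mathbin{\otimes^\mathrm{op}} \supp\psi$ and $\varepsilon_0 \colon \supp\psi \to \mathbbm 1$ in $\mathcal C^{\mathrm{op},\mathrm{op}}$ making the analogues of diagrams~\eqref{EquationSuppDelta} and~\eqref{EquationSuppEpsilon} commute, makes $(\supp\psi,\Delta_0,\varepsilon_0)$ a comonoid in $\mathcal C^{\mathrm{op},\mathrm{op}}$, makes $M$ a $\supp\psi$-comodule there, and makes the monomorphism $\supp\psi \rightarrowtail A$ a comonoid homomorphism in $\mathcal C^{\mathrm{op},\mathrm{op}}$. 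Translating each of these statements back through the dictionary: $\Delta_0$ becomes the desired $\mu_0 \colon \cosupp\psi \otimes \cosupp\psi \to \cosupp\psi$, $\varepsilon_0$ becomes $u_0 \colon \mathbbm 1 \to \cosupp\psi$, the commuting squares become exactly the two squares displayed just before the statement (the $\mathbin{\otimes^\mathrm{op}}$ reverses tensor factor order, which is why the module diagrams come out in the form written), the comonoid axioms become the monoid axioms for $(\cosupp\psi,\mu_0,u_0)$, the comodule axioms become the module axioms exhibiting $M$ as a $(\cosupp\psi)$-module, and the comonoid homomorphism $\supp\psi \rightarrowtail A$ becomes the monoid homomorphism $\cosupp\psi \twoheadrightarrow A$. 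Uniqueness of $\mu_0$ and $u_0$ is dual to the uniqueness of $\Delta_0$ and $\varepsilon_0$, which rested on Property~\ref{PropertyMonomorphism}; here it rests on Property~\ref{PropertyDUALEpimorphism} ensuring $A\otimes A \twoheadrightarrow (\cosupp\psi)\otimes(\cosupp\psi)$ is an epimorphism.

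The only genuine work is to check that the correspondence is faithful to the precise monoidal data, and this is the step I expect to require the most care rather than being truly hard: one must verify that the associator, left unit and right unit of $\mathcal C^{\mathrm{op},\mathrm{op}}$ — which are $a^{\mathrm{op},\mathrm{op}}_{L,M,N} = a_{N,M,L}$, $l^{\mathrm{op},\mathrm{op}}_M = r_M^{-1}$, $r^{\mathrm{op},\mathrm{op}}_M = l_M^{-1}$ — transport the diagrams of Theorem~\ref{TheoremSupportComodule} to exactly the diagrams in the statement, with the tensor factors in the order written. Since the existence of $\cosupp\psi$ itself is guaranteed by Theorem~\ref{TheoremDUALAbsValueCosupportExistence} under the same hypotheses, there is nothing further to establish, and it suffices to write: \emph{Theorem~\ref{TheoremDUALCosupportModule} is dual to Theorem~\ref{TheoremSupportComodule}, i.e. it is obtained by applying the latter to the category $\mathcal C^{\mathrm{op},\mathrm{op}}$.}
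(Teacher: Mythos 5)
Your proof is correct and takes exactly the paper's approach: the paper's entire proof is the single sentence ``Theorem~\ref{TheoremDUALCosupportModule} is dual to Theorem~\ref{TheoremSupportComodule},'' relying on the framework of Section~\ref{SubsectionDUALCosupportActingConditions} where $\mathcal C^{\mathrm{op},\mathrm{op}}$ is set up precisely so that the starred properties of $\mathcal C$ become the unstarred ones there. Your more explicit unpacking of the dictionary (monoid/comonoid, module/comodule, $\cosupp$ in $\mathcal C$ versus $\supp$ in $\mathcal C^{\mathrm{op},\mathrm{op}}$, and the transport of the coherence data) is a faithful elaboration of the same argument.
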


\begin{proof} Theorem~\ref{TheoremDUALCosupportModule} is dual to Theorem~\ref{TheoremSupportComodule}. 
\end{proof}	

Again, the morphism $\tau$ from the definition of the coarser/finer relation is always a monoid homomorphism
in the case of module structures.

\begin{proposition}\label{PropositionDUALFromTensorMonoMonoidMorphism}
	Let $\mathcal C$ be a monoidal category and let $\psi_i \colon  {A_i} \otimes M \to M $ define on an object~$M$ structures of $A_i$-modules
	for monoids $(A_i, \mu_i, u_i)$ for $i=1,2$.
	Suppose that $\psi_2$ is a tensor monomorphism and the diagram below is commutative for some morphism $\tau \colon A_1 \to A_2$:
	\begin{equation*}\xymatrix{
			A_1	\otimes M    \ar[d]_{ \tau \otimes {\id_M}} \ar[r]^(0.65){\psi_{1}}  &     M\\
			{A_2} \otimes M \ar[ru]_(0.6){\psi_2} &  \\
	}\end{equation*}
	Then $\tau$ is a monoid homomorphism.
\end{proposition}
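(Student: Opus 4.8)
The statement is precisely the dual of Proposition~\ref{PropositionFromTensorEpiComonoidMorphism}, so the plan is to mirror that proof, passing to the opposite monoidal category $\mathcal C^{\mathrm{op},\mathrm{op}}$ where a tensor monomorphism becomes a tensor epimorphism and a module structure becomes a comodule structure. Concretely, first I would observe that $\psi_i\colon A_i\otimes M\to M$ making $M$ an $A_i$-module in $\mathcal C$ is the same datum as $\rho_i\colon M\to M\,\mathbin{\otimes^\mathrm{op}}A_i$ making $M$ an $A_i$-comodule in $\mathcal C^{\mathrm{op},\mathrm{op}}$ (the unit and associativity diagrams for a module dualize to the counit and coassociativity diagrams for a comodule), and that $\psi_2$ being a tensor monomorphism in $\mathcal C$ is exactly $\rho_2$ being a tensor epimorphism in $\mathcal C^{\mathrm{op},\mathrm{op}}$. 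The commuting triangle relating $\psi_1$ and $\psi_2$ via $\tau\otimes\mathrm{id}_A$ dualizes to the triangle relating $\rho_1$ and $\rho_2$ via $\mathrm{id}_M\otimes\tau$. Then Proposition~\ref{PropositionFromTensorEpiComonoidMorphism} applied in $\mathcal C^{\mathrm{op},\mathrm{op}}$ yields that $\tau$ is a comonoid homomorphism in $\mathcal C^{\mathrm{op},\mathrm{op}}$, which is the same as saying $\tau$ is a monoid homomorphism in $\mathcal C$.

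Alternatively, if one prefers a direct argument not routed through the opposite category, I would dualize the proof of Proposition~\ref{PropositionFromTensorEpiComonoidMorphism} step by step. One forms the two diagrams analogous to \eqref{EquationComoduleDeltaTauComon} and \eqref{EquationComoduleEpsilonTauComon}: for multiplication, the square
$$\xymatrix{
A_1\otimes A_1\otimes M \ar[r]^(0.55){\mu_1\otimes\id_M}\ar[d]_{\tau\otimes\tau\otimes\id_M} & A_1\otimes M \ar[d]^{\tau\otimes\id_M} \\
A_2\otimes A_2\otimes M \ar[r]^(0.55){\mu_2\otimes\id_M} & A_2\otimes M
}$$
composed appropriately with $\psi_1$ and $\psi_2$, where the module axioms guarantee commutativity of the relevant sub-squares and of the outer diagram, and where $\tau\otimes\id_M$ precomposed with $\psi_1$ equals $\psi_2\,(\tau\otimes\id_M)$ on each side; for the unit, one uses the triangle built from $u_1$, $u_2$ and the unitality of the $A_i$-module structures on $M$. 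In both cases the conclusion that $\mu_2(\tau\otimes\tau)=\tau\mu_1$ and $u_2=\tau u_1$ follows by cancelling with the definition of a tensor monomorphism, i.e. from the fact that $f\otimes\id_A$ can be recovered uniquely from $\psi_2\,(f\otimes\id_A)$.

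The main (and only) obstacle is bookkeeping: one must be careful that the associativity and unit constraints of $\mathcal C$ are transported correctly, and in the direct approach that the diagrams are assembled so that precomposition with $\psi_1$ (and hence with $\psi_2$, via the hypothesis triangle) is what produces the two parallel morphisms to which tensor-monomorphicity of $\psi_2$ is applied. Since Proposition~\ref{PropositionFromTensorEpiComonoidMorphism} requires no hypotheses on $\mathcal C$ beyond being monoidal, no conditions from Section~\ref{SubsectionDUALCosupportActingConditions} are needed here, and the dualization is clean. I would therefore present the short proof: \emph{Proposition~\ref{PropositionDUALFromTensorMonoMonoidMorphism} is dual to Proposition~\ref{PropositionFromTensorEpiComonoidMorphism}.}
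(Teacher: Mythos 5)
Your proposal is correct and coincides with the paper's own proof, which consists precisely of the one sentence ``Proposition~\ref{PropositionDUALFromTensorMonoMonoidMorphism} is dual to Proposition~\ref{PropositionFromTensorEpiComonoidMorphism}'', relying on the passage to $\mathcal C^{\mathrm{op},\mathrm{op}}$ set up at the start of Section~\ref{SubsectionDUALCosupportActingConditions}. Your additional verification that modules, tensor monomorphisms and the commuting triangle dualize correctly, and that no extra hypotheses on $\mathcal C$ are needed, is accurate.
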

\begin{proof} Proposition~\ref{PropositionDUALFromTensorMonoMonoidMorphism} is dual to Proposition~\ref{PropositionFromTensorEpiComonoidMorphism}.
\end{proof}	

\subsection{Universal measuring comonoids}\label{SubsectionDUALUnivMeasExistence}

Fix $\Omega$-magmas $A$ and $B$ in a braided monoidal category $\mathcal C$. Consider the category $\mathbf{Meas}(A,B)$ where
\begin{itemize}
	\item the objects are all measurings $\psi \colon P\otimes A \to B$ for arbitrary comonoids $P$;
	\item the morphisms from $\psi_1 \colon P_1\otimes A \to B$
	to $\psi_2 \colon P_2\otimes A \to B$  are monoid homomorphisms $\varphi \colon P_1 \to P_2$
	making the diagram below commutative:
\begin{equation*}\xymatrix{
		{P_1} \otimes A \ar[r]^(0.6){\psi_1}  \ar[d]_{\varphi\otimes{\id_A}} & B  \\
		{P_2} \otimes A \ar[ru]_(0.6){\psi_2} &
}\end{equation*}	
\end{itemize}

Denote by $G_1'$ the forgetful functor $\mathbf{Meas}(A,B)\to \mathbf{TensMor}(A,B)$. 
Let $\psi_V \colon V \otimes A \to B$ be a tensor monomorphism  for some object $V$.
We call the comonoid ${}_\square\mathcal{C}(\psi_V)$ corresponding to the initial object $$\psi_V^\mathbf{Meas} \colon {}_\square\mathcal{C}(\psi_V) \otimes A \to B$$ in $\mathbf{Meas}(A,B)^{\mathrm{op}}_{G_1'}(\psi_V)$ (if it exists) the \textit{$V$-universal measuring comonoid} from $A$ to $B$.

In other words, ${}_\square\mathcal{C}(\psi_V)$ is a $V$-universal measuring comonoid if  for every measuring $\psi \colon P \otimes A \to B$
such that $\psi =  \psi_V (\tau \otimes \id_A)$ for some morphism $\tau \colon P \to V$ there exists a unique comonoid homomorphism
$\varphi \colon P \to {}_\square\mathcal{C}(\psi_V)$ making the diagram below commutative:

\begin{equation*}\xymatrix{
		{P} \otimes A \ar[r]^(0.6){\psi}  \ar[d]_{\varphi\otimes{\id_A}} & B  \\
		{{}_\square\mathcal{C}(\psi_V)} \otimes A \ar[ru]_(0.6){\psi_V^\mathbf{Meas}} &
}\end{equation*}

\begin{theorem}\label{TheoremDUALComonUnivMeasExistence}  Suppose that a braided monoidal category $\mathcal C$
	satisfies  Properties~\ref{PropertyDUALSmallColimits}, \ref{PropertyDUALExtrEpiMonoFactorizations}, \ref{PropertyDUALSubObjectsSmallSet}, \ref{PropertyDUALMonomorphism},  \ref{PropertyDUALEpimorphism},  \ref{PropertyDUALSwitchCoprodTensorIsAnEpimorphism}, \ref{PropertyDUALCofreeComonoid} of Section~\ref{SubsectionDUALCosupportActingConditions}.
	Then there exists an initial object in $\mathbf{Meas}(A,B)^{\mathrm{op}}_{G_1'}(\psi_V)$ if $\mathbf{Meas}(A,B)^{\mathrm{op}}_{G_1'}(\psi_V)$ is not empty.
\end{theorem}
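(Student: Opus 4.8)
The whole point of Section~\ref{SectionDUALCosupportActingExistence} is that every statement dualizes: a braided monoidal category $\mathcal C$ satisfies Properties~\ref{PropertyDUALSmallColimits}, \ref{PropertyDUALExtrEpiMonoFactorizations}, \ref{PropertyDUALSubObjectsSmallSet}, \ref{PropertyDUALMonomorphism}, \ref{PropertyDUALEpimorphism}, \ref{PropertyDUALSwitchCoprodTensorIsAnEpimorphism}, \ref{PropertyDUALCofreeComonoid} of Section~\ref{SubsectionDUALCosupportActingConditions} precisely when the braided monoidal category $\mathcal C^{\mathrm{op},\mathrm{op}}$ satisfies Properties~\ref{PropertySmallLimits}, \ref{PropertyEpiExtrMonoFactorizations}, \ref{PropertyFactorObjectsSmallSet}, \ref{PropertyMonomorphism}, \ref{PropertyEpimorphism}, \ref{PropertySwitchProdTensorIsAMonomorphism}, \ref{PropertyFreeMonoid} of Section~\ref{SubsectionSupportCoactingConditions}. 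So the plan is simply to invoke Theorem~\ref{TheoremMonUnivComeasExistence} for the category $\mathcal C^{\mathrm{op},\mathrm{op}}$ and translate the conclusion back to $\mathcal C$.

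\begin{proof}[Proof of Theorem~\ref{TheoremDUALComonUnivMeasExistence}]
Theorem~\ref{TheoremDUALComonUnivMeasExistence} is dual to Theorem~\ref{TheoremMonUnivComeasExistence}. More precisely, a measuring $\psi \colon P \otimes A \to B$ of $\Omega$-magmas in $\mathcal C$, where $P$ is a comonoid, is the same datum as a comeasuring $\psi \colon A \to B \mathbin{\otimes^{\mathrm{op}}} P$ of $\Omega$-magmas in $\mathcal C^{\mathrm{op},\mathrm{op}}$, where $P$ is a monoid in $\mathcal C^{\mathrm{op},\mathrm{op}}$ (recall that $\mathsf{Comon}(\mathcal C)=\mathsf{Mon}(\mathcal C^{\mathrm{op},\mathrm{op}})$ and that reversing arrows turns the monoidal product $\otimes$ of $\mathcal C$ into the monoidal product $\otimes^{\mathrm{op}}$ of $\mathcal C^{\mathrm{op},\mathrm{op}}$). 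Under this identification the category $\mathbf{Meas}(A,B)^{\mathrm{op}}$ of measurings in $\mathcal C$ is isomorphic to the category $\mathbf{Comeas}(A,B)$ of comeasurings in $\mathcal C^{\mathrm{op},\mathrm{op}}$, the forgetful functor $G_1'\colon\mathbf{Meas}(A,B)^{\mathrm{op}}\to\mathbf{TensMor}(A,B)^{\mathrm{op}}$ corresponds to $G_1\colon\mathbf{Comeas}(A,B)\to\mathbf{MorTens}(A,B)$ for $\mathcal C^{\mathrm{op},\mathrm{op}}$, and a tensor monomorphism $\psi_V\colon V\otimes A\to B$ in $\mathcal C$ is precisely a tensor epimorphism $\psi_V\colon A\to B\mathbin{\otimes^{\mathrm{op}}}V$ in $\mathcal C^{\mathrm{op},\mathrm{op}}$, i.e. an element of $\LIO\bigl(\mathbf{MorTens}(A,B)\bigr)$ computed in $\mathcal C^{\mathrm{op},\mathrm{op}}$.

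By the observation at the beginning of Section~\ref{SubsectionDUALCosupportActingConditions}, the hypotheses on $\mathcal C$ are equivalent to $\mathcal C^{\mathrm{op},\mathrm{op}}$ satisfying Properties~\ref{PropertySmallLimits}, \ref{PropertyEpiExtrMonoFactorizations}, \ref{PropertyFactorObjectsSmallSet}, \ref{PropertyMonomorphism}, \ref{PropertyEpimorphism}, \ref{PropertySwitchProdTensorIsAMonomorphism}, \ref{PropertyFreeMonoid} of Section~\ref{SubsectionSupportCoactingConditions}, which are exactly the hypotheses of Theorem~\ref{TheoremMonUnivComeasExistence}. Applying that theorem in $\mathcal C^{\mathrm{op},\mathrm{op}}$ yields, for $\psi_V$ viewed as a tensor epimorphism there, an initial object of $\mathbf{Comeas}(A,B)(\psi_V)$ whenever the latter is nonempty. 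Translating back, this is an initial object of $\mathbf{Meas}(A,B)^{\mathrm{op}}(\psi_V)$, nonempty exactly when $\mathbf{Meas}(A,B)^{\mathrm{op}}(\psi_V)$ is, i.e. the Lifting Problem for $G_1'$ has a solution for every $\psi_V\in\LIO\bigl(\mathbf{TensMor}(A,B)^{\mathrm{op}}\bigr)$. This is the assertion.
\end{proof}

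The only genuine content here is bookkeeping, so there is no real obstacle; the one point that needs care—and the reason the dualization is legitimate at all—is that Property~\ref{PropertyDUALCofreeComonoid} (existence of cofree comonoids, used in place of Property~\ref{PropertyFreeMonoid}) is \emph{assumed} rather than derived, exactly as flagged in the introduction, since cofree comonoids are not formally dual to free monoids over a field; once it is taken as a hypothesis, the functorial translation $\mathcal C\rightsquigarrow\mathcal C^{\mathrm{op},\mathrm{op}}$ carries every ingredient of the proof of Theorem~\ref{TheoremMonUnivComeasExistence} across intact.
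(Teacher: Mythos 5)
Your proposal is correct and coincides with the paper's own argument: the paper's entire proof is the single sentence that Theorem~\ref{TheoremDUALComonUnivMeasExistence} is dual to Theorem~\ref{TheoremMonUnivComeasExistence}, relying on the observation in Section~\ref{SubsectionDUALCosupportActingConditions} that the starred properties of $\mathcal C$ are exactly the unstarred properties of $\mathcal C^{\mathrm{op},\mathrm{op}}$. You have merely made the bookkeeping of that dualization explicit, including the correct matching of hypotheses and the identification of tensor monomorphisms with tensor epimorphisms in $\mathcal C^{\mathrm{op},\mathrm{op}}$.
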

\begin{proof} Theorem~\ref{TheoremDUALComonUnivMeasExistence} is dual to Theorem~\ref{TheoremMonUnivComeasExistence}.
\end{proof}	

\begin{examples}\label{SweedlerUnivMeasuring}
\hspace{1cm}
\begin{enumerate}
\item In the case $\mathcal C = \mathbf{Vect}_\mathbbm{k}$ for a field $\mathbbm{k}$ we get the $V$-universal measuring coalgebra ${}_\square\mathcal{C}(\psi_V)$ (see~\cite[Theorem 3.10]{AGV2}). If we take $V = \Hom_\mathbbm{k}(A,B)$ and define $\psi_V \colon \Hom_\mathbbm{k}(A,B) \otimes A \to B$ by $\psi_V(\theta \otimes a) := \theta(a)$,
	then $\psi \preccurlyeq \psi_V$ for every measuring $\psi \colon P \otimes A \to B$.
	In this case ${}_\square\mathcal{C}(\psi_V)$ is the Sweedler universal measuring coalgebra (see \cite[Chapter
	VII]{Sweedler});
\item\label{ExampleDUALUnivMeasSets} In the case $\mathcal C = \mathbf{Sets}$
	a map $\psi_V \colon V \times A \to B$ is a tensor monomorphism if and only if $\psi_V(v_1,-)\ne \psi_V(v_2,-)$
	for $v_1 \ne v_2$. Therefore, $V$ may be identified with a subset in the set $\mathbf{Sets}(A,B)$
	of all maps $A\to B$. We have $\psi \preccurlyeq \psi_V$ for a given $\psi \colon P \times A \to B $ if and only if $\psi(P,-)\subseteq V$. Recall that $\psi$ is a measuring if and only if $\psi(p,-)$ is an $\Omega$-magma homomorphism
	for every $p\in P$. Hence in this case  ${}_\square\mathcal{C}(\psi_V) = V \cap \Hom_{\Omega}(A,B)$ where $\Hom_{\Omega}(A,B)$
	is the set of all $\Omega$-magma homomorphisms $A\to B$.
\end{enumerate}
\end{examples}

\begin{remark} \label{RemarkDUALInitialMeas}
	Suppose that there exists an initial object $I$ in $\mathcal{C}$ and the object $I \otimes A$ is again initial for every object $A$. (This holds, say, for $\mathcal C = \mathbf{Vect}_\mathbbm{k}$ and $\mathcal C = \mathbf{Sets}$.) Note that the unique morphisms $I \to \mathbbm{1}$ and $I \to I \otimes I$ define on $I$ the structure of a comonoid. (All the corresponding diagrams are commutative, since $I$ is an initial object.) Denote by $\psi_I$  the unique measuring $I \otimes A \to B$. Then $\psi_I$, being the initial object in $\mathbf{Meas}(A,B)$,
	is the terminal object in $\mathbf{Meas}(A,B)^{\mathrm{op}}_{G_1'}(\psi_V)$. 
\end{remark}	

\subsection{Universal acting bimonoids}\label{SubsectionDUALUnivBiActingExistence}

Again fix an $\Omega$-magma $A$ in a braided monoidal category $\mathcal C$.
Define the category $\mathbf{Act}(A)$ where \begin{itemize}
	\item the objects are all actions $\psi \colon B\otimes A \to A$ for arbitrary bimonoids $B$;
	\item the morphisms from $\psi_1 \colon B_1\otimes A \to A$
	to $\psi_2 \colon B_2\otimes A \to A$  are bimonoid homomorphisms $\varphi \colon B_1 \to B_2$
	making the diagram below commutative:
	\begin{equation*}\xymatrix{
			{B_1} \otimes A \ar[r]^(0.6){\psi_1}  \ar[d]_{\varphi\otimes{\id_A}} & A  \\
			{B_2} \otimes A \ar[ru]_(0.6){\psi_2} &
	}\end{equation*}	
\end{itemize}

Consider the following commutative diagram consisting of forgetful functors:
$$\xymatrix{
	%	 \mathbf{HAct}(A)^{\mathrm{op}} \ar[d]^{G'_4} \\
	\mathbf{Act}(A)^{\mathrm{op}} \ar[d]^{G'_2} \ar[r]^-{G'_3} & \mathbf{Meas}(A,A)^{\mathrm{op}} \ar[d]^{G'_1}   \\
	\mathbf{ModStr}(A)^{\mathrm{op}}  
	\ar[r]^-{G'} & \mathbf{TensMor}(A,A)^{\mathrm{op}}
}$$	

Let $V$ be a monoid and let a tensor monomorphism $\psi_V \colon V \otimes A \to A$
define on $A$ a structure of a $V$-module.
Let us call the bimonoid 
corresponding to the initial object in $\mathbf{Act}(A)^{\mathrm{op}}_{G_2'}(\psi_V)$ (if it exists) the \textit{$V$-universal acting bimonoid} on $A$.

\begin{remark}\label{RemarkDUALActAG2PsiVEquivDef}
	Again, by Proposition~\ref{PropositionDUALFromTensorMonoMonoidMorphism}, 
	the category $\mathbf{Act}(A)^{\mathrm{op}}_{G_2'}(\psi_V)$  coincides with the category
	$\mathbf{Act}(A)^{\mathrm{op}}_{G'G_2'}(G'\psi_V) = \mathbf{Act}(A)^{\mathrm{op}}_{G_1'G_3'}(G' \psi_V)$.
\end{remark}

\begin{theorem}\label{TheoremDUALBimonUnivActingExistence} Let $\mathcal C$ be a braided monoidal category 
	and let $\psi_V \colon V \otimes A \to A$ be a tensor monomorphism defining on an $\Omega$-magma~$A$
	a structure of a $V$-module for a monoid $V$ such that there exists ${}_\square\mathcal{C}(\psi_V)$.
	 Then ${}_\square\mathcal{B}(\psi_V):={}_\square\mathcal{C}(\psi_V)$ admits a unique monoid structure turning
	$\psi_V^\mathbf{Act}:=\psi_V^\mathbf{Meas}$ into an action, which is the initial object in $\mathbf{Act}(A)^{\mathrm{op}}_{G_2'}(\psi_V)$.
\end{theorem}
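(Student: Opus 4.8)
The plan is to dualize Theorem~\ref{TheoremBimonUnivCoactingExistence} exactly as the intervening results have been dualized, i.e. by applying the coacting statement to the category $\mathcal C^{\mathrm{op},\mathrm{op}}$. First I would observe that a tensor monomorphism $\psi_V \colon V \otimes A \to A$ in $\mathcal C$ is precisely a tensor epimorphism $A \to A \otimes V$ in $\mathcal C^{\mathrm{op},\mathrm{op}}$, that monoids in $\mathcal C$ are comonoids in $\mathcal C^{\mathrm{op},\mathrm{op}}$, and that a $V$-module structure $\psi_V \colon V \otimes A \to A$ on the $\Omega$-magma $A$ in $\mathcal C$ is the same datum as a $V$-comodule structure on $A$ in $\mathcal C^{\mathrm{op},\mathrm{op}}$. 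Under this dictionary, the category $\mathbf{Act}(A)^{\mathrm{op}}$ of bimonoid actions on $A$ in $\mathcal C$ (with bimonoid homomorphisms reversed) is isomorphic to the category $\mathbf{Coact}(A)$ of bimonoid coactions on $A$ in $\mathcal C^{\mathrm{op},\mathrm{op}}$, and the forgetful functors match up: $\mathbf{Act}(A)^{\mathrm{op}} \to \mathbf{Meas}(A,A)^{\mathrm{op}} \to \mathbf{TensMor}(A,A)^{\mathrm{op}}$ corresponds to $\mathbf{Coact}(A) \to \mathbf{Comeas}(A,A) \to \mathbf{MorTens}(A,A)$. Hence the subcategory $\mathbf{Act}(A)^{\mathrm{op}}(\psi_V)$ corresponds to $\mathbf{Coact}(A)(\rho_U)$ with $U = V$ and $\rho_U = \psi_V$ read in $\mathcal C^{\mathrm{op},\mathrm{op}}$.

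Next I would invoke Theorem~\ref{TheoremBimonUnivCoactingExistence} for the braided monoidal category $\mathcal C^{\mathrm{op},\mathrm{op}}$: since we assume ${}_\square\mathcal{C}(\psi_V)$ exists, which corresponds to $\mathcal{A}^\square(\rho_U)$ existing in $\mathcal C^{\mathrm{op},\mathrm{op}}$ (it is the universal measuring comonoid, i.e. the universal comeasuring monoid in the opposite category), that theorem yields that $\mathcal{A}^\square(\rho_U)$ carries a unique comonoid structure in $\mathcal C^{\mathrm{op},\mathrm{op}}$ — equivalently, ${}_\square\mathcal{C}(\psi_V)$ carries a unique monoid structure in $\mathcal C$ — turning $\rho_U^{\mathbf{Comeas}} = \psi_V^{\mathbf{Meas}}$ into a coaction in $\mathcal C^{\mathrm{op},\mathrm{op}}$, equivalently turning $\psi_V^{\mathbf{Act}} := \psi_V^{\mathbf{Meas}}$ into an action in $\mathcal C$, and that this is the initial object in $\mathbf{Coact}(A)(\rho_U)$ computed in $\mathcal C^{\mathrm{op},\mathrm{op}}$. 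Translating back through the isomorphism of categories gives exactly that $\psi_V^{\mathbf{Act}}$ is the initial object in $\mathbf{Act}(A)^{\mathrm{op}}(\psi_V)$, which is the assertion. I would set ${}_\square\mathcal{B}(\psi_V) := {}_\square\mathcal{C}(\psi_V)$ as in the statement.

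The one point requiring genuine care — and the main obstacle — is the bookkeeping of the two levels of "op". One must check that the bimonoid structure involved (a bimonoid in $\mathcal C^{\mathrm{op},\mathrm{op}}$ versus a bimonoid in $\mathcal C$) transports correctly: a bimonoid in $\mathcal C^{\mathrm{op},\mathrm{op}}$ is by definition a comonoid object in $\mathsf{Mon}(\mathcal C^{\mathrm{op},\mathrm{op}})$, and since $\mathsf{Mon}(\mathcal C^{\mathrm{op},\mathrm{op}}) \cong \mathsf{Comon}(\mathcal C)^{\mathrm{op}}$ one gets a monoid object in $\mathsf{Comon}(\mathcal C)$, i.e. an ordinary bimonoid in $\mathcal C$ — but one must verify that the monoidal-product twist $\otimes^{\mathrm{op}}$ does not introduce a spurious flip, which is handled precisely by the fact that $\mathcal C^{\mathrm{op},\mathrm{op}}$ uses the reversed monoidal product (this is why the construction is self-dual and why the "double op" was introduced in Section~\ref{SubsectionDUALCosupportActingConditions}). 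I would remark that this is exactly parallel to how Theorem~\ref{TheoremDUALBimonHopfRightAdjoint} was deduced from Theorem~\ref{TheoremBimonHopfLeftAdjoint}, and that the correspondences between $\mathbf{TensMor}$/$\mathbf{MorTens}$, $\mathbf{Meas}$/$\mathbf{Comeas}$, and $\mathbf{Act}$/$\mathbf{Coact}$ under $(-)^{\mathrm{op},\mathrm{op}}$ are the content of Sections~\ref{SubsectionMeasurings} and~\ref{SubsectionComeasurings} read in reverse. Consequently the proof reduces to a single line: \emph{Theorem~\ref{TheoremDUALBimonUnivActingExistence} is dual to Theorem~\ref{TheoremBimonUnivCoactingExistence}.}
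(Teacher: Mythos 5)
Your proposal is correct and is exactly the paper's argument: the paper's proof is the single line ``Theorem~\ref{TheoremDUALBimonUnivActingExistence} is dual to Theorem~\ref{TheoremBimonUnivCoactingExistence},'' relying on the same passage to $\mathcal C^{\mathrm{op},\mathrm{op}}$ set up in Section~\ref{SubsectionDUALCosupportActingConditions}. Your expanded bookkeeping of the two ``op''s and the matching of $\mathbf{Act}(A)^{\mathrm{op}}$ with $\mathbf{Coact}(A)$ in $\mathcal C^{\mathrm{op},\mathrm{op}}$ just makes explicit what the paper leaves implicit.
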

\begin{proof}Theorem~\ref{TheoremDUALBimonUnivActingExistence} is dual to Theorem~\ref{TheoremBimonUnivCoactingExistence}.
\end{proof}	
\begin{corollary}\label{CorollaryDUALBimonUnivActingExistence} Suppose that a braided monoidal category $\mathcal C$
	satisfies  Properties~\ref{PropertyDUALSmallColimits}, \ref{PropertyDUALExtrEpiMonoFactorizations}, \ref{PropertyDUALSubObjectsSmallSet}, \ref{PropertyDUALMonomorphism},  \ref{PropertyDUALEpimorphism},  \ref{PropertyDUALSwitchCoprodTensorIsAnEpimorphism}, \ref{PropertyDUALCofreeComonoid} of Section~\ref{SubsectionDUALCosupportActingConditions}.
	Let $\psi_V \colon V \otimes A \to A$ be a tensor monomorphism defining on an $\Omega$-magma~$A$
	a structure of a $V$-module for a monoid $V$.
	Then ${}_\square\mathcal{B}(\psi_V):={}_\square\mathcal{C}(\psi_V)$ admits a unique monoid structure turning
	$\psi_V^\mathbf{Act}:=\psi_V^\mathbf{Meas}$ into an action, which is the initial object in $\mathbf{Act}(A)^{\mathrm{op}}_{G_2'}(\psi_V)$.
\end{corollary}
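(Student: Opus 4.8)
The plan is to combine Theorem~\ref{TheoremDUALComonUnivMeasExistence} and Theorem~\ref{TheoremDUALBimonUnivActingExistence}, exactly as the dual of the proof of Corollary~\ref{CorollaryBimonUnivCoactingExistence}. The hypotheses list the properties~\ref{PropertyDUALSmallColimits}, \ref{PropertyDUALExtrEpiMonoFactorizations}, \ref{PropertyDUALSubObjectsSmallSet}, \ref{PropertyDUALMonomorphism}, \ref{PropertyDUALEpimorphism}, \ref{PropertyDUALSwitchCoprodTensorIsAnEpimorphism}, \ref{PropertyDUALCofreeComonoid} of Section~\ref{SubsectionDUALCosupportActingConditions}, which are precisely the properties that Theorem~\ref{TheoremDUALComonUnivMeasExistence} requires in order to guarantee the existence of an initial object in $\mathbf{Meas}(A,A)^{\mathrm{op}}(\psi_V)$ whenever that category is non-empty. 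So the first step is to invoke Theorem~\ref{TheoremDUALComonUnivMeasExistence} with $B = A$ to produce the universal measuring comonoid ${}_\square\mathcal{C}(\psi_V)$ and the initial object $\psi_V^\mathbf{Meas}$.

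Second, I would observe that since $\psi_V \colon V \otimes A \to A$ is a tensor monomorphism defining on $A$ a $V$-module structure, in particular $\psi_V$ is itself an object of $\mathbf{Meas}(A,A)^{\mathrm{op}}(\psi_V)$ (it is a measuring, being a module action on an $\Omega$-magma, and trivially $\psi_V \preccurlyeq \psi_V$), so that category is non-empty and the existence clause of Theorem~\ref{TheoremDUALComonUnivMeasExistence} actually applies. Hence ${}_\square\mathcal{C}(\psi_V)$ exists. Third, with the existence of ${}_\square\mathcal{C}(\psi_V)$ in hand, the hypotheses of Theorem~\ref{TheoremDUALBimonUnivActingExistence} are met verbatim: $\mathcal C$ is braided monoidal, $\psi_V$ is a tensor monomorphism defining a $V$-module structure on $A$ for a monoid $V$, and ${}_\square\mathcal{C}(\psi_V)$ exists. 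Applying that theorem yields that ${}_\square\mathcal{B}(\psi_V) := {}_\square\mathcal{C}(\psi_V)$ carries a unique monoid structure turning $\psi_V^\mathbf{Act} := \psi_V^\mathbf{Meas}$ into an action which is the initial object of $\mathbf{Act}(A)^{\mathrm{op}}(\psi_V)$, which is the assertion of the corollary.

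The only point requiring a moment's care is the bookkeeping of which property indices from Section~\ref{SubsectionSupportCoactingConditions} correspond to which dualized properties in Section~\ref{SubsectionDUALCosupportActingConditions}; but this matching is already fixed by the opening paragraph of Section~\ref{SubsectionDUALCosupportActingConditions}, which states that $\mathcal C^{\mathrm{op},\mathrm{op}}$ satisfies Properties~\ref{PropertySmallLimits}--\ref{PropertyFreeMonoid}, \ref{PropertyFactorObjectsSmallSet}, \ref{PropertyEpimorphism}, \ref{PropertyExtrMonomorphism} exactly when $\mathcal C$ satisfies \ref{PropertyDUALSmallColimits}--\ref{PropertyDUALCofreeComonoid}, \ref{PropertyDUALSubObjectsSmallSet}, \ref{PropertyDUALMonomorphism}, \ref{PropertyDUALExtrEpimorphism}. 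So there is no real obstacle here; the proof is a one-line combination of the two cited theorems, and the anticipated write-up is simply:

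\begin{proof}
Since $\psi_V$ is a tensor monomorphism defining on $A$ a structure of a $V$-module, it is in particular an object of $\mathbf{Meas}(A,A)^{\mathrm{op}}(\psi_V)$, so this category is not empty. By Theorem~\ref{TheoremDUALComonUnivMeasExistence} there exists an initial object $\psi_V^\mathbf{Meas}$ in $\mathbf{Meas}(A,A)^{\mathrm{op}}(\psi_V)$, i.e. the $V$-universal measuring comonoid ${}_\square\mathcal{C}(\psi_V)$ exists. Now apply Theorem~\ref{TheoremDUALBimonUnivActingExistence}.
\end{proof}
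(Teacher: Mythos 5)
Your overall route is exactly the paper's: the official proof of Corollary~\ref{CorollaryDUALBimonUnivActingExistence} is literally ``Apply Theorems~\ref{TheoremDUALComonUnivMeasExistence} and~\ref{TheoremDUALBimonUnivActingExistence}'', so the decomposition you chose is the intended one.

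There is, however, one flaw in the extra detail you supply. To invoke the existence clause of Theorem~\ref{TheoremDUALComonUnivMeasExistence} you argue that $\mathbf{Meas}(A,A)^{\mathrm{op}}(\psi_V)$ is non-empty because $\psi_V$ itself is an object of it, ``being a module action on an $\Omega$-magma''. That is not correct: objects of $\mathbf{Meas}(A,A)$ are measurings $P\otimes A\to A$ for \emph{comonoids} $P$, i.e.\ $\Omega$-magmas in $\mathsf{TensMor}(P)$, whereas $V$ is only assumed to be a monoid and the $V$-module structure $\psi_V$ is not assumed to be compatible with the operations in $\Omega$. (This mirrors the coacting side, where $\rho_U\colon A\to A\otimes U$ with $U$ a comonoid is in general \emph{not} a comeasuring.) The non-emptiness you need is instead supplied by the dual of Lemma~\ref{LemmaComeasAAU}: the trivial measuring $\mathbbm{1}\otimes A \mathrel{\widetilde\to} A$ (with $\mathbbm{1}$ the trivial comonoid) is a measuring by the dual of Lemma~\ref{LemmaCompositionComeasAndTrivial}, and it factors through $\psi_V$ via the unit $u\colon \mathbbm{1}\to V$ by the unit axiom of the $V$-module structure, hence lies in $\mathbf{Meas}(A,A)^{\mathrm{op}}(\psi_V)$. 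With that substitution your proof is complete and coincides with the paper's.
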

\begin{proof} Apply Theorems~\ref{TheoremDUALComonUnivMeasExistence} and~\ref{TheoremDUALBimonUnivActingExistence}.
\end{proof}	

In other words, for every action $\psi \colon B \otimes A \to A$
such that $\psi =  \psi_V (\tau \otimes \id_A)$ for some morphism $\tau \colon B \to V$ (recall that by Proposition~\ref{PropositionDUALFromTensorMonoMonoidMorphism} the morphism $\tau$ is necessarily a monoid homomorphism) there exists a unique bimonoid homomorphism
$\varphi \colon B \to {}_\square\mathcal{B}(\psi_V)$ making the diagram below commutative:

\begin{equation*}\xymatrix{
		{B} \otimes A \ar[r]^(0.6){\psi}  \ar[d]_{\varphi\otimes{\id_A}} & A  \\
		{{}_\square\mathcal{B}(\psi_V)} \otimes A \ar[ru]_(0.6){\psi_V^\mathbf{Act}} &
}\end{equation*}

\begin{remark} Note that $\mathbbm{1} \otimes  A \mathrel{\widetilde\to} A $ is a terminal object in $\mathbf{Act}(A)^{\mathrm{op}}_{G_2'}(\psi_V)$,
	since for every bimonoid $B$ there exists the only bimonoid homomorphism $\mathbbm{1} \to B$, namely, the unit $u$.
\end{remark}

\begin{examples}\label{SweedlerBialgebra}
\hspace{1cm}
\begin{enumerate}
\item\label{ExampleSweedlerBialgebra}
When $\mathcal C = \mathbf{Vect}_\mathbbm{k}$ and $A$ is a unital associative algebra, the bimonoid ${}_\square\mathcal{B}(A,\End_\mathbbm{k}(A))$
is exactly the Sweedler universal acting bialgebra on $A$~\cite[Chapter VII]{Sweedler};	
\item\label{ExampleDUALUnivActionSets} 
	Example~\ref{SweedlerUnivMeasuring} (\ref{ExampleDUALUnivMeasSets}) shows that in the case $\mathcal{C} = \mathbf{Sets}$
	the monoid $V$ can be identified with the corresponding submonoid in $\mathbf{Sets}(A,A)$, the ordinary monoid of all 
	 maps $A \to A$. Hence ${}_\square\mathcal{B}(\psi_V) = V \cap \End_{\Omega}(A)$ where $\End_{\Omega}(A)$
	 is the ordinary monoid of all $\Omega$-magma endomorphisms of $A$.
\end{enumerate}
\end{examples}

\subsection{Universal acting Hopf monoids}\label{SubsectionDUALUnivHopfActingExistence}

Again fix an $\Omega$-magma $A$ in a braided monoidal category~$\mathcal C$. Consider the category $\mathbf{HAct}(A)$ where \begin{itemize}
	\item the objects are all actions $\psi \colon H\otimes A \to A$ for arbitrary Hopf monoids $H$;
	\item the morphisms from $\psi_1 \colon H_1\otimes A \to A$
	to $\psi_2 \colon H_2\otimes A \to A$  are Hopf monoid homomorphisms $\varphi \colon H_1 \to H_2$
	making the diagram below commutative:
	\begin{equation*}\xymatrix{
			{H_1} \otimes A \ar[r]^(0.6){\psi_1}  \ar[d]_{\varphi\otimes{\id_A}} & A  \\
			{H_2} \otimes A \ar[ru]_(0.6){\psi_2} &
	}\end{equation*}	
\end{itemize}

Let $V$ be a monoid and let a tensor monomorphism $\psi_V \colon V \otimes A \to A$
define on $A$ a structure of a $V$-module. Denote by $G_4'$ the forgetful functor $\mathbf{HAct}(A)\to \mathbf{Act}(A)$.
Let us call the Hopf monoid ${}_\square\mathcal{H}(\psi_V)$
corresponding to the initial object $\psi_V^\mathbf{HAct}$
in $$\mathbf{HAct}(A)^{\mathrm{op}}_{G_2'G_4'}(\psi_V)$$ (if it exists) the \textit{$V$-universal acting Hopf monoid} on $A$.

\begin{theorem}\label{TheoremDUALHopfMonUnivActingExistence}
	Let $\mathcal C$ be a braided monoidal category and let $\psi_V \colon V \otimes A \to A$
	be a tensor monomorphism
	defining on an $\Omega$-magma~$A$ a structure of a $V$-module for a monoid $V$.
	Suppose that the forgetful functor $\mathbf{Hopf}(\mathcal C) \to \mathbf{Bimon}(\mathcal C)$
	admits a right adjoint functor $H_r \colon \mathbf{Bimon}(\mathcal C) \to \mathbf{Hopf}(\mathcal C)$
	and there exists ${}_\square\mathcal{B}(\psi_V)$.
	Then the initial object $\psi_V^\mathbf{HAct}$
	in $$\mathbf{HAct}(A)^{\mathrm{op}}_{G_2'G_4'}(\psi_V)$$ indeed exists.
\end{theorem}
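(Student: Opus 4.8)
The plan is to obtain this statement by the mechanism used throughout Section~\ref{SectionDUALCosupportActingExistence}: transport the proof of Theorem~\ref{TheoremHopfMonUnivCoactingExistence} to the category $\mathcal C^{\mathrm{op},\mathrm{op}}$. Concretely, I would invoke the statement dual to Theorem~\ref{TheoremHopfMonUnivCoactingExistenceForSubCat} applied to the full subcategory $\mathcal D := \mathbf{Act}(A)^{\mathrm{op}}(\psi_V)$ of $\mathbf{Act}(A)^{\mathrm{op}}$. That dual statement reads: if $\mathcal D$ is a full subcategory of $\mathbf{Act}(A)^{\mathrm{op}}$ admitting an initial object $\psi_0 \colon B_0 \otimes A \to A$, if $\mathbf{HAct}(A)^{\mathrm{op}}(\mathcal D)$ denotes the full subcategory of $\mathbf{HAct}(A)^{\mathrm{op}}$ whose objects have image under the forgetful functor $G'_4$ lying in $\mathcal D$, and if $\mathbf{Hopf}(\mathcal C)\to\mathbf{Bimon}(\mathcal C)$ admits a right adjoint $H_r$, then $\mathbf{HAct}(A)^{\mathrm{op}}(\mathcal D)$ has an initial object; the construction is $H_0 := H_r B_0$ with action $\tilde\psi_0 := \psi_0(\gamma_{B_0}\otimes\id_A)$, where $\gamma_{B_0}\colon H_0\to B_0$ is the counit of the adjunction, the universal property of $H_r$ supplying the unique lifting to a Hopf monoid homomorphism exactly as in the diagram of Theorem~\ref{TheoremHopfMonUnivCoactingExistenceForSubCat}.

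Granting this, the proof of Theorem~\ref{TheoremDUALHopfMonUnivActingExistence} reduces to checking the two hypotheses for $\mathcal D = \mathbf{Act}(A)^{\mathrm{op}}(\psi_V)$. The existence of a right adjoint $H_r$ is assumed. For the initial object, by hypothesis ${}_\square\mathcal{C}(\psi_V)$ exists, and Theorem~\ref{TheoremDUALBimonUnivActingExistence} then equips ${}_\square\mathcal{B}(\psi_V) = {}_\square\mathcal{C}(\psi_V)$ with the unique monoid structure making $\psi_V^{\mathbf{Act}} := \psi_V^{\mathbf{Meas}}$ an action which is initial in $\mathbf{Act}(A)^{\mathrm{op}}(\psi_V)$, so we may take $\psi_0 := \psi_V^{\mathbf{Act}}$ and $B_0 := {}_\square\mathcal{B}(\psi_V)$. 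Finally $\mathbf{HAct}(A)^{\mathrm{op}}(\mathcal D)$ is literally $\mathbf{HAct}(A)^{\mathrm{op}}(\psi_V)$, since both are cut out of $\mathbf{HAct}(A)^{\mathrm{op}}$ by the same condition on the cosupport of the underlying measuring; hence the dual of Theorem~\ref{TheoremHopfMonUnivCoactingExistenceForSubCat} produces the desired initial object $\psi_V^{\mathbf{HAct}}$, realized as $\psi_V^{\mathbf{Act}}(\gamma\otimes\id_A)$ with ${}_\square\mathcal{H}(\psi_V) := H_r\bigl({}_\square\mathcal{B}(\psi_V)\bigr)$.

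The only real work — and hence the main obstacle — is the bookkeeping behind the word \emph{dual}: one must confirm that passing to $\mathcal C^{\mathrm{op},\mathrm{op}}$ induces isomorphisms of categories $\mathbf{Act}(A)^{\mathrm{op}}\simeq\mathbf{HCoact}$-type categories over $\mathcal C^{\mathrm{op},\mathrm{op}}$ compatibly with the forgetful functors $G'_4$ and $G_4$, that a tensor monomorphism $\psi_V$ in $\mathcal C$ becomes a tensor epimorphism in $\mathcal C^{\mathrm{op},\mathrm{op}}$ so that ${}_\square\mathcal{B}(\psi_V)$ there plays the role of $\mathcal{B}^\square$, and that ``$H_l$ exists for $\mathcal C^{\mathrm{op},\mathrm{op}}$'' is the same assertion as ``$H_r$ exists for $\mathcal C$'', the underlying forgetful functors $\mathbf{Hopf}\to\mathbf{Bimon}$ being mutually opposite. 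These are precisely the identifications already employed to deduce Theorems~\ref{TheoremDUALComonUnivMeasExistence} and~\ref{TheoremDUALBimonUnivActingExistence} from Theorems~\ref{TheoremMonUnivComeasExistence} and~\ref{TheoremBimonUnivCoactingExistence}; once they are in hand, no further computation is required and the statement ``Theorem~\ref{TheoremDUALHopfMonUnivActingExistence} is dual to Theorem~\ref{TheoremHopfMonUnivCoactingExistence}'' is a genuine proof.
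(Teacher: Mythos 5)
Your proposal is correct and follows exactly the paper's route: the paper proves this theorem by declaring it dual to Theorem~\ref{TheoremHopfMonUnivCoactingExistence} (itself an application of Theorem~\ref{TheoremHopfMonUnivCoactingExistenceForSubCat} with $\mathcal D = \mathbf{Coact}(A)(\rho_U)$), and you have simply made the dualization explicit, including the construction $H_r({}_\square\mathcal{B}(\psi_V))$ with action $\psi_V^{\mathbf{Act}}(\gamma\otimes\id_A)$ via the counit of the adjunction. No substantive difference.
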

\begin{proof}
Theorem~\ref{TheoremDUALHopfMonUnivActingExistence} is dual to Theorem~\ref{TheoremHopfMonUnivCoactingExistence}.
\end{proof}
\begin{corollary}\label{CorollaryDUALHopfMonUnivActingExistence}  Suppose that a braided monoidal category $\mathcal C$
	satisfies  Properties~\ref{PropertyDUALSmallColimits}--\ref{PropertyDUALExtrEpiMonoFactorizations},
	\ref{PropertyDUALSubObjectsSmallSet}, \ref{PropertyDUALMonomorphism}, \ref{PropertyDUALEpimorphism},  \ref{PropertyDUALSwitchCoprodTensorIsAnEpimorphism}, \ref{PropertyDUALCofreeComonoid} of Section~\ref{SubsectionDUALCosupportActingConditions}. Let $\psi_V \colon V \otimes A \to A$
	be a tensor monomorphism
	defining on an $\Omega$-magma~$A$ a structure of a $V$-module for a monoid $V$ in~$\mathcal C$.
	Then the initial object $\psi_V^\mathbf{HAct}$
	in $$\mathbf{HAct}(A)^{\mathrm{op}}_{G_2'G_4'}(\psi_V)$$ indeed exists.
\end{corollary}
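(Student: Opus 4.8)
The plan is to deduce Corollary~\ref{CorollaryDUALHopfMonUnivActingExistence} from Theorem~\ref{TheoremDUALHopfMonUnivActingExistence} in exactly the same way that Corollary~\ref{CorollaryHopfMonUnivCoactingExistence} was deduced from Theorem~\ref{TheoremHopfMonUnivCoactingExistence} in the coacting situation. Since the statement is the formal dual of Corollary~\ref{CorollaryHopfMonUnivCoactingExistence}, the cleanest route is to observe that all the hypotheses have been set up precisely so that the proof is obtained by applying the coacting results to the category $\mathcal C^{\mathrm{op},\mathrm{op}}$, following the convention adopted throughout Section~\ref{SectionDUALCosupportActingExistence}.

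Concretely, first I would invoke Theorem~\ref{TheoremDUALBimonHopfRightAdjoint}: Properties~\ref{PropertyDUALSmallColimits}--\ref{PropertyDUALExtrEpiMonoFactorizations}, \ref{PropertyDUALSubObjectsSmallSet}, \ref{PropertyDUALMonomorphism} and~\ref{PropertyDUALCofreeComonoid} (all of which are among the hypotheses) guarantee that the forgetful functor $\mathbf{Hopf}(\mathcal C) \to \mathbf{Bimon}(\mathcal C)$ admits a right adjoint $H_r$. Next I would invoke Corollary~\ref{CorollaryDUALBimonUnivActingExistence}: its hypotheses are Properties~\ref{PropertyDUALSmallColimits}, \ref{PropertyDUALExtrEpiMonoFactorizations}, \ref{PropertyDUALSubObjectsSmallSet}, \ref{PropertyDUALMonomorphism}, \ref{PropertyDUALEpimorphism}, \ref{PropertyDUALSwitchCoprodTensorIsAnEpimorphism}, \ref{PropertyDUALCofreeComonoid}, which are again all among the hypotheses here, so ${}_\square\mathcal{B}(\psi_V)$ exists and $\psi_V^\mathbf{Act}$ is the initial object in $\mathbf{Act}(A)^{\mathrm{op}}(\psi_V)$. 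Then I would feed these two outputs into Theorem~\ref{TheoremDUALHopfMonUnivActingExistence}, whose two standing assumptions—the existence of the right adjoint $H_r$ and the existence of ${}_\square\mathcal{B}(\psi_V)$—have now been verified, to conclude that the initial object $\psi_V^\mathbf{HAct}$ in $\mathbf{HAct}(A)^{\mathrm{op}}(\psi_V)$ exists.

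There is essentially no obstacle here: the whole point of the conditions in Section~\ref{SubsectionDUALCosupportActingConditions} is that each is dual to a condition in Section~\ref{SubsectionSupportCoactingConditions}, so that passing to $\mathcal C^{\mathrm{op},\mathrm{op}}$ turns the acting statements into the coacting ones already proved. The only thing to be careful about is bookkeeping—checking that the list of Properties cited in the hypothesis of the corollary is exactly the union of the lists needed by Theorem~\ref{TheoremDUALBimonHopfRightAdjoint}, Corollary~\ref{CorollaryDUALBimonUnivActingExistence} and Theorem~\ref{TheoremDUALHopfMonUnivActingExistence}, which it is. Thus the proof is a one-line appeal to these three results.

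\begin{proof}
By Theorem~\ref{TheoremDUALBimonHopfRightAdjoint} the forgetful functor $\mathbf{Hopf}(\mathcal C) \to \mathbf{Bimon}(\mathcal C)$ admits a right adjoint $H_r$, and by Corollary~\ref{CorollaryDUALBimonUnivActingExistence} there exists ${}_\square\mathcal{B}(\psi_V)$. Now apply Theorem~\ref{TheoremDUALHopfMonUnivActingExistence}.
\end{proof}
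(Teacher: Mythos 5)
Your proof is correct and is exactly the paper's argument: the paper's own proof reads ``Apply Theorems~\ref{TheoremDUALBimonHopfRightAdjoint}, \ref{TheoremDUALHopfMonUnivActingExistence} and Corollary~\ref{CorollaryDUALBimonUnivActingExistence}.'' Your bookkeeping of the Property lists (verifying that the corollary's hypotheses are the union of those needed by the three cited results) is accurate and matches the intended reading.
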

\begin{proof}
	Apply Theorems~\ref{TheoremDUALBimonHopfRightAdjoint}, \ref{TheoremDUALHopfMonUnivActingExistence} and Corollary~\ref{CorollaryDUALBimonUnivActingExistence}.
\end{proof}		

Again, if we unfold the definition, we obtain that for every action $\psi \colon H \otimes A \to A$
such that $\psi =  \psi_V (\tau \otimes \id_A)$ for some morphism $\tau \colon H \to V$ (recall that by Proposition~\ref{PropositionDUALFromTensorMonoMonoidMorphism} the morphism $\tau$ is necessarily a monoid homomorphism) there exists a unique Hopf monoid homomorphism
$\varphi \colon H \to {}_\square\mathcal{H}(\psi_V)$ making the diagram below commutative:

\begin{equation*}\xymatrix{
		{H} \otimes A \ar[r]^(0.6){\psi}  \ar[d]_{\varphi\otimes{\id_A}} & A  \\
		{{}_\square\mathcal{H}(\psi_V)} \otimes A \ar[ru]_(0.6){\psi_V^\mathbf{HAct}} &
}\end{equation*}

\begin{remark} Again, $\mathbbm{1} \otimes  A \mathrel{\widetilde\to} A $ is a terminal object
	in $$\mathbf{HAct}(A)^{\mathrm{op}}_{G_2'G_4'}(\psi_V),$$
	since for every Hopf monoid $H$ there exists the only Hopf monoid homomorphism $\mathbbm{1} \to H$, namely, the unit~$u$.
\end{remark}

\begin{examples}\label{UniversalActingHopf}
\hspace{1cm}
\begin{enumerate}
\item When $\mathcal C = \mathbf{Vect}_\mathbbm{k}$ and $A$ is an $\Omega$-algebra, the Hopf monoid ${}_\square\mathcal{H}(A,\End_\mathbbm{k}(A))$
is the universal acting Hopf algebra on $A$.	
\item		Let $\psi \colon  H \otimes A \to A$ be some Hopf monoid action on an $\Omega$-magma $A$
with an absolute value $|\psi|_{G_2'G_4'} \colon  (\mathop\mathrm{cosupp} \psi) \otimes A \to A$ in a braided monoidal category
$\mathcal C$ satisfying Properties~\ref{PropertyDUALSmallColimits}--\ref{PropertyDUALCofreeComonoid}, \ref{PropertyDUALSubObjectsSmallSet}, \ref{PropertyDUALMonomorphism} and~\ref{PropertyDUALExtrEpimorphism} of Section~\ref{SubsectionDUALCosupportActingConditions}.
Then by Propositions~\ref{PropositionLIOCoarserFinerEquivalent} and~\ref{PropositionLIOAbsValueLiftCriterion}
the ${}_\square\mathcal{H}(|\psi|_{G_2'G_4'})$-action $|\psi|^{\mathbf{HAct}}_{G_2'G_4'}$ on $A$ is equivalent to~$\psi$ and is universal among all actions equivalent to or coarser than $\psi$.
We call ${}_\square\mathcal{H}(|\psi|_{G_2'G_4'})$
\textit{the universal Hopf monoid} of $\psi$ and in the case $\mathcal C = \mathbf{Vect}_\mathbbm{k}$ for a field $\mathbbm{k}$
this Hopf monoid is exactly the universal Hopf algebra of $\psi$ introduced in~\cite{AGV1}.
\item\label{ExampleDUALUnivHopfActionSets} 
	Recall that by Example~\ref{SweedlerUnivMeasuring} (\ref{ExampleDUALUnivMeasSets}) in the case $\mathcal{C} = \mathbf{Sets}$
	the monoid $V$ can be identified with the corresponding submonoid in $\mathbf{Sets}(A,A)$. 
	Hence \begin{equation}\label{EqDUALUnivHopfActionSets} 
	{}_\square\mathcal{H}(\psi_V) = {\mathcal U}(V) \cap \Aut_{\Omega}(A)
	\end{equation} where ${\mathcal U}(V)$
	is the group of invertible elements of $V$ and $\Aut_{\Omega}(A)$
	is the group of all $\Omega$-magma automorphisms of $A$. 
	 Taking $V=\mathbf{Sets}(A,A)$,
	we obtain that $\Aut_{\Omega}(A)$-action is universal among all Hopf monoid (i.e. group) actions on~$A$.
\end{enumerate}
\end{examples}

\begin{remarks}
\hspace{0.1cm}
\begin{enumerate}
\item Note that~\eqref{EqDUALUnivHopfActionSets} resembles the group-like part of the formula for the universal acting
cocommutative Hopf algebra in~\cite[Theorem~5.3]{AGV1} (recall that in $\mathbf{Sets}$ all comonoids are trivial and therefore cocommutative).
\item The existence of universal measuring coalgebras for any pair of $\Omega$-magmas in $\Cc$ from a given class $\Xx$, turns $\Xx$ into a category over comonoids in $\Cc$ (see \cite{HylLopVas} and the forthcoming \cite{GV2}); in other words $\Xx$ can be given the structure of a semi-Hopf category in the sense of \cite{BCV}. If $\Xx$ contains just one object, we recover the universal acting bimonoid as described in Section~\ref{SubsectionDUALUnivBiActingExistence}. As we have just shown, the cofree Hopf algebra over this universal acting bimonoid is a universal acting Hopf monoid on the same $\Omega$-magma. This has been extended in \cite{GV}, showing that there exist (free and) cofree Hopf categories over a semi-Hopf category, which shows the existence of universal Hopf measurings. 
\end{enumerate}
\end{remarks}

\section*{Acknowledgments}

We are grateful to the referee for a thorough review and his/her extremely valuable insights and suggestions which greatly improved the exposition and the overall quality of the manuscript.

\end{document}